\let\reftagform@=\tagform@
\def\tagform@#1{\maketag@@@{(\ignorespaces\textcolor{purple}{#1}\unskip\@@italiccorr)}}
\renewcommand{\eqref}[1]{\textup{\reftagform@{\ref{#1}}}}
\DeclareUrlCommand\ULurl@@{%
  \def\UrlLeft{\uline\bgroup}%
  \def\UrlRight{\egroup}}
\def\ULurl@#1{\hyper@linkurl{\ULurl@@{#1}}{#1}}
\DeclareRobustCommand*\ULurl{\hyper@normalise\ULurl@}
\def\lessim{\ \lower4pt\hbox{$
		\buildrel{\displaystyle <}\over\sim$}\ }
\def\gessim{\ \lower4pt\hbox{$\buildrel{\displaystyle >}
		\over\sim$}\ }
\def\si{\sigma}
\def\eps{{\varepsilon}}
\newcommand{\indi}{\ensuremath{\boldsymbol 1}}
\newcommand{\bt}{\boldsymbol{t}}
\newcommand{\jx}{\mathcal{J}}
\newcommand{\pref}{\prettyref}
\newtheorem{lemma}{\bf Lemma}[section]
\newtheorem{theorem}[lemma]{\bf Theorem}
\newtheorem{example}{\bf Example}
\newtheorem{proposition}[lemma]{\bf Proposition}
\theoremstyle{remark}
\newtheorem{remark}{Remark}[section]
\numberwithin{equation}{section}
\newcommand{\8}{\infty}
\newcommand{\ix}{\mathcal{I}}
\newcommand{\px}{\mathcal{P}}
\newcommand{\nz}{\mathbb{N}}
\newcommand{\rz}{\mathbb{R}}
\newcommand{\ez}{\mathbb{E}}
\newcommand{\zz}{\mathbb{Z}}
\newcommand{\pz}{\mathbb{P}}
\newcommand{\Ga}{\Gamma}
\newcommand{\sfT}{\mathsf T}
\newcommand{\sfa}{\mathsf a}
\newcommand{\sfb}{\mathsf b}
\newcommand{\sfm}{\mathsf m}
\newcommand{\al}{\alpha}
\newcommand{\de}{\delta}
\renewcommand{\si}{\sigma}
\newcommand{\ga}{\gamma}
\newcommand{\la}{\lambda}
\renewcommand{\bt}{\beta}
\newcommand{\ta}{\theta}
\newcommand{\Crt}{\mathrm{Crt}}
\newcommand{\Cov}{\mathrm{Cov}}
\newcommand{\GOE}{\mathrm{GOE}}
\newcommand{\sgn}{\mathrm{sgn}}
\newcommand{\dd}{\mathrm{d}}
\newenvironment{Proof of lemma}{\noindent{\bf Proof of Lemma}}{\hfill$\Box$\newline}
\newenvironment{Proof of theorem}{\noindent{\bf Proof of Theorem}}{\hfill{\footnotesize${\square}$}\newline}
\newenvironment{Proof of theorems}{\noindent{\bf Proof of Theorems}}{\hfill$\Box$\newline}
\newenvironment{Proof of proposition}{\noindent{\bf Proof of Proposition}}{\hfill$\Box$\newline}
\newenvironment{Proof of propositions}{\noindent{\bf Proof of Propositions}}{\hfill$\Box$\newline}
\newenvironment{Proof of exercise}{\noindent{\it Proof of Exercise:}}{\hfill$\Box$}
\begin{document}
\title{Complexity of Gaussian random fields with isotropic increments: critical points with given indices}
\author{Antonio Auffinger \thanks{Department of Mathematics, Northwestern University, tuca@northwestern.edu, research partially supported by NSF Grant CAREER DMS-1653552 and NSF Grant DMS-1517894.} \\
	\small{Northwestern University}\and Qiang Zeng \thanks{Department of Mathematics, University of Macau, qzeng.math@gmail.com, research partially supported by SRG 2020-00029-FST and FDCT 0132/2020/A3.}\\
	\small{University of Macau}}
\date{\today}

\maketitle
\abstract{We study the landscape complexity of the Hamiltonian $X_N(x) +\frac\mu2 \|x\|^2,$ where $X_{N}$ is a smooth Gaussian process with isotropic increments on $\mathbb R^{N}$. This model describes a single particle on a random potential in statistical physics. We derive asymptotic formulas for the mean number of critical points of index $k$ with critical values in an open set as the dimension $N$ goes to infinity. In a companion paper, we provide the same analysis without the index constraint.}

\tableofcontents

\section{Introduction}

This is the second part of two papers on the landscape complexity of locally isotropic Gaussian random fields (a.k.a.~Gaussian random fields with isotropic increments). In this second article, we investigate the complexity of critical values with a given index.

The model is defined as follows. Let $B_N \subset \mathbb R^N$ be a sequence of subsets and  let $H_N : B_N \subset \mathbb R^N \to \mathbb R$ be given by
\begin{align}\label{hamil}
  H_N(x) = X_N(x) +\frac\mu2 \|x\|^2,
\end{align}
where $\mu \in \rz$, $\|x\|$ is the Euclidean norm of $x$, and $X_N$ is a Gaussian random field that satisfies
\begin{align*}
  \ez[(X_N(x)-X_N(y))^2]=N D\Big(\frac1N \|x-y\|^2\Big), \ \ x,y\in \rz^N.
\end{align*}
Here the function $D:\mathbb R_+ \to \mathbb R_+$ is called the correlator (or structure) function and $\rz_+=[0,\8)$.
The correlator $D$ has a representation
      \begin{align}\label{eq:drep}
        D(r) = \int_{(0,\8)} (1-e^{-rt^2})\nu(\dd t) + Ar, \ \ r\in \rz_+,
      \end{align}
      where $A\in \rz_+$ is a constant and $\nu$ is a $\si$-finite measure with
      \begin{align*}
        \int_{(0,\8)} \frac{t^2}{1+t^2}\nu(\dd t) <\8.
      \end{align*}
The case where $\nu(0,\8)=\8$ is called a long range correlated (LRC) Gaussian field and is the focus of this paper. We refer the reader to \cite{Ya87}*{Section 25.3} for more details on locally isotropic Gaussian fields.


In \cite{AZ20}, we considered the mean number of critical points of LRC fields. Here, we will study critical points with given indices. To state our main results, let $E\subset \rz$ be a Borel set. We define
\begin{align*}
\mathrm{Crt}_{N,k}(E,B_N) &= \#\{x\in B_N: \nabla H_N(x)=0, \frac1NH_N(x)\in E, i(\nabla^2 H_N(x))=k\},\\
	\mathrm{Crt}_N(E,B_N) &= \#\{x\in B_N: \nabla H_N(x)=0, \frac1N H_N(x)\in E \},
\end{align*}
where $i(\nabla^2 H_N(x))$ is the index (or number of negative eigenvalues) of the Hessian $\nabla^2 H_N(x)$.

 Throughout the paper we will consider the following extra assumptions on $X_N$.

\textbf{Assumption I} (Smoothness). The function $D$ is four times differentiable at $0$ and it satisfies
\begin{align*}
	0<|D^{(4)}(0)|<\8.
\end{align*}


\textbf{Assumption II} (Pinning). We have $$X_N (0) = 0.$$


These two assumptions are natural and necessary for our study on complexity; see \cite{AZ20} for a discussion. We state our main results in two separate batches. The first set counts the average of the {\it total} number of local minima and saddles of $H_N$. The second set counts the average number of local minima and saddles of $H_N$ with a given fixed critical value. Although the results of the first set can be essentially recovered from the second, formulas and proofs for the first set are much clearer thus we state them separately. We hope this organization provides a gentle introduction to the reader to appreciate the second set of results, where most of the novelty (and difficulty of the paper) resides.


The following condition is only needed when the critical value is not restricted, which simply provides the correct scaling for domains of the random fields.

\textbf{Assumption III} (Domain growth). Let $z_N$ be a standard $N$ dimensional Gaussian random variable. There exist $\Xi$ or $\Theta$ such that the sequence of sets $B_N$ satisfies
\begin{align}
\lim_{N\to \8} \frac1N \log \pz(  z_N \in  |\mu| B_N/\sqrt{D'(0)} ) &= -\Xi\le 0, &\mu\neq 0,\label{eq:bnasp1}\\
\lim_{N\to \8} \frac1N \log |B_N|& = \Theta, &\mu=0.\label{eq:bnasp2}
\end{align}

Our first main result counts the total number of critical points of a fixed index $k$.

\begin{theorem}\label{th:fixktotal} 
Assume Assumptions I, II, and III. Let $k\in\zz_+=\{0,1,2,...\}$. Then we have
\[
\lim_{N\to\8}\frac1N\log\ez \Crt_{N,k}(\rz,B_N)= \begin{cases}
\frac{\mu^2}{4D''(0)} -\log\frac{|\mu|}{\sqrt{-2D''(0)}}-\frac12-\Xi +I_k, &\mu\neq0,  \\
\log\sqrt{-2D''(0)}-\frac32-\frac12\log(2\pi)- \frac12\log[D'(0)]+\Theta,& \mu=0,
\end{cases}
\]
where the positive constant $I_k$ is given in \eqref{ikcase}.
\end{theorem}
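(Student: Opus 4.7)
My plan is to use the Kac--Rice formula and then analyze the expected absolute determinant of the Hessian under the additional index constraint. The Kac--Rice identity reads
\begin{align*}
\ez \Crt_{N,k}(\rz,B_N) = \int_{B_N} \ez\bigl[|\det \nabla^2 H_N(x)|\, \mathbf{1}_{\{i(\nabla^2 H_N(x))=k\}} \bigm| \nabla H_N(x)=0 \bigr] \, \phi_{\nabla H_N(x)}(0) \, \dd x,
\end{align*}
and since $X_N$ has isotropic increments pinned at $0$, the integrand depends on $x$ only through the radial variable $u=\|x\|/\sqrt N$. I would pass to spherical coordinates and work with a single one-dimensional integral in $u$. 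The Gaussian density $\phi_{\nabla H_N(x)}(0)$ is then explicit once the covariance of $\nabla X_N(x)$ is computed; it has the form $-2D'(u^2)\,I$ plus rank-one corrections from Assumption II, producing a leading factor of order $\exp(-N\mu^2 u^2/(-4 D'(u^2)))$ times a polynomial.

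The conditional law of $\nabla^2 H_N(x)$ given $\nabla H_N(x)=0$, derived in \cite{AZ20}, is a scaled GOE matrix with variance proportional to $-D''(0)$ shifted by $\mu I$, up to a finite-rank perturbation that does not affect exponential asymptotics. The main new input is therefore the evaluation of
\begin{align*}
\frac{1}{N}\log \ez\bigl[|\det(a\, G_N + b I)|\,\mathbf{1}_{\{i(aG_N+bI)=k\}}\bigr],
\end{align*}
where $a,b$ depend on $\mu$ and $D''(0)$ and $G_N$ is a GOE matrix. My plan is to use the joint eigenvalue density, split the spectrum into $k$ ``outlier'' eigenvalues forced to be negative and an $(N-k)$-dimensional bulk whose empirical measure converges to the semicircle law, integrate the bulk part against the semicircle (reproducing the determinant contribution used in \cite{AZ20} for the total complexity), and finally integrate the $k$ outliers against the Vandermonde interaction with the semicircular bulk to obtain the positive constant $I_k$ of \eqref{ikcase}.

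The final step is a Laplace method in $u$. For $\mu\neq 0$ the exponent in $u$ has a unique strictly concave maximum; substituting this optimizer and using Assumption III to account for the domain size yields the algebraic terms $\frac{\mu^2}{4D''(0)}-\log\frac{|\mu|}{\sqrt{-2D''(0)}}-\tfrac12-\Xi$, and the determinant correction $I_k$ from the previous step supplies the remaining term. For $\mu=0$ the exponent is constant in $u$, so Laplace is trivial; instead one uses $|B_N|=e^{N\Theta+o(N)}$ and evaluates the Gaussian density at $0$ with the constant covariance $D'(0) I$, which produces the $-\tfrac12\log[D'(0)]-\tfrac12\log(2\pi)$ factor together with the explicit numerical constants on the second line of the statement.

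The main obstacle will be the determinant expectation with the index constraint. The event $\{i(aG_N+bI)=k\}$ is exponentially rare whenever the shift $b/a$ lies outside the support of the semicircle, and even inside the support, fixing $k$ negative eigenvalues requires precise control of the $k$ bottom eigenvalues relative to the bulk. Producing the clean constant $I_k$ requires combining GOE large deviations for edge eigenvalues with a careful matching of subleading Vandermonde/Selberg constants, and this is the new analytical content beyond the no-index computation of \cite{AZ20}.
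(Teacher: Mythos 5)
Your skeleton (Kac--Rice, Hessian as a shifted GOE, eigenvalue large deviations) is the right one, but two steps are wrong or missing as stated. First, the treatment of the gradient density and the domain: by Lemma \ref{le:cov} the covariance of $\nabla H_N(x)$ is $D'(0)I_N$ for \emph{every} $x$ --- it is not $-2D'(u^2)I$ plus rank-one corrections --- so $p_{\nabla H_N(x)}(0)=(2\pi D'(0))^{-N/2}e^{-\mu^2\|x\|^2/(2D'(0))}$, and for $\mu\neq0$ the $x$-integral is handled by the exact identity $\int_{B_N}p_{\nabla H_N(x)}(0)\,\dd x=|\mu|^{-N}\,\pz\bigl(z_N\in|\mu|B_N/\sqrt{D'(0)}\bigr)$, which is precisely how Assumption III and the term $-\Xi$ enter. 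Your plan to pass to spherical coordinates and run a Laplace method in the radial variable assumes $B_N$ is rotationally invariant and that only its size matters; Assumption III allows arbitrary Borel sets and, for $\mu\neq 0$, $\Xi$ is a Gaussian measure of $B_N$, not a volume or radial profile, so that reduction does not go through in the generality of the theorem (and is unnecessary, since the identity above is exact). Also, the shift of the GOE is the random scalar matrix $\bigl(\sqrt{-2D''(0)/N}\,Z-\mu\bigr)I_N$, not a finite-rank perturbation, although it is indeed negligible on the exponential scale.

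Second, the computation of $I_k$ --- the genuinely new content --- is left as a plan at its hardest point. The paper does not redo an outlier-versus-bulk Vandermonde/Selberg analysis: it applies Lemma 3.3 of \cite{ABC13}, which converts $\ez[|\det \nabla^2 H_N|\indi\{i(\nabla^2 H_N)=k\}]$ for the shifted GOE \emph{exactly} into $\frac{C_N}{N+1}\,\ez_{\GOE(N+1)}e^{(N+1)\phi(\la_{k+1})}$ with $\phi(x)=-\tfrac12x^2-\mu x/\sqrt{-D''(0)}$; then Varadhan's lemma together with the LDP \eqref{eq:jk1x} for the $(k+1)$st smallest GOE eigenvalue (rate $J_{k+1}=(k+1)J_1$) gives $I_k=\sup_{x}[\phi(x)-J_{k+1}(x)]$, and Lemma \ref{le:repl} disposes of the $\sqrt{N(N+1)}$ versus $N+1$ discrepancy. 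Your proposed direct splitting into $k$ forced eigenvalues plus a semicircular bulk could in principle be carried out, but it amounts to re-deriving exactly this single-eigenvalue large deviation estimate with the determinant weight, including the case distinctions $\mu\le\sqrt{-2D''(0)}$ versus $\mu>\sqrt{-2D''(0)}$ and $k=0$ versus $k\ge1$ in \eqref{ikcase}; as written, the mechanism that produces the specific constant $I_k$ is not supplied, so the proposal has a genuine gap at its central step.
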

\begin{remark}\label{re:25}
We will see in the proof that the sequence of  constants $(I_k)_{k\geq0}$ is strictly decreasing and $I_k\ge1$ when $\mu>\sqrt{-2 D''(0)}$. When $k=0$ and $\mu\neq0$, we have
\[
I_0=\frac{\mu^2}{-4D''(0)}+\frac12+\log\frac{|\mu|}{\sqrt{-2D''(0)}},
\]
and it follows that $\lim_{N\to\8}\frac1N\log\ez \Crt_{N,0}(\rz,B_N)=-\Xi$, which coincides with the complexity of total number of critical points for $\mu>\sqrt{-2D''(0)}$ obtained in \cite{AZ20}*{Theorem 1.1}. When $\mu\le\sqrt{-2 D''(0)}$, $I_k$ does not depend on $k$, which also includes the case $\mu=0$.

The cases $k\ge1$ and $k=0$ do not have a unified expression for $I_k$.  The reason for this unusual phenomenon will be more clear when we recover this theorem from the general Theorems \ref{th:criticalfix1} and \ref{th:fixk}.
\end{remark}

\begin{remark}
By symmetry, one can get the complexity of critical points with index $N-k$ for fixed $k\in\zz_+$.
\end{remark}

Next, we consider the total number of critical points of diverging index $k=k_N$. Let $\ga\in(0,1)$ and $k=k_N$ a sequence of integers such that
\[
\lim_{N\to\8} \frac{k_N}N =\ga.
\]
Let $s_\ga$ be the $\ga$th quantile of the semicircle law, i.e.
\begin{align}\label{eq:sgasc}
\frac1\pi\int_{-\sqrt{2}}^{s_\ga} \sqrt{2-x^2} \dd x = \ga.
\end{align}

\begin{theorem}\label{th:divtotal}
Assume Assumptions I, II, and III. Let $k_N,\ga$ and $s_\ga$ be as above. Then we have
\begin{align*}
\lim_{N\to\8}&\frac1N\log\ez \Crt_{N,k_N}(\rz,B_N)\\
&= \begin{cases}
\frac{\mu^2}{4D''(0)} -\log\frac{|\mu|}{\sqrt{-2D''(0)}}-\frac12 -\frac12 s_\ga^2-\frac{\mu s_\ga}{\sqrt{-D''(0)}} -\Xi, &\mu\neq0,\\
\log\sqrt{-2D''(0)}-\frac12-\frac12\log(2\pi)- \frac12\log[D'(0)]  -\frac12 s_\ga^2 +\Theta,& \mu=0.
\end{cases}
\end{align*}
\end{theorem}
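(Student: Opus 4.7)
My plan is to deduce Theorem \ref{th:divtotal} from the general density formula furnished by Theorem \ref{th:fixk} (the diverging-index analogue of Theorem \ref{th:criticalfix1}). That formula, obtained by a Kac-Rice computation adapted to the locally isotropic setting of the companion paper \cite{AZ20}, expresses $\ez\Crt_{N,k_N}(\dd u, B_N)$ as an explicit integral whose integrand factors into (i) a Gaussian density in $(\nabla H_N(x), H_N(x))$ evaluated at $(0,Nu)$, and (ii) a conditional expectation of the form
\[
\ez\big[|\det(\si_N G_N - t I_N)|\,\indi_{\{i(\si_N G_N - t I_N)=k_N\}}\big],
\]
where $\si_N^2\sim -2D''(0)$, $G_N$ is $\GOE(N)$ normalized so that its limiting spectrum sits on $[-\sqrt 2,\sqrt 2]$, and the shift $t=t(u,\mu)$ is an affine function of the critical value $u$ and the parameter $\mu$ (after integrating out the auxiliary LRC scalar). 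Setting $E=\rz$ reduces Theorem~\ref{th:divtotal} to computing the exponential rate of $\int_\rz \ez\Crt_{N,k_N}(\dd u, B_N)$, which I would extract by Laplace's method in $u$.

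The exponential rate of the constrained GOE expectation is given by the large-deviation asymptotic: for $k_N/N\to\ga$ and $t$ in a neighbourhood of $s_\ga$,
\[
\lim_{N\to\8}\frac1N \log \ez\big[|\det(G_N - t I_N)|\,\indi_{\{i=k_N\}}\big] = \Omega(t,\ga),
\]
where $\Omega(t,\ga)$ equals the logarithmic potential of the semicircle at $t$ minus the quadratic large-deviation cost forcing the empirical spectral mass below $t$ to equal $\ga$; the latter cost vanishes precisely at $t=s_\ga$ and is strictly positive otherwise. Combining $\Omega$ with the Gaussian density factor in $u$ yields, after rescaling, a smooth concave functional $\Phi(u)$, and Laplace's method gives $\sup_u \Phi(u)$ as the rate in the theorem.

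Solving this one-dimensional variational problem is where $s_\ga$ enters explicitly. Differentiating $\Phi$ in $u$, the first-order condition pins $t(u,\mu)$ to $s_\ga$ at the maximizer (so that the large-deviation cost in $\Omega$ is minimized to zero). Substituting back, the value of the semicircle logarithmic potential at $s_\ga$ together with the quadratic Gaussian weight produces the $-\tfrac12 s_\ga^2$ contribution, the cross term with $\mu$ yields $-\mu s_\ga/\sqrt{-D''(0)}$, and the remaining Gaussian part contributes $\mu^2/(4 D''(0)) - \log\tfrac{|\mu|}{\sqrt{-2D''(0)}} - \tfrac12$, exactly as in Theorem~\ref{th:fixktotal} but with $I_k$ replaced by $-\tfrac12 s_\ga^2 - \mu s_\ga/\sqrt{-D''(0)}$. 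The $-\Xi$ term arises from the asymptotic volume control of $B_N$ via \eqref{eq:bnasp1} when $\mu\neq 0$; when $\mu=0$, the Gaussian factor in $u$ simplifies and $|B_N|$ enters directly through $\Theta$ from \eqref{eq:bnasp2}, giving the second case of the theorem.

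The main technical obstacle is the sharp large-deviation analysis of $\ez[|\det(G_N - t I_N)|\,\indi_{\{i=k_N\}}]$ together with the uniformity in $t$ required for Laplace's method. Both the logarithmic singularity at $t$ inside the semicircle support and the discontinuous index indicator must be controlled at the exponential scale, and one needs uniform estimates in $t$ over a compact interval containing $s_\ga$ in order to justify the interchange of limit with supremum. This is the crux of the argument; once it is in place, identifying the maximizer and substituting back to recover the explicit constants is a routine calculus computation.
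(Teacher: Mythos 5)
Your route does not match the paper's, and as written it has two genuine gaps. First, you propose to deduce the theorem from the $u$-resolved, diverging-index result (Theorem \ref{th:kdiv}) by integrating over the critical value with Laplace's method. But Theorem \ref{th:divtotal} is stated under Assumptions I--III for an arbitrary domain sequence $B_N$ satisfying \eqref{eq:bnasp1}--\eqref{eq:bnasp2}, whereas Theorems \ref{th:criticalfix1}, \ref{th:fixk}, \ref{th:kdiv} require the extra Assumption IV, shell domains $B_N(R_1,R_2)$, and $|\mu|+\tfrac1{R_2}>0$. As Example \ref{ex:4} makes explicit, the general theorem only recovers Theorem \ref{th:divtotal} when $B_N$ is a shell; it cannot produce the $-\Xi$ term for general $B_N$, nor can it dispense with Assumption IV. Second, your key input is misstated: conditionally on the critical value $\tfrac1N H_N(x)=u$, the Hessian is \emph{not} a scalar-shifted GOE $\si_N G_N-t(u,\mu)I_N$ (the Hessian is correlated with $H_N$), but the bordered matrix $G$ of \eqref{eq:ayg}, whose distinguished row/column is coupled to the shift through $m_1,m_2$. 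Obtaining the exponential asymptotics of $\ez[|\det G|\,\indi\{i(G)=k_N\}]$ is exactly the hard content of Sections \ref{se:5}--\ref{se:div} (Lazutkin's lemma, the Schur complement $\zeta$, the singular quadratic form $Q(y)$, the identity \eqref{eq:elak} plus convexity for diverging index); it is not an available black box, so the step you defer as "the crux" is the entire difficulty, and your sketch of its rate function (a finite "quadratic cost" away from $s_\ga$) is also off, since the index constraint deviates at speed $N^2$.

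The paper's actual proof avoids all of this. With $E=\rz$ there is no conditioning on the critical value, and since $\nabla^2 H_N$ is independent of $\nabla H_N$ and equals in law $\sqrt{-4D''(0)}\,\GOE_N-(\sqrt{-2D''(0)/N}\,Z-\mu)I_N$ with $Z$ independent, Lemma 3.3 of \cite{ABC13} turns the Kac--Rice integral into the exact identity \eqref{eq:ecnk8}, an expectation of $e^{(N+1)\phi(\la_{k+1})}$ over the $(k_N+1)$st smallest eigenvalue of $\GOE(N+1)$ with $\phi(x)=-\tfrac12x^2-\mu x/\sqrt{-D''(0)}$ bounded above. The LDP for the empirical spectral measure \eqref{eq:conineq} gives $\pz(|\la_{k_N+1}-s_\ga|>\eps)\le e^{-cN^2}$, so a two-sided sandwich and $\eps\to0+$ yield the rate $\phi(s_\ga)$, which supplies exactly the $-\tfrac12 s_\ga^2-\mu s_\ga/\sqrt{-D''(0)}$ terms; Stirling's formula for $C_N$ in \eqref{eq:stir1}, Assumption III, and Lemma \ref{le:repl} complete the constants, including $-\Xi$ and $\Theta$. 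If you want to salvage your approach, you would have to either restrict to shells and add Assumption IV (proving a weaker statement), or prove independently the constrained-determinant asymptotics for the bordered matrix $G$, which is precisely what the paper's direct argument renders unnecessary here.
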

\begin{remark}
    In particular, the complexity of total number of critical points obtained in \cite{AZ20}*{Theorem 1.1} when $|\mu|\le \sqrt{-2D''(0)}$ can be recovered by considering the supremum over all $\ga\in(0,1)$.

\end{remark}


In the following, we refine previous results by imposing restrictions on the location of the critical values of $H_N$. More precisely, we concern the number of minima/saddles with critical values in an open set $E\subset \mathbb R$ and confined to a shell $B_{N}(R_{1},R_{2})=\{ x\in \mathbb R^{N}: R_{1}< \frac{\| x\|}{\sqrt N} < R_{2} \}$. The shell is a natural choice, as the isotropy assumption implies rotational invariance. To emphasize the dependence on $R_1$ and $R_2$, we also write
\begin{align}
  \Crt_{N}(E, (R_1,R_2)) & =\Crt_{N}(E, B_{N}(R_1,R_2)), \notag\\
  \Crt_{N,k}(E, (R_1,R_2)) & =\Crt_{N,k}(E, B_{N}(R_1,R_2)). \label{eq:crter12}
\end{align}
We assume Assumptions I, II, and the following technical assumption for the general theorems:

\textbf{Assumption IV}.
For any $x\in \rz^N\setminus\{0\}$, we have
\begin{align}
  -2D''(0)&> \left(\frac{\al\|x\|^2}{N}+\bt\right)\bt,\label{eq:asmp1} \\
  -4D''(0)&> \left(\frac{\al\|x\|^2}{N}+\bt\right)\frac{\al \|x\|^2}N,\label{eq:asmp2}
\end{align}
where $\al$ and $\bt$ are defined in \pref{eq:albt0}.

This assumption is rather mild, and is satisfied by e.g.~the so called Thorin--Bernstein functions; see \cite{AZ20}*{Section 3} and \cite{SSV} for more details. The next result provides the asymptotic behavior of the number of local minima.

\begin{theorem}\label{th:criticalfix1}
  Let $0\le R_1<R _2\le \8$ and $E$ be an open set of $\rz$.  	Assume Assumptions I, II, IV, and $|\mu|+\frac1{R_2} > 0$.  Then
  \begin{align*}
   \lim_{N\to\8}\frac1N\log \ez\Crt_{N,0}(E,(R_1,R_2))= & \frac12\log[-4D''(0)]-\frac12\log D'(0)+\frac12 \\
      &+\sup_{(\rho,u,y)\in F} [\psi_*(\rho,u,y)- \ix^-(\rho,u,y)],
 \end{align*}
 where $F=\{(\rho,u,y): \rho\in (R_1,  R_2), u\in  E, y\le -\sqrt2\}$ and the functions $\psi_*$ and $\ix^-$ are defined in \eqref{eq:psifunction} and \eqref{eq:ixdef} respectively.
\end{theorem}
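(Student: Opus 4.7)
The plan is to apply the Kac--Rice formula and then analyze the resulting Gaussian spectral integral by large deviations; the ingredient that distinguishes this proof from \cite{AZ20} is the index constraint, which invokes the Ben Arous--Dembo--Guionnet lower-tail LDP for the smallest eigenvalue of the GOE.

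First, by Kac--Rice (whose validity for this model is established in \cite{AZ20}) we have
\begin{align*}
\ez\Crt_{N,0}(E,(R_1,R_2))=\int_{B_N(R_1,R_2)} \ez\bigl[|\det \nabla^2 H_N(x)|\,\indi_{\{H_N(x)/N\in E\}}\,\indi_{\{\nabla^2 H_N(x)>0\}}\,\big|\,\nabla H_N(x)=0\bigr]\,\varphi_{\nabla H_N(x)}(0)\,\dd x,
\end{align*}
and the isotropy of $X_N$ makes the conditional expectation depend only on $\|x\|$, collapsing the outer integral to a one-dimensional integral over $\rho=\|x\|/\sqrt{N}\in(R_1,R_2)$ with a volume factor proportional to $\rho^{N-1}$.

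Second, using the covariance computations in \cite{AZ20}, I would identify the conditional joint law of $(H_N(x),\nabla^2 H_N(x))$ given $\nabla H_N(x)=0$ and $H_N(x)/N=u$. Up to scale factors the Hessian takes the form $\sqrt{-4D''(0)/N}\,M_N-T(\rho,u,\mu)I-S(\rho,u,\mu)P_{\widehat x}$, with $M_N$ a standard GOE, $P_{\widehat x}$ the orthogonal projection onto $x/\|x\|$, and $T,S$ deterministic functions of $\rho,u,\mu$ and the constants $\al,\bt$ from Assumption IV (which also guarantees that the relevant conditional variances are strictly positive).

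The central analytic step is to evaluate, to exponential order,
\begin{align*}
\ez\Bigl[\Bigl|\det\bigl(\sqrt{-4D''(0)/N}\,M_N-TI-SP_{\widehat x}\bigr)\Bigr|\,\indi_{\{\mathrm{all\ eigenvalues\ positive}\}}\Bigr].
\end{align*}
The rank-one term $SP_{\widehat x}$ produces at most one outlier and affects the determinant subexponentially, so the problem reduces to finding the exponential rate of
\begin{align*}
\ez\Bigl[\prod_{i=1}^N|\la_i(M_N)-t|\,\indi_{\{\lambda_{\min}(M_N)\ge y\}}\Bigr],
\end{align*}
where $t$ is an affine function of the parameters and $y$ is the rescaled threshold coming from positivity of the Hessian (with the rank-one displacement absorbed in). The bulk factor contributes $\int\log|x-t|\,\semi(\dd x)$ via the LDP for the empirical spectral distribution of $M_N$, while the indicator contributes the Ben Arous--Dembo--Guionnet lower-tail rate for the smallest GOE eigenvalue, which vanishes on $y\le-\sqrt 2$. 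The variational parameter $y\le-\sqrt 2$ in the statement arises naturally from this spectral edge analysis.

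Finally, combining the Gaussian prefactor $\varphi_{\nabla H_N(x)}(0)$, the conditional density of $H_N(x)/N$ at $u$, the volume factor, and the rescaling constants producing the additive piece $\tfrac12\log[-4D''(0)]-\tfrac12\log D'(0)+\tfrac12$, the Kac--Rice integrand has exponential rate $N[\psi_*(\rho,u,y)-\ix^-(\rho,u,y)]+o(N)$ uniformly on compact subsets of $F$, and Laplace's principle yields the claimed supremum formula. The main obstacle is this third step: ensuring the spectral large-deviation estimates are uniform in $(\rho,u,y)$, especially at the soft-edge transition $y=-\sqrt 2$ and, when $R_2=\infty$, at $\rho\to\infty$. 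The continuity of $\ix^-$ at $y=-\sqrt 2$ and the decay of the integrand in $\rho$ provided by the hypothesis $|\mu|+1/R_2>0$ should suffice to extract the supremum over the open set $F$.
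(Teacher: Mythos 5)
Your proposal has a genuine gap at its core: the structural claim about the conditional Hessian is wrong for this model, and as a consequence your argument cannot produce the correction term $\ix^-$ that appears in the statement. Conditionally on $\nabla H_N(x)=0$ and the (independent) critical value, the Hessian is \emph{not} of the form $\sqrt{-4D''(0)/N}\,M_N-TI-SP_{\widehat x}$ with deterministic $T,S$; by \pref{eq:ayg} it is a block matrix $G$ whose $(1,1)$ entry $z_1'$, whose shift $z_3'$ of the $(N-1)\times(N-1)$ GOE block, and whose off-diagonal column $\xi$ are all \emph{random} Gaussians (with $z_1',z_3'$ correlated and their means depending on $u,\rho$). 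Treating the rank-one part as a deterministic perturbation that ``affects the determinant subexponentially'' loses exactly the mechanism that matters: the index constraint $i(G)=0$ is not equivalent to a lower bound on the smallest eigenvalue of the GOE block. Via Lazutkin's lemma (Remark \ref{rmk:lazutkin}), $\{i(G)=0\}=\{i(G_{**})=0,\ \zeta>0\}$ where $\zeta=z_1'-\xi^{\sfT}G_{**}^{-1}\xi$ is the Schur complement, and the event $\zeta>0$ carries an exponential cost governed by the singular Gaussian quadratic form $Q(y)=\frac{\sqrt{-D''(0)}}{N}\sum_i Z_i^2/(\la_i-y)$, which has infinite mean and does not concentrate; the entire Section \ref{se:5} of the paper (G\"artner--Ellis for $\sfm(y)-\frac1N\sum f_{y,\de}(\la_i)Z_i^2$, uniform tail bounds, etc.) exists to quantify this cost, and its optimized value is precisely $\ix^-(\rho,u,y)$.

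Concretely, the analysis you describe --- bulk LDP for the empirical spectral measure plus the BDG left-tail rate for $\la_{\min}$, which vanishes on $y\le-\sqrt2$ --- would yield the rate $\sup_F\psi_*(\rho,u,y)$ (the unconstrained-index complexity restricted to $y\le-\sqrt2$), and your final sentence simply asserts rather than derives the rate $\psi_*-\ix^-$. This is not a harmless shortcut: Remark \ref{re:63} shows that when $E$ forces large critical values the maximizer sits at $y^0=-\sqrt2$ with $\ix^-(\rho^0,u^0,y^0)>0$, so $\sup_F[\psi_*-\ix^-]<\sup_F\psi_*$ and your proposed answer would be strictly too large. (It is only in the unrestricted case $E=\rz$, as in Example \ref{ex:3}, that $\ix^-$ can be argued away by a first-order condition.) To repair the proof you would need the correct block representation of the conditional Hessian, the Schur-complement characterization of the index, and large deviation upper and lower bounds for the left tail of $Q(y)$ uniform enough for the covering argument of Propositions \ref{pr:ubk0} and \ref{pr:lbk0}, including the boundary behavior at $y=-\sqrt2$.
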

The condition $|\mu|+\frac1{R_2} > 0$ merely says $R_2<\8$ if $\mu=0$, which is necessary to get non-trivial asymptotics as we saw in Assumption III. In Example \ref{ex:3} at the end of \pref{se:k=0}, we show how to recover the $k=0$ case of \pref{th:fixktotal} from \pref{th:criticalfix1}. The next result establishes the complexity of saddles with fixed index.

\begin{theorem}\label{th:fixk}
  Let $0\le R_1<R _2\le \8$ and $E$ be an open set of $\rz$.  	Assume Assumptions I, II, IV, and $|\mu|+\frac1{R_2} > 0$.   Then for any fixed $k\in\nz$,
  \begin{align*}
   &\lim_{N\to\8}\frac1N\log \ez\Crt_{N,k}(E,(R_1,R_2))=   \frac12\log[-4D''(0)]-\frac12\log D'(0)+\frac12 \\
      &+\max\Big\{\sup_{(\rho,u,y)\in F} [\psi_*(\rho,u,y)-k J_1(y)],  ~  \sup_{(\rho,u,y)\in F} [\psi_*(\rho,u,y)- \ix^+(\rho,u,y)- (k-1) J_1(y)] \Big\},
 \end{align*}
 where $F=\{(\rho,u,y): \rho\in (R_1,  R_2), u\in  E, y\le -\sqrt2\}$ and the functions $\psi_*$, $\ix^+$, and $J_1$ are given by \eqref{eq:psifunction}, \eqref{eq:ix+} and \eqref{eq:rf1ev}, respectively.
\end{theorem}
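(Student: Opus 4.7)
The plan is to begin from the Kac-Rice formula, as in the proof of \pref{th:criticalfix1}:
\[
\ez \Crt_{N,k}(E,(R_1,R_2)) = \int_{B_N(R_1,R_2)} \ez\bigl[|\det \nabla^2 H_N(x)| \, \indi_{\{i(\nabla^2 H_N(x))=k\}}\, \indi_{\{H_N(x)/N \in E\}} \bigm| \nabla H_N(x)=0\bigr] \, p_{\nabla H_N(x)}(0) \, dx.
\]
By rotational invariance of $X_N$, the integrand depends on $x$ only through $\rho = \|x\|/\sqrt{N}$, reducing to a one-dimensional integral over $\rho$. The conditional joint law of $(H_N(x), \nabla^2 H_N(x))$ given $\nabla H_N(x)=0$ is Gaussian with parameters computable from the derivatives of $D$, and (after normalization by $\sqrt{N}$) the Hessian equals $\sqrt{-2D''(0)}\, W_N$ plus a deterministic shift depending on $\rho$, $u = H_N(x)/N$, and $\mu$, where $W_N$ is a GOE matrix together with a rank-one perturbation along the radial direction $x/\|x\|$. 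This reduction is identical to the one used for \pref{th:criticalfix1} and the companion paper \cite{AZ20}; it already produces $\psi_*(\rho, u, y)$ as the exponential contribution of the bulk spectral measure together with the Gaussian Kac-Rice factors, with $y \le -\sqrt{2}$ the ``effective left-edge'' parameter over which we ultimately optimize.

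The new content for fixed index $k \ge 1$ is the asymptotic analysis of the spectral event $\{i(\nabla^2 H_N(x)) = k\}$ jointly with $|\det \nabla^2 H_N(x)|$. After rescaling, this becomes the event that the rank-one-perturbed GOE matrix has exactly $k$ eigenvalues below $y$. Using large deviations for the extreme eigenvalues of GOE (single-eigenvalue rate $J_1$) together with a BBP-type analysis of the rank-one perturbation, I would split into two complementary scenarios that exhaust the leading-order configurations of $k$ negative eigenvalues:

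\begin{enumerate}
\item \textbf{Subcritical regime}: the deterministic perturbation is not strong enough to produce a natural outlier, so all $k$ negative eigenvalues must arise from $k$ independent large deviations of single bulk eigenvalues to the location $y$, at total cost $k J_1(y)$.
\item \textbf{Supercritical regime}: the rank-one perturbation pushes one eigenvalue out of the bulk to location $y$ at BBP cost $\ix^+(\rho,u,y)$, and the remaining $k-1$ negative eigenvalues are manufactured by individual deviations at cost $(k-1)J_1(y)$.
\end{enumerate}

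\noindent Combining these with $\psi_*$ and applying Varadhan's lemma to exchange the $(\rho,u,y)$-integration with the exponential rate, Laplace's method selects the maximum of the two contributions and yields the announced formula.

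The main technical obstacle will be to prove matching exponential upper and lower bounds for the conditional expectation of $|\det \nabla^2 H_N(x)|\,\indi_{\{i=k\}}$, uniformly in $(\rho,u)$ on compact subsets of $F$. This requires a joint large deviation principle for the $k$-th smallest eigenvalue of a rank-one perturbation of GOE which carefully accounts for the interaction between the ``forced'' individual outliers and the BBP spike; it is precisely this interaction which dictates the dichotomy in the maximum. A secondary difficulty is to verify that the optimization is concentrated on $y \le -\sqrt{2}$ and that the boundary case $y = -\sqrt{2}$ contributes at most subexponentially, so that the rate functions $J_1$ and $\ix^+$ are well defined on the admissible set $F$. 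Unlike the $k=0$ case treated in \pref{th:criticalfix1}, which required controlling the entire left edge of the spectrum and produced the single full-spectrum rate $\ix^-$, the fixed $k\ge 1$ setting concerns only a finite number of outliers and is what produces the additive $(k-1) J_1(y)$ or $k J_1(y)$ contributions in the formula.
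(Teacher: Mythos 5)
Your proposal reproduces the correct reduction (Kac--Rice, rotational invariance, the conditional Hessian as a shifted GOE with a distinguished radial direction) and it correctly guesses the two-term structure of the answer, but the step that actually carries the theorem is missing. You defer everything to ``a joint large deviation principle for the $k$-th smallest eigenvalue of a rank-one perturbation of GOE which carefully accounts for the interaction between the forced outliers and the BBP spike.'' No such result is available off the shelf at the required level of uniformity in $(\rho,u,y)$, and the paper deliberately does not follow the eigenvalue-LDP/BBP route: the index constraint is handled exactly, via Lazutkin's signature identity, which gives the decomposition $\{i(G)=k\}=\{i(G_{**})=k,\ \zeta>0\}\cup\{i(G_{**})=k-1,\ \zeta<0\}$ with $\zeta=z_1'-\xi^{\sfT}G_{**}^{-1}\xi$ the Schur complement. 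This converts the problem into joint large deviations of finitely many extreme eigenvalues of $\GOE_{N-1}$ and of the singular Gaussian quadratic form $Q(y)=\frac{\sqrt{-D''(0)}}{N}\sum_i Z_i^2/(\la_i-y)$. That object is the real difficulty: $\ez Q(y)=\8$, its limiting log-moment generating function is not steep, and the deviations one needs lie at non-exposed points, so Cram\'er/G\"artner--Ellis cannot be invoked directly; Section 3 of the paper (the $\Lambda^*$ analysis, the left- and right-tail propositions, the uniform-in-$y$ bounds) exists precisely to fill this hole, and $\ix^+(\rho,u,y)$ emerges from optimizing $\Lambda^*$ against the Gaussian tail of $z_1'$, not from a standard BBP threshold computation.

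Two further points where your heuristic would mislead you if taken literally. First, the distinguished entry $z_1'$ is correlated with the random bulk shift $z_3'$ and has a mean $\sfa(\rho,u,y)$ of order one whose sign varies over $F$; it is the sign of $\sfa$, not a sub/supercritical BBP dichotomy, that governs which tail of the conditional Gaussian must be paid for and when the constraint $\zeta>0$ (resp.\ $\zeta<0$) can be dropped in the upper (resp.\ matched in the lower) bound -- indeed both terms in the maximum are genuinely needed, and which one wins depends on $E,R_1,R_2$. Second, even the ``easy'' $kJ_1(y)$ upper bound is not free: one must bound $\ez[|\det G|\indi\{i(G_{**})=k\}]$ although the Schur factor $|z_1'-Q|$ has infinite conditional mean, which the paper handles by expanding the determinant into the two pieces $I_1^k,I_2^k$ and using $\ez Z_i^2=1$. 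In short, you have restated the answer and correctly located the obstacle, but closing it requires developing the singular-quadratic-form large deviation machinery (or an equivalent substitute), which is the actual content of the proof.
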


In Example \ref{ex:31} at the end of \pref{se:kge1}, we provide details on how to recover the $k\ge1$ cases of Theorem \ref{th:fixktotal} from \pref{th:fixk}, which amounts to solving the involved optimization problem in \pref{th:fixk}. Theorems \ref{th:criticalfix1} and \ref{th:fixk} also show the evident difference between the cases $k=0$ and $k\ge1$, which explains the inconsistent formulas for $I_k$ in \pref{th:fixktotal}.  Our last result concerns the complexity of saddles with diverging index. 

\begin{theorem}\label{th:kdiv}
  Let $0\le R_1<R _2\le \8$ and $E$ be an open set of $\rz$.  	Assume Assumptions I, II, IV, $|\mu|+\frac1{R_2} > 0$, and $\lim_{N\to\8}\frac{k_N}{N}=\ga\in(0,1)$. Then
  \begin{align*}
   &\lim_{N\to\8}\frac1N\log \ez\Crt_{N,k_N}(E,(R_1,R_2))\\
   &=    \frac12\log[-4D''(0)]-\frac12\log D'(0)+\frac12 + \sup_{R_1<\rho<R_2, u\in  E  }\psi_*(\rho,u,s_\ga),
 \end{align*}
 where the function $\psi_*$ is given in \pref{eq:psifunction}.
\end{theorem}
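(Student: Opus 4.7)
The plan is to follow the same Kac--Rice pipeline developed for \pref{th:criticalfix1} and \pref{th:fixk}, with the crucial modification being how the diverging-index event is handled at the spectral level. First, by the Kac--Rice formula,
\begin{align*}
\ez\Crt_{N,k_N}(E,(R_1,R_2))=\int_{B_N(R_1,R_2)} &\ez\bigl[|\det\nabla^2 H_N(x)|\,\mathbf 1_{\{H_N(x)/N\in E,\ i(\nabla^2 H_N(x))=k_N\}}\bigm| \nabla H_N(x)=0\bigr]\\
&\times p_{\nabla H_N(x)}(0)\,\dd x.
\end{align*}
The isotropy of $X_N$ collapses the integration over the shell $B_N(R_1,R_2)$ to a one-dimensional integration in $\rho=\|x\|/\sqrt{N}\in(R_1,R_2)$ with Jacobian $\rho^{N-1}$, and the further conditioning on $H_N(x)/N=u$ yields the same conditional Hessian representation as in \pref{th:fixk}: a rescaled GOE matrix shifted by a deterministic multiple of the identity, with shift depending explicitly on $(\rho,u,\mu)$. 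Up to the dimension-independent constants $\tfrac12\log[-4D''(0)]-\tfrac12\log D'(0)+\tfrac12$, the integrand will ultimately take the form $\exp[N\psi_*(\rho,u,y)+o(N)]$ where $y$ parameterizes the location of the relevant spectral edge/quantile.

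The key step is to translate the constraint $i(\nabla^2 H_N(x))=k_N$ into a statement about this shifted GOE. It is equivalent to asking that exactly $k_N$ eigenvalues lie below $0$, that is, that the $k_N$-th and $(k_N{+}1)$-th eigenvalues of the unshifted GOE straddle a deterministic threshold $y=y(\rho,u,\mu)$. When $k_N/N\to\ga\in(0,1)$, Wigner's semicircle law together with eigenvalue rigidity forces this $k_N$-th GOE eigenvalue to concentrate at $s_\ga$, the $\ga$-th quantile of $\semi$ defined in \eqref{eq:sgasc}. In sharp contrast with \pref{th:fixk}, where the bottom $k$ eigenvalues must be pushed outside the bulk at Ben~Arous--Guionnet cost $kJ_1(y)$ (or $\ix^{\pm}$ for the very smallest), here the constrained eigenvalue sits in the bulk at a typical location: the $\ga N$ eigenvalues below zero arrange themselves according to the natural semicircle density, so the associated empirical-measure large-deviation rate vanishes. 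Consequently only the value $y=s_\ga$ survives, and the whole $\ix^{\pm}$, $J_1$ machinery drops out. Substituting $y=s_\ga$ into $\psi_*$ and applying Laplace's method / Varadhan's lemma on the open set $(R_1,R_2)\times E$ produces the claimed supremum; openness of $E$ gives the matching lower bound and integrability/boundedness of $\psi_*$, inherited from the analysis of \pref{th:fixk}, yields the upper bound.

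The main technical obstacle is making the spectral reduction of the second step quantitative and uniform. Concretely, one needs a moderate-deviation estimate for the $k_N$-th eigenvalue of the conditional (shifted) GOE that is uniform in $(\rho,u)$ over compact subsets of $(R_1,R_2)\times E$, together with a matching exponential upper bound showing that any macroscopic deviation of the threshold $y$ from $s_\ga$ forces the empirical measure away from the semicircle and therefore costs a strictly positive rate at the scale $N$. This is precisely the qualitative jump from \pref{th:fixk} to \pref{th:kdiv}: the eigenvalue location is no longer a free variational parameter but a fixed quantile dictated by $\ga$, and one must carefully couple the concentration of the index event with the concentration of the log-determinant $\frac1N\log|\det \nabla^2 H_N(x)|\to\int\log|\la|\,\dd\mu_{s_\ga}(\la)$ to ensure that the indicator and the determinant contribute coherently to the same saddle point $y=s_\ga$.
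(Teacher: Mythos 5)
There is a genuine gap, and it sits exactly where the paper's Section~\ref{se:div} does its real work. Your spectral reduction rests on the claim that, after conditioning on $\frac1N H_N(x)=u$, the Hessian is ``a rescaled GOE matrix shifted by a deterministic multiple of the identity,'' so that $i(\nabla^2H_N)=k_N$ becomes the statement that the $k_N$-th and $(k_N{+}1)$-th GOE eigenvalues straddle a deterministic threshold $y(\rho,u,\mu)$. That is not the structure here: by \eqref{eq:ayg} the conditional Hessian is the \emph{bordered} matrix $G$, a shifted GOE minor $G_{**}$ (with random Gaussian shift $z_3'$) glued to a distinguished row/column containing $z_1'$ and the Gaussian vector $\xi$. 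The index constraint therefore does not reduce to an eigenvalue-quantile event for a shifted GOE; one must go through the interlacing/Lazutkin decomposition \eqref{eq:kdcmp}, i.e.\ $\{i(G)=k\}=\{i(G_{**})=k,\zeta>0\}\cup\{i(G_{**})=k-1,\zeta<0\}$ with $\zeta=z_1'-\xi^\sfT G_{**}^{-1}\xi$. The upper bound can indeed be run by dropping the sign of $\zeta$ and using the LDP of the empirical measure to pin $z_3'$ near $s_\ga$ (this is Proposition~\ref{pr:divup}), but your argument never addresses $\zeta$ at all.

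The omission is fatal for the lower bound, which you dispatch with ``openness of $E$ gives the matching lower bound.'' The obstruction is that the quadratic form $Q(z_3')=\xi^\sfT G_{**}^{-1}\xi\stackrel{d}{=}\frac{\sqrt{-D''(0)}}N\sum_i Z_i^2/((\tfrac{N-1}{N})^{1/2}\la_i-z_3')$ now has its singularity at $z_3'\approx s_\ga$ \emph{inside} the support of the semicircle law: $\ez[Q]$ is not even finite there, there is no exponential-scale concentration, and the large-deviation machinery of Section~\ref{se:5} (which requires $y<-\sqrt2$) is unavailable — this is precisely the ``qualitative jump'' the introduction flags. The paper's proof resolves it by writing the contribution of both terms of \eqref{eq:kdcmp} as $f_N(\hat\sfa_N(\rho,u,\la_{k+1}^N))+f_N(-\hat\sfa_N(\rho,u,\la_k^N))$ with $f_N$ convex, applying Jensen's inequality, and then computing the expectations via the integration-by-parts identity \eqref{eq:elak}, $\ez(\la_k^N)=\frac1N\ez\sum_{i\neq k}(\la_k^N-\la_i^N)^{-1}$, so that the singular sum is traded for a single eigenvalue; eigenvalue rigidity then gives $\ez(\la_{k+1}^N-\la_k^N)=o(1)$ and hence a lower bound $2f_N(o(1))\gtrsim \sfb/\sqrt N$, which is not exponentially small. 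Nothing in your proposal substitutes for this convexity-plus-identity step, and a direct attempt to lower-bound $\pz(\zeta>0)$ or $\ez[|\zeta|\indi\{\cdot\}]$ by concentration of $Q$ at $\sqrt{-D''(0)}\,\sfm(s_\ga)$ would fail, since $\sfm$ is not defined (as a convergent real limit of $Q$) at a bulk point. Your moderate-deviation/rigidity remarks control the location of $\la_{k_N}$, which is the easy half; they do not control the sign and size of the Schur complement, which is the actual content of Theorem~\ref{th:kdiv}'s proof.
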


Let us briefly explain the novelty of this paper and sketch our plan of attack. In part one \cite{AZ20}, we found the conditional distribution of the Hessian as a deformation of a matrix from the Gaussian Orthogonal Ensemble (GOE) and introduced bounds to control its dependency on the field location, the source of the main difficulty and difference between LRC and invariant models.   Here, we develop techniques to handle the extra condition on indices, which turns out to be quite subtle. In previous papers that dealt with complexity of saddles and local minima \cites{ABC13, ABA13}, fixing the index translated directly to a large deviation estimate on fixed eigenvalues of GOE. Here, the format of the conditional Hessian (and in particular its dependence on the field location) forces us to follow a different route. First, we take advantage of  a result of Lazutkin that relates the signature of a matrix to its principal minor. Then the most technical part of this paper is to deal with quadratic forms of Gaussian random variables, where the coefficients are given by the Stieltjes transform of GOE matrices with singularities on the real line. For a fixed index, these singularities lie outside the support of semicircle distribution; this allows us to derive various large deviation estimates. For the diverging index, since the singularity is in the support of semicircle distribution, we rely on an identity to transfer the expectation of the singular sum to the expectation of a single eigenvalue combined with a convexity argument.

The rest of the paper is organized as follows. In Section \ref{se:whole}, we fix the notation and provide the proofs of Theorems \ref{th:fixktotal} and \ref{th:divtotal}. We deduce a variety of large deviation estimates for Gaussian quadratic forms with singular coefficients in Section \ref{se:5}. This is the major technical input for analyzing the complexity of local minima in \pref{se:k=0} and the complexity of saddles with fixed index in \pref{se:kge1}. Finally, we derive the complexity of saddles with diverging index in \pref{se:div}. In the Appendix, we list some facts proved in \cite{AZ20} and needed in this paper for the reader's convenience and to make this version as self-contained as possible.

\textbf{Acknowledgments.}
Both authors would like to thank Yan Fyodorov for suggesting the study of fields with isotropic increments and providing several references, Julian Gold for providing the reference \cite{lazutkin1988signature}. A.A. thanks the hospitality of the International Institute of Physics where part of this work was developed. Q.Z. thanks Jiaoyang Huang for showing him the identity \pref{eq:elak}.

\section{Notation and proofs for Theorems \ref{th:fixktotal} and \ref{th:divtotal}}\label{se:whole}
We basically follow the notations used in \cite{AZ20}. Throughout, we regard a vector to be a column vector. We write e.g.~$C_{\mu,D}$  for a constant depending on $\mu$ and $D$ which may vary from line to line. For $N\in \nz$, let us denote $[N]=\{1,2,...,N\}$. Given $a,b\in \rz$, we write $a\vee b = \max\{a,b\}$ and $a\wedge b=\min\{a,b\}$. Also, $a_+=a\vee 0$ and $a_-=a\wedge 0$. For a vector $(y_1,...,y_N)^\sfT\in \rz^N$, we write $L(y_1^N)=\frac1N \sum_{i=1}^N \de_{y_i}$ for its empirical measure. Recall that an $N \times N$ matrix $M$ in the Gaussian Orthogonal Ensemble (GOE) is a symmetric matrix with centered Gaussian entries that satisfy
\begin{align}\label{eq:goemat}
  \ez(M_{ij})=0,\ \ \ez(M_{ij}^2) =\frac{1+\de_{ij}}{2N},\ \ i,j\in[N].
\end{align}
We will simply write $\GOE_N$ or $\GOE(N)$ for the matrix $M$. Denoting by $\lambda_1 \leq \dots \leq \lambda_N$ the eigenvalues of $M$, we write $L_N =L(\la_1^N)= \frac{1}{N} \sum_{k=1}^N \delta_{\lambda_k}$ for its empirical spectral measure. From time to time, we may also use $\la_k$ to denote the $k$th smallest eigenvalue of $\GOE_{N+1}$  or $\GOE_{N-1}$. This should be clear from context. For a closed set $F\subset \rz$, we denote by  $\px(F)$ the set of probability measures with support contained in $F$. We equip the space $\px(\rz)$ with the weak topology, which is compatible with the metric
\begin{align}\label{eq:measd}
d(\mu,\nu):=\sup\Big\{\Big|\int f \dd \mu-\int f \dd\nu \Big|: \|f\|_\8\le1, \|f\|_L\le 1\Big\},\quad \mu,\nu \in\px(\rz),
\end{align}
where $\|f\|_\8$ and $\|f\|_L$ denote the $L^\8$ norm and Lipschitz constant of $f$, respectively. Let $B(\nu,\de)$ denote the open ball in the space $\px(\rz)$ with center $\nu$ and radius $\de$ w.r.t.~to the metric $d$ given in \pref{eq:measd}. Similarly, we write $B_K(\nu,\de)=B_K(\nu,\de) \cap \px([-K,K])$ for some constant $K>0$. We denote by $\si_{{\rm sc}}$ the semicircle law scaled to have support $[-\sqrt2,\sqrt2]$.

We will frequently use the following facts which are consequences of large deviations. Using the large deviation principle (LDP) of empirical measures of GOE matrices \cite{BG97}, for any $\de>0$, there exists $c=c(\de)>0$ and $N_\de>0$ such that for all $N>N_\de$,
\begin{align}\label{eq:conineq}
\pz(L(\la_1^N)\notin B(\si_{\rm sc}, \de) )\le e^{-cN^2}.
\end{align}
In the same spirit, by \cite{MMS14}*{Theorem 5}, there exist $c_1, c_2>0$ such that for any $t>c_1\sqrt{\frac{\log(1+N)}N}$,
\begin{align}\label{eq:mms}
\pz\Big(\sup_{\|h\|_L\le 1} \Big|\frac1N\sum_{i=1}^N h(\la_i)- \int h(x)\si_{\rm sc}(\dd x)\Big|>t \Big)\le e^{- c_2 N^2 t^2}.
\end{align}
On the other hand, by the LDP of extreme eigenvalues of GOE matrices \cites{BDG01,ABC13}, for any fixed $k=1,2,3,...,$ the $k$th smallest eigenvalue $\la_k$ satisfies an LDP with speed $N$ and  a good rate function
\begin{align}\label{eq:jk1x}
J_k(x)=kJ_{1}(x)=
\begin{cases}
k\int_{x}^{-\sqrt{2}}\sqrt{z^2-2}\dd z, & x\le -\sqrt 2,\\
\8, & \text{otherwise},
\end{cases}
\end{align}
where the function $J_1(x)$ can be explicitly computed as
\begin{align}\label{eq:rf1ev}
J_1(x)=\begin{cases}
\frac12\log2 -\frac12 x\sqrt{x^2-2}-\log(-x+\sqrt{x^2-2}),& x\le -\sqrt2,\\
\8,& x>-\sqrt2.
\end{cases}
\end{align}
In particular, writing $\la_N^*=\max_{i\in [N]} |\la_i|$ for the operator norm of $\GOE_N$, by \cite{BDG01}*{Lemma 6.3}, there exists $N_0>0$ and $K_0>0$ such that for $K>K_0$ and $N>N_0$,
\begin{align}\label{eq:ladein}
\pz(\la_N^*>K)\le e^{-NK^2/9}.
\end{align}
This can also be seen directly from the LDP of $\la_1$, even though it was originally proved as a technical input for the LDP of $\la_1$. For a probability measure $\nu$ on $\rz$, let us define
\begin{align}\label{eq:psidef0}
  \Psi(\nu,x)=\int_\rz \log |x-t| \nu(\dd t), \qquad \Psi_*(x)=\Psi(\si_{\rm sc}, x).
\end{align}
By calculation,
\begin{align}\label{eq:phi*}
\Psi_*(x) &= \frac12 x^2-\frac12 -\frac12\log2-\int_{\sqrt2}^{|x|} \sqrt{y^2-2}\dd y \indi\{|x|\ge\sqrt2\} \notag \\
&=
\begin{cases}
\frac12 x^2-\frac12 -\frac12\log2,& |x|\le \sqrt2,\\
\frac12x^2 -\frac12-\log2 - \frac12 |x| \sqrt{x^2-2}+\log(|x|+\sqrt{x^2-2}) ,& |x|>\sqrt2.
\end{cases}
\end{align}
Note that $\Psi_*(x)-\frac{x^2}{2}\le -\frac12-\frac12\log2$. We define the major part of complexity functions $\psi_*(\rho,u,y)$ for $u,y\in \rz$ and $\rho>0$,
\begin{align}
 		&\psi_*(\rho,u,y)=\psi(\si_{\rm sc},\rho,u,y), \notag\\
    &\psi(\nu,\rho,u,y)= \Psi(\nu, y)-\frac{(u-\frac{\mu\rho^2}{2}+\frac{\mu D'(\rho^2) \rho^2}{D'(0) } )^2}{ 2 (D(\rho^2)-\frac{D'(\rho^2)^2 \rho^2}{D'(0)} )} -\frac{\mu^2\rho^2}{2D'(0)}+\log \rho-\frac{-2D''(0)}{-2D''(0)-\frac{[D'(\rho^2 )-D'(0)]^2}{{ D(\rho^2 )-\frac{D'(\rho^2)^2 \rho^2}{D'(0)}}}}
 		\notag \\
 		& \times\Big(y+\frac{1} {\sqrt{-4D''(0)}}\Big[ \mu + \frac{(u-\frac{\mu \rho^2}{2} +\frac{\mu D'(\rho^2) \rho^2}{D'(0)}) (D'(\rho^2) -D'(0) )}{D( \rho^2) -\frac{D'(\rho^2)^2 \rho^2}{D'(0) }}\Big]\Big)^2.
\label{eq:psifunction} 	
\end{align}
By \cite{AZ20}*{Lemma 5.7}, with other variables fixed we have
\begin{align}\label{eq:psi*lim}
  \lim_{\rho\to0+}\psi_*(\rho,u,y) = -\8, \ \ \lim_{|y|\to \8}\psi_*(\rho,u,y) = -\8.
\end{align}

Let $z$ be a standard Gaussian r.v.~and $\Phi$ the c.d.f.~of $z$. For $a\in\rz, b>0$, we have
\begin{align}\label{eq:gau2}
  \ez[(a+bz)\indi\{a+bz>0\}]=a\Phi(\frac{a}b) +\frac{b}{\sqrt{2\pi}} e^{-\frac{a^2}{2b^2}},
\end{align}
which is strictly increasing as $a$ increases, and
\begin{align}\label{eq:gau3}
  \ez[-(a+bz)\indi\{a+bz<0\}]= -a\Phi(-\frac{a}b) +\frac{b}{\sqrt{2\pi}} e^{-\frac{a^2}{2b^2}},
\end{align}
which is strictly decreasing as $a$ increases. These two functions are 1-Lipschitz convex in $a$. Also,
\begin{align}\label{eq:absgau}
\sqrt{\frac2\pi}b\le \ez|a+bz|=\frac{\sqrt2 b }{\sqrt{\pi}}e^{-\frac{a^2}{2b^2}}
 +a(2\Phi(\frac{a}{b})-1)\le \sqrt{\frac2\pi}b+|a|.
\end{align}
Unless specified otherwise, we always assume Assumptions I and II throughout. Moreover, Assumption IV is assumed after this section.

In the rest of this section, we prove the results for the total number of critical points with given indices. The starting point is the Kac--Rice formula (see e.g. \cite{AT07}*{Theorem 11.2.1}),
\begin{align} \label{eq:startingpoint}
  \ez \Crt_{N,k}(E,B_N)&=\int_{B_N} \ez[|\det \nabla^2 H_N(x)| \indi\{\frac1N H_N(x)\in E,i(\nabla^2 H_N(x))=k\}| \nabla H_N(x)=0]\notag\\
  & \quad \times p_{\nabla H_N(x)}(0)\dd x,
\end{align}
where $p_{\nabla H_N(x)}(t)$ is the p.d.f.~of ${\nabla H_N(x)}$ at $t$. By Lemma \ref{le:cov}, we have in distribution
\[
  \nabla^2 H_N(x) \stackrel{d}= \sqrt{-4D''(0)} \GOE_N- (\sqrt{\frac{-2D''(0)}N}Z-\mu)I_N
\]
where $Z$ is a standard Gaussian variable independent of $\GOE_N$.  When $E=\rz$, there is no restriction on the critical value. And thanks to the independence of $\nabla H_N$ and $\nabla^2 H_N$, the above representation of $\nabla^2 H_N(x)$ as a GOE matrix shifted by an independent scalar matrix is good enough for asymptotic analysis. Indeed, together with large deviation principles, it is sufficient to result in the high dimensional limits. This strategy is by far standard after it was developed in \cite{ABC13}.

\begin{proof}[Proof of Theorem \ref{th:fixktotal}]

The Kac--Rice formula in the current setting simplifies to
\[
\ez\Crt_{N,k}(\rz,B_N)=\int_{B_N} \ez[|\det \nabla^2 H_N(x)| \indi\{ i(\nabla^2 H_N(x))=k\}] p_{\nabla H_N(x)}(0)\dd x.
\]
A direct application of Lemma 3.3 in \cite{ABC13} gives us
\begin{align}\label{eq:ecnk8}
  \ez\Crt_{N,k}(\rz,B_N) & = \frac{C_N}{N+1} \ez_{\GOE(N+1)} e^{-\frac12(N+1)\la_{k+1}^2-\frac{\sqrt{N(N+1)}\mu \la_{k+1}}{\sqrt{-D''(0)}}},
\end{align}
where
\begin{align}\label{eq:stir1}
  C_N = \begin{cases}
  \frac{\sqrt{2}[-4D''(0)]^{N/2} \Ga(\frac{N+1}2)  (N+1) }{\sqrt{\pi}N^{N/2} e^{Nm^2 }|\mu|^N }  \pz(  z_N \in  |\mu| B_N/\sqrt{D'(0)} ), & \mu\neq 0,\\
 \frac{\sqrt{2}[-4D''(0)]^{N/2} \Ga(\frac{N+1}2)  (N+1)|B_N| }{\sqrt{\pi}N^{N/2} (2\pi)^{N/2}D'(0)^{N/2}} , & \mu=0,
  \end{cases}
\end{align}
whose asymptotics is given by the Stirling's formula and Assumption III. Let
$$
\phi(x)=-\frac12x^2-\frac{\mu x}{\sqrt{-D''(0)} }.
$$
Since $\phi$ is bounded above, by \pref{eq:jk1x} and Varadhan's Lemma,
\begin{align*}
\sup_{x\in \rz} \phi(x)-J_{k+1}(x)&\le \liminf_{N\to\8} \frac1N\log \ez_{\GOE(N+1)} e^{(N+1)\phi(\la_{k+1})} \\
&\le \limsup_{N\to\8} \frac1N\log \ez_{\GOE(N+1)} e^{(N+1)\phi(\la_{k+1})}\le \sup_{x\in \rz} \phi(x)-J_{k+1}(x).
\end{align*}
Let $I_{k}=\sup_{x\in \rz} [\phi(x)-J_{k+1}(x)]$. Then
\begin{align}\label{ikcase}
I_k=
\begin{cases}
\phi(-\sqrt2)=-1+\frac{\sqrt2 \mu}{\sqrt{-D''(0)}},& \mu\le \sqrt{-2D''(0)},\\
\phi(x_0)-J_1(x_0),& \mu>  \sqrt{-2D''(0)}, k=0,\\
\phi(x_k)-J_{k+1}(x_k), & \mu>  \sqrt{-2D''(0)}, k\ge1,
\end{cases}
\end{align}
where
\[
x_0=-\frac{\mu}{2\sqrt{-D''(0)}}-\frac{\sqrt{-D''(0)}}\mu,\ \ x_k= \frac{\mu- (k+1)\sqrt{\mu^2 -D''(0)k(k+2)} }{k(k+2)\sqrt{-D''(0)}}.
\]
A combination of these observations with \pref{le:repl} is enough to obtain the result.
\end{proof}

\begin{remark}\rm
  Clearly $x_0<-\sqrt2$. One can check that $x_k<-\sqrt2$ if and only if
\[
\Big[k(k+2)\frac{\mu^2}{-2D''(0)}-k(k+2)\Big]^2>0,
\]
which indeed holds for $k\ge1$.
Since $\phi(x)-J_{k+1}(x)< \phi(x)-J_{k}(x)$ for any $x<-\sqrt2$, we see that $I_k$ is strictly decreasing in $k$ and $I_k\ge 1$ when $\mu>\sqrt{-2 D''(0)}$. Observe also that $x_0$ and $x_k$ in \eqref{ikcase} have different expressions. The reason for different formats of local minima and saddles will become clear in Sections \ref{se:k=0} and \ref{se:kge1}.
\end{remark}

\begin{proof}[Proof of Theorem \ref{th:divtotal}]
The main ingredient of the proof is the LDP for the empirical spectral measure of GOE matrices \pref{eq:conineq}.
By \pref{le:repl}, it suffices to obtain the asymptotics of
$$\ez_{\GOE(N+1)} e^{(N+1)\phi(\la_{k+1})}.$$
As a consequence of \pref{eq:conineq}, we have for each $\eps>0$,
\begin{align}\label{eq:lakn2}
\pz(|\la_{k+1} -s_\ga|>\eps)\le e^{-c (N+1)^2}
\end{align}
for some $c=c(\ga,\eps)$. Therefore,
\begin{align*}
&\ez_{\GOE(N+1)} e^{(N+1)\phi(\la_{k+1})} \\
&\le \ez_{\GOE(N+1)} e^{(N+1)\phi(\la_{k+1})} \indi\{\la_{k+1}\in [s_\ga-\eps, s_\ga+\eps]\} + e^{\frac{(N+1)\mu^2}{-2D''(0)} -c(N+1)^2}\\
&\le e^{(N+1)\sup_{s_\ga-\eps\le x\le s_\ga +\eps}\phi(x)} + e^{\frac{(N+1)\mu^2}{-2D''(0)} -c(N+1)^2}.
\end{align*}
On the other hand,
\begin{align*}
\ez_{\GOE(N+1)} e^{(N+1)\phi(\la_{k+1})} &\ge \ez_{\GOE(N+1)} e^{(N+1)\phi(\la_{k+1})} \indi\{\la_{k+1}\in [s_\ga-\eps, s_\ga+\eps]\} \\
&\ge e^{(N+1)\inf_{s_\ga-\eps\le x\le s_\ga +\eps} \phi(x)}\pz(|\la_{k+1}-s_\ga|\le \eps).
\end{align*}
It follows that
\begin{align*}
\inf_{s_\ga-\eps\le x\le s_\ga +\eps} \phi(x) &\le \liminf_{N\to\8}  \frac1N\log \ez_{\GOE(N+1)} e^{(N+1)\phi(\la_{k+1})}\\
& \le \limsup_{N\to\8}  \frac1N\log \ez_{\GOE(N+1)} e^{(N+1)\phi(\la_{k+1})} \le \sup_{s_\ga-\eps\le x\le s_\ga +\eps} \phi(x).
\end{align*}
By the continuity of $\phi$, sending $\eps\to 0+$ we see that
\[
\lim_{N\to\8}  \frac1N\log \ez_{\GOE(N+1)} e^{(N+1)\phi(\la_{k+1})} = \phi(s_\ga),
\]
which ends the proof of the theorem.
\end{proof}

\section{Large deviations for singular Gaussian quadratic forms}\label{se:5}

\subsection{Reduction to large deviation estimates}
From now on, we consider the domain $B_N$ to be a shell $B_{N}(R_{1},R_{2})=\{ x\in \mathbb R^{N}: R_{1}< \frac{\| x\|}{\sqrt N} < R_{2} \}$, where $0\le R_1< R_2\le \8$. By \pref{le:cov}, $Y:= \frac{H_N(x)}N- \frac{D'(\frac{\|x\|^2}N)\sum_{i=1}^{N} x_i \partial_i H_N(x)}{N D'(0)}$ is independent of $\nabla H_N(x)$.
Thanks to \pref{eq:ayg} and \pref{eq:martin}, by conditioning and recalling the notation \pref{eq:crter12}, we can rewrite the Kac--Rice representation \pref{eq:startingpoint} as
\begin{align}
  &\ez\Crt_{N,k}(E,(R_1,R_2)) \notag\\
  &=\int_{B_N} \int_E \ez[|\det \nabla^2 H_N(x)|\indi\{i(\nabla^2 H_N(x))\}=k |Y=u] p_{\nabla H_N(x)}(0) \pz(Y\in \dd u) \dd x \notag\\
  &\stackrel{\rho=\frac{\|x\|}{\sqrt N}}{\scalebox{3}[1]{=}} S_{N-1} N^{(N-1)/2} \int_{R_1}^{R_2}\int_E \ez[|\det G|\indi\{i(G)=k\} ] \frac1{\sqrt{2\pi}\si_Y} e^{-\frac{(u -m_Y)^2}{2\si_Y^2}} \notag\\
  &\quad   \frac1{(2\pi)^{N/2} D'(0)^{N/2}} e^{-\frac{N \mu^2 \rho^2}{2D'(0)}} \rho^{N-1}   \dd u  \dd\rho.\label{eq:krerr}
  \end{align}
  Here  $S_{N-1} = \frac{2\pi^{N/2}}{\Gamma(N/2)}$ is the area of $N-1$ dimensional unit sphere, $G$ depends on $u$ implicitly.
  Note that
  \begin{align}\label{eq:snlim}
  \lim_{N\to\8} \frac1N \log (S_{N-1}N^{\frac{N-1}{2}})= \frac12\log(2\pi)+\frac12.
  \end{align}
Enumerate the eigenvalues of $G$ and $G_{**}$ as $( \lambda_j(G))_{1 \leq j \leq N}$ and $(\lambda_j(G_{**}))_{1 \leq j \leq N-1}$ in ascending manner. By the interlacement property, for all $j \in \{1,\dots, N-1\}$,
\begin{align}
\lambda_j (G) \leq \lambda_j(G_{**}) \leq \lambda_{j+1}(G)\,.
\label{eq:interlace}
\end{align}
For $k \geq 1$, we have
\begin{align*}
\{ i(G) = k \}  \subset \{i(G_{**}) = k-1 \}\cup\{  i(G_{**})=k\} .
\end{align*}
The following result of Lazutkin \cite{lazutkin1988signature} relates the signature of a matrix and that of a principal minor. Recall that the signature of a symmetric matrix is the number of positive eigenvalues minus that of negative eigenvalues. 
\begin{lemma}[{\cite[Equation 2]{lazutkin1988signature}}]\label{le:lazutkin}
 Let $S$ be a symmetric block matrix, and write its inverse $S^{-1}$ in block form with the same block structure:
	\begin{align*}
	S = \begin{pmatrix} A & B \\ B^\sfT & C \end{pmatrix}\,,\quad\quad S^{-1} = \begin{pmatrix} A' & B' \\ (B')^\sfT & C' \end{pmatrix}\,.
	\end{align*}
	Then, $\sgn(S) = \sgn(A) + \sgn(C')$, with $\sgn(M)$ denoting the signature of the matrix $M$.
\end{lemma}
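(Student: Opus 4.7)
My plan is to realize $S$ as congruent to a block-diagonal matrix using the Schur complement of $A$ in $S$, and then to recognize that this Schur complement is precisely $(C')^{-1}$. With these two observations in hand, the identity follows from Sylvester's law of inertia.

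Assume first that $A$ is invertible (the generic case). Then the block $LDL^\sfT$ factorization
\[
S \;=\; \begin{pmatrix} I & 0 \\ B^\sfT A^{-1} & I \end{pmatrix} \begin{pmatrix} A & 0 \\ 0 & C - B^\sfT A^{-1} B \end{pmatrix} \begin{pmatrix} I & A^{-1} B \\ 0 & I \end{pmatrix}
\]
exhibits $S$ as congruent to the block-diagonal symmetric matrix $\mathrm{diag}(A,\, C - B^\sfT A^{-1} B)$. Since congruence preserves inertia, Sylvester's law gives
\[
\sgn(S) \;=\; \sgn(A) \;+\; \sgn\bigl(C - B^\sfT A^{-1} B\bigr).
\]
The standard block inversion formula identifies the bottom-right block of $S^{-1}$ as $C' = (C - B^\sfT A^{-1} B)^{-1}$, and inverting an invertible symmetric matrix only reciprocates its nonzero eigenvalues, preserving their signs. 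Hence $\sgn(C') = \sgn(C - B^\sfT A^{-1} B)$, and combining the two displays yields the claim.

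To remove the invertibility assumption on $A$, I would perturb: replace $A$ by $A + \epsilon I$ for small $\epsilon > 0$, apply the identity to the perturbed matrix $S_\epsilon$, and then let $\epsilon \to 0+$. Since $S$ is invertible by hypothesis and the perturbation is a rank-bounded analytic deformation, $S_\epsilon$ remains invertible and each of the signatures $\sgn(S_\epsilon)$, $\sgn(A+\epsilon I)$, $\sgn(C'_\epsilon)$ is locally constant off a finite exceptional set of $\epsilon$-values, so the equality passes to the limit. The main technical concern here is the continuity of $\sgn(C'_\epsilon)$ as $\epsilon \to 0$ when $C'$ happens to be singular; this is the one point requiring real care, but it can be resolved either by the original argument in \cite{lazutkin1988signature} or by a simple eigenvalue-counting argument using that the other two signatures in the identity are already determined in the limit. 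Apart from this minor bookkeeping, the entire proof is a routine application of Sylvester's law.
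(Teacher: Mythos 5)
The paper offers no proof of this lemma (it is quoted from Lazutkin), so there is nothing to compare approaches against; your Schur-complement argument is the natural one, and in the case where $A$ is invertible it is complete and correct: the congruence to $\mathrm{diag}(A,\,C-B^\sfT A^{-1}B)$, Sylvester's law, and the identification $C'=(C-B^\sfT A^{-1}B)^{-1}$ are all fine, and that case is in fact all the paper uses, since in Remark \ref{rmk:lazutkin} the relevant blocks are almost surely invertible and $\zeta\neq 0$.

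The genuine gap is the perturbation step for singular $A$. Your claim that the three signatures are locally constant and "the equality passes to the limit" fails precisely at $\epsilon=0$: for singular $A$ one has $\sgn(A+\epsilon I)=\sgn(A)+\dim\ker A$ for all small $\epsilon>0$, so $\lim_{\epsilon\to0+}\sgn(A_\epsilon)\neq\sgn(A)$; likewise $C'_\epsilon\to C'$ entrywise, but $C'$ is then itself singular (by the nullity theorem, $\dim\ker C'=\dim\ker A$ whenever $S$ is invertible), so $\sgn(C'_\epsilon)$ also jumps at $\epsilon=0$. Consequently your proposed repair -- an eigenvalue count "using that the other two signatures in the identity are already determined in the limit" -- rests on a false premise about $\sgn(A_\epsilon)$ and is moreover circular, since it presupposes the identity at $\epsilon=0$ that you are trying to prove. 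A correct patch must show the two jumps cancel: with $k=\dim\ker A=\dim\ker C'$, exactly $k$ eigenvalues of $C-B^\sfT(A+\epsilon I)^{-1}B$ diverge to $-\infty$ as $\epsilon\to0+$ (equivalently, the $k$ small eigenvalues of $C'_\epsilon$ approach $0$ from below), so that $\sgn(C'_\epsilon)=\sgn(C')-k$ compensates $\sgn(A_\epsilon)=\sgn(A)+k$. The key input for this is that invertibility of $S$ makes $B^\sfT$ injective on $\ker A$: if $v\in\ker A$ and $B^\sfT v=0$, then $S$ annihilates the vector with blocks $(v,0)$, forcing $v=0$; hence the singular part of $A_\epsilon^{-1}$ acts on a $k$-dimensional space on which $B$ has full rank, producing exactly $k$ negatively divergent directions. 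Either supply this argument or restrict the statement to invertible $A$ (enough for the paper) and cite Lazutkin for the degenerate case.
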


Recall $z_1'=\si_1 z_1 -\si_2 z_2 + m_{1}$, $z_3'=(\si_2 z_2 + \frac{\rho\sqrt{\al \bt}z_3}{\sqrt N}-m_{2}) /\sqrt{-4D''(0)}$ as in \pref{eq:ayg}. To save space, let us write the Schur complement of $G_{**}$ as
\begin{align}\label{eq:lazutkin_variable}
  \zeta=\zeta(z_1', z_3')=z_1' - \xi^\sfT G_{**}^{-1} \xi,
\end{align}
where
\begin{equation}\label{eq:ztdef}
\zeta(r,s)= r- [-4D''(0)]^{-1/2} \Bigl\langle \xi, \Bigl(\sqrt{\frac{N-1}{N}}\GOE_{N-1}-s I_{N-1}\Bigr)^{-1} \xi\Bigr\rangle.
\end{equation}

\begin{remark} \label{rmk:lazutkin}
The variable $\zeta$ will be pivotal in the following analysis. Recalling the block matrix representation \eqref{eq:ayg} of $G$ and applying Lemma~\ref{le:lazutkin} in our setting, $G^{-1}$ can be expressed in block form, with $(G^{-1})_{11}$ given by $\frac1\zeta$
so that the index of $G$ is equal to the index of $G_{**}$ exactly when
$\zeta= z_1' - \langle \xi, (G_{**})^{-1} \xi\rangle > 0.$
In other words, for $k\ge 1$,
\begin{align}\label{eq:kdcmp}
\{i(G)=k\}=\{i(G_{**})=k, \zeta>0\}\cup\{i(G_{**})=k-1, \zeta <0\},
\end{align}
and for $k=0$,
\begin{align}\label{eq:0dcmp}
 \{i(G)=0\}= \{i(G_{**})=0,\zeta>0\}.
\end{align}
\end{remark}


Substituting \pref{eq:kdcmp} into the Kac--Rice formula \pref{eq:krerr}, we have
\begin{align}
	&\ez\Crt_{N,k}(E,(R_1,R_2) ) = S_{N-1} N^{(N-1)/2} \int_{R_1}^{R_2} \int_{E} \ez[|\det G| ( \indi\{i(G_{**})=k,\zeta>0\} \notag\\
	&\qquad  +  \indi\{i(G_{**})=k-1, \zeta<0 \} )]\frac1{\sqrt{2\pi}\si_Y} e^{-\frac{(u-m_Y)^2}{2\si_Y^2}}  \frac1{(2\pi)^{N/2} D'(0)^{N/2}} e^{-\frac{N \mu^2 \rho^2}{2D'(0)}} \rho^{N-1}   \dd u  \dd\rho.\label{eq:krk}
\end{align}
Since $\Crt_{N,k}(E,B_N) \le \Crt_{N}(E,B_N)$, by Lemmas \ref{le:exptt} and \ref{le:exptt2} we may assume $\bar E$ to be compact and $R_2<\8$. Let us define
\begin{align}
  I^k(E,(R_1,R_2)) &= \int_{R_1}^{R_2} \int_{E} \ez[|\det G|  \indi\{i(G_{**})=k,\zeta>0\}] \notag\\
  &\quad \frac{ e^{-\frac{(u-m_Y)^2}{2\si_Y^2}}}{\sqrt{2\pi}\si_Y} \frac{e^{-\frac{N \mu^2 \rho^2}{2D'(0)}}}{(2\pi)^{N/2} D'(0)^{N/2}}  \rho^{N-1}   \dd u  \dd\rho,\notag\\
  II^k(E,(R_1,R_2)) &=\int_{R_1}^{R_2} \int_{E} \ez[|\det G|  \indi\{i(G_{**})=k-1,\zeta<0\}] \notag\\
  &\quad \frac{e^{-\frac{(u-m_Y)^2}{2\si_Y^2}}}{\sqrt{2\pi}\si_Y} \frac{e^{-\frac{N \mu^2 \rho^2}{2D'(0)}} }{(2\pi)^{N/2} D'(0)^{N/2}}   \rho^{N-1} \dd u  \dd\rho. \label{eq:ikdef}
\end{align}
If $k=0$, we understand $\{i(G_{**})=k-1,\zeta<0\}=\emptyset$ and $II^0=0$. Using the Schur complement formula, we have
\begin{align}
  \det G= \det (G_{**})(z_1' - \xi^\sfT G_{**}^{-1} \xi ). \label{eq:schur}
\end{align}
Let us write
$$Q=Q(z_3')=\xi^\sfT G_{**}^{-1}\xi
$$
so that $\zeta=z_1'-Q(z_3')$. Recall that $\la_1\le \la_2\le \cdots \le \la_{N-1}$ denote the ordered eigenvalues of $\GOE_{N-1}$. Using the representation \pref{eq:goedc},
\begin{align*}
  Q(z_3')
  =[-4D''(0)]^{-1/2} \sum_{i=1}^{N-1}\frac{{\xi'}_i^2}{(\frac{N-1}{N})^{1/2}\tilde \la_i-z_3'} \stackrel{d}=\frac{\sqrt{-D''(0)}}{N}\sum_{i=1}^{N-1} \frac{Z_i^2}{(\frac{N-1}{N})^{1/2}\la_i-z_3'},
\end{align*}
where $\xi'=V\xi$ and $Z_i$'s are independent standard Gaussian random variables. Since $\xi'$ is independent of $\tilde\la_i$'s, the last $\stackrel{d}=$ follows by applying a random permutation (depending on $\tilde\la_i$'s) to  $\xi'$.
For convenience, we regard $\sqrt{\frac{-2D''(0)}{N}}Z_i=\xi_i'$.
By convention, $ \la_0=-\8$. A direct calculation yields that the conditional distribution of $z_1'$ given $z_3'=y$ is given by
\begin{align}\label{eq:z13con0}
z_1'| z_3'=y \sim N \Big (\bar \sfa, \frac{\sfb^2}{N}\Big),
\end{align}
where
\begin{align*}
 \bar \sfa&=  m_1-\frac{\si_2^2(\sqrt{-4D''(0)}y + m_2)}{\si_2^2+\frac{\al \bt \rho^2}{N}}\\
 &=\frac{-2D''(0)\al\rho^2( u-\frac{\mu\rho^2}{2}+\frac{\mu D'(\rho^2) \rho^2}{D'(0) } )}{(-2D''(0)-\bt^2) \sqrt{D(\rho^2)-\frac{D'(\rho^2)^2 \rho^2}{D'(0)} }}
   +\frac{\al\bt\rho^2 \mu }{-2D''(0)-\bt^2}\notag \\
   &\quad  -\frac{(-2D''(0)-\bt^2-\al\bt\rho^2)\sqrt{-4D''(0)} y}{-2D''(0)-\bt^2},\\
 \frac{\sfb^2}{N}& =  \si_1^2+\si_2^2-\frac{\si_2^4}{\si_2^2+\frac{\al\bt \rho^2}{N}} =\frac{-4D''(0)}{N} +\frac{2D''(0)\al^2\rho^4}{N(-2D''(0)-\bt^2)}.
\end{align*}
Using \pref{eq:schur}, \pref{eq:z13con0}, \pref{eq:gau2} and by conditioning, we have
\begin{align}
  &\ez[|\det G|  \indi\{i(G_{**})=k,\zeta>0\}]=\ez(|\det G_{**}| | z_1'-\xi^\sfT G_{**}^{-1} \xi |  \indi\{i(G_{**})=k , \zeta>0\}) \notag\\
&= [-4D''(0)]^{\frac{N-1}{2}} \ez[ |\det ((\frac{N-1}{N})^{1/2}\GOE_{N-1}-z_3' I_{N-1}) | \indi\{\la_k<(\frac{N}{N-1})^{1/2}z_3'<\la_{k+1} \} \notag\\
&\quad \ez(  | z_1'- Q(z_3') |\indi\{z_1'- Q(z_3') >0\}  | \la_1^{N-1}, z_3',\xi')] \notag\\
&=[-4D''(0)]^{\frac{N-1}{2}}\int_{\rz} \ez\Big[ |\det ((\frac{N-1}{N})^{1/2}\GOE_{N-1}- y I_{N-1}) | \indi\{\la_k<(\frac{N}{N-1})^{1/2} y<\la_{k+1} \}\notag\\
& \quad(\mathsf{a}_N\Phi(\frac{\sqrt N \sfa_N}\sfb) +\frac{\sfb}{\sqrt{2\pi N}} e^{-\frac{N\sfa_N^2}{2\sfb^2}})\Big]  \frac{\sqrt {-4N D''(0)} \exp\{-\frac{N(\sqrt{ -4D''(0)}y+m_2)^2}{2(-2D''(0)-\bt^2)}\}} {\sqrt{2\pi(-2D''(0)-\bt^2)} } \dd y, \label{eq:detgk}
\end{align}
where
\begin{align}\label{eq:abdef}
  \sfa_N & =\sfa_N(\rho,u,y)=\bar\sfa -Q(y).
\end{align}
For any $x\in \rz$ and $b>0$, using l'Hospital's rule,
\begin{align}\label{eq:phiblim}
\lim_{N\to\8}\frac1N \log \Big(x\Phi(\frac{\sqrt N x}b)+\frac{b}{\sqrt{2\pi N}} e^{-\frac{N x^2}{2b^2}}\Big) = -\frac{(x_-)^2}{2b^2},
\end{align}
where we recall the notation $x_-=x\wedge 0$. We want to replace $Q(y)$ with a deterministic quantity in the large $N$ limit.



Let $\bar\rz=\rz\cup\{-\8,+\8\}$ be the extended real numbers.
\begin{lemma}\label{le:sclaw}
  Let $h: \rz\to \bar\rz$ be a measurable function that is bounded and continuous on an open set $G\supset [-\sqrt2,\sqrt2]$. Then for any fixed $\ell=1,2,...$ and any deterministic sequence $\ta_N\to 1$, we have almost surely
  \[
  \lim_{N\to\8} \frac{1}{N-1}\sum_{i=\ell}^{N-1} h(\ta_N\la_i) = \int h(x) \si_{\rm sc}(\dd x),
  \]
  and
  \[
  \lim_{N\to\8} \frac{1}{N-1}\sum_{i=\ell}^{N-1} h(\ta_N\la_i)Z_i^2 = \int h(x) \si_{\rm sc}(\dd x).
  \]
\end{lemma}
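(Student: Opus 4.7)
The strategy rests on three standard inputs: the empirical measure $L_{N-1}$ of $\GOE_{N-1}$ converges weakly to $\si_{\rm sc}$ almost surely; the extreme eigenvalues $\la_1$ and $\la_{N-1}$ converge almost surely to $-\sqrt 2$ and $\sqrt 2$ respectively, with exponential speed as in \pref{eq:ladein}; and the $Z_i$ are independent of $(\la_i)$. I fix $\eps>0$ so small that the compact set $K:=[-\sqrt 2-2\eps,\sqrt 2+2\eps]$ is contained in $G$. By \pref{eq:ladein} and Borel--Cantelli, almost surely there is a random $N_0$ such that for $N\ge N_0$ all eigenvalues $\la_i$ lie in $[-\sqrt 2-\eps,\sqrt 2+\eps]$, and since $\ta_N\to 1$ I have $\ta_N\la_i\in K$ for all $i$. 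On $K$, $h$ is uniformly continuous and bounded by some $M<\8$, and I extend $h|_K$ to a bounded continuous function $\tilde h$ on $\rz$.

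For the first limit, I would write, on the good event,
\[
\frac1{N-1}\sum_{i=1}^{N-1}h(\ta_N\la_i)=\int[\tilde h(\ta_N x)-\tilde h(x)]\,L_{N-1}(\dd x)+\int \tilde h\,\dd L_{N-1}.
\]
The first piece is bounded by $\sup_{x\in K}|\tilde h(\ta_N x)-\tilde h(x)|\to 0$ by uniform continuity and $\ta_N\to 1$; the second tends to $\int \tilde h\,\dd\si_{\rm sc}=\int h\,\dd\si_{\rm sc}$ by almost sure weak convergence of $L_{N-1}$ and the fact that $\supp\si_{\rm sc}\subset K$. Removing the first $\ell-1$ terms costs at most $(\ell-1)M/(N-1)\to 0$, which yields the first claim almost surely.

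For the second limit, I set $S_N:=\frac{1}{N-1}\sum_{i=\ell}^{N-1}h(\ta_N\la_i)Z_i^2$ and condition on $(\la_i)$. The conditional mean is $\frac{1}{N-1}\sum_{i=\ell}^{N-1}h(\ta_N\la_i)$, which by the first part converges a.s.\ to $\int h\,\dd\si_{\rm sc}$. For the fluctuation $S_N-\ez[S_N\mid(\la_i)]=\frac{1}{N-1}\sum_i h(\ta_N\la_i)(Z_i^2-1)$, on the good event the coefficients are bounded by $M/(N-1)$ and the summands are independent centered sub-exponentials, so Bernstein's inequality gives
\[
\pz\Big(\big|S_N-\ez[S_N\mid(\la_i)]\big|>\de \,\Big|\,(\la_i)\Big)\le 2\exp\big(-c\,\de^2\,(N-1)/M^2\big)
\]
for $\de$ small and some absolute $c>0$. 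The right-hand side is summable in $N$, and a second application of Borel--Cantelli delivers $S_N-\ez[S_N\mid(\la_i)]\to 0$ almost surely, completing the proof.

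The main obstacle is that $h$ is allowed to take the value $\pm\8$ outside $G$, so neither sum is \emph{a priori} well-defined as a sum of bounded terms, and the concentration step for the $Z_i^2$-weighted sum needs uniform control on the coefficients. Both issues are resolved by the same ingredient: the exponentially fast (hence Borel--Cantelli summable) containment of the entire spectrum in a compact set strictly interior to $G$, which simultaneously localizes $h$ to its continuous, bounded regime and validates the Bernstein estimate.
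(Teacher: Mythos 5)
Your argument is correct and essentially the paper's own proof: localize the spectrum in a compact subset of $G$ up to exponentially small, Borel--Cantelli-summable probability, replace $h$ by a bounded continuous modification and invoke the almost sure semicircle law for the first limit, then treat the $Z_i^2$-weighted sum by conditioning on the eigenvalues and applying Bernstein's inequality for the centered sub-exponential variables $Z_i^2-1$, again finishing with Borel--Cantelli. The only slip is the pointer: containment of the spectrum in $[-\sqrt2-\eps,\sqrt2+\eps]$ does not follow from \pref{eq:ladein}, which is stated only for $K>K_0$; it follows from the extreme-eigenvalue LDP \pref{eq:jk1x} (and its mirror image for $\la_{N-1}$), which is exactly how the paper controls $\pz(\la_\ell\le-\ta_N^{-1}K)+\pz(\la_{N-1}\ge\ta_N^{-1}K')$.
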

\begin{proof}
  Let $[-\sqrt2,\sqrt2]\subset (-K,K')$ be a neighborhood such that $h(x)$ is bounded and continuous for $x\in[-K,K']$. Let
  \[
  \hat h (x)= \begin{cases}
            h(x), & \mbox{if } -K\le x\le K', \\
            h(K'), & \mbox{if } x>K', \\
            h(-K), & \mbox{otherwise}.
          \end{cases}
   \]
Since $\hat h(x)$ is a bounded continuous function on $\rz$, by the semicircle law, we have almost surely
  \[
  \lim_{N\to\8} \frac{1}{N-1}\sum_{i=\ell}^{N-1} \hat h(\la_i)=\lim_{N\to\8} \frac{1}{N-1}\sum_{i=1}^{N-1} \hat h(\la_i)=\int \hat h(x) \si_{\rm sc}(\dd x)=\int h(x) \si_{\rm sc}(\dd x).
  \]
  On the other hand, using the LDP of the extreme eigenvalues of GOE \pref{eq:jk1x}, for any $\eps>0$, by continuity we have for all $N$ large enough,
  \begin{align*}
    &\pz \Big(\Big|\frac{1}{N-1}\sum_{i=\ell}^{N-1} [\hat h(\la_i) - h(\ta_N\la_i)]\Big|>\eps\Big)\\
    &\le \pz (\la_\ell\le -\ta_N^{-1} K) +\pz(\la_{N-1}\ge \ta_N^{-1} K')\le  e^{-\ell N J_K}+e^{-N J_{K'}}
  \end{align*}
  where $J_K$ is an absolute constant depending on $K$. By the Borel--Cantelli lemma, we have almost surely
  \[
  \lim_{N\to\8} \frac{1}{N-1}\sum_{i=\ell}^{N-1} [\hat h(\la_i) - h(\ta_N\la_i)] = 0,
  \]
  and the first assertion follows. For the second one, using the Bernstein inequality for the subexponential variables $Z_i^2-1$ and by conditioning,
  \begin{align*}
    &\pz\Big(\Big|\frac{1}{N-1}\sum_{i=\ell}^{N-1} h(\ta_N \la_i) (Z_i^2-1)\Big|>\eps, -K\le \ta_N\la_\ell<\ta_N\la_N\le K' \Big)\\
    & \le 2\ez_{\GOE(N-1)}\Big[\indi\{ -K\le\ta_N \la_\ell<\ta_N\la_{N-1}\le K'\} \\
    &\qquad \times \exp\Big(-c\min\Big\{\frac{(N-1)^2\eps^2}{\sum_{i=\ell}^{N-1} h(\ta_N\la_i)^2}, \frac{(N-1)\eps}{\max_{\ell\le j\le N-1} |h(\ta_N\la_j)|}\Big\} \Big)\Big]\\
    &\le 2 \exp\Big(-c\min\Big\{\frac{(N-1)\eps^2}{ \|\hat h\|_\8^2}, \frac{(N-1)\eps}{ \|\hat h\|_\8}\Big\} \Big).
  \end{align*}
  It follows that for $N$ large enough
  \begin{align*}
    &\pz\Big(\Big|\frac{1}{N-1}\sum_{i=\ell}^{N-1} h(\ta_N\la_i)(Z_i^2-1)\Big|>\eps \Big)    \le \pz(\la_\ell<-\ta_N^{-1}K) +\pz(\la_{N-1}>\ta_N^{-1}K')\\
    &\qquad +\pz\Big(\Big|\frac{1}{N-1}\sum_{i=\ell}^{N-1} h(\ta_N\la_i) (Z_i^2-1)\Big|>\eps, -K\le \ta_N\la_\ell<\ta_N\la_{N-1}\le K' \Big)\\
    &\le 3 e^{-c N},
  \end{align*}
  for some constant $c=c(\eps, \ell, K, K',\|\hat h\|_\8)$. The proof is completed by using the Borel--Cantelli lemma again.
\end{proof}

Let us fix $y<-\sqrt2$. Then $h_y(x)=\frac1{x-y}$ is a bounded continuous function on $[\frac12(y-\sqrt2),2]$ whose interior contains $[-\sqrt2,\sqrt2]$. Recall that the Stieltjes transform of the semicircle law is defined as
\[
\sfm(z)= \int_{-\sqrt2}^{\sqrt2} \frac1{x-z}\si_{\rm sc}(\dd x)=-z-\sqrt{z^2-2}, \ \ z<-\sqrt2.
\]
Here we have chosen the branch of square root $\sqrt w>0$ for any $w>0$ so that $\sfm(z)\sim -\frac1z$ as $z\to -\8$. By \pref{le:sclaw} with $\ta_N=(\frac{N-1}{N})^{1/2}$, we have $\lim_{N\to\8} Q(y) =\sqrt{-D''(0)}\sfm(y)$ almost surely. Recalling $\bar \sfa$ as in \pref{eq:z13con0}, let us define
\begin{align}\label{eq:abdef1}
\sfa=\sfa(\rho,u,y)&=\bar \sfa-\sqrt{-D''(0)}\sfm(y)=  \frac1{-2D''(0)-\bt^2 } \Big( \frac{-2D''(0)\al\rho^2 (u-\frac{\mu\rho^2}{2}+\frac{\mu D'(\rho^2) \rho^2}{D'(0) } )}{\sqrt{D(\rho^2)-\frac{D'(\rho^2)^2 \rho^2}{D'(0)} } }  \notag\\
  &\quad  +\al\bt\rho^2 \mu - (-2D''(0)-\bt^2-\al\bt\rho^2)\sqrt{-4D''(0)} y \Big) -\sqrt{-D''(0)}  \sfm(y) .
\end{align}
Observe that the coefficient of $u$ in $\sfa$ is negative, thus $ \sfa<0$ for large $u>0$ and $\sfa>0$ for small $u<0$. However, we cannot replace $Q(y)$ with $\sqrt{-D''(0)}\sfm(y)$ directly due to the lack of good concentration. In fact, $\ez[Q(y)]$ is not even finite. In some sense, all the argument below is meant to resolve this issue. The correct quantity to replace $Q(y)$ turns out to involve the semicircle and chi-square distributions in a twisted way. Intuitively, $\sfa$ is approximately the mean of $z_1'$ conditioning on $z_3'$, and the sign of $\sfa$ determines whether we should use the left or the right tail of the conditional distribution. We also remark that the factor $(\frac{N-1}{N})^{1/2}$ barely affects our analysis below.

We consider a Lipschitz approximation of $f(x)=\frac1{x-y}$. Fix $\de\in(0, -\sqrt2-y]$ and define a Lipschitz function
\[
f_{y,\de}(x)=
\begin{cases}
\frac1{x-y}, & \mbox{if } x\ge y+\de, \\
\frac{x-y}{\de^2}, & \mbox{if } y\le x < y+\de,\\
0, & \mbox{otherwise}.
\end{cases}
\]
We have for $x>y$,
\begin{align}\label{eq:fyde}
 \frac{1}{x-y}\indi\{x\ge y+\de\} \le f_{y,\de}(x)\le \frac{1}{x-y},
\end{align}
and $\int f_{y,\de}(x)\si_{\rm sc}(\dd x) = \sfm(y)$.

Let $h:\rz\to\rz_+$ be a nonnegative bounded Lipschitz function which is strictly decreasing on $[-\sqrt2,\sqrt2]$. By \pref{le:sclaw}, for every $t>-\frac{1}{2\|h\|_\8}$, we have almost surely
\begin{align*}
  \lim_{N\to\8}& \frac{t}{N-1} \sum_{i=\ell}^{N-1} h(\la_i)-\frac1{N-1}\sum_{i=\ell}^{N-1}\frac12\log(1+2 h(\la_i)t) \\ &= t \int h(x)\si_{\rm sc}(\dd x) - \frac12\int \log[1+ 2 h(x)t] \si_{\rm sc} (\dd x).
\end{align*}
Let us define
\begin{align}\label{eq:ladef}
 \Lambda(t;h)&
    =\begin{cases}
  t \int h(x)\si_{\rm sc}(\dd x) - \frac12\int \log[1+ 2 h(x)t] \si_{\rm sc} (\dd x), & \mbox{if } t\ge -\frac{1}{2\|h\|_\8}, \\
  +\8, & \mbox{otherwise}.
                \end{cases}
\end{align}
If $0>t>-\frac{1}{2\|h\|_\8}$, there exists $c\in(0,1)$ such that
\[
\int \log[1+ 2 h(x)t] \si_{\rm sc} (\dd x)\ge \int \frac{2h(x)t}{1+2h(x)t} \si_{\rm sc}(\dd x) =-\int \frac{ch(x)}{\|h\|_\8- ch(x)}\si_{\rm sc}(\dd x)>-\8,
\]
 we see $\Lambda(t;h)<\8$ for all $t>-\frac{1}{2\|h\|_\8}$. Let
$$
\Lambda^*(s;h) =\sup_{t\in\rz}[ts-\Lambda(t;h)]
$$
be the Fenchel--Legendre transform of $\Lambda$. 
Fix $\ell\in \nz$. For $t\in\rz$, we define
  \[
  \Lambda_N(t;h,\ell)= t\int h(x)\si_{\rm sc} (\dd x) +\log \ez  e^{ -\frac{t}N\sum_{i=\ell}^{N-1}h(\la_i)Z_i^2}.
  \]
\begin{lemma}\label{le:gepre}
  Let $h:\rz\to\rz_+$ be a nonnegative bounded Lipschitz function which is strictly decreasing on $[-\sqrt2,\sqrt2]$. 
  Then
  \begin{enumerate}
    \item
    For any real number $t\neq -\frac1{2\|h\|_\8}$, $\Lambda(t;h)=\lim_{N\to\8}\frac1{N-1}\Lambda_N(Nt;h,\ell)$ as extended real number. 
    \item $\Lambda(t;h)$ is differentiable for $t>-\frac{1}{2\|h\|_\8}$.
    \item For any $\Lambda'(-\frac1{2\|h\|_\8}+;h)< s < \int h(x)\si_{\rm sc}(\dd x)$ there is a unique $\tau_s>-\frac1{2\|h\|_\8}$ such that $\Lambda'(\tau_s;h)=s$ and $\Lambda^*(s;h)=s\tau_s-\Lambda(\tau_s;h).$  Here $\Lambda'(-\frac1{2\|h\|_\8}+;h)$ is the right limit of $\Lambda'(t;h)$ at $t=-\frac1{2\|h\|_\8}$. Moreover, for any $t\neq s$, $\tau_s(t-s)+\Lambda^*(s;h)< \Lambda^*(t;h)$, i.e., any such $s$ is an exposed point. For $s_\8=\int h(x)\si_{\rm sc}(\dd x)$, $\Lambda^*(s_\8;h)=\lim_{t\to\8}s_\8 t-\Lambda(t;h)$.
  \end{enumerate}
\end{lemma}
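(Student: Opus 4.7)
The plan is to dispatch the three assertions in turn, with the common first step being to integrate out the Gaussian vector $(Z_i)$ first and reduce everything to a linear statistic of the GOE spectrum, after which the $N^2$-speed concentration of the empirical measure takes over. For Part 1, fix $t>-\frac{1}{2\|h\|_\8}$ first. Since $h$ is nonnegative and bounded, the deterministic lower bound $1+2th(\la_i)\ge 1+2t\|h\|_\8>0$ lets me integrate each $Z_i$, yielding
\[
\log\ez e^{-t\sum_{i=\ell}^{N-1} h(\la_i)Z_i^2}=\log\ez\exp\Bigl(-\tfrac{N-1}{2}\cdot\tfrac{1}{N-1}\sum_{i=\ell}^{N-1} g_t(\la_i)\Bigr),
\]
with $g_t(x):=\log(1+2th(x))$ bounded continuous on $\rz$ (the bound $1+2th(x)\in[1+2t\|h\|_\8,\,1\vee(1+2t\|h\|_\8)]$ away from $0$). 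By \eqref{eq:conineq} the average $\frac{1}{N-1}\sum g_t(\la_i)$ concentrates at $\int g_t\,\dd\si_{\rm sc}$ with speed $N^2$; a standard Laplace sandwich---upper bound split between a good event and its complement of probability $e^{-cN^2}$ on which the integrand is crudely bounded by $e^{\frac{N}{2}\|g_t\|_\8}$, matching lower bound restricted to the good event---then gives $\frac{1}{N-1}\log\ez(\cdot)\to-\frac12\int g_t\,\dd\si_{\rm sc}$, and combining with $\frac{Nt}{N-1}\int h\,\dd\si_{\rm sc}\to t\int h\,\dd\si_{\rm sc}$ recovers $\Lambda(t;h)$. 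For $t<-\frac{1}{2\|h\|_\8}$, the set $\{h>\frac{1}{2|t|}\}$ is open and nonempty (since $\frac{1}{2|t|}<\|h\|_\8$); strict positivity of the GOE joint eigenvalue density on $\rz^{N-1}$ puts some $\la_j$ in this set with positive probability, and the Gaussian integral in $Z_j$ then diverges on that event, so $\ez e^{-t\sum h(\la_i)Z_i^2}=+\8$, consistent with $\Lambda(t;h)=+\8$.

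For Part 2, I would differentiate under the integral sign: on any compact subinterval of $(-\frac{1}{2\|h\|_\8},\8)$, both $\frac{h(x)}{1+2th(x)}$ and $\frac{h(x)^2}{(1+2th(x))^2}$ are bounded uniformly in $x$, justifying
\[
\Lambda'(t;h)=\int h\,\dd\si_{\rm sc}-\int\tfrac{h(x)}{1+2th(x)}\,\dd\si_{\rm sc},\qquad \Lambda''(t;h)=\int\tfrac{2h(x)^2}{(1+2th(x))^2}\,\dd\si_{\rm sc}.
\]
Strict monotonicity plus nonnegativity of $h$ on $[-\sqrt2,\sqrt2]$ forces $h(-\sqrt2)>h(\sqrt2)\ge 0$ and hence $h>0$ on $[-\sqrt2,\sqrt2)$, which is a set of full $\si_{\rm sc}$-mass. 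Consequently $\Lambda''(t;h)>0$, and $\Lambda(\cdot;h)$ is smooth and strictly convex on its effective domain.

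For Part 3, strict convexity makes $\Lambda'(\cdot;h)$ a strictly increasing diffeomorphism from $(-\frac{1}{2\|h\|_\8},\8)$ onto $(\Lambda'(-\frac{1}{2\|h\|_\8}+;h),s_\8)$, with $s_\8:=\lim_{t\to\8}\Lambda'(t;h)=\int h\,\dd\si_{\rm sc}$ by dominated convergence ($\frac{h(x)}{1+2th(x)}\to 0$ pointwise and is dominated by $h$). For $s$ in this open range, $\tau\mapsto s\tau-\Lambda(\tau;h)$ is strictly concave with derivative $s-\Lambda'(\tau;h)$ that is positive near the left boundary and negative near $+\8$, so the supremum is uniquely attained at the $\tau_s$ solving $\Lambda'(\tau_s;h)=s$, giving $\Lambda^*(s;h)=s\tau_s-\Lambda(\tau_s;h)$. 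For the exposed-point inequality, when $t\neq s$ the derivative $\partial_\tau[t\tau-\Lambda(\tau;h)]\bigl|_{\tau=\tau_s}=t-s\neq 0$, so $\tau_s$ is not a maximizer for $\Lambda^*(t;h)$ and $t\tau_s-\Lambda(\tau_s;h)<\Lambda^*(t;h)$, which rearranges to the strict inequality claimed. For the edge case $s=s_\8$, the function $t\mapsto s_\8 t-\Lambda(t;h)$ has derivative $s_\8-\Lambda'(t;h)>0$, so it is strictly increasing on its domain and its supremum is the limit at $+\8$.

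The most delicate step I anticipate is controlling the Laplace sandwich in Part 1: once $g_t$ is verified to be globally bounded (which uses the strict inequality $t>-\frac{1}{2\|h\|_\8}$ to bound $1+2th$ away from zero), the bad-event contribution $e^{CN-cN^2}$ decays fast enough to be absorbed, but the argument is genuinely sensitive to how close $t$ sits to the critical value $-\frac{1}{2\|h\|_\8}$, where $\|g_t\|_\8$ blows up. The complementary $t<-\frac{1}{2\|h\|_\8}$ case is algebraically trivial but conceptually distinct: divergence of the Gaussian integral rests not on any limit theorem but on the strict positivity of the GOE joint eigenvalue density, which forces at least one $\la_j$ into the forbidden region $\{h>\frac{1}{2|t|}\}$ with positive probability for each fixed $N$.
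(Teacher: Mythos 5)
Your proposal is correct and follows essentially the same route as the paper: integrate out the chi-square variables via their moment generating function, reduce to a linear eigenvalue statistic that is controlled by fast ($e^{-cN^{3/2}}$ or $e^{-cN^2}$) concentration of the GOE empirical measure through a Laplace sandwich, obtain divergence for $t<-\frac1{2\|h\|_\infty}$ from the strict positivity of the joint eigenvalue density, and then treat items 2--3 by differentiating $\Lambda$ and standard Legendre duality. The only cosmetic differences are that you invoke \eqref{eq:conineq} where the paper uses \eqref{eq:mms}, and you verify the exposed-point inequality directly from the nonvanishing derivative of $\tau\mapsto t\tau-\Lambda(\tau;h)$ at the interior point $\tau_s$ instead of citing Lemma 2.3.9(b) in \cite{DZ98} (also note the harmless slip that for $t>0$ the uniform lower bound on $1+2th(x)$ is $1\wedge(1+2t\|h\|_\infty)=1$, not $1+2t\|h\|_\infty$).
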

\begin{proof}
  For item 1, using the moment generating function of chi-square distribution with conditioning, we find for $t>-\frac1{2\|h\|_{\8}}$,
  \[
  \frac1{N-1}\Lambda_N(Nt;h,\ell)= \frac{Nt}{N-1}\int h(x)\si_{\rm sc}(\dd x) +\frac1{N-1}\log \ez \exp\Big(-\frac12 \sum_{i=\ell}^{N-1} \log[1+ 2 h(\la_i) t] \Big).
  \]
  Note that $\log[1+h(x)t]$ is a bounded Lipschitz function in $x$. By \pref{eq:mms}, there exists a constant $c=c(h,t,\ell)>0$ such that for $N$ large,
  \begin{align*}\label{}
    \pz\Big(\Big|\frac{1}{N-1} \sum_{i=\ell}^{N-1} \log[1+2 h(\la_i)t]- \int \log[1+2h(x)t]\si_{\rm sc}(\dd x)\Big|>N^{-1/4}\Big) & \le e^{-c N^{3/2}}.
  \end{align*}
  Let
  $$\Omega_N=\{|\frac{1}{N-1} \sum_{i=\ell}^{N-1} \log[1+2h(\la_i)t]- \int \log[1+2h(x)t]\si_{\rm sc}(\dd x)|\le N^{-1/4}\}$$
  be the event with overwhelming probability. Since
  \begin{align*}
   & \limsup_{N\to\8}   \frac1{N-1}\log \ez \indi_{\Omega_N^c}\exp\Big(-\frac12 \sum_{i=\ell}^{N-1} \log[1+ 2h(\la_i) t] \Big)\\
   &\le \limsup_{N\to\8}   \frac1{N-1}\log \exp\Big[\frac{(N-1)\|h\|_\8 t_-}{1+ 2\|h\|_\8 t_-} -c N^{3/2}\Big]=-\8,
  \end{align*}
  it follows that
  \begin{align*}
    \lim_{N\to\8} & \frac1{N-1}\log \ez \exp\Big(-\frac12 \sum_{i=\ell}^{N-1} \log[1+ 2h(\la_i) t] \Big)\\
    &=\lim_{N\to\8}  \frac1{N-1}\log \Big( e^{-\frac{N-1}{2}[\int \log[1+2 h(x)t]\si_{\rm sc}(\dd x)+O(N^{-1/4})]}\pz (\Omega_N) \Big)\\
    &=-\frac{1}{2}  \int \log[1+2 h(x)t]\si_{\rm sc}(\dd x)
  \end{align*}
  and we have $\lim_{N\to\8}\frac1{N-1}\Lambda_N(Nt;h,\ell)=\Lambda(t;h)$ for $t>-\frac1{2\|h\|_{\8}}$. For $t< -\frac1{2\|h\|_{\8}}$, by continuity we may find $y_0\in \rz$ such that $1+2h(y_0)t< 0$. It follows that
  \[
  \ez\prod_{i=\ell}^{N-1} \frac1{[1+2h(\la_i)t]^{1/2}} = \8
  \]
  because the joint density of eigenvalues does not vanish in a neighborhood of $(y_0,y_0,...,y_0)\in \rz^{N-\ell}$.

  For items 2 and 3, by the dominate convergence theorem,
  \[
  \Lambda'(t;h)= \int h(x)\si_{\rm sc}(\dd x) - \int \frac{h(x)}{1+2 h(x)t} \si_{\rm sc} (\dd x)
  \]
  is strictly increasing and continuous in $t$ such that $\Lambda'(t;h)<0$ for $t<0$, $\Lambda'(0;h)=0$ and $\lim_{t\to+\8} \Lambda'(t;h)=\int h(x)\si_{\rm sc}(\dd x)$. It follows that for any $\Lambda'(-\frac1{2\|h\|_\8}+;h)< s < \int h(x)\si_{\rm sc}(\dd x)$ there is a unique $\tau_s>-\frac1{2\|h\|_\8}$ such that $\Lambda'(\tau_s;h)=s$. Therefore,
  \[
  \Lambda^*(s;h)=s\tau_s-\Lambda(\tau_s;h).
  \]
  If $s=\int h(x)\si_{\rm sc}(\dd x)$, the function $t\mapsto st-\Lambda(t;h)$ is strictly increasing and we have
  \[
  \Lambda^*(s;h)=\lim_{t\to\8} st-\Lambda(t;h).
  \]
  The assertion on exposed points is just \cite{DZ98}*{Lemma 2.3.9(b)}.
\end{proof}

Let us define
\begin{align*}
  \bar\Lambda(t;h)&=\limsup_{N\to\8} \frac1N\log\ez  \exp\Big[t\int h(x)\si_{\rm sc}(\dd x)- \frac{t}N\sum_{i=\ell}^{N-1}h(\la_i)Z_i^2\Big],\\
  \bar\Lambda^*(s;h) &=\sup_{t\in\rz} \{st-\bar\Lambda(t;h)\}.
\end{align*}
By \pref{le:gepre}, $\Lambda(t;h)=\bar\Lambda(t;h)$ for $t\neq -\frac1{2\|h\|_\8}$. Since for $t_1<t_2$,
  \begin{align*}
    &t_1\int h(x)\si_{\rm sc}(\dd x) +\limsup_{N\to\8}\frac1N\log\ez [ e^{-{t_1}\sum_{i=\ell}^{N-1}h (\la_i){Z_i^2} }]\\
    &\ge t_1\int h(x)\si_{\rm sc}(\dd x) + \limsup_{N\to\8} \frac1N\log\ez [ e^{-{t_2}\sum_{i=\ell}^{N-1} h (\la_i){Z_i^2}}],
  \end{align*}
  sending $t_2\to t_1+$, we deduce that for any $t$, $\bar\Lambda(t;h)  \ge \bar\Lambda(t+;h)=\Lambda(t+;h)$.
  It follows together with continuity that for any $s\in \rz$,
  \begin{align}\label{eq:la**}
    \bar\Lambda^*(s;h) =\sup_{t>-\frac1{2\|h\|_\8}} \{st-\Lambda(t;h)\}=\Lambda^*(s;h),
  \end{align}
  and thus
  \begin{align}\label{eq:lasy}
   \Lambda^*(s;h)=
   \begin{cases}
   +\8,& s> \int h(x) \si_{\rm sc}(\dd x),\\
    \lim_{t\to\8} st -\Lambda(t;h),& s= \int h(x) \si_{\rm sc}(\dd x), \\
      s\tau_s-\Lambda(\tau_s;h),& \Lambda'(-\frac1{2\|h\|_\8}+;h)<s< \int h(x) \si_{\rm sc}(\dd x),\\
      -\frac{s}{2\|h\|_\8}  - \Lambda(-\frac1{2\|h\|_\8}+;h), &s\le \Lambda'(-\frac1{2\|h\|_\8}+;h).
   \end{cases}
  \end{align}
  We will mainly consider the case $h=f_{y,\de}$ for $y<-\sqrt2$ and $\de\in(0,-\sqrt2-y]$. In particular,
  \[
  -\frac1{2\|f_{y,\de}\|_\8}=-\frac\de2,\  \ \Lambda(y+\de;f_{y,\de})<\8, \ \ -\frac1{2\|f_{y,-\sqrt2-y}\|_\8}=\frac{\sqrt2+y}2.
  \]
  Note also that $\Lambda^*(s;f_{y,\de})$ does not depend on $\de>0$ if $s\ge 0$. For simplicity, we will write $\Lambda^*(s;y)=\Lambda^*(s;f_{y,-\sqrt2-y})$.

\subsection{Left tail of $Q(y)$}

We first consider the right tail of $\sfa_N$, or the left tail of $Q(y)$. Sometimes the right tail of $Q(y)$ will also be obtained as a by-product.

\begin{proposition}\label{pr:ge}
  For any fixed $\ell\in\nz$, $y<-\sqrt2$, $0<\de\le -\sqrt2-y$, and $\Lambda(-\frac\de2+;f_{y,\de})\le s<s'\le\sfm(y)$,
  \[
  \lim_{N\to\8}\frac1N\log \pz\Big(\sfm(y) -s'\le \frac{1}{N} \sum_{i=\ell}^{N-1} f_{y,\de}(\la_i)Z_i^2 \le \sfm(y) -s\Big) = -\inf_{x\in[s,s']}\Lambda^*(x;f_{y,\de}).
  \]
  In particular, for $0\le s<s' \le \sfm(y)$,
  \[
  \lim_{N\to\8}\frac1N\log \pz\Big(\sfm(y)-s'\le \frac{1}{N} \sum_{i=\ell}^{N-1} f_{y,\de}(\la_i)Z_i^2 \le \sfm(y)-s\Big) = -\Lambda^*(s;f_{y,\de}).
  \]
\end{proposition}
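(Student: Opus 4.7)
The plan is to deduce the statement from the general Gärtner–Ellis large deviation theorem applied to $\tilde S_N:=\sfm(y)-\frac1N\sum_{i=\ell}^{N-1}f_{y,\de}(\la_i)Z_i^2$. Lemma~\ref{le:gepre} supplies the three ingredients this theorem requires. Item 1 identifies the scaled logarithmic moment generating function
\[
\Lambda(t;f_{y,\de}) = \lim_{N\to\8}\frac1N\log \ez e^{Nt\tilde S_N},\qquad t\neq -\de/2,
\]
with $\Lambda(t;f_{y,\de})=+\8$ for $t<-\de/2$. Item 2 gives the differentiability and strict convexity of $\Lambda$ on $(-\de/2,\8)$ with $\Lambda(0)=\Lambda'(0)=0$, and item 3 shows that every $s^*\in(\Lambda'(-\de/2+;f_{y,\de}),\sfm(y))$ is an exposed point of $\Lambda^*(\cdot;f_{y,\de})$ with exposing tilt $\tau_{s^*}>-\de/2$ satisfying $\Lambda'(\tau_{s^*};f_{y,\de})=s^*$.

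For the upper bound on a closed interval $[s,s']$, I would apply Chebyshev's inequality to the two half-line events $\{\tilde S_N\ge s\}$ and $\{\tilde S_N\le s'\}$ using tilts $t\ge 0$ and $t\le 0$ respectively, pass to the limit via item 1, and optimize over $t$. Convexity of $\Lambda$ with $\Lambda(0)=\Lambda'(0)=0$ ensures that each one-sided supremum coincides with the full Fenchel–Legendre transform $\Lambda^*$ at the corresponding endpoint, giving
\[
\limsup_{N\to\8}\frac1N\log \pz(\tilde S_N\in [s,s']) \le -\inf_{x\in [s,s']}\Lambda^*(x;f_{y,\de}).
\]
For the matching lower bound, I would perform a change of measure centered at an exposed point. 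For any $s^*\in (s,s')$ with associated tilt $\tau_{s^*}$, consider the tilted measure $\mathbb{Q}$ whose density with respect to $\pz$ is proportional to $e^{N\tau_{s^*}\tilde S_N}$. Conditionally on $\la_1^{N-1}$ the $Z_i$'s remain independent centered Gaussians with variances $(1+2\tau_{s^*}f_{y,\de}(\la_i))^{-1}$ under $\mathbb{Q}$, so the conditional mean of $\tilde S_N$ equals $\sfm(y)-\frac1N\sum_{i=\ell}^{N-1}f_{y,\de}(\la_i)/(1+2\tau_{s^*}f_{y,\de}(\la_i))$. Combining Lemma~\ref{le:sclaw}, applied to the bounded Lipschitz function $f_{y,\de}/(1+2\tau_{s^*}f_{y,\de})$ on a neighborhood of $[-\sqrt2,\sqrt2]$, with a conditional second-moment bound on the fluctuations of the $Z_i^2$'s yields $\tilde S_N\to s^*$ in $\mathbb{Q}$-probability. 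Translating back to $\pz$ via the Radon–Nikodym weight then gives
\[
\liminf_{N\to\8}\frac1N\log \pz(\tilde S_N\in (s,s'))\ge -s^*\tau_{s^*}+\Lambda(\tau_{s^*};f_{y,\de})=-\Lambda^*(s^*;f_{y,\de}),
\]
and sending $s^*\to \argmin_{[s,s']}\Lambda^*$ closes the gap by continuity of $\Lambda^*$.

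The ``in particular'' assertion reduces to noting that $\Lambda^*(\cdot;f_{y,\de})$ is nondecreasing on $[0,\sfm(y)]$: since $\Lambda'(0)=0$ and $\Lambda'$ is strictly increasing, $\tau_s\ge 0$ for every $s\in [0,\sfm(y))$, so by the envelope identity $(\Lambda^*)'(s)=\tau_s\ge 0$. Hence $\inf_{x\in[s,s']}\Lambda^*(x;f_{y,\de})=\Lambda^*(s;f_{y,\de})$. The main technical obstacle is the failure of essential smoothness of $\Lambda(\cdot;f_{y,\de})$ at the boundary $t=-\de/2$: because $\|f_{y,\de}\|_\8$ is attained only at the single point $x=y+\de$, which is $\si_{\rm sc}$-null, the derivative $\Lambda'(t)$ admits a finite right limit as $t\to -\de/2+$ rather than diverging. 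This lack of steepness is what restricts the exposed-point conclusion in Lemma~\ref{le:gepre} and thereby the admissible range of $s$ in the proposition.
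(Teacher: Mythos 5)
Your proposal is sound and rests on the same foundation as the paper's proof: both are G\"artner--Ellis arguments whose hypotheses are supplied by Lemma \ref{le:gepre} together with the identification \eqref{eq:la**} of $\Lambda^*$, and both close the endpoint issue by convexity/continuity of $\Lambda^*$ with $\Lambda^*(0)=0$ (which also gives the ``in particular'' monotonicity claim). The difference is one of execution: the paper simply invokes the abstract upper bound for compact sets \cite{DZ98}*{Theorem 4.5.3}, exponential tightness, and the exposed-point lower bound \cite{DZ98}*{Theorem 4.5.20}, whereas you re-derive these by hand, via a Chernoff bound on the two half-lines for the upper bound and an explicit exponential tilting at an exposed point $s^*\in(s,s')$ for the lower bound. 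Your hand-made version is more self-contained and makes visible exactly where the lack of steepness at $t=-\frac\de2$ enters, which is a genuine (if modest) benefit.

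One step in your lower bound needs more care than you give it. The tilting factor $e^{N\tau_{s^*}\tilde S_N}\propto e^{-\tau_{s^*}\sum_i f_{y,\de}(\la_i)Z_i^2}$ does not only change the conditional law of the $Z_i$'s: integrating out the Gaussians shows that under $\mathbb{Q}$ the marginal law of the eigenvalues is itself reweighted by $\prod_{i}(1+2\tau_{s^*}f_{y,\de}(\la_i))^{-1/2}$, an $e^{O(N)}$ factor. So you cannot apply Lemma \ref{le:sclaw} (stated under $\pz$) to the tilted environment as if nothing happened; you must argue that the empirical-measure concentration survives the tilt, e.g.\ by noting that the bad event in \eqref{eq:mms} (or \eqref{eq:conineq}) has $\pz$-probability $e^{-cN^{3/2}}$ or $e^{-cN^2}$, which beats the $e^{O(N)}$ Radon--Nikodym weight after a H\"older estimate using $\Lambda(\ga\tau_{s^*};f_{y,\de})<\8$ for some $\ga>1$. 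This is exactly the kind of bookkeeping that the paper's route delegates to \cite{DZ98}*{Theorem 4.5.20} (whose hypotheses are checked in Lemma \ref{le:gepre}, including the $\ga_x>1$ condition) and that Lemma \ref{le:gepre} itself handles through the event $\Omega_N$. With that point added, your argument is complete.
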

\begin{proof}
  This is a consequence of the G\"artner--Ellis theorem applying to the random variables $\sfm(y)-\frac1N\sum_{i=\ell}^{N-1} f_{y,\de}(\la_i)(Z_i^2)$. Indeed, with \pref{le:gepre} and \pref{eq:la**} we deduce from \cite{DZ98}*{Theorem 4.5.3} the upper bound for compact sets
  \[
  \limsup_{N\to\8} \frac1N\log\pz\Big(\sfm(y)-\frac1N\sum_{i=\ell}^{N-1} f_{y,\de}(\la_i)Z_i^2 \in [s,s']\Big)\le- \inf_{x\in [s,s'] }\Lambda^*(x;f_{y,\de}).
  \]
  This also implies that this sequence of random variables is exponentially tight as shown in the proof of \cite{DZ98}*{Theorem 2.3.6}. By \pref{le:gepre}, every $x\in(s,s')$ is an exposed point of $\Lambda^*$ with an exposing hyperplane $\tau_x$ for which $\Lambda(\tau_x;f_{y,\de})=\lim_{N\to\8}\frac1N \Lambda_N(N\tau_x;f_{y,\de})$ and $\Lambda(\ga_x \tau_x;f_{y,\de})<\8$ for some $\ga_x>1$,
  it follows from \cite{DZ98}*{Theorem 4.5.20} that
  \[
  \liminf_{N\to\8} \frac1N\log\pz\Big(\sfm(y)-\frac1N\sum_{i=\ell}^{N-1} f_{y,\de}(\la_i)Z_i^2 \in [s,s']\Big)\ge - \inf_{x\in (s,s')}\Lambda^*(x;f_{y,\de}).
  \]
  Since $\Lambda'(0;f_{y,\de})=0$ and $\Lambda'(t;f_{y,\de})$ is strictly increasing, we see that $\Lambda^*(0;f_{y,\de})=0$. But $\Lambda^*(\cdot;f_{y,\de})$ is nonnegative and convex, we have
  \[
  \inf_{x\in (s,s')}\Lambda^*(x;f_{y,\de}) = \inf_{x\in [s,s']}\Lambda^*(x;f_{y,\de})
  \]
  and the first assertion follows. If $s\ge0$, then $\inf_{x\in [s,s']}\Lambda^*(x;f_{y,\de})=\Lambda^*(s;f_{y,\de})$.
\end{proof}

Fix $k,\ell\in\nz$. We will need to deal with the joint probability involving $\la_j$ and $\frac1N\sum_{i=\ell}^{N-1}\frac{Z_i^2}{\la_i-y}$. 
Let us write
 \begin{align*}
 \tilde\Lambda_N(t_1,t_2;\de)&= \log \ez\exp\Big\{t_1\la_j -\frac{t_2}N \sum_{i=\ell}^{N-1} f_{y,\de}(\la_i){Z_i^2}+t_2\sfm(y) \Big\},\ \ t_1,t_2\in\rz,\\
 \tilde \Lambda(t_1,t_2;\de)&= \limsup_{N\to\8}\frac1N \tilde \Lambda_N(N t_1, Nt_2;\de),\\
 \tilde \Lambda^*(s_1,s_2;\de) &=\sup_{(t_1,t_2)\in\rz^2} [s_1t_2+s_2t_2-\tilde \Lambda(t_1,t_2;\de)].
 \end{align*}

\begin{lemma}\label{le:exppt}
For $s_1,s_2\in\rz$,
\[
 \tilde \Lambda^*(s_1,s_2;\de)=j J_1(s_1)+  \Lambda^*(s_2;f_{y,\de}).
 \]
  If $y'<-\sqrt2, \Lambda'(-\frac\de2+;f_{y,\de})< s<\sfm(y)$, then $(y',s)$ is an exposed point of $\tilde \Lambda^*$.
\end{lemma}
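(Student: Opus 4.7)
The plan is to compute $\tilde\Lambda(t_1,t_2;\de)$ explicitly, show it decomposes additively into a function of $t_1$ alone plus a function of $t_2$ alone, and then invoke Fenchel--Moreau together with the exposed-point analysis already done in \pref{le:gepre}. Since $(Z_i)$ are independent of the eigenvalues of $\GOE_{N-1}$, I first integrate the Gaussians out. For $t_2>-1/(2\|f_{y,\de}\|_\8)$, conditioning on $\la_1^{N-1}$ and using the chi-square moment generating function gives
\[
\ez\Bigl[\exp\Bigl(-t_2\sum_{i=\ell}^{N-1} f_{y,\de}(\la_i)Z_i^2\Bigr)\Big|\la_1^{N-1}\Bigr]=\prod_{i=\ell}^{N-1}(1+2t_2 f_{y,\de}(\la_i))^{-1/2},
\]
while for $t_2<-1/(2\|f_{y,\de}\|_\8)$ the inner integral diverges on a set of positive measure in the eigenvalue configuration (by continuity of the joint density) and $\tilde\Lambda(t_1,t_2;\de)=+\8$, exactly as in the proof of \pref{le:gepre}(1).

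Next, the scaled log-sum $\frac1{N-1}\sum_{i=\ell}^{N-1}\log(1+2t_2 f_{y,\de}(\la_i))$ concentrates at $\int\log(1+2t_2 f_{y,\de}(x))\si_{\rm sc}(\dd x)$ at superexponential speed $e^{-cN^{3/2}}$ via \eqref{eq:mms}, since $x\mapsto\log(1+2t_2 f_{y,\de}(x))$ is bounded and Lipschitz for $t_2>-1/(2\|f_{y,\de}\|_\8)$. Splitting the expectation over a good event and its complement---on the latter, boundedness of the log-sum combined with \eqref{eq:ladein} and Cauchy--Schwarz makes the contribution negligible at scale $N$---the log-sum may be replaced by its deterministic limit at cost $o(N)$ in the exponent, and the surviving factor $e^{Nt_1\la_j}$ decouples. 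Since $\la_j$ satisfies the LDP \eqref{eq:jk1x} with rate $jJ_1$ and has adequate exponential moments by \eqref{eq:ladein}, Varadhan's lemma yields $\lim\frac1N\log\ez e^{Nt_1\la_j}=\sup_x[t_1 x-jJ_1(x)]=(jJ_1)^*(t_1)$. Using $\int f_{y,\de}(x)\si_{\rm sc}(\dd x)=\sfm(y)$, the $t_2\sfm(y)$ term and the deterministic log-integral fuse into $\Lambda(t_2;f_{y,\de})$, producing
\[
\tilde\Lambda(t_1,t_2;\de)=(jJ_1)^*(t_1)+\Lambda(t_2;f_{y,\de}).
\]

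The additive structure forces $\tilde\Lambda^*(s_1,s_2;\de)=((jJ_1)^*)^*(s_1)+\Lambda^*(s_2;f_{y,\de})$. Since $J_1''(x)=-x/\sqrt{x^2-2}>0$ on $(-\8,-\sqrt2)$ and $J_1\equiv+\8$ on $(-\sqrt2,\8)$, $jJ_1$ is a proper, convex, lower semicontinuous function on $\rz$, and Fenchel--Moreau delivers $((jJ_1)^*)^*=jJ_1$, establishing the first claim. For the exposed point, I take $(t_1^*,t_2^*)=((jJ_1)'(y'),\tau_s)$ with $(jJ_1)'(y')=-j\sqrt{{y'}^2-2}$ and $\tau_s$ given by \pref{le:gepre}(3). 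The strict exposed-point inequality
\[
t_1^*(s_1-y')+t_2^*(s_2-s)+\tilde\Lambda^*(y',s;\de)<\tilde\Lambda^*(s_1,s_2;\de)
\]
decouples by separability into two nonnegative brackets; the first is strict when $s_1\ne y'$ by strict convexity and smoothness of $jJ_1$ at $y'<-\sqrt2$ (with the case $s_1>-\sqrt2$ handled trivially since $jJ_1(s_1)=+\8$), and the second is strict when $s_2\ne s$ by \pref{le:gepre}(3), so the sum is strict for any $(s_1,s_2)\ne(y',s)$.

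The main technical point is the decoupling in paragraph two: because $\la_j$ may coincide with an index appearing in the log-sum (when $j\ge\ell$), independence is unavailable, and one must argue that the superexponential concentration \eqref{eq:mms} dominates the $e^{O(N)}$ blow-up from $e^{Nt_1\la_j}$ on the bad event. Everything else then reduces to bookkeeping with Fenchel--Moreau and the separable-Legendre computation.
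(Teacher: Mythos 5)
Your proposal follows essentially the same route as the paper: condition on the eigenvalues and integrate out the chi-squares, replace the log-sum by its semicircle integral using the superexponential concentration \eqref{eq:mms} (with the bad event killed by the exponential moment bound from \eqref{eq:ladein}), apply Varadhan's lemma to $e^{Nt_1\la_j}$, and then pass from the separable limit $\tilde\Lambda(t_1,t_2;\de)=(jJ_1)^*(t_1)+\Lambda(t_2;f_{y,\de})$ to the stated formula by convex duality; the exposed-point claim is the same as the paper's appeal to \pref{le:gepre} and \cite{DZ98}*{Lemma 2.3.9}, only with the exposing hyperplane $((jJ_1)'(y'),\tau_s)$ written out explicitly, which is fine.

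The one point you gloss over is the boundary value $t_2=-\frac\de2$: the limit defining $\tilde\Lambda$ is only identified for $t_2\neq-\frac\de2$, and in fact $\tilde\Lambda(t_1,-\frac\de2;\de)$ may strictly exceed $(jJ_1)^*(t_1)+\Lambda(-\frac\de2;f_{y,\de})$, so ``the additive structure forces'' the conjugate formula is not literally available on all of $\rz^2$. The fix is short and is spelled out in the paper (the same mechanism as \eqref{eq:la**}): since $t_2\mapsto\log\ez\exp\{t_1\la_j-\frac{t_2}N\sum_{i}f_{y,\de}(\la_i)Z_i^2\}$ is non-increasing, one has $\tilde\Lambda(t_1,-\frac\de2;\de)\ge\tilde\Lambda(t_1,-\frac\de2+;\de)$, so the single point $t_2=-\frac\de2$ cannot raise the supremum in the Legendre transform, and by right-continuity of $\Lambda(\cdot;f_{y,\de})$ at $-\frac\de2$ the supremum over $t_2>-\frac\de2$ already yields $\Lambda^*(s_2;f_{y,\de})$. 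Adding this two-line observation closes the only gap; everything else in your argument, including the handling of the possible overlap between $\la_j$ and the indices in the quadratic form via conditioning on all eigenvalues, matches the paper's proof.
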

\begin{proof}
 Using the tail probability estimate of $\la_1$ \pref{eq:ladein},
 \[
 \pz(\la_j\le -K)\le\pz(\la_1\le -K)\le e^{-\frac19(N-1)K^2}
  \]
  for $K$ large enough. If $t_1\le0$, we may take in particular $K>10|t_1|$ so that
 \begin{align*}
   \ez e^{t_1 N \la_j}\indi\{\la_j\le -K\} &\le \sum_{i=1}^\8 e^{-t_1N K(i+1)}\pz(\la_j \in[-(i+1)K, -iK])\\
   &\le \sum_{i=1}^\8 e^{-t_1N K(i+1)-\frac19(N-1)i^2K^2}\le 2.
 \end{align*}
 It follows that
 \begin{align*}
   \limsup_{N\to\8}\frac1N\log \ez e^{t_1 N \la_j}\le \limsup_{N\to\8}  \frac1N\log [e^{-t_1NK}\pz(\la_j>-K)+\ez e^{t_1 N \la_j}\indi\{\la_j\le -K\} ]\le |t_1|K.
 \end{align*}
 Similar estimates hold for $t_1>0$ as well. This verifies Varadhan's lemma \cite{DZ98}*{Theorem 4.3.1} and thus
 \[
 \lim_{N\to\8} \frac1N\log \ez e^{N t_1 \la_j} = \sup_{x\in \rz}[ t_1x-j J_1(x)].
 \]
 For $t_1\in\rz, t_2>-\frac\de2$, by conditioning and \pref{eq:mms} as before,
 \begin{align*}
   \tilde\Lambda_N(t_1,t_2;\de)&=t_2\sfm(y)+\log\ez \exp \Big\{ t_1\la_j - \frac1{2N}\sum_{i=\ell}^{N-1} \log[1+2t_2f_{y,\de}(\la_i)]\Big\}\\
   &= t_2\sfm(y)+\log\Big[\ez \indi_{\Omega_N} \exp \Big\{ t_1\la_j-\frac12 \int \log[1+2t_2f_{y,\de}(x)]\si_{\rm sc}(\dd x) +O(N^{-1/4})\Big\}\\
   &\quad +\ez\indi_{\Omega_N^c}\exp \Big\{ t_1\la_j - \frac1{2N}\sum_{i=\ell}^{N-1} \log[1+2t_2f_{y,\de}(\la_i)]\Big\}\Big]
 \end{align*}
 where $\Omega_N=\{|\frac{1}{N} \sum_{i=k}^{N-1} \log[1+2f_{y,\de}(\la_i)t_2]- \int \log[1+2f_{y,\de}(x)t_2]\si_{\rm sc}(\dd x)|\le N^{-1/4}\}$ and $\pz(\Omega_N^c)\le e^{-c(t_2,\de,y,k) N^{3/2}}$. From here we deduce that for $t_1\in\rz, t_2>-\frac\de2$,
 \begin{align*}
   \tilde \Lambda(t_1,t_2;\de) &=  t_2\sfm(y) -\frac12 \int \log[1+2t_2f_{y,\de}(x)]\si_{\rm sc}(\dd x) +\sup_{x\in \rz}[ t_1x-j J_1(x)],
 \end{align*}
 and $\tilde \Lambda(t_1,t_2;\de) =+\8$ for $t_2<-\frac\de2$. By the duality lemma \cite{DZ98}*{Lemma 4.5.8},
 \[
 \tilde \Lambda^*(s_1,s_2;\de)=j J_1(s_1)+  \Lambda^*(s_2;f_{y,\de}).
 \]
 Here is a subtlety that is worth more explanation following the same idea as for \pref{eq:lasy} in one variable. For general $0<\de\le -\sqrt2-y$, one has $\tilde \Lambda(t_1,-\frac\de2;\de) \ge \tilde \Lambda(t_1,-\frac\de2+;\de)$ and the two sides may not be equal. Hence $\tilde \Lambda^*(s_1,s_2;\de)$ does not depend on $\Lambda(t_1,-\frac\de2;\de)$ and is instead determined by $\Lambda(t_2;f_{y,\de})$ for $t_2>-\frac\de2$. Since $\Lambda(t_2;f_{y,\de})$ is right continuous at $t_2=-\frac\de2$, $\Lambda^*(s_2;f_{y,\de})$ yields the correct formula for $\tilde \Lambda^*(s_1,s_2;\de)$.

 By \pref{le:gepre} and \cite{DZ98}*{Lemma 2.3.9}, if $y'<-\sqrt2, \Lambda'(-\frac{\de}{2}+;f_{y,\de})< s<\sfm(y)$, then $(y',s)$ is an exposed points for $\tilde \Lambda^*$.
\end{proof}

\begin{proposition}\label{pr:geup}
  Let $y<-\sqrt2$ and $0<\de\le -\sqrt2-y$. Fix $k,\ell\in\nz$. Then for any closed set $B$ and $x<-\sqrt2$,
  \begin{align*}
    \limsup_{N\to\8}\frac1N\log \pz \Big(\la_k\le x,\ \sfm(y)-\frac1N\sum_{i=\ell}^{N-1} f_{y,\de}(\la_i)Z_i^2 \in B\Big) & \le -k J_1(x) -\inf_{s\in B}\Lambda^*(s;f_{y,\de}).
  \end{align*}
  Moreover, if $0\le s<\sfm(y)$,
  \[
  \liminf_{N\to\8}\frac1N\log \pz \Big(\la_k\le x,\ \sfm(y)-\frac1N\sum_{i=\ell}^{N-1} f_{y,\de}(\la_i)Z_i^2 \ge s\Big)  \ge -k J_1(x) -\Lambda^*(s;f_{y,\de}).
  \]
\end{proposition}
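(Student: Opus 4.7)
The plan is to apply the two-dimensional G\"artner--Ellis theorem to the pair
\[
\mathbf{W}_N := \Big(\la_k,\; \sfm(y) - \frac{1}{N}\sum_{i=\ell}^{N-1} f_{y,\de}(\la_i) Z_i^2\Big),
\]
using the joint log moment generating function identified in \pref{le:exppt} (with $j=k$ there), whose Fenchel--Legendre transform factorizes as $\tilde\Lambda^*(s_1,s_2;\de) = k J_1(s_1) + \Lambda^*(s_2; f_{y,\de})$. This mirrors the strategy used to prove \pref{pr:ge}, but now in joint form, and the separable form of the rate function is what allows the two infima to decouple.

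For the upper bound, I first verify that $\mathbf{W}_N$ is exponentially tight in $\rz^2$: the left tail of $\la_k$ is controlled by $\pz(\la_1\le -K)\le e^{-NK^2/9}$ via \eqref{eq:ladein}, and the second coordinate lies in a compact interval with exponentially small failure probability because $0\le f_{y,\de}\le 1/\de$ and $\frac{1}{N}\sum_{i=\ell}^{N-1} Z_i^2$ concentrates near $1$ at exponential speed. Up to such a negligible event, $\mathbf{W}_N$ is supported on a compact rectangle, so combining \cite{DZ98}*{Theorem~4.5.3} with the computation of $\tilde\Lambda^*$ in \pref{le:exppt} gives
\[
\limsup_{N\to\infty}\frac{1}{N}\log\pz\bigl(\mathbf{W}_N\in (-\infty,x]\times B\bigr) \le -\inf_{s_1\le x,\, s_2\in B}\bigl[k J_1(s_1)+\Lambda^*(s_2; f_{y,\de})\bigr].
\]
Because $J_1$ is nonincreasing on $(-\infty,-\sqrt 2]$ with $J_1(-\sqrt 2)=0$, one has $\inf_{s_1\le x} k J_1(s_1) = k J_1(x)$, and the additive structure of the rate function separates the two minimizations to yield the claimed bound.

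For the lower bound, fix $s\in[0,\sfm(y))$ and choose $\eps>0$ small enough that $s+\eps<\sfm(y)$. The open rectangle $G_\eps := (x-\eps,x)\times(s,s+\eps)$ is contained in $(-\infty,x]\times[s,\infty)$, so the target probability is bounded below by $\pz(\mathbf{W}_N\in G_\eps)$. Since $\Lambda'(0;f_{y,\de})=0$ and $\Lambda'(\cdot;f_{y,\de})$ is strictly increasing, $\Lambda'(-\de/2+;f_{y,\de})<0\le s$, and every point of $G_\eps$ satisfies $s_1<x<-\sqrt 2$ and $\Lambda'(-\de/2+;f_{y,\de})<s_2<\sfm(y)$; \pref{le:exppt} then certifies each such point as an exposed point of $\tilde\Lambda^*$. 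Invoking \cite{DZ98}*{Theorem~4.5.20} yields
\[
\liminf_{N\to\infty}\frac{1}{N}\log\pz(\mathbf{W}_N\in G_\eps) \ge -\inf_{(s_1,s_2)\in G_\eps}\bigl[k J_1(s_1)+\Lambda^*(s_2; f_{y,\de})\bigr].
\]
Continuity of $J_1$ on $(-\infty,-\sqrt 2)$, together with right-continuity of the convex function $\Lambda^*(\cdot;f_{y,\de})$ at $s$ (automatic for $s>0$ since we are in the interior of its effective domain, and immediate at $s=0$ where $\Lambda^*(0;f_{y,\de})=0$), lets me send $\eps\to 0$ to recover $-k J_1(x)-\Lambda^*(s;f_{y,\de})$.

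The main technical hurdle is confirming that the exposed-point hypothesis of \cite{DZ98}*{Theorem~4.5.20} is met throughout $G_\eps$ and that the associated exposing hyperplanes $(\tau_{s_1},\tau_{s_2})$ sit in the interior of the effective domain of $\tilde\Lambda$. Both are handled inside the proof of \pref{le:exppt}, which isolates the smooth region $t_2>-\de/2$ of $\tilde\Lambda$ and matches the dual slope $\tau_{s_2}$ with the parameter produced by \pref{le:gepre}(3). Once these structural facts are granted, the rest is a routine separation-of-variables calculation using the product form of $\tilde\Lambda^*$.
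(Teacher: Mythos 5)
Your proposal is correct, and your lower bound is essentially the paper's: both rest on Lemma \ref{le:exppt} (product form and exposed points of $\tilde\Lambda^*$) combined with the abstract G\"artner--Ellis lower bound \cite{DZ98}*{Theorem 4.5.20} and a continuity/monotonicity passage to the limit; the paper phrases this as an infimum over $x'<x$, $s<s'<\sfm(y)$ rather than over your small rectangle $G_\eps$, which is only a cosmetic difference. Where you genuinely diverge is the upper bound. The paper does not invoke the two-dimensional G\"artner--Ellis upper bound: it runs a direct exponential Chebyshev estimate conditionally on the eigenvalues, keeping the indicator $\indi\{\la_k\le x\}$ intact, replaces the linear eigenvalue statistic by its semicircle value on the high-probability event from \eqref{eq:mms}, applies the LDP of $\la_k$ separately, and only then optimizes over $t\in(-\de/2,0]$ (resp.\ $t\ge0$) for half-lines $B=(-\8,w]$ (resp.\ $[w,\8)$), extending to general closed $B$ at the end. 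Your route---exponential tightness plus \cite{DZ98}*{Theorem 4.5.3} and the additive form of $\tilde\Lambda^*$ from Lemma \ref{le:exppt}---is shorter and valid here, since Lemma \ref{le:exppt} precedes this proposition, so there is no circularity; two small polish points are that exponential tightness also requires controlling the upper tail of $\la_k$ (trivial via \eqref{eq:ladein}), and that it is in fact automatic because $\tilde\Lambda$ is finite in a neighborhood of the origin. What the paper's hands-on bound buys is a template that is later pushed to the uniform-in-$y'$ estimate in the proof of Proposition \ref{pr:ge2}, where one needs bounds holding uniformly over a family of shifted coefficient functions and a one-shot application of the abstract theorem would not directly deliver that uniformity.
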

\begin{proof}
  1. For $t>-\frac\de2$, let $\Omega_N=\{|\frac{1}{N} \sum_{i=\ell}^{N-1} \log[1+2f_{y,\de}(\la_i)t]- \int \log[1+2f_{y,\de}(x)t]\si_{\rm sc}(\dd x)|\le N^{-1/4}\}.$
  By \pref{eq:mms}, there exists a constant $c=c(y,\de,\ell,t)>0$ such that for $N$ large, we have $\pz(\Omega_N^c)\le e^{-c N^{3/2}}$.  Since $Z_i$'s are independent of $\la_i$'s, by an argument similar to that of \pref{le:gepre} with conditioning, we have for all $w$ and any $-\frac\de2<t\le0$,
  \begin{align*}
    &\pz \Big(\la_k\le x,\ \sfm(y) -\frac1N\sum_{i=\ell}^{N-1} f_{y,\de}(\la_i)Z_i^2\le w\Big) \\
    &\le e^{-Nt[w-\sfm(y)]} \ez\Big(\indi\{\la_k\le x\} \prod_{i=\ell}^{N-1}(1+2tf_{y,\de}(\la_i))^{-1/2}\Big)\\
    &\le  e^{-Nt[w-\sfm(y)]}\Big( e^{-\frac{N}2 (\int \log[1+2f_{y,\de}(x)t]\si_{\rm sc}(\dd x) -N^{-1/4})}\pz(\la_k\le x, \Omega_N) + e^{\frac{N t}{\de+2 t}} \pz(\Omega_N^c)\Big)\\
    &\le e^{-N[t(w-\sfm(y))+\frac12\int \log[1+2f_{y,\de}(x)t]\si_{\rm sc}(\dd x) -N^{-1/4}]}\pz(\la_k\le x) +e^{\frac{N t}{\de+2 t}-Nt[w-\sfm(y)] -cN^{3/2}}.
  \end{align*}
  By \pref{le:gepre}, $\Lambda(t;f_{y,\de})\ge 0$ for all $t$, if $w\le0$, then
  \[
  \Lambda^*(w;f_{y,\de})=\sup_{-\frac\de2 <t\le0} \Big\{wt-\sfm(y)t+\frac12\int \log[1+2f_{y,\de}(x)t]\si_{\rm sc}(\dd x)\Big\}.
  \]
  Using the LDP of $\la_k$, sending $N\to\8$ and then optimizing in $t$, for $w\le0$ we find
  \begin{align*}
    \limsup_{N\to\8}\frac1N\log\pz \Big(\la_k\le x, \sfm(y)-\frac1N\sum_{i=\ell}^{N-1} f_{y,\de}(\la_i)Z_i^2\le w\Big) &\le -k J_1(x) -\sup_{-\frac\de2 <t\le0} \{wt-\Lambda(t;f_{y,\de})\}\\
    &= -k J_1(x) -\Lambda^*(w;f_{y,\de}).
  \end{align*}
  Since $\Lambda^*(w;f_{y,\de})\ge0$ is convex and $\Lambda^*(0;f_{y,\de})=0$, $\Lambda^*(w;f_{y,\de})=\inf_{s\le w} \Lambda^*(s;f_{y,\de})$. We have proved the lemma for $B=(-\8,w]$ and $w\le0$. The case $B=[w,\8)$ and $w\ge0$ can be proved in a similar way with $t\ge0$.
  The argument for general closed $B$ is standard.

2. By \pref{le:exppt}, every $(x',s')$ is an exposed point of $\tilde \Lambda^*$ for $x'<x, \Lambda(-\frac\de2+;f_{y,\de})< s<s'<\sfm(y)$. Using the abstract G\"artner--Ellis theorem \cite{DZ98}*{Theorem 4.5.20} again,
\begin{align*}
  &\liminf_{N\to\8}   \frac1N\log \pz \Big(\la_k\le x,\ \sfm(y)-\frac1N\sum_{i=\ell}^{N-1} f_{y,\de}(\la_i)Z_i^2 \ge s\Big)\\
    &\ge -\inf_{x'<x, s<s'<\sfm(y)} [k J_1(x') +\Lambda^*(s';f_{y,\de})]
\end{align*}
and the second claim follows from the monotonicity and continuity of $J_1$ and $\Lambda^*$.
\end{proof}

\subsection{Right tail of $Q(y)$}
The right tail of $Q(y)$ is more involved compared to the left tail as $\bar\Lambda$ is not steep and $s$ is not an exposed point for $s\le \Lambda'(-\frac\de2+;f_{y,\de})$.  We also need some sort of uniform estimates on the (upper bound of) tail probability. Recall that we write $\Lambda^*(s;y)=\Lambda^*(s;f_{y,-\sqrt2-y})$.

\begin{proposition}\label{pr:ge2}
  Fix $k,\ell\in\nz$, $y<-\sqrt2$ and $\de>0$ such that $y+\de\le -\sqrt2$. For any $y_1<-\sqrt2$ and $s<0$, we have
  \[
  \limsup_{N\to\8}\frac1N\log \sup_{y'\le y}\pz\Big(\la_{\ell}\le y_1, \frac1N\sum_{i=k}^{N-1}f_{y', \de} (\la_i){Z_i^2} -\sfm(y)\ge -s \Big)\le -\ell J_1(y_1) -\Lambda^*(s;y).
  \]
  Moreover, if $y\le y_2<-\sqrt2$ and $ \ell J_1(y_1)<k J_1(y_2)$, we have for $s<\sfm(y)$,
  \[
  \liminf_{N\to\8}\frac1N\log \pz\Big(\la_{\ell}\le y_1, \la_k>y_2, \frac1N\sum_{i=k}^{N-1}\frac{Z_i^2}{\la_i-y} -\sfm(y)\ge -s \Big)\ge -\ell J_1(y_1) -\Lambda^*(s_-;y).
  \]
\end{proposition}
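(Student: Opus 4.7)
The proof splits into an upper bound (uniform in $y'\le y$) and a lower bound.

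For the upper bound I would apply exponential Chernoff. Integrating out the $Z_i^2$ yields, for any $\al\in (0,\de/2)$,
\[
\pz\Bigl(\la_\ell\le y_1,\,\tfrac1N\sum_{i=k}^{N-1}f_{y',\de}(\la_i)Z_i^2\ge \sfm(y)-s\Bigr)\le e^{-N\al(\sfm(y)-s)}\,\ez\Bigl[\indi\{\la_\ell\le y_1\}\prod_{i=k}^{N-1}(1-2\al f_{y',\de}(\la_i))^{-1/2}\Bigr].
\]
The central obstacle, foreshadowed by the subsection title, is that this naive Chernoff would only produce the weaker bound $\Lambda^*(s;f_{y,\de})$ rather than the claimed $\Lambda^*(s;y)=\Lambda^*(s;f_{y,-\sqrt 2-y})$; indeed $\bar\Lambda$ is not steep at $-\de/2$, so the Legendre transform lives on its linear branch for very negative $s$, and enlarging $\de$ would strictly lower the bound. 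To circumvent this, I would extend the admissible range of $\al$ up to $(-\sqrt 2-y)/2$. Since $y+\de\le-\sqrt 2$, the functions $f_{y',\de}$ and $f_{y,-\sqrt 2-y}$ coincide with the pure reciprocals $1/(x-y')$ and $1/(x-y)$ respectively on $[-\sqrt 2,\sqrt 2]$, so the essential supremum of $f_{y',\de}$ against the empirical measure restricted to a bulk neighborhood $[-\sqrt 2-\eps,\sqrt 2+\eps]$ is only $1/(-\sqrt 2-y'-\eps)$ rather than $1/\de$. Splitting the product at this neighborhood, handling the complement via \eqref{eq:conineq} and \eqref{eq:ladein}, and applying \eqref{eq:mms} to the Lipschitz function $\log(1-2\al f_{y',\de})$ on the bulk, yields for every $0<\al<(-\sqrt 2-y)/2$,
\[
\limsup_{N\to\infty}\tfrac1N\log\ez\Bigl[\indi\{\la_\ell\le y_1\}\prod_{i=k}^{N-1}(1-2\al f_{y',\de}(\la_i))^{-1/2}\Bigr]\le -\ell J_1(y_1)+\al \sfm(y)+\Lambda(-\al;f_{y,-\sqrt 2-y}),
\]
uniformly in $y'\le y$; uniformity follows from the pointwise inequality $f_{y',\de}(x)\le f_{y,-\sqrt 2-y}(x)$ on $[-\sqrt 2,\sqrt 2]$ for $y'\le y$ together with the monotonicity of $u\mapsto-\tfrac12\log(1-2\al u)$ in $u\ge0$, which together imply that the $\si_{\rm sc}$-integral is maximized at $y'=y$. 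Optimizing in $\al$ via the duality \eqref{eq:lasy} identifies the bound as $-\ell J_1(y_1)-\Lambda^*(s;y)$.

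For the lower bound I would invoke the G\"artner--Ellis lower bound with exposed points. By Lemma~\ref{le:exppt}, every $(x',s')$ with $x'<-\sqrt 2$ and $\Lambda'(-\tfrac\de 2+;f_{y,\de})<s'<\sfm(y)$ is an exposed point of the joint rate function $\tilde\Lambda^*(\cdot;\de)$ of the pair $\bigl(\la_\ell,\,\sfm(y)-\tfrac1N\sum_{i=k}^{N-1}Z_i^2/(\la_i-y)\bigr)$, once we shrink $\de$ so that $y+\de\le y_2$ (so on the restricted event $\{\la_k>y_2\}$ the reciprocal $1/(\la_i-y)$ and $f_{y,\de}(\la_i)$ agree for $i\ge k$). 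Pick any $x_0\in(-\infty,y_1)$ and approximate $s_-$ from above by $s'$ in the exposed range. The hypothesis $\ell J_1(y_1)<k J_1(y_2)$ ensures that the auxiliary constraint $\{\la_k>y_2\}$ carries no additional rate: under the cheapest configuration realizing $\la_\ell\le y_1$, namely placing $\ell$ eigenvalues near $y_1$ while the rest follow the semicircle, the $k$-th smallest eigenvalue converges to $-\sqrt 2>y_2$, and pushing $\la_k$ below $y_2$ would cost the strictly larger rate $kJ_1(y_2)$. Applying \cite{DZ98}*{Theorem 4.5.20} on a neighborhood of $(x_0,\sfm(y)-s')$ (intersected with $\{\la_k>y_2\}$, a typical event on this neighborhood by the previous compatibility) gives
\[
\liminf_{N\to\infty}\tfrac1N\log\pz\Bigl(\la_\ell\le y_1,\,\la_k>y_2,\,\tfrac1N\sum_{i=k}^{N-1}\tfrac{Z_i^2}{\la_i-y}\ge\sfm(y)-s'\Bigr)\ge -\ell J_1(y_1)-\Lambda^*(s';y),
\]
and sending $s'\downarrow s_-$, using continuity of $\Lambda^*(\cdot;y)$ on the exposed range together with \eqref{eq:lasy} on the linear branch beyond, finishes the lower bound.
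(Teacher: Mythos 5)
Your upper bound has a genuine integrability gap. Once you allow the tilt $\al$ to exceed $\de/2$, the conditional moment generating function $\ez[e^{\al f_{y',\de}(\la_i)Z_i^2}\mid\la_i]=(1-2\al f_{y',\de}(\la_i))^{-1/2}$ is infinite whenever $2\al f_{y',\de}(\la_i)\ge1$, i.e.\ whenever some $\la_i$ with $i\ge k$ lands in an interval whose right end $y'+2\al$ can be arbitrarily close to $-\sqrt2$; so the expectation in your key display simply does not exist unless you first restrict to an event that keeps eigenvalues out of that region. Such configurations are only $e^{-O(N)}$-rare (cost about $kJ_1(y'+2\al)$, which vanishes as $\al\uparrow\tfrac{-\sqrt2-y}{2}$), and eigenvalues in the fringe $[-\sqrt2-\eps,-\sqrt2)$ of your bulk window occur with probability of order one (there, for $y'$ with $y'+\de>-\sqrt2-\eps$, $f_{y',\de}$ is of size $1/\de$, not $1/(-\sqrt2-y-\eps)$, so your essential-sup claim also fails). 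Neither \eqref{eq:conineq}, which is a speed-$N^2$ bound for macroscopic deviations only, nor \eqref{eq:ladein}, which concerns the operator norm at large $K$, controls these events; and on them you cannot just bound the indicator of the target event by $1$, since that loses the entire $\Lambda^*(s;y)$ contribution. This is precisely the hole the paper fills: it dominates $f_{y',\de}$ by the bridge function $\bar f_{y',\de,y}=f_{y,-\sqrt2-y}+g$ with $g$ supported in $[y',-\sqrt2]$ and bounded by $1/\de$, reduces the main term to Proposition \ref{pr:geup}, and shows the $g$-term is negligible by combining the speed-$N^2$ LDP (at most $\ga N$ eigenvalues below $-\sqrt2$) with a Cram\'er bound for $\frac1N\sum_{i\le\ga N}Z_i^2$, choosing $\ga$ small.

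Your lower bound breaks down in exactly the regime the proposition is designed for. The exposed points of $\tilde\Lambda^*$ are only those $s'$ with $s'>s_0:=\Lambda'(\tfrac{\sqrt2+y}{2}+;y)$, so for $s<s_0$ (the linear branch of $\Lambda^*(\cdot;y)$, which is needed later, e.g.\ in Proposition \ref{pr:lbk2}) you cannot ``approximate $s$ from above by exposed $s'$'': any exposed $s'$ satisfies $s'>s_0\ge s$, hence $\{\sfm(y)-\frac1N\sum_{i\ge k}\frac{Z_i^2}{\la_i-y}\le s'\}$ is a \emph{larger} event than the target, and a lower bound for its probability (whose rate, moreover, only reaches $\Lambda^*(s_0;y)<\Lambda^*(s;y)$) says nothing about the target event; G\"artner--Ellis alone gives nothing below $s_0$ because $\bar\Lambda$ is not steep. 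The paper closes this with the one-big-jump construction of \cite{BGR97}: pin $\la_k\in(y_2,-\sqrt2+\eps]$, let the sum over $i\ge k+1$ realize the level $s_0+\eps$ (an exposed point), and let the single variable $Z_k^2$ be of order $N$ so that $\frac{Z_k^2}{N(\la_k-y)}$ supplies the remaining $s_0-s$; its Gaussian-tail cost $\tfrac12(s_0-s)(-\sqrt2-y)$ matches the slope of the linear branch and yields exactly $\Lambda^*(s;y)$. Your treatment of the constraint $\{\la_k>y_2\}$ as ``typical'' also needs the subtraction argument using $\ell J_1(y_1)<kJ_1(y_2)$, as in the paper, but that is secondary to the missing non-exposed regime.
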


\begin{proof}
1. To get the uniform estimate, let
\begin{align*}
  \bar f_{y',\de, y}(x)=\begin{cases}
  \frac{1}{x-y}, & \mbox{if } x\ge -\sqrt2, \\
  \frac{1}{\sqrt2+y'+\de}\Big(\frac1\de+\frac1{\sqrt2+y} \Big) (x+\sqrt2)+\frac1{-\sqrt2-y}, & \mbox{if } y'+\de\le x<-\sqrt2, \\
  f_{y',\de}(x), & x<y'+\de,
                \end{cases}
\end{align*}
and $g(x)=\bar f_{y',\de, y}(x)-f_{y,-\sqrt2-y}(x), x\in\rz$. Morally speaking, $\bar f_{y',\de, y}$ is a bridge to go from the function $x\mapsto \frac1{x-y'}$ to $x\mapsto \frac1{x-y}$ by connecting the points $(y'+\de,\frac1\de)$ and $(-\sqrt2,\frac1{-\sqrt2-y})$ with straight line.
Clearly, $\bar f_{y',\de, y}(x)\ge f_{y',\de}(x)$ for all $x\in\rz$ and $g$ is supported in $[y',-\sqrt2]$ such that $0\le g(x)\le \frac1\de$.
 For any $\eps>0$,
 \begin{align*}
   &\pz\Big(\la_\ell\le y_1, \frac1N\sum_{i=k}^{N-1}f_{y', \de} (\la_i){Z_i^2} \ge \sfm(y) -s \Big) \\
   &\le \pz\Big(\la_\ell\le y_1, \frac1N\sum_{i=k}^{N-1}[  f_{y, -\sqrt2-y} (\la_i)+g(\la_i) ]{Z_i^2} \ge \sfm(y) -s \Big)\\
   &\le \pz\Big(\la_\ell\le y_1, \frac1N\sum_{i=k}^{N-1} f_{y, -\sqrt2-y} (\la_i){Z_i^2} \ge \sfm(y) -s -\eps \Big)+ \pz\Big(\la_\ell\le y_1, \frac1N\sum_{i=k}^{N-1}g(\la_i) {Z_i^2} \ge \eps\Big).
 \end{align*}
 By \pref{pr:geup} and continuity,
 \begin{align*}
   \limsup_{\eps\to0+}\limsup_{N\to\8} \frac1N\log \pz\Big(\la_\ell\le y_1, \frac1N\sum_{i=k}^{N-1} f_{y, -\sqrt2-y} (\la_i){Z_i^2} \ge \sfm(y) -s -\eps \Big) \le -\ell J_1(y_1) -\Lambda^*(s;y).
 \end{align*}
 For $\eps>0$, we take $\ga=\ga(\eps,\de)>0$ such that
 $$
 \frac12\Big(\frac{\de\eps}{\ga}-\log \frac{\de\eps}{\ga}-1\Big)> 10\Lambda^*(s;y)+10.
 $$
 Since $\lim_{N\to\8}\frac{[\ga N]}{N}=\ga$, by LDP of empirical measures of GOE as in \pref{eq:lakn2}, we may find $c=c(\ga)>0$ such that $\pz(\la_{[\ga N]}<\frac{s_\ga-\sqrt2}2)\le e^{-c N^2}$ where $s_\ga$ is the quantile with $\si_{\rm sc}([-\sqrt2,s_\ga])=\ga$. It follows that uniformly for $y'\le y$,
 \begin{align*}
   &\pz\Big(\la_\ell\le y_1, \frac1N\sum_{i=k}^{N-1}g(\la_i) {Z_i^2} \ge \eps\Big)\le \pz\Big(\la_\ell\le y_1, \frac{1}{N}\sum_{i=k}^{N-1} Z_i^2 \indi\{\la_i\le-\sqrt2 \}\ge \de\eps, \la_{[\ga N]}<\frac{s_\ga-\sqrt2}2 \Big) \\
   &\quad  +\pz\Big(\la_\ell\le y_1,\frac{1}{N}\sum_{i=k}^{N-1} Z_i^2 \indi\{\la_i\le-\sqrt2 \} \ge \de\eps ,\la_{[\ga N]}\ge \frac{s_\ga-\sqrt2}2\Big)\\
   &\le \pz\Big(\la_{[\ga N]}<\frac{s_\ga-\sqrt2}2 \Big)+\pz \Big(\la_\ell\le y_1, \frac{1}{N}\sum_{i=k}^{[\ga N]} Z_i^2 \ge \de\eps \Big).
 \end{align*}
 By Cramer's Theorem and the LDP for $\la_\ell$,
 \begin{align*}
   \limsup_{N\to\8}\frac1N\log \pz \Big(\la_\ell\le y_1, \frac{1}{N}\sum_{i=k}^{[\ga N]} Z_i^2 \ge \de\eps \Big)\le -\ell J_1(y_1) -\frac12\Big(\frac{\de\eps}{\ga}-\log \frac{\de\eps}{\ga}-1\Big).
 \end{align*}
 From here the upper bound follows.

 2. For the lower bound, we take advantage of an idea in \cite{BGR97}. If $\sfm(y)> s>s_0:=\Lambda'(\frac{\sqrt2+y}{2}+;f_{y,-\sqrt2-y})$, by \pref{le:exppt},  $(y_1,s)$ and $(y_2,s)$ are exposed points of $\tilde\Lambda^*$. Using \cite{DZ98}*{Theorem 4.5.20},
 \begin{align*}
   \liminf_{N\to\8}\frac1N\log   \pz\Big(\la_{\ell}\le y_1, \frac1N\sum_{i=k}^{N-1}f_{y,-\sqrt2-y}(\la_i){Z_i^2} -\sfm(y)\ge -s \Big) \ge -\ell J_1(y_1)- \inf_{s_0<s'<s} \Lambda^*(s';y),\\
   \limsup_{N\to\8}\frac1N\log   \pz\Big(\la_{k}\le y_2, \frac1N\sum_{i=k}^{N-1}f_{y,-\sqrt2-y}(\la_i){Z_i^2} -\sfm(y)\ge -s \Big) \le -k J_1(y_2)- \inf_{s'\le s}  \Lambda^*(s';y).
 \end{align*}
 Since $ \ell J_1(y_1)<k J_1(y_2)$ and observing that
 \begin{align*}
  & \pz\Big(\la_{\ell}\le y_1, \la_k>y_2,\frac1N\sum_{i=k}^{N-1} \frac{Z_i^2}{\la_i-y} -\sfm(y)\ge -s \Big)\\
  &\ge \pz\Big(\la_{\ell}\le y_1, \frac1N\sum_{i=k}^{N-1}f_{y,-\sqrt2-y}(\la_i){Z_i^2} -\sfm(y)\ge -s \Big)\\
   &\quad -\pz\Big(\la_k\le y_2, \frac1N\sum_{i=k}^{N-1}f_{y,-\sqrt2-y}(\la_i){Z_i^2} -\sfm(y)\ge -s\Big),
 \end{align*}
 the assertion follows.
 Now assume $s\le s_0$. For $\eps>0$, by independence,
 \begin{align*}
   &\pz\Big(\la_{\ell}\le y_1, \la_k>y_2, \frac1N\sum_{i=k}^{N-1}\frac{Z_i^2}{\la_i-y} -\sfm(y)\ge -s \Big)\\
   &\ge \pz\Big(\frac1N\sum_{i=k+1}^{N-1} \frac{Z_i^2}{\la_i-y} -\sfm(y)\ge -s_0-\eps, \ \frac{Z_k^2}{N(\la_k-y)} \ge -s+s_0+\eps, \ y_2<\la_k\le -\sqrt2+\eps,\la_{\ell}\le y_1 \Big)\\
   &\ge \pz\Big(\frac1N\sum_{i=k+1}^{N-1} \frac{Z_i^2}{\la_i-y} -\sfm(y)\ge -s_0-\eps, \  y_2 <\la_k\le -\sqrt2+\eps,\la_{\ell}\le y_1\Big) \\
   &\quad \pz\Big(\frac{Z_k^2}{N(-\sqrt2+\eps -y)}\ge -s+s_0+\eps  \Big)\\
   &\ge \Big[\pz\Big(\la_{\ell}\le y_1, \frac1N\sum_{i=k+1}^{N-1} f_{y,-\sqrt2-y}(\la_i){Z_i^2} -\sfm(y)\ge -s_0-\eps\Big) -\pz\Big( \la_k > -\sqrt2+\eps\Big)\\
   &\ \ -\pz\Big(\la_k\le y_2, \frac1N\sum_{i=k+1}^{N-1} f_{y,-\sqrt2-y}(\la_i){Z_i^2} -\sfm(y)\ge -s_0-\eps\Big) \Big]  \pz\Big(\frac{Z_k^2}{N(-\sqrt2+\eps -y)}\ge -s+s_0+\eps  \Big).
 \end{align*}
 Since $s_0+\eps$ is an exposed point, using the LDP for $\la_k$ and the G\"artner--Ellis theorem again,
 \begin{align*}
   \liminf_{N\to\8}\frac1N\log \pz\Big(\frac1N\sum_{i=k+1}^{N-1} \frac{Z_i^2}{\la_i-y} -\sfm(y)\ge -s_0-\eps, \  y_2<\la_k\le -\sqrt2+\eps, \la_{\ell}\le y_1\Big)\\
   \ge -\ell J_1(y_1)-\Lambda^*(s_0+\eps;y).
 \end{align*}
 Using the tail probability asymptotic estimates for standard Gaussian,
 \[
 \liminf_{N\to\8}\frac1N\log\pz\Big(\frac{Z_k^2}{N(-\sqrt2+\eps -y)}\ge -s+s_0+\eps  \Big)\ge -\frac12(-s+s_0+\eps )(-\sqrt2+\eps -y).
 \]
 It follows from continuity that
 \begin{align*}
 &\liminf_{\eps\to0+} \liminf_{N\to\8}\frac1N\log\pz\Big(\la_{\ell}\le y_1, \la_k>y_2, \frac1N\sum_{i=k}^{N-1}\frac{Z_i^2}{\la_i-y} -\sfm(y)\ge -s \Big)\\
 &\ge \frac12(-s+s_0 )(\sqrt2 +y) -\ell J_1(y_1) -\Lambda^*(s_0;y)=-\ell J_1(y_1)-\frac{(\sqrt2+y)s}{2} +\Lambda\Big(\frac{\sqrt2+y}{2};y\Big) \\
 & =-\ell J_1(y_1)-\Lambda^*(s;y).
 \end{align*}
 Here we used the fact that $\Lambda^*(s;y)=\frac{(\sqrt2+y)s}{2} -\Lambda(\frac{\sqrt2+y}{2};y)$ for $s\le s_0$.
\end{proof}

Let us compute $\Lambda^*(s;f_{y,\de})$ for $y<-\sqrt2$ and $\de\in(0,-\sqrt2-y]$. 
In this case, we have $\|f_{y,\de}\|_\8 = \frac1\de$ and for $t>-\frac\de2$, $\Lambda'(-\frac\de2+;f_{y,\de})< s<\sfm(y)$,
\begin{align*}
 \Lambda(t;f_{y,\de}) &= \sfm(y)t - \frac12[\frac12(y-2t)^2-\frac12 y^2 -\frac12y\sqrt{y^2 -2}+\frac12(y-2t)\sqrt{(y-2t)^2-2} \\ &\qquad +\log(2t-y+\sqrt{(y-2t)^2-2})-\log(-y+\sqrt{y^2-2})], \\
 \Lambda'(t;f_{y,\de})  &=\sfm(y)-\sfm(y-2t) =-2t+\sqrt{(y-2t)^2-2}-\sqrt{y^2-2},\\
 \tau_s &= \frac{s^2+2s\sqrt{y^2-2}}{-4(s+y+\sqrt{y^2-2})} =\frac{s^2+2s\sqrt{y^2-2}}{4(\sfm(y)-s)},\\
 \Lambda^*(s;f_{y,\de})& =-\frac18s^2-\frac{s}{2\sfm(y)} -\frac12\log\Big(1-\frac{s}{\sfm(y)}\Big) =\Big(\frac1{4\sfm(y)^2}-\frac18\Big) s^2 +\sum_{n=3}^\8\frac{s^n}{2n\sfm(y)^n}.
\end{align*}
From \pref{pr:ge2}, it should be transparent that the case $\de=-\sqrt2-y$ will play a special role in the following. 
 In particular, $\Lambda'(\frac{\sqrt2+y}{2}+;y)=-\sqrt2-y-\sqrt{y^2-2}$.

\begin{remark}\label{re:ylim}
  All these calculations are made for fixed $y<-\sqrt2$ and $0<\de\le -\sqrt2-y$. Later on, we may need the boundary behavior as $\de\downarrow 0$ or as $y\uparrow -\sqrt2$ which forces $\de\downarrow 0$. We observe that the functions are unchanged whenever the variables remain fixed and well-defined as $\de\downarrow 0$. By continuity and \pref{eq:lasy},
  \begin{align*}
    \lim_{y\to-\sqrt2-} \Lambda'(t;f_{y,\de})& = -2t+\sqrt{(\sqrt2+2t)^2-2}, \quad   \lim_{y\to-\sqrt2-,t\to0+} \Lambda'(t;f_{y,\de})=0,\\
    \lim_{y\to-\sqrt2-}\tau_s & =\frac{s^2}{4(\sqrt2-s)}, \quad    \lim_{y\to-\sqrt2-,s\to0+}\tau_s =0,\\
    \lim_{y\to-\sqrt2-}\Lambda^*(s;f_{y,\de}) & =
    \begin{cases}
    -\frac18s^2 -\frac{s}{2\sqrt2} -\frac12\log\Big (1-\frac{s}{\sqrt2}\Big), &s\ge 0,\\
     0,& \text{otherwise}.
    \end{cases}
  \end{align*}
  The bottom line is that those functions extend continuously to the boundary.
\end{remark}

In the following, the above large deviation estimates are typically applied for \emph{finitely} many points so that the convergence in $N$  is uniform. To give a rough idea on how to use these estimates, if $\sfa<0$, then the right tail of $\sfa_N$ (and the left tail of $Q(y)$) will potentially contribute in \pref{eq:detgk}. It turns out that this tail probability only affects the complexity function for local minima. For saddles, we will see that it suffices to simply drop $z_1'-Q(z_3')>0$ in \pref{eq:detgk}. If $\sfa\ge0$, then the left tail of $\sfa_N$ (and the right tail of $Q(y)$) will potentially contribute when $z_1'-Q(z_3')<0$.

\section{Local minima}\label{se:k=0}
For local minima, we only need to consider $I^0$ defined in \pref{eq:ikdef}. 
For $\rho,u$ and $y<-\sqrt2$ fixed, let us define
  \[
  \ix_-(\rho,u,y;x)=\Lambda^*(x;f_{y,\de})+\frac1{2\sfb^2} [(\sfa+\sqrt{-D''(0)}x)_-]^2,\quad x\ge0,
\]
which is independent of $\de\in(0,-\sqrt2-y]$.

\begin{lemma}\label{le:ixmin}
  Let
  \begin{align}\label{eq:ixmin}
    \hat x&= \hat x (\rho,u,y)= \frac{C+B\sfm(y)-\sqrt{(C-B\sfm(y))^2+4B}}{2B},\notag\\
  B&=B(\rho)=\frac{-D''(0)\al^2\rho^4}{(-2D''(0)-\bt^2)\sfb^2}, \quad C=C(\rho,u,y)=\frac1{\sfm(y)} -\frac{2\sqrt{-D''(0)}\sfa}{\sfb^2}.
\end{align}
  Then
  $$\inf_{x\in[0,\sfm(y)]}\ix_-(\rho,u,y;x) = \begin{cases}
    \ix_-(\rho,u,y;0)=0, & \mbox{if } \sfa\ge0, \\
    \ix_-(\rho,u,y;\hat x)>0, & \mbox{otherwise}.
  \end{cases}
  $$
  Moreover, if $\sfa<0$,  the minimum is attained at $0<\hat x<\sfm(y)$ with $\sfa+\sqrt{-D''(0)}\hat x < 0$.
\end{lemma}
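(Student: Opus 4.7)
The plan is to exploit the strict convexity of $\ix_-(\rho,u,y;\cdot)$ and reduce the first-order condition to an explicit quadratic. The function $\Lambda^*(\cdot;f_{y,\de})$ is strictly convex and $C^1$ on $[0,\sfm(y))$, with value $0$ and zero derivative at the origin (by \pref{le:gepre}), and blows up as $x\uparrow \sfm(y)$; the map $x\mapsto [(\sfa+\sqrt{-D''(0)}\,x)_-]^2$ is convex with continuous derivative $2\sqrt{-D''(0)}\,(\sfa+\sqrt{-D''(0)}x)_-$. Hence $\ix_-(\rho,u,y;\cdot)$ is strictly convex and $C^1$ on $[0,\sfm(y))$, diverges at the right endpoint, and therefore admits a unique minimizer $x^*\in[0,\sfm(y))$ characterized by
\[
\big(\Lambda^*(\cdot;f_{y,\de})\big)'(x^*)+\frac{\sqrt{-D''(0)}}{\sfb^2}\big(\sfa+\sqrt{-D''(0)}\,x^*\big)_-=0.
\]

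If $\sfa\ge 0$, the quadratic summand vanishes identically on $[0,\sfm(y))$ and the minimum is attained at $x^*=0$ with value $0$. If $\sfa<0$, the right-derivative at $0$ equals $\sqrt{-D''(0)}\sfa/\sfb^2<0$, forcing $x^*>0$ and $(\Lambda^*(\cdot;f_{y,\de}))'(x^*)>0$; then the first-order condition forces the negative-part to be active, i.e.\ $\sfa+\sqrt{-D''(0)}\hat x<0$. Writing $d=\sqrt{-D''(0)}$ and invoking the Fenchel--Legendre identity
\[
\big(\Lambda^*(\cdot;f_{y,\de})\big)'(\hat x)=\tau_{\hat x}=\frac{\hat x(\hat x+2\sqrt{y^2-2})}{4(\sfm(y)-\hat x)}
\]
recorded just before \pref{re:ylim}, the condition becomes
\[
\hat x(\hat x+2\sqrt{y^2-2})+\frac{4d}{\sfb^2}(\sfa+d\hat x)(\sfm(y)-\hat x)=0.
\]

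The main obstacle is reducing this to the quadratic in \pref{eq:ixmin}. The two algebraic identities
\[
\sqrt{y^2-2}=\frac{1}{\sfm(y)}-\frac{\sfm(y)}{2}, \qquad \frac{4d^2}{\sfb^2}=1+2B,
\]
the first from $\sfm(y)=-y-\sqrt{y^2-2}$ and the second from the formula for $\sfb^2$ displayed under \pref{eq:z13con0} together with the definition of $B$, collapse the equation to
\[
B\hat x^2-(C+B\sfm(y))\,\hat x+(C\sfm(y)-1)=0,
\]
whose discriminant simplifies to $(C-B\sfm(y))^2+4B>0$ and yields the two real roots displayed in \pref{eq:ixmin}. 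To pick the correct one, note that $\sfa<0$ gives $C\sfm(y)-1=-2d\sfa\sfm(y)/\sfb^2>0$ and $C+B\sfm(y)>0$, so by Vieta's formulas both roots are positive; evaluating the quadratic at $x=\sfm(y)$ returns $-1<0$, so exactly one root lies in $(0,\sfm(y))$, necessarily the smaller one, which is $\hat x$ as written in \pref{eq:ixmin}. Finally $\ix_-(\rho,u,y;\hat x)>0$ is immediate: at $\hat x$ we have $\sfa+d\hat x<0$, which makes the quadratic summand strictly positive, while $\Lambda^*(\hat x;f_{y,\de})\ge 0$.
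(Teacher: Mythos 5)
Correct, and essentially the paper's approach: you analyze the first-order condition for $\ix_-(\rho,u,y;\cdot)$, show that for $\sfa<0$ the minimizer is interior with the negative part active (so $\sfa+\sqrt{-D''(0)}\hat x<0$), and identify it with the root in $(0,\sfm(y))$ of the quadratic $Bx^2-(C+B\sfm(y))x+(C\sfm(y)-1)=0$, which is exactly what the paper does via \eqref{eq:ixder1}--\eqref{eq:ixder2}. The differences are cosmetic: your strict-convexity framing plays the role of the paper's monotone-derivative argument, your algebra makes explicit the quadratic the paper leaves implicit, and your root selection (Vieta together with the observation that the quadratic evaluates to $-1$ at $x=\sfm(y)$) is a tidier version of the paper's case analysis on whether $B\sfm(y)\le C$.
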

\begin{proof}
  For any fixed $\rho,u$ and $y$, $\ix_-(\rho,u,y;\cdot)$ is a continuous function of $x\in[0,\sfm(y)]$ and attains minimum. Since $\Lambda^*(x;f_{y,\de})$ is strictly increasing for $x\ge0$, if $\sfa+\sqrt{-D''(0)}x\ge 0$, then
  $$\ix_-(\rho,u,y;x)\ge \Lambda^*(-\sfa [-D''(0)]^{-1/2} \vee 0;f_{y,\de}).$$
  In particular, if $\sfa\ge0$,
  $$\inf_{x\in[0,\sfm(y)]} \ix_-(\rho,u,y;x) =\Lambda^*(0;f_{y,\de})=0.$$
  Suppose $\sfa<0$. 
  A calculation yields
  \begin{align}
    &\partial_x \ix_-(\rho,u,y;x)\notag\\
    &=-\frac14x+\frac14(y-\sqrt{y^2-2}) +\frac1{2(-y-\sqrt{y^2-2}-x)} +\frac{\sqrt{-D''(0)}(\sfa+\sqrt{-D''(0)}x)}{\sfb^2} \label{eq:ixder1}\\
    &=\frac{-D''(0)\al^2\rho^4 x}{2(-2D''(0)-\bt^2)\sfb^2}+\frac1{2(-y-\sqrt{y^2-2}-x)} +\frac14(y-\sqrt{y^2-2})+\frac{\sqrt{-D''(0)}\sfa}{\sfb^2}. \label{eq:ixder2}
  \end{align}
  Since in \pref{eq:ixder1},
  \[
  -\frac14x+\frac14(y-\sqrt{y^2-2}) +\frac1{2(-y-\sqrt{y^2-2}-x)}>0
  \]
  for $x>0$ and tends to $+\8$ as $x\to \sfm(y)$, we see that the minimum of $\ix_-(\rho,u,y;\cdot)$ can only be attained for $\sfa+\sqrt{-D''(0)}x< 0$.
  From \pref{eq:ixder2}, it is clear that there is a unique solution to $\partial_x \ix_-(\rho,u,y;x)=0$ and this solution is in  $(0,\sfm(y))$.
  It follows that $\ix_-(\rho,u,y;\cdot)$ has a unique minimizer $\hat x$, which is in $(0,\sfm(y))$. We claim that $\hat x$ is given by \pref{eq:ixmin}. Indeed, since $C\sfm(y)>1$, we see
  $$\frac{C+B\sfm(y)-\sqrt{(C-B\sfm(y))^2+4B}}{2B}>0.$$
  If $B\sfm(y)\le C$ we have
  $$
  \frac{C+B\sfm(y)-\sqrt{(C-B\sfm(y))^2+4B}}{2B}<\frac{C+ B\sfm(y)-|C-B\sfm(y)|}{2B}=\sfm(y).
  $$
  If $B\sfm(y)>C$, since the other possible solution to $\partial_x \ix_-(\rho,u,y;x)=0$ is
  \[
  \frac{C+B\sfm(y)+\sqrt{(C-B\sfm(y))^2+4B}}{2B}> \frac{C+ B\sfm(y)+|B\sfm(y)-C|}{2B}=\sfm(y),
  \]
  the expression given in \pref{eq:ixmin} has to be the minimizer in $(0,\sfm(y))$.
\end{proof}
Thanks to this lemma, we know $\hat x>0$ when $\sfa<0$. Let us define
\begin{align}\label{eq:ixdef}
  \ix^-(\rho,u,y)= 
  \begin{cases}
    \ix_-(\rho,u,y;0)=0, & \mbox{if } \sfa\ge0, \\
    \ix_-(\rho,u,y;\hat x), & \mbox{otherwise}.
  \end{cases}
\end{align}
If $\sfa=0$, $C\sfm(y)=1$ and thus $\hat x=0$. Clearly, $\ix^-$ is a continuous function of $(\rho,u,y)$.
Note that $\ix^-$ extends continuously to $y=-\sqrt2$. Since $\Lambda^*(x;f_{y,\de})$ is independent of $\de$ for $x\ge0$, we will write $\Lambda^*(x)=\Lambda^*(x;f_{y,\de})$ in what follows when there is no ambiguity.

%
%

\subsection{Upper bound}
We need a covering argument for the quantity $I^0$ defined in \pref{eq:ikdef} in the following.  To this end, let $K>0$ be a large constant to be determined later. For $\rho'\in[R_1, R_2], u'\in \bar E, y'\in[-K,-\sqrt2],\de>0$ and $\de_1>0$, let
\begin{align*}
&I_N(\rho',u',y';\de;\de_1)= 
\int_{\rho'-\de}^{\rho'+\de} \int_{u'-\de}^{u'+\de} \int_{y'-\de}^{y'+\de} \frac{e^{-\frac{(u-m_Y)^2}{2\si_Y^2}-\frac{N \mu^2 \rho^2}{2D'(0)} -\frac{N(\sqrt{ -4D''(0)}y+m_2)^2}{2(-2D''(0)-\bt^2)} }} {\si_Y\sqrt{-2D''(0)-\bt^2}}
 \\
  & \quad \ez\Big[ e^{(N-1)\Psi(L((\frac{N-1}{N})^{1/2}\la_1^{N-1}),y)} \indi\{L((\frac{N-1}{N})^{1/2}\la_1^{N-1})\in B_K(\si_{\rm sc}, \de_1),(\frac{N-1}{N})^{1/2}\la_1> y\} \\
  & \quad \Big(\mathsf{a}_N\Phi\Big(\frac{\sqrt N\sfa_N}\sfb\Big) +\frac{\sfb}{\sqrt{2\pi N}} e^{-\frac{N\sfa_N^2}{2\sfb^2}}\Big)\Big]  \rho^{N-1}  \dd y \dd u  \dd\rho.
\end{align*}
Here and in what follows we always replace the integration limits with the boundary of $(R_1, R_2)\times \bar E \times(-K,-\sqrt2)$ if they exceed the latter set; e.g.~we always replace $y'+\de$ with $-\sqrt2$ if $y'+\de>-\sqrt2$. From \pref{le:albtd} and \pref{eq:abdef1}, we know
  \begin{align}\label{eq:supa}
  \sup\{|\sfa(\rho,u,y)|: (\rho,u,y)\in [R_1,R_2]\times \bar E\times [-K,-\sqrt2]\}<\8.
  \end{align}
\begin{lemma}\label{le:covlet}
  For any $K>0$ and $0<\de<1$, we have
  \begin{align*}
    &\limsup_{\de_1\to0+}\limsup_{N\to\8} \frac1N\log I_N(\rho',u',y';\de;\de_1)\le 
    \sup_{\rho'-\de < \rho< \rho'+\de, u'-\de< u< u'+\de,\atop y'-\de< y< y'+\de}\psi_*(\rho,u,y)\\
    &\quad -\inf_{0<x<\sfm((y'+\de)\wedge -\sqrt2)} \Lambda^*((x-3\sqrt\de )_+;f_{y'-2\de,\de})+ \frac1{2\sfb_m^2} [(\sfa_m+\sqrt{-D''(0)}x)_-]^2.
  \end{align*}
  Here $\sfb_m=\sup_{\rho\in (\rho'-\de, \rho'+\de)\cap [R_1,R_2]} \sfb(\rho)$ and
  \[
  \sfa_m= \sup_{(\rho,u,y)\in (\rho'-\de, \rho'+\de)\times (u'-\de, u'+\de)\times( y'-\de, y'+\de)\cap [R_1,R_2]\times \bar E\times [-K,-\sqrt2]} \sfa(\rho,u,y).
  \]
  Moreover,
  \begin{align*}
    \liminf_{\de\to0+}  \inf_{0<x<\sfm((y'+\de)\wedge -\sqrt2)} \Lambda^*((x-3\sqrt\de )_+;f_{y'-2\de,\de})+ \frac1{2\sfb_m^2} [(\sfa_m+\sqrt{-D''(0)}x)_-]^2& =\ix^-(\rho',u',y').
  \end{align*}
\end{lemma}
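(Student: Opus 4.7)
The plan is to split the integrand in $I_N$ into a smooth deterministic prefactor (whose $\tfrac1N\log$-contribution converges to $\psi_*$) and a random factor (whose logarithmic decay is governed by $\Lambda^*$ together with the Gaussian tail from \pref{eq:phiblim}), and then to discretize the auxiliary variable $X_N:=\sfm(y)-Q(y)/\sqrt{-D''(0)}$ and invoke the uniform tail bounds of \pref{pr:ge2} and \pref{pr:ge}.

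For the deterministic factor, on the box $(\rho'-\de,\rho'+\de)\times(u'-\de,u'+\de)\times(y'-\de,y'+\de)\cap [R_1,R_2]\times\bar E\times[-K,-\sqrt2]$, I would bound $\si_Y^{-1}e^{-(u-m_Y)^2/(2\si_Y^2)}$, $e^{-N\mu^2\rho^2/(2D'(0))}$, $e^{-N(\sqrt{-4D''(0)}y+m_2)^2/(2(-2D''(0)-\bt^2))}$, and $\rho^{N-1}$ by their respective suprema over the box. On the event $L((\tfrac{N-1}{N})^{1/2}\la_1^{N-1})\in B_K(\si_{\rm sc},\de_1)$ the log-potential satisfies $\Psi(L,y)\le \Psi_*(y)+\eta(\de_1)$ with $\eta(\de_1)\downarrow 0$, since $x\mapsto\log|x-y|$ has finite Lipschitz constant on $[-K,K]$ when $y\le -\sqrt 2$. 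Comparing with \pref{eq:psifunction} identifies the deterministic contribution to $\tfrac1N\log I_N$ as $\sup_{\mathrm{box}}\psi_*+\eta(\de_1)+o(1)$.

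For the random factor, \pref{eq:phiblim} allows me to dominate the Gaussian tail by $\exp(-N(\sfa_N)_-^2/(2\sfb^2))$ up to polynomial-in-$N$ prefactors, with $\sfa_N=\sfa+\sqrt{-D''(0)}X_N$. I would then partition the relevant range $X_N\in[0,\sfm((y'+\de)\wedge -\sqrt2))$ into $O(\de^{-1/2})$ cells of width $\sqrt\de$. On a cell $X_N\in[j\sqrt\de,(j+1)\sqrt\de]$, the inequalities $\sfa\le\sfa_m$ and $\sfb\le\sfb_m$ yield the pointwise bound $(\sfa_N)_-^2\ge [(\sfa_m+\sqrt{-D''(0)}(j+1)\sqrt\de)_-]^2$; the probability of this cell, uniformly for $y\in[y'-\de,y'+\de]$, is controlled by \pref{pr:ge2} after dominating $\frac1{\la_i-y}$ by the bridge function $\bar f_{y',\de,y}$ used in its proof and then by $f_{y'-2\de,\de}$, with an $O(\sqrt\de)$ error that is absorbed into the $3\sqrt\de$ shift. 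This gives $\pz(X_N\ge j\sqrt\de)\le \exp(-N\Lambda^*((j\sqrt\de-3\sqrt\de)_+;f_{y'-2\de,\de})+o(N))$. The complementary region $X_N<0$ (right tail of $Q(y)$, controlled by \pref{pr:ge}) is dominated by the contribution at $X_N=0$ because $(\sfa_N)_-^2$ only increases as $X_N$ decreases, so it contributes no new exponent. A union bound over the $O(\de^{-1/2})$ cells costs only a subexponential factor, and sending $N\to\8$ followed by $\de_1\to 0+$ gives the first assertion.

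For the moreover part, continuity of $\sfa$ and $\sfb$ on the compact box yields $\sfa_m\to\sfa(\rho',u',y')$ and $\sfb_m\to\sfb(\rho')$ as $\de\to 0$; combined with the $\de$-independence of $\Lambda^*(x;f_{y,\de})$ for $x\ge 0$ and the boundary continuity recorded in \pref{re:ylim}, the function $\Lambda^*((x-3\sqrt\de)_+;f_{y'-2\de,\de})$ converges uniformly on compact subsets of $[0,\sfm(y'))$ to $\Lambda^*(x)$. \pref{le:ixmin} then identifies the limit of the infimum as $\inf_{0\le x\le\sfm(y')}\ix_-(\rho',u',y';x)=\ix^-(\rho',u',y')$. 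The main obstacle is producing the uniform-in-$y$ right-tail estimate, which is precisely the content of \pref{pr:ge2}, and carefully tracking the $3\sqrt\de$ shift so that sending $\de\to 0$ preserves continuity of $\Lambda^*$; a secondary subtlety is the edge case $y'=-\sqrt 2$, which is handled by the $\wedge -\sqrt 2$ truncation in the statement and the continuous extensions of the relevant rate functions at the edge recorded in \pref{re:ylim}.
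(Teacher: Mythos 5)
Your overall architecture is the same as the paper's: separate the deterministic factor (upper semicontinuity of $\Psi$ on $B_K(\si_{\rm sc},\de_1)$ giving $\sup_{\rm box}\psi_*$), discretize the value of the quadratic form, pair each cell's probability cost $\Lambda^*$ against the Gaussian gain from \pref{eq:phiblim}, and finish the ``moreover'' part by continuity and \pref{le:ixmin}. However, the justification of the key cell-probability estimate is wrong as written. For this lemma the rare event that matters is the \emph{left} tail of $Q(y)$ (the quadratic form being atypically small, which is what makes $\sfa_N$ large and the factor $\sfa_N\Phi(\sqrt N\sfa_N/\sfb)+\tfrac{\sfb}{\sqrt{2\pi N}}e^{-N\sfa_N^2/(2\sfb^2)}$ big), and the bound you state, $\pz(X_N\ge j\sqrt\de)\le \exp(-N\Lambda^*((j\sqrt\de-3\sqrt\de)_+;f_{y'-2\de,\de})+o(N))$, is exactly of that type. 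But you attribute it to \pref{pr:ge2} and to a domination of $\tfrac1{\la_i-y}$ from above by the bridge function $\bar f_{y',\de,y}$: \pref{pr:ge2} controls the event that the sum \emph{exceeds} $\sfm(y)$ (the right tail, needed later for $II^k$ in \pref{le:covlet2}), and dominating the kernel from above makes the event $\{\sum Z_i^2/((\tfrac{N-1}{N})^{1/2}\la_i-y)\le t\}$ \emph{not} imply the corresponding event for the comparison sum, so that chain of inequalities cannot produce your bound. The correct mechanism (and the paper's) is the opposite comparison: on $\{(\tfrac{N-1}{N})^{1/2}\la_1>y\}$ and for $y<y'+\de$, one has $\tfrac1{(\frac{N-1}{N})^{1/2}\la_i-y}\ge f_{y'-2\de,\de}(\la_i)$, so the left-tail event implies the same event for the fixed, $y$-independent Lipschitz kernel $f_{y'-2\de,\de}$ with threshold $\sfm((y'+\de)\wedge-\sqrt2)-j\eps_1$, and then \pref{pr:ge} applies (uniformity in $y$ is automatic because the kernel and threshold no longer depend on $y$); the $3\sqrt\de$ shift is exactly $\sfm((y'+\de)\wedge-\sqrt2)-\sfm(y'-2\de)\le 3\sqrt\de$. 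Your treatment of the region $X_N<0$ is correct in substance (it is absorbed into the $x\to0+$ boundary value), but it is again the right tail of $Q$, not something \pref{pr:ge} is needed for.

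Two smaller points. First, tying the mesh to $\de$ (cells of width $\sqrt\de$) costs an additional $O(\sqrt\de)$ shift either in the $\Lambda^*$ argument or in the Gaussian argument, so you obtain the stated inequality only with, say, $4\sqrt\de$ in place of $3\sqrt\de$; this is harmless for the ``moreover'' limit and for the application, but it does not reproduce the lemma as stated. The paper avoids this by using an independent mesh $\eps_1$ sent to $0$ after $N\to\8$, which recovers the exact $3\sqrt\de$ bound with the Gaussian term evaluated at $x$ itself. Second, your claim that $x\mapsto\log|x-y|$ is Lipschitz on $[-K,K]$ for $y\le-\sqrt2$ is false: on the event only $(\tfrac{N-1}{N})^{1/2}\la_1>y$ is imposed, so eigenvalues may approach $y$ and the log-potential is unbounded below there. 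What is true, and what the paper uses, is that $\Psi(\nu,y)$ is upper semicontinuous on $B_K(\si_{\rm sc},\de_1)\times[-K,K]$, which suffices for the upper bound $\sup_{\nu\in B_K(\si_{\rm sc},\de_1)}\Psi(\nu,y)\to\Psi_*(y)$ as $\de_1\to0+$.
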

\begin{proof}
  Let $y\in (y'-\de, y'+\de)$ and $\eps_1>0$ be a small number that will be sent to zero later.  Since $\sfm(y)$ is convex and increasing, we have
\begin{align*}
  \sfm((y'+\de) \wedge -\sqrt2)-\sfm(y'-2\de)&\le \sfm(-\sqrt2)-\sfm(-\sqrt2-3\de)\le 3\sqrt\de.
\end{align*}
By \pref{pr:ge}, for any $\eps_2>0$ there exists $N(\eps_2,y',\de)>0$ such that
\begin{align*}
  &\frac1N\log\pz \Big(\frac{1}{N}\sum_{i=1}^{N-1} f_{y'-2\de,\de}(\la_i)Z_i^2\in[0, \sfm((y'+\de)\wedge -\sqrt2)-j\eps_1 ] \Big)\\
  &\le -\Lambda^*[(j\eps_1 -[\sfm((y'+\de) \wedge -\sqrt2)-\sfm(y'-2\de)])_+]+\eps_2\le -\Lambda^*[(j\eps_1 -3\sqrt\de)_+]+\eps_2
\end{align*}
for all $N>N(\eps_2,y',\de)$ and all $j=0,1,...,[\frac{\sfm(y)}{\eps_1}]$.
Note that the functions
  \[
  x\mapsto x\Phi(\frac{\sqrt N x}\sfb) +\frac{\sfb}{\sqrt{2\pi N}} e^{-\frac{Nx^2}{2\sfb^2}},\quad \sfb \mapsto x\Phi(\frac{\sqrt N x}\sfb) +\frac{\sfb}{\sqrt{2\pi N}} e^{-\frac{Nx^2}{2\sfb^2}}, \quad y\mapsto \sfm(y)
  \]
  are increasing.
  For $(\frac{N-1}{N})^{1/2}\la_i>y$ and $N$ large enough, $\frac1{(\frac{N-1}{N})^{1/2}\la_i-y}>\frac{1}{\la_i-y'+2\de}\ge f_{y'-2\de,\de}(\la_i)$, we deduce
  \begin{align*}
    &\ez\Big[\indi\{(\frac{N-1}{N})^{1/2}\la_1> y\}  \Big(\mathsf{a}_N\Phi(\frac{\sqrt N\sfa_N}\sfb) +\frac{\sfb}{\sqrt{2\pi N}} e^{-\frac{N\sfa_N^2}{2\sfb^2}}\Big)\Big] \\
    &\le \sum_{j=0}^{[ \frac{\sfm(y)}{\eps_1}] } \ez \Big[\Big(\mathsf{a}_N\Phi(\frac{\sqrt N\sfa_N}\sfb) +\frac{\sfb}{\sqrt{2\pi N}} e^{-\frac{N{\sfa_N}^2}{2\sfb^2}}\Big)\\
    &\quad \indi\Big\{(\frac{N-1}{N})^{1/2}\la_1> y, \frac{1}{N}\sum_{i=1}^{N-1} \frac{Z_i^2}{(\frac{N-1}{N})^{1/2}\la_i-y}\in[\sfm(y)-(j\eps_1+\eps_1), \sfm(y)-j\eps_1 ]\Big\}\Big]\\
    &\quad +\ez \Big[(\mathsf{a}_N\Phi(\frac{\sqrt N\sfa_N}\sfb) +\frac{\sfb}{\sqrt{2\pi N}} e^{-\frac{N{\sfa_N}^2}{2\sfb^2}}) \indi\Big\{\frac{1}{N}\sum_{i=1}^{N-1} \frac{Z_i^2}{(\frac{N-1}{N})^{1/2}\la_i-y}> \sfm(y)\Big\}\Big]\\
    &\le \Big\lceil \frac{\sfm(y)}{\eps_1}\Big\rceil \max_{0\le j\eps_1\le \sfm(y)} \Big\{ \pz \Big(\frac{1}{N}\sum_{i=1}^{N-1} f_{y'-2\de,\de}(\la_i)Z_i^2\in[0, \sfm((y'+\de)\wedge -\sqrt2)-j\eps_1 ] \Big) \\
    & \quad \Big[\frac{\sfb}{\sqrt{2\pi N}} e^{-\frac{N(\sfa+\sqrt{-D''(0)}(j\eps_1+\eps_1))^2 }{2\sfb^2}} +[\sfa+\sqrt{-D''(0)}(j\eps_1+\eps_1)]\Phi\Big(\frac{\sqrt N (\sfa + \sqrt{-D''(0)}(j\eps_1+\eps_1))}{\sfb}\Big) \Big]\Big\}\\
    &\quad +\Big[\sfa \Phi\Big(\frac{\sqrt N \sfa }{\sfb}\Big)+\frac{\sfb}{\sqrt{2\pi N}} e^{-\frac{N\sfa^2 }{2\sfb^2}} \Big]\\
    &\le \Big[\sfa_m \Phi\Big(\frac{\sqrt N \sfa_m }{\sfb_m}\Big) +\frac{\sfb_m}{\sqrt{2\pi N}} e^{-\frac{N\sfa_m^2 }{2\sfb^2}} \Big] + \Big\lceil \frac{\sfm(-\sqrt2)}{\eps_1}\Big\rceil \max_{0\le x\le \sfm((y'+\de)\wedge-\sqrt2)} \Big\{ e^{-N(\Lambda^*((x -3\sqrt\de)_+)-\eps_2)}  \\
    & \quad\Big[\frac{\sfb_m}{\sqrt{2\pi N}} e^{-\frac{N(\sfa_m+\sqrt{-D''(0)}(x+\eps_1))^2 }{2\sfb_m^2}} +[\sfa_m+\sqrt{-D''(0)}(x+\eps_1)]\Phi\Big(\frac{\sqrt N (\sfa_m + \sqrt{-D''(0)}(x+\eps_1))}{\sfb_m}\Big) \Big]\Big\}.
  \end{align*}
  Note that
  \begin{align*}
  &\Phi\Big(\frac{\sqrt N (\sfa_m + \sqrt{-D''(0)}(x+\eps_1))}{\sfb_m}\Big) \le \indi\{\sfa_m +\sqrt{-D''(0)}(x+\eps_1)\ge 0\}\\
  &\quad + \frac{\sfb_m}{\sqrt{2\pi N} |\sfa_m + \sqrt{-D''(0)}(x+\eps_1)|} e^{-\frac{N(\sfa_m+\sqrt{-D''(0)}(x+\eps_1))^2 }{2\sfb_m^2}}\indi\{\sfa_m + \sqrt{-D''(0)}(x+\eps_1)<0\}.
  \end{align*}
  Similar estimate holds for $\Phi\Big(\frac{\sqrt N \sfa_m}{\sfb_m}\Big)$. Combining altogether, we deduce that
  \begin{align*}
    &\limsup_{\eps_2\to0+}\limsup_{N\to\8}\frac1N \log \sup_{\rho'-\de < \rho< \rho'+\de, u'-\de< u< u'+\de,\atop y'-\de< y< y'+\de}\\
    &\qquad\ez \Big[\indi\{(\frac{N-1}{N})^{1/2}\la_1> y\}  \Big(\mathsf{a}_N\Phi(\frac{\sqrt N\sfa_N}\sfb) +\frac{\sfb}{\sqrt{2\pi N}} e^{-\frac{N\sfa_N^2}{2\sfb^2}}\Big)\Big]\\
    &\le -\inf_{-\eps_1<x<\sfm((y'+\de)\wedge -\sqrt2)} \Lambda^*((x-3\sqrt\de )_+; f_{y'-2\de,\de})+ \frac1{2\sfb_m^2} [(\sfa_m+\sqrt{-D''(0)}(x+\eps_1))_-]^2.
  \end{align*}
  Observe that the right-hand side is continuous in $\rho,u,y$ and $\eps_1, \de$, and  it tends to $-\ix^-(\rho',u',y')$ as $\eps_1\to0+, \de\to0+$ by \pref{le:ixmin}. We also note that the only possible singular function in the integrand of $I_N$ is $\frac{\rho^2}{\sqrt{D(\rho^2)-\frac{D'(\rho^2)^2\rho^2}{D'(0)}} }$, which continuously extends to $\rho\in [0,R_2)$ and attains its maximum on compact intervals. Since $\Psi(\nu,y)$ is upper semi-continuous for $(\nu,y)\in B_K(\si_{\rm sc}, \de)\times [-K,K]$, we conclude that
  \begin{align*}
    &\limsup_{\de_1\to0+,\atop \eps_1\to0+} \limsup_{\eps_2\to0+} \limsup_{N\to\8} \frac1N\log I_N(\rho',u',y';\de;\de_1) \\
    &\le \limsup_{\de_1\to0+}\sup_{\rho'-\de < \rho< \rho'+\de, u'-\de< u< u'+\de,\atop y'-\de< y< y'+\de,\nu\in B_K(\si_{\rm sc},\de_1)}\psi(\nu,\rho,u,y)\\
    &\quad -\liminf_{\eps_1\to0+} \inf_{-\eps_1<x<\sfm((y'+\de)\wedge -\sqrt2)} \Lambda^*((x-3\sqrt\de )_+)+ \frac1{2\sfb_m^2} [(\sfa_m+\sqrt{-D''(0)}(x+\eps_1))_-]^2\\
    &\le  \sup_{\rho'-\de < \rho< \rho'+\de, u'-\de< u< u'+\de,\atop y'-\de< y< y'+\de}\psi_*(\rho,u,y)\\
    &\quad - \inf_{0<x<\sfm((y'+\de)\wedge -\sqrt2)} \Lambda^*((x-3\sqrt\de )_+; f_{y'-2\de,\de})+ \frac1{2\sfb_m^2} [(\sfa_m+\sqrt{-D''(0)}x)_-]^2.
  \end{align*}
  The second assertion follows from continuity and \pref{le:ixmin} by sending $\de\to0+$.
\end{proof}

Recall $I^0(E,(R_1,R_2))$ as in \pref{eq:ikdef}.
\begin{proposition}\label{pr:ubk0}
 Assume $\bar E$ is compact and $R_2<\8$. Then
 \begin{align*}
   \limsup_{N\to\8}\frac1N\log I^0(E,(R_1,R_2))\le \frac12\log[-4D''(0)] -\frac12\log(2\pi)-\frac12\log D'(0)\\
   +\sup_{(\rho,u,y)\in F} [\psi_*(\rho,u,y)- \ix^-(\rho,u,y)],
 \end{align*}
 where $F=\{(\rho,u,y): \rho\in(R_1,  R_2), u\in\bar E, y\le -\sqrt2\}$.
\end{proposition}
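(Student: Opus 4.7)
The plan is to upper-bound $I^0(E,(R_1,R_2))$ by a finite-cover argument built on top of \pref{le:covlet}, after first confining the integral to a compact region of the $(\rho,u,y)$-space. Since $\bar E$ is compact and $R_2<\8$, the $u$- and $\rho$-variables already sit in a compact set $\bar E\times[R_1,R_2]$. For the $y$-variable, the factor $e^{-\frac{N(\sqrt{-4D''(0)}y+m_2)^2}{2(-2D''(0)-\bt^2)}}$ in the integrand of \pref{eq:detgk} provides Gaussian decay like $e^{-cNy^2}$ for $|y|$ large, which dominates the polynomial growth of $\Psi(L,y)$ on the event $L\in B_K(\si_{\rm sc},\de_1)$; this allows us to truncate to $y\in[-K,-\sqrt2]$ at the cost of a term that vanishes faster than $e^{-Na}$ for any $a$ once $K$ is large enough. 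On the other end, the indicator $\indi\{(\frac{N-1}{N})^{1/2}\la_1>y\}$ in \pref{eq:detgk} together with the LDP \pref{eq:jk1x} for $\la_1$ shows that the contribution from $y>-\sqrt2$ is supported on a vanishing neighbourhood of $-\sqrt2$ and, thanks to the continuity noted in \pref{re:ylim}, can be absorbed in the limit $y\to -\sqrt2-$.

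Next, I would insert the empirical-measure restriction $\indi\{L((\frac{N-1}{N})^{1/2}\la_1^{N-1})\in B_K(\si_{\rm sc},\de_1)\}$ for small $\de_1>0$, using the concentration inequality \pref{eq:conineq} which shows the complement contributes at most $e^{-cN^2}$, negligible against the $e^{O(N)}$ main term. After this reduction, the integrand matches (up to the global prefactor) the expression appearing in $I_N(\rho',u',y';\de;\de_1)$ of \pref{le:covlet}. The global prefactor that we pull out of the integral is
\begin{align*}
 \frac{[-4D''(0)]^{(N-1)/2}}{(2\pi)^{N/2}D'(0)^{N/2}}\cdot(\text{sub-exponential factors}),
\end{align*}
whose $\frac1N\log$ limit is exactly $\frac12\log[-4D''(0)]-\frac12\log(2\pi)-\frac12\log D'(0)$.

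Given $\eta>0$, fix $\de=\de(\eta)>0$ and cover the compact set $[R_1,R_2]\times\bar E\times[-K,-\sqrt2]$ by finitely many balls of radius $\de$ centred at points $(\rho'_i,u'_i,y'_i)$; the cardinality grows only polynomially in $\de^{-1}$, hence contributes $o(N)$. By \pref{le:covlet}, for each $i$,
\begin{align*}
\limsup_{\de_1\to0+}\limsup_{N\to\8}\tfrac1N\log I_N(\rho'_i,u'_i,y'_i;\de;\de_1)\le\sup_{(\rho,u,y)\in B_\de(\rho'_i,u'_i,y'_i)}\psi_*(\rho,u,y)-\mathcal{J}_\de(\rho'_i,u'_i,y'_i),
\end{align*}
where $\mathcal{J}_\de$ is the infimum over $x$ appearing in \pref{le:covlet}. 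Summing (i.e.\ taking the max) over the finite cover and sending $\de\to 0+$, the second assertion of \pref{le:covlet} gives $\mathcal{J}_\de(\rho',u',y')\to \ix^-(\rho',u',y')$, while uniform continuity of $\psi_*$ on the compact domain turns the grid-supremum into $\sup_{(\rho,u,y)\in F}[\psi_*(\rho,u,y)-\ix^-(\rho,u,y)]$; this yields the claimed bound.

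The main obstacle is the careful ordering of the limits $N\to\8$, $\de_1\to 0+$, $\de\to 0+$, $K\to\8$, together with the fact that the inner infimum in \pref{le:covlet} mixes a $\de$-dependent Lipschitz approximation $f_{y'-2\de,\de}$ with the quadratic $\sfa_m,\sfb_m$ terms; convergence to $\ix^-$ relies on the explicit minimizer analysis in \pref{le:ixmin} and the continuous extension of $\sfm, \Lambda^*$ to the boundary described in \pref{re:ylim}. A secondary subtlety is that the $y$-tail truncation and the removal of the $L\notin B_K(\si_{\rm sc},\de_1)$ event must be uniform in $(\rho,u)\in[R_1,R_2]\times\bar E$, but this follows from \pref{eq:supa} and the continuity of the coefficients in $\psi_*$.
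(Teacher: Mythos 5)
Your overall route is the paper's: truncate to a compact parameter region, insert the empirical-measure restriction via \eqref{eq:conineq}, cover $[R_1,R_2]\times\bar E\times[-K,-\sqrt2]$ by small cubes, apply \pref{le:covlet} on each cube, and take the limits $N\to\8$, $\de_1\to0+$, $\de\to0+$ in that order. The genuine gap is your treatment of the strip $y\in(-\sqrt2,-\sqrt2+\eps)$. The cubes in \pref{le:covlet} have centers $y'\le-\sqrt2$ and are truncated at $y=-\sqrt2$, so they never see this strip; and the strip cannot simply be discarded: for each fixed $y\ge-\sqrt2+\eps$ the event $\{(\frac{N-1}{N})^{1/2}\la_1>y\}$ is indeed negligible (the paper uses \eqref{eq:conineq} for this), but the decay constant degenerates as $y\downarrow-\sqrt2$, so the integral over $(-\sqrt2,-\sqrt2+\eps)$ contributes at exponential scale no matter how small $\eps$ is. Saying the strip "can be absorbed in the limit $y\to-\sqrt2-$ by continuity" is precisely the assertion that requires proof: one must dominate the integrand there by (essentially) its value at a point $y\le-\sqrt2$. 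The paper does this with a separate argument: on the event $\{(\frac{N-1}{N})^{1/2}\la_1>y\}$ it uses that $y\mapsto\sfa_N(\rho,u,y)$ is decreasing to bound $\sfa_N(\rho,u,y)\le\sfa_N(\rho,u,-\sqrt2-\eps)$, bounds the determinant factor by $e^{(N-1)\Psi(\nu,-\sqrt2-\eps)}$ (since all eigenvalues exceed $y\ge-\sqrt2-\eps$), then applies \pref{pr:ge} to the dominating expression and only afterwards sends $\eps\to0+$ using the continuity recorded in \pref{re:ylim}. Without such a comparison, your covering argument does not close at the spectral edge.

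A secondary point: compactness of $[R_1,R_2]\times\bar E$ is not enough when $R_1=0$. By \pref{le:albtd}, $m_2=\mu-\frac{2u}{3\rho^2}(1+o(1))$ as $\rho\to0+$ for $u\neq0$, so $m_2$ and $\sfa$ blow up near $\rho=0$, and the truncation level $K$ and the uniform bound \eqref{eq:supa} you invoke are not available uniformly down to $\rho=0$. The paper first reduces to $R_1>0$ via \pref{le:exptt}, using $\lim_{\rho\to0+}\psi_*(\rho,u,y)=-\8$ to guarantee the maximizer has $\rho^0>0$; you should make this reduction explicit before appealing to uniformity in $(\rho,u)$.
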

\begin{proof}
Thanks to \pref{le:exptt}, we may assume $R_1>0$. By \pref{eq:psi*lim} and the definition of $\sfa$ as in \pref{eq:abdef1}, for any fixed $\rho>R_1$ and $u\in \bar E$, we have $\lim_{y\to -\8} \sfa =\8$ and
  $$\lim_{y\to-\8} \psi_*(\rho,u,y)- \ix^-(\rho,u,y)=\lim_{y\to-\8} \psi_*(\rho,u,y)=-\8.$$
  For any fixed $u,y$, since $\ix^-(\rho,u,y)\ge0$, by \pref{eq:psi*lim}, we have
\[
\lim_{\rho\to0+} \psi_*(\rho,u,y)- \ix^-(\rho,u,y) =-\8.
\]
We may choose $K$ large enough so that $\sup_{\rho\in[R_1,R_2],u\in\bar E, y\le-\sqrt2} [\psi_*(\rho,u,y)- \ix^-(\rho,u,y)]$ is attained at a point $(\rho^0,u^0,y^0)\in [R_1,R_2]\times \bar E\times [-K,-\sqrt2]$ with $\rho^0>0$. Let $\eps>0$. By \pref{eq:conineq}, we deduce that for any $y\ge -\sqrt2+\eps$,
\[
\pz(\la_1>y)\le e^{-c N^2}
\]
for some $c=c(\eps)>0$. Since
\[
\ez[|\det G|  \indi\{i(G_{**})=0,\zeta>0\}]\le \ez[|\det G|\indi\{(\frac{N-1}{N})^{1/2}\la_1>z_3'\}],
\]
using Lemmas \ref{le:goecpt}, \ref{le:goecpt2}, \ref{le:exptti11}, and \ref{le:exptti12}, by choosing $K>0$ large enough we have for $\de_1>0$,
  \begin{align*}
  &\limsup_{N\to\8} I^0(E,(R_1,R_2))  \\
  &\ \le \frac12\log[-4D''(0)]+ \max\{ \limsup_{N\to\8}I_N^0(\de_1;[-K,-\sqrt2-\eps)), \ \limsup_{N\to\8}I_N^0(\de_1;[-\sqrt2-\eps,-\sqrt2+\eps])\}
  \end{align*}
  where
\begin{align*}
 &I_N^0(\de_1;G) :=\int_{R_1}^{R_2} \int_{E} \int_{G} \ez\Big[ |\det ((\frac{N-1}{N})^{1/2}\GOE_{N-1}-y I_{N-1}) |   \\
  &\ \  \indi\{L((\frac{N-1}{N})^{1/2}\la_1^{N-1})\in B_K(\si_{\rm sc}, \de_1), (\frac{N-1}{N})^{1/2}\la_1> y\}\Big(\mathsf{a}_N\Phi(\frac{\sqrt N\sfa_N}\sfb) +\frac{\sfb}{\sqrt{2\pi N}} e^{-\frac{N\sfa_N^2}{2\sfb^2}}\Big)\Big]  \\
  &\ \ \frac{ e^{-\frac{(u-m_Y)^2}{2\si_Y^2}}}{\sqrt{2\pi}\si_Y} \frac{e^{-\frac{N \mu^2 \rho^2}{2D'(0)}}}{(2\pi)^{N/2} D'(0)^{N/2}} \frac{\sqrt {-4N D''(0)} \exp\{-\frac{N(\sqrt{ -4D''(0)}y+m_2)^2}{2(-2D''(0)-\bt^2)}\}} {\sqrt{2\pi(-2D''(0)-\bt^2)} } \rho^{N-1}  \dd y \dd u  \dd\rho.
\end{align*}
Here we remark that $K$ may depend on $\rho$ through $m_2$; we can choose $K<\8$ since we have assumed $R_1>0$.
Note that $I_N^0(\de_1;[-K,-\sqrt2-\eps))\le I_N^0(\de_1;[-K,-\sqrt2])$.
Consider a cover of the compact set $[R_1,R_2]\times \bar E\times [-K,-\sqrt2]$ with cubes of side length $2\de$ and center $(\rho',u',y')$ so that $(\rho^0,u^0,y^0)$ is one of the centers. 
We deduce from \pref{le:covlet} that
\begin{align*}
  &\limsup_{\de\to0+}\limsup_{\de_1\to0+}\limsup_{N\to\8}\frac1N  \log I_N^0(\de_1;[-K,-\sqrt2]) \le -\frac12\log(2\pi) -\frac12\log D'(0)\\
  & \quad+\limsup_{\de\to0+} \max_{(\rho',u',y') \atop \text{ centers of cubes}} \Big\{ \sup_{\rho'-\de < \rho< \rho'+\de, u'-\de< u< u'+\de,\atop y'-\de< y< y'+\de}\psi_*(\rho,u,y)\\
  &\quad - \inf_{\rho'-\de < \rho< \rho'+\de, u'-\de< u< u'+\de,\atop y'-\de< y< y'+\de,0<x<\sfm((y'+\de)\wedge -\sqrt2)} \Lambda^*((x-3\sqrt\de )_+;f_{y'-2\de,\de})+ \frac1{2\sfb^2} [(\sfa+\sqrt{-D''(0)}x)_-]^2\Big\}\\
  &=-\frac12\log(2\pi) -\frac12\log D'(0)+ \psi_*(\rho^0,u^0,y^0)- \ix^-(\rho^0,u^0,y^0).
\end{align*}
Here we understand that the supremum and infimum were always taken within $(R_1,R_2)\times \bar E\times [-K,-\sqrt2)$. Let us consider $I_N^0(\de_1;[-\sqrt2-\eps,-\sqrt2+\eps])$. From \pref{eq:abdef}, we know for fixed $\rho$, $u$ and all $y\ge-\sqrt2-\eps$,
\begin{align*}
  \sfa_N(\rho,u,y)\indi\{(\frac{N-1}{N})^{1/2}\la_1>y\}&\le \sfa_N(\rho,u,-\sqrt2-\eps)\indi\{(\frac{N-1}{N})^{1/2}\la_1>y\}\\
  &\le \sfa_N(\rho,u,-\sqrt2-\eps)\indi\{(\frac{N-1}{N})^{1/2}\la_1>-\sqrt2-\eps\}.
\end{align*}
Using \pref{eq:gau2} together with continuity of functions in question, we find
\begin{align*}
  &\limsup_{N\to\8}\frac1N  \log I_N^0(\de_1;[-\sqrt2-\eps,-\sqrt2+\eps])\le -\frac12\log(2\pi) -\frac12\log D'(0)\\
  &+ \limsup_{N\to\8}\frac1N  \log   \int_{R_1}^{R_2} \int_{E}  \exp\Big\{(N-3)\sup_{\nu\in B_K(\si_{\rm sc},\de_1),\atop -\sqrt2-\eps<y<-\sqrt2+\eps}[\Psi(\nu,-\sqrt2-\eps)+\psi(\nu,\rho,u,y) -\Psi(\nu,y) ]\Big \}\\
  &\  \ez\Big[ \Big(\mathsf{a}_N(\rho,u,-\sqrt2-\eps)\Phi(\frac{\sqrt N\sfa_N(\rho,u,-\sqrt2-\eps)}\sfb) +\frac{\sfb}{\sqrt{2\pi N}} e^{-\frac{N\sfa_N(\rho,u,-\sqrt2-\eps)^2}{2\sfb^2}}\Big)\\
  &\quad \indi\{(\frac{N-1}{N})^{1/2}\la_1>-\sqrt2-\eps\} \Big] \dd u \dd \rho.
\end{align*}
As in the proof of \pref{le:covlet}, for $N$ large enough since $\frac{1}{(\frac{N-1}{N})^{1/2}\la_i-(-\sqrt2-\eps)}>\frac{1}{\la_i-(-\sqrt2-2\eps)}$ on $\{(\frac{N-1}{N})^{1/2}\la_1>-\sqrt2-\eps\}$, for $\eps_1,\eps_2>0$ we deduce from \pref{pr:ge} that
\begin{align*}
  &\ez\Big[ \indi\{(\frac{N-1}{N})^{1/2}\la_1>-\sqrt2-\eps\} \\
  &\qquad \Big(\mathsf{a}_N(\rho,u,-\sqrt2-\eps)\Phi(\frac{\sqrt N\sfa_N(\rho,u,-\sqrt2-\eps)}\sfb) +\frac{\sfb}{\sqrt{2\pi N}} e^{-\frac{N\sfa_N(\rho,u,-\sqrt2-\eps)^2}{2\sfb^2}}\Big)\Big]\\
  &\le \Big\lceil \frac{\sfm(-\sqrt2-2\eps)}{\eps_1}\Big\rceil \max_{0\le j\eps_1\le \sfm(-\sqrt2-2\eps)} \Big\{ \pz \Big(\frac{1}{N}\sum_{i=1}^{N-1} f_{-\sqrt2-2\eps,\eps}(\la_i)Z_i^2\in[0, \sfm(-\sqrt2-2\eps)-j\eps_1 ] \Big) \\
    & \quad \Big[\frac{\sfb}{\sqrt{2\pi N}} e^{-\frac{N(\sfa+\sqrt{-D''(0)}(j\eps_1+\eps_1))^2 }{2\sfb^2}} +[\sfa+\sqrt{-D''(0)}(j\eps_1+\eps_1)]\Phi\Big(\frac{\sqrt N (\sfa + \sqrt{-D''(0)}(j\eps_1+\eps_1))}{\sfb}\Big) \Big]\Big\}\\
    &\quad +\Big[\sfa \Phi\Big(\frac{\sqrt N \sfa }{\sfb}\Big)+\frac{\sfb}{\sqrt{2\pi N}} e^{-\frac{N\sfa^2 }{2\sfb^2}} \Big]\\
  &\le \Big[\sfa \Phi\Big(\frac{\sqrt N \sfa }{\sfb}\Big) +\frac{\sfb}{\sqrt{2\pi N}} e^{-\frac{N\sfa^2 }{2\sfb^2}} \Big]+ \Big\lceil \frac{\sfm(-\sqrt2)}{\eps_1}\Big\rceil \max_{0\le x\le \sfm(-\sqrt2-2\eps)} \Big\{ e^{-N(\Lambda^*(x)-\eps_2)} \\
    & \quad \Big[\frac{\sfb}{\sqrt{2\pi N}} e^{-\frac{N(\sfa+\sqrt{-D''(0)}(x+\eps_1))^2 }{2\sfb^2}} +[\sfa+\sqrt{-D''(0)}(x+\eps_1)]\Phi\Big(\frac{\sqrt N (\sfa + \sqrt{-D''(0)}(x+\eps_1))}{\sfb}\Big) \Big]\Big\}.
\end{align*}
Here we wrote $\sfa=\sfa(\rho,u,-\sqrt2-2\eps)$. With the same covering argument as for $I_N^0(\de_1;[-K,-\sqrt2])$, we find
\begin{align*}
   &\limsup_{\eps\to0+}\limsup_{\de_1\to0+,\atop \eps_1\to0+,\eps_2\to0+}\limsup_{N\to\8}\frac1N  \log I_N^0(\de_1;[-\sqrt2-\eps,-\sqrt2+\eps])\\
   &\le -\frac12\log(2\pi) -\frac12\log D'(0)+\limsup_{\eps\to0+}\sup_{R_1<\rho<R_2,\atop u\in \bar E} [\psi_*(\rho,u,-\sqrt2-2\eps)- \ix^-(\rho,u,-\sqrt2-2\eps)]\\
   &\le -\frac12\log(2\pi) -\frac12\log D'(0)+ \psi_*(\rho^0,u^0,y^0)- \ix^-(\rho^0,u^0,y^0).
\end{align*}
We have completed the proof.
\end{proof}

\subsection{Lower bound}
Recalling $I^0(E,(R_1,R_2))$ as in \pref{eq:ikdef}, we have the following lower bound.
\begin{proposition}\label{pr:lbk0}
 Suppose $ E$ is open and $R_2<\8$. Then
 \begin{align*}
   \liminf_{N\to\8}\frac1N\log I^0(E,(R_1,R_2))\ge \frac12\log[-4D''(0)] -\frac12\log(2\pi)-\frac12\log D'(0)\\
   +\sup_{(\rho,u,y)\in F} [\psi_*(\rho,u,y)- \ix^-(\rho,u,y)],
 \end{align*}
 where $F=\{(\rho,u,y): \rho\in[R_1,  R_2], u\in\bar E, y\le -\sqrt2\}$.
\end{proposition}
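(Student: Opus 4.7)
The plan is to match the upper bound of \pref{pr:ubk0} by localizing $I^0(E,(R_1,R_2))$ around a near-optimal point and invoking the matching lower bound of \pref{pr:ge}. Fix $\eps>0$ and pick $(\rho^*,u^*,y^*)\in F$ with $\psi_*(\rho^*,u^*,y^*)-\ix^-(\rho^*,u^*,y^*)\ge \sup_F[\psi_*-\ix^-]-\eps$ (assumed finite, else there is nothing to show). Using openness of $E$, the boundary decay \pref{eq:psi*lim}, the continuous extension of $\ix^-$ to $y=-\sqrt 2$, and continuity of $\psi_*$, we may further assume $\rho^*\in(R_1,R_2)$, $u^*\in E$, and $y^*<-\sqrt 2$. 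Let $\hat x=\hat x(\rho^*,u^*,y^*)\in[0,\sfm(y^*))$ be the minimizer from \pref{le:ixmin}; note $\hat x=0$ exactly when $\sfa(\rho^*,u^*,y^*)\ge0$.

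Choose small $\de,\de_1,\eta>0$, to be sent to $0$ at the end. Take $\de$ small enough that the cube $B_\de:=(\rho^*-\de,\rho^*+\de)\times(u^*-\de,u^*+\de)\times(y^*-\de,y^*+\de)$ sits inside $(R_1,R_2)\times E\times(-\8,-\sqrt 2-\tfrac{3\de}{2})$ and that all relevant continuous functions $\psi_*,\sfa,\sfb,\Psi_*$ oscillate by at most $\eps$ on $B_\de$. Fix a large $K$ (via \pref{eq:ladein}) so that $(\tfrac{N-1}{N})^{1/2}\la_N^*\le K$ with overwhelming probability. The key event is
\[
\Omega_N:=\Bigl\{L((\tfrac{N-1}{N})^{1/2}\la_1^{N-1})\in B_K(\si_{\rm sc},\de_1),\ (\tfrac{N-1}{N})^{1/2}\la_1>-\sqrt 2-\tfrac{\de}{2},\ \Bigl|\tfrac{1}{N}\sum_{i=1}^{N-1}f_{y^*-2\de,\de}(\la_i)Z_i^2-(\sfm(y^*)-\hat x)\Bigr|<\eta\Bigr\}.
\]
Combining the semicircle/operator-norm concentration \pref{eq:conineq}, \pref{eq:ladein} (speed $N^2$, essentially free by a union bound) with the moreover part of \pref{pr:ge} (speed $N$, with $\hat x\in[0,\sfm(y^*))$), I will get
\[
\liminf_{N\to\8}\tfrac{1}{N}\log\pz(\Omega_N)\ \ge\ -\Lambda^*(\hat x;f_{y^*-2\de,\de}).
\]

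Now restrict the integral defining $I^0(E,(R_1,R_2))$ to $B_\de$ and to the event $\Omega_N$. On $\Omega_N$: (i) $\la_1>y$ for every $y$ in the $y$-slice of $B_\de$, so $i(G_{**})=0$; (ii) since $\la_1>-\sqrt2-\de/2$ and $y+\de<-\sqrt2-\de/2$, the truncated sum equals the true sum $Q(y)/\sqrt{-D''(0)}$ up to $O(\de)$, pinning $\sfa_N(\rho,u,y)$ near $\sfa(\rho^*,u^*,y^*)+\sqrt{-D''(0)}\hat x$ uniformly on $B_\de$; (iii) $\Psi(L((\tfrac{N-1}{N})^{1/2}\la_1^{N-1}),y)\ge \Psi_*(y)-o(1)$ uniformly, by continuity of $\Psi$ on $B_K(\si_{\rm sc},\de_1)\times[-K,K]$. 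The Gaussian tail term obeys $\mathsf{a}_N\Phi(\tfrac{\sqrt N\sfa_N}\sfb)+\tfrac{\sfb}{\sqrt{2\pi N}}e^{-N\sfa_N^2/(2\sfb^2)}\ge e^{-N[(\sfa^*+\sqrt{-D''(0)}\hat x)_-]^2/(2(\sfb^*)^2)-o(N)}$ by \pref{eq:phiblim}, handling both the $\sfa^*\ge 0$ case (where it is bounded below by a positive constant) and the $\sfa^*<0$ case (where the Gaussian tail provides the matching exponent). Multiplying the three contributions and reading off the exponential rate gives, after letting $\eta,\de_1,\de\to 0+$ in that order,
\[
\liminf_{N\to\8}\tfrac1N\log I^0(E,(R_1,R_2))\ \ge\ \tfrac12\log[-4D''(0)]-\tfrac12\log(2\pi)-\tfrac12\log D'(0)+\psi_*(\rho^*,u^*,y^*)-\ix^-(\rho^*,u^*,y^*)-C\eps.
\]
Sending $\eps\to 0+$ finishes the proof.

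The main obstacle is securing the joint lower bound on $\pz(\Omega_N)$ at the correct exponential rate: \pref{pr:ge} is stated for the Lipschitz truncation $f_{y,\de}$, not for the genuinely singular $\frac{1}{x-y}$, which is why the constraint $\la_1>-\sqrt2-\de/2$ together with $y\le y^*+\de<-\sqrt2-\tfrac{\de}{2}$ is forced in the definition of $\Omega_N$—it is exactly what makes the truncated and untruncated sums coincide and permits one to quote \pref{pr:ge} cleanly. The other ingredients (uniform continuity over $B_\de$, converting the Gaussian tail into the $[(\cdot)_-]^2$ form, and absorbing the non-exponential prefactors $\rho^{N-1}$, $\mathrm{e}^{-N\mu^2\rho^2/(2D'(0))}$, $\mathrm{e}^{-N(\sqrt{-4D''(0)}y+m_2)^2/(2(-2D''(0)-\bt^2))}$, and $(\si_Y)^{-1}\mathrm{e}^{-(u-m_Y)^2/(2\si_Y^2)}$ into the single expression $\psi_*$) are routine.
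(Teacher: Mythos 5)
Your overall plan (localize near a near-maximizer with $y^*<-\sqrt2$, force the quadratic form $Q$ to sit near $\sqrt{-D''(0)}(\sfm(y^*)-\hat x)$, and read off the Gaussian factor via \pref{eq:phiblim}) is essentially the paper's strategy — the paper uses a one-sided constraint $Q(y)\le\sqrt{-D''(0)}(\sfm(y)-\hat x)$ plus monotonicity of $x\mapsto x\Phi(\sqrt N x/\sfb)+\tfrac{\sfb}{\sqrt{2\pi N}}e^{-Nx^2/(2\sfb^2)}$, whereas you pin the sum two-sidedly, which is a harmless variant. But there is a genuine gap at the one step you declare "essentially free": the lower bound $\liminf_N\frac1N\log\pz(\Omega_N)\ge-\Lambda^*(\hat x;f_{y^*-2\de,\de})$. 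The constraint $(\tfrac{N-1}{N})^{1/2}\la_1>-\sqrt2-\tfrac\de2$ in $\Omega_N$ is \emph{not} controlled by \pref{eq:conineq} or \pref{eq:ladein}: the weak-topology concentration of the empirical measure says nothing about a single eigenvalue, and \pref{eq:ladein} only bounds the operator norm. By \pref{eq:jk1x}, $\pz(\la_1\le-\sqrt2-\tfrac\de2)\approx e^{-NJ_1(-\sqrt2-\de/2)}$ is a speed-$N$ event whose rate is of order $\de^{3/2}$, hence tends to $0$ as $\de\to0+$. When $\sfa(\rho^*,u^*,y^*)<0$ (which does occur at the maximizer for restricted $E$, cf.\ \pref{re:63}), the target rate $\Lambda^*(\hat x)>0$ is a fixed positive number, so for small $\de$ the naive subtraction $\pz(\Omega_N)\ge\pz(\text{sum pinned})-\pz(\la_1\le-\sqrt2-\tfrac\de2)-\pz(\text{ESD bad})$ is dominated by the middle term and yields nothing. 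You need the \emph{joint} estimate: the bad event $\{\la_1\le-\sqrt2-\tfrac\de2\}\cap\{\text{sum pinned}\}$ must be shown to have rate at least $J_1(-\sqrt2-\tfrac\de2)+\Lambda^*(\cdot)$, strictly beating the main term by the positive amount $J_1$. That is exactly what \pref{pr:geup} provides and how the paper argues (it subtracts $\pz(\la_1\le y^0-\tfrac\de4,\ \tfrac1N\sum_i f_{y^0-\de/2,\de/4}(\la_i)Z_i^2\le\cdots)$ and controls it with \pref{pr:geup}); your sketch never invokes it, and without it the claimed bound on $\pz(\Omega_N)$ does not follow.

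The remaining ingredients are fine modulo routine bookkeeping: the reduction to $\rho^*\in(R_1,R_2)$, $u^*\in E$, $y^*<-\sqrt2$ by continuity of $\psi_*$ and of the extension of $\ix^-$ costs only $\eps$; since $y^*$ is then at a fixed distance from $-\sqrt2$ and the event forces $\la_1>-\sqrt2-\tfrac\de2$, your identification of the truncated sum with $Q(y)$ up to $O(\de)$ is legitimate (though note the centering mismatch between $\sfm(y^*)$ and $\sfm(y^*-2\de)$, an $O(\sqrt\de)$ correction analogous to the paper's $3\sqrt\de$ terms, which disappears as $\de\to0+$). In the case $\sfa(\rho^*,u^*,y^*)\ge0$, i.e.\ $\hat x=0$ and $\Lambda^*(0)=0$, your union bound does work since $J_1(-\sqrt2-\tfrac\de2)>0$; the gap is precisely the case $\sfa<0$, which is the case where $\ix^-$ is nontrivial and the proposition has content.
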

\begin{proof}
   Without loss of generality assume $E\neq\emptyset$. Choose $(\rho^0,u^0,y^0)$  as in the proof of \pref{pr:ubk0} and $\de>0$ small enough so that
$$(\rho^0-\de,\rho^0+\de)\times (u^0-\de,u^0+\de)\times (y^0-\de,y^0)\subset (R_1,R_2)\times E\times (-K,-\sqrt2).$$
Here for simplicity in writing, we assumed $u^0$ is in the interior of $\bar E$. If $u^0$ is on the boundary, we can simply replace the interval $(u^0-\de,u^0+\de)$ with $(u^0,u^0+\de)$ or $(u^0-\de,u^0)$ in an obvious way. By restriction, we have
\begin{align*}
  &I^0(E,(R_1,R_2))  \\
  \ge & \frac{\sqrt{N}[-4D''(0)]^{N/2}}{(2\pi)^{(N+2)/2} D'(0)^{N/2} } \int_{\rho^0-\de}^{\rho^0+\de} \int_{u^0-\de}^{u^0+\de}\int_{y^0-\de}^{y^0-\de/2}  \frac{e^{-\frac{(u-m_Y)^2}{2\si_Y^2}-\frac{N \mu^2 \rho^2}{2D'(0)} -\frac{N(\sqrt{ -4D''(0)}y+m_2)^2}{2(-2D''(0)-\bt^2)} }} {\si_Y\sqrt{-2D''(0)-\bt^2}}
 \\
  & \ez[ e^{(N-1)\Psi(L((\frac{N-1}{N})^{1/2}\la_1^{N-1}),y)} \indi\{L(\la_1^{N-1})\in B(\si_{\rm sc}, \de_1),(\frac{N-1}{N})^{1/2}\la_1>y\} \\
  &(\mathsf{a}_N\Phi(\frac{\sqrt N\sfa_N}\sfb) +\frac{\sfb}{\sqrt{2\pi N}} e^{-\frac{N\sfa_N^2}{2\sfb^2}})]
    \rho^{N-1}  \dd y \dd u  \dd\rho\\
    =: &\frac{\sqrt{N}[-4D''(0)]^{N/2}}{(2\pi)^{(N+2)/2} D'(0)^{N/2} } I_N^0(\de,\de_1).
\end{align*}
Let $\hat x=\hat x(\rho^0,u^0,y^0)$ as defined in \pref{eq:ixmin}. Note that for any $y\in [y^0-\de, y^0-\frac\de2]$ we have $y^0-\frac\de2>(\frac{N}{N-1})^{1/2}y$ and $f_{y^0-\frac\de2,\frac\de4}(x)  \ge \frac1{x-(y^0-\frac\de2)}\indi\{x-(y^0-\frac\de2)\ge \frac\de4\}$. Choose $N$ large enough so that  $\sfm(y^0-2\de)-\hat x\le (\frac{N-1}{N})^{1/2}(\sfm(y^0-\de)-\hat x)$. Since $x\mapsto x\Phi(\frac{\sqrt N x}\sfb) +\frac{\sfb}{\sqrt{2\pi N}} e^{-\frac{N x^2}{2\sfb^2}}$ is positive and strictly increasing,
\begin{align*}
    &\ez\Big[e^{(N-1)\Psi(L((\frac{N-1}{N})^{1/2}\la_1^{N-1}),y)} \indi\{L((\frac{N-1}{N})^{1/2}\la_1^{N-1})\in B(\si_{\rm sc}, \de_1),(\frac{N-1}{N})^{1/2}\la_1>y\} \\
    &\qquad  \Big(\mathsf{a}_N\Phi(\frac{\sqrt N\sfa_N}\sfb) +\frac{\sfb}{\sqrt{2\pi N}} e^{-\frac{N\sfa_N^2}{2\sfb^2}}\Big)\Big] \\
&\ge \ez \Big[e^{(N-1)\Psi(L((\frac{N-1}{N})^{1/2}\la_1^{N-1}),y)}\Big(\mathsf{a}_N\Phi(\frac{\sqrt N\sfa_N}\sfb) +\frac{\sfb}{\sqrt{2\pi N}} e^{-\frac{N{\sfa_N}^2}{2\sfb^2}}\Big) \indi\Big\{ (\frac{N-1}{N})^{1/2}\la_1>y,
    \\
    &\quad L((\frac{N-1}{N})^{1/2} \la_1^{N-1})\in B(\si_{\rm sc}, \de_1), \frac{1}{N}\sum_{i=1}^{N-1} \frac{Z_i^2}{\la_i-(\frac{N}{N-1})^{1/2}y}\le(\frac{N-1}{N})^{1/2}[ \sfm(y)-\hat x] \Big\}\Big]\\
&\ge \Big((\sfa+\sqrt{-D''(0)}\hat x)
    \Phi(\frac{\sqrt N(\sfa+\sqrt{-D''(0)}\hat x)}\sfb) +\frac{\sfb}{\sqrt{2\pi N}} e^{-\frac{N(\sfa+\sqrt{-D''(0)} \hat x)^2}{2\sfb^2}}\Big) \\
    &\ \ \ez\Big[e^{(N-1)\Psi(L((\frac{N-1}{N})^{1/2}\la_1^{N-1}),y)} \indi\Big\{L((\frac{N-1}{N})^{1/2}\la_1^{N-1})\in B(\si_{\rm sc}, \de_1),\\
    &\ \ \la_1>y^0-\frac\de4,\frac{1}{N}\sum_{i=1}^{N-1} f_{y^0-\frac\de2,\frac\de4}(\la_i){Z_i^2}\le \sfm(y^0-2\de)-\hat x \Big\} \Big]\\
&\ge \Big((\sfa+\sqrt{-D''(0)}\hat x)
    \Phi(\frac{\sqrt N(\sfa+\sqrt{-D''(0)} \hat x)}\sfb) +\frac{\sfb}{\sqrt{2\pi N}} e^{-\frac{N(\sfa+\sqrt{-D''(0)} \hat x)^2}{2\sfb^2}}\Big) \\
    & \ \ e^{(N-1)\inf_{\nu_->y^0-\frac\de4, y<y^0-\frac\de2, \nu\in B(\si_{\rm sc}, \de_1)} \Psi(\nu,y)} \Big[ \pz \Big(\frac{1}{N}\sum_{i=1}^{N-1} f_{y^0-\frac\de2,\frac\de4}(\la_i){Z_i^2}\le \sfm(y^0-2\de)-\hat x \Big)\\
    &\ \ - \pz \Big(\la_1\le y^0-\frac\de4,\frac{1}{N}\sum_{i=1}^{N-1} f_{y^0-\frac\de2,\frac\de4}(\la_i){Z_i^2}\le \sfm(y^0-2\de)-\hat x \Big)\\
    &\ \ - \pz(L((\frac{N-1}{N})^{1/2}\la_1^{N-1})\notin B(\si_{\rm sc}, \de_1))\Big].
  \end{align*}
  Here $\nu_-$ denotes the lower edge of the support of $\nu$. Note that $\Psi(\nu,y)$ is a continuous function for $\nu \in B(\si_{\rm sc},\de_1)$ with $\nu_- >y^0-\frac\de4$ and $y<y^0-\frac{\de}{2}$.
  Using \pref{eq:phiblim}, Propositions \ref{pr:ge} and \ref{pr:geup}, since all functions in question are continuous on compact sets, we have
\begin{align*}
  &\liminf_{\de_1\to0+}\liminf_{N\to\8}\frac1N I_N^0(\de,\de_1) \\
  &\ge \inf_{\rho^0-\de<\rho<\rho^0+\de, u^0-\de<u<u^0+\de, \atop y^0-\de<y<y^0-\frac\de2}\psi_*(\rho,u,y)-\Lambda^*[\hat x+\sfm(y^0-\frac\de2)-\sfm(y^0-2\de) ]\\
  & \quad -\sup_{\rho^0-\de<\rho<\rho^0+\de, u^0-\de<u<u^0+\de, \atop y^0-\de<y<y^0-\frac\de2} \frac{[(\sfa+\sqrt{-D''(0)}\hat x)_-]^2}{2\sfb^2}.
\end{align*}
Sending $\de\to0+$, we find by continuity and \pref{re:ylim} if necessary,
\begin{align*}
  &\liminf_{N\to\8} \frac1N \log I^0(E,(R_1,R_2))\\
  & \ge \frac12\log [-4D''(0)]-\frac12\log(2\pi) -\frac12\log D'(0)+\psi_*(\rho^0,u^0,y^0)- \ix^-(\rho^0,u^0,y^0).
\end{align*}
The proof is complete.
\end{proof}
\begin{proof}[Proof of Theorem \ref{th:criticalfix1}]
The case for $\bar E$ to be compact and $R_2<\8$ have been handled by Propositions \ref{pr:ubk0} and \ref{pr:lbk0}. Suppose $\bar E$ is not compact or $R_2=\8$.
Using \pref{eq:psi*lim} and \pref{le:exptt}, we may choose $R<\8$ and $T<\8$ large enough such that
  \begin{align*}
    &\lim_{N\to\8} \frac1N \log\ez \Crt_{N,0}(E, (R_1,R_2)) =	\lim_{N\to\8} \frac1N \log\ez \Crt_{N,0}(E \cap(-T,T), (R_1,R_2)\cap [0,R])\\
&=\frac12 \log[-4D''(0)] -\frac12\log D'(0) +\frac12+\sup_{y\le -\sqrt2, R_1< \rho<R\wedge R_2, u\in \bar E\cap [-T, T], }\psi_*(\rho,u,y)\\
&=\frac12 \log[-4D''(0)] -\frac12\log D'(0) +\frac12+\sup_{(\rho,u,y)\in F}\psi_*(\rho,u,y).\qedhere
  \end{align*}
\end{proof}

\begin{remark}\label{re:63}
Let us justify that $\ix^-(\rho^0,u^0,y^0)$ cannot always vanish. From \pref{eq:abdef1} we compute
\begin{align*}
\partial_y \sfa&=-\frac{(-2D''(0)-\al\bt\rho^2-\bt^2) \sqrt{-4D''(0)}}{-2D''(0)-\bt^2} -\sqrt{-D''(0)}\sfm'(y) <0,\\
\partial_u \sfa &= \frac{-2D''(0)\al\rho^2}{(-2D''(0)-\bt^2)\sqrt{D(\rho^2)-\frac{D'(\rho^2)^2 \rho^2}{D'(0)} }}<0.
\end{align*}
We may choose $\rho$ and (large) $u$ appropriately so that $\sfa<0$ for all $y\in[-2\sqrt2, -\sqrt2]$.
From \pref{le:ixmin}, we see that $\ix^-(\rho,u,y)$ is differentiable with possible exception for $\sfa=0$. In particular, for $\sfa<0$, using $ \partial_x \ix_-(\rho,u,y; \hat x)=0$,
\begin{align*}
\partial_y\ix^-(\rho,u,y)&=\partial_y \ix_-(\rho,u,y;\hat x)+ \partial_x \ix_-(\rho,u,y; \hat x) \partial_y \hat x\\
&= \partial_y \Lambda^*(\hat x;y)+\frac1\sfb (\sfa+\sqrt{-D''(0)}\hat x)_-\partial_y\sfa <0.
\end{align*}
Recall \pref{eq:uvcov} and let $J=\sqrt{-2D''(0)}$. In \cite{AZ20}*{Example 2}, we computed
\begin{align}\label{eq:parypsi}
  \partial_y \psi_* =\frac{-(\bt^2+J^2)y-\sqrt2J (\mu+\bt v)}{J^2-\bt^2}-\sgn(y)\sqrt{y^2-2}\indi\{|y|>\sqrt2\}.
\end{align}
For $y<-\sqrt2$ and $u$ large enough so that $\mu+\bt v<0$,
\begin{align*}
\partial_y [\psi_*(\rho,u,y)-\ix^-(\rho,u,y)]>0.
\end{align*}
Hence, to maximize $\psi_*(\rho,u,y)-\ix^-(\rho,u,y)$ when the critical value $u$ is restricted to large values, we must take $y^0=-\sqrt2$ which ensures $\ix^-(\rho^0,u^0,y^0)>0$.
\end{remark}
\begin{example}\label{ex:3}
  \rm
  Here we use \pref{th:criticalfix1} to recover \pref{th:fixktotal} for the case $k=0$. Let $0\le R_1<R_2\le\8$ and $E=\rz$. This removes restriction on the range of the random field. Recall \pref{eq:uvcov} and \pref{eq:m12cov}. We rewrite
\begin{equation}\label{eq:m12a}
\begin{split}
\sfa &=\frac{\al\bt\rho^2(\frac{J^2v}{\bt}+\mu)-(J^2-\al\bt\rho^2-\bt^2)\sqrt2 Jy-(J^2-\bt^2)\frac1{\sqrt2}J\sfm(y)}{J^2-\bt^2}\\
&= \frac{J}{\sqrt2}(-y+\sqrt{y^2-2}) +\frac{\al \rho^2 (J^2v+\sqrt2 J \bt y+\mu\bt)}{J^2-\bt^2}.
\end{split}
\end{equation}
In \cite{AZ20}*{Example 2}, we calculated
\begin{align*}
\partial_v\psi_*&= \frac{-J^2v -\bt (\sqrt2Jy +\mu )}{J^2-\bt^2},\  \
\end{align*}
and solving $\partial_v\psi_*=0$, we found
\begin{align}\label{eq:musbt}
    v=-\frac{\bt(\sqrt2Jy+\mu)}{J^2}, \ \ \sqrt2Jy+\mu+ \bt v = \frac{(\sqrt2Jy +\mu)(J^2-\bt^2)}{J^2}.
\end{align}
Recalling \pref{eq:phi*}, we can eliminate $v$ and rewrite
\begin{align}\label{eq:psids0}
  \psi_*(\rho,u,y )= -\frac12y^2-\frac12-\frac12\log2 -J_1(-|y|)\indi\{|y|>\sqrt2\}-\frac{\sqrt2 \mu y}{J}-\frac{\mu^2}{2J^2}-\frac{\mu^2\rho^2}{2D'(0)}+\log \rho.
\end{align}
If the first order condition \pref{eq:musbt} holds, we would find $\sfa=\frac{J}{\sqrt2}(-y+\sqrt{y^2-2})>0$ for any $y\le-\sqrt2$.

Recall that $(\rho^0,u^0,y^0)$ is a maximizer of $[\psi_*(\rho,u,y)- \ix^-(\rho,u,y)]$. We claim $\ix^-(\rho^0,u^0,y^0)=0$. Indeed, suppose $\ix^-(\rho^0,u^0,y^0)>0$ and thus $\sfa(\rho^0,u^0,y^0)<0$. Then $(\rho^0,u^0,y^0)$ cannot verify condition \pref{eq:musbt}. Note that we can always find a point $(\rho^0,u_1^0,y^0)$ satisfying \pref{eq:musbt} which differs from $(\rho^0,u^0,y^0)$ in the second coordinate. Since $\ix^-(\rho^0,u_1^0,y^0)=0$, by strict concavity of $u\mapsto \psi_*(\rho^0,u,y^0)$, we have
\[
\psi_*(\rho^0,u^0,y^0)<\psi_*(\rho^0,u_1^0,y^0)\le  \psi_*(\rho^0,u^0,y^0)- \ix^-(\rho^0,u^0,y^0),
\]
which contradicts our assumption.  Thus we find the same complexity function as that for all critical points and the function $\ix^-$ does not play a role in this case. In particular, if $\mu> J$ we have $y^0=-\frac1{\sqrt2}(\frac\mu{J}+\frac{J}\mu)<-\sqrt2$. If $\mu\le J$, since $\psi$ is concave in $y$ and is increasing when $y$ is small enough, we must take $y^0=-\sqrt2$. Observe that this includes the case $\mu\le0$. Plugging these values into \pref{eq:psids0}, we can obtain the conclusion of \pref{th:fixktotal} for $k=0$ in the same way as that in \cite{AZ20}*{Example 2} when $B_N$ is a shell. We summarize the results as follows.

\emph{Case 1}: $\mu\neq0$. We only give results for $R_1<\frac{\sqrt{D'(0)}}{|\mu|}$. Then $\rho^0=\rho_*$ where $\rho_*$ is
\begin{align*}
  \rho_*&=\begin{cases}
    \frac{\sqrt{D'(0)}}{|\mu|},& \text{ if } R_2>\frac{\sqrt{D'(0)}}{|\mu|},\\
    R_2,&\text{ otherwise},
  \end{cases}\\
  u^0& =\begin{cases}
   \Big(\frac\mu{-2D''(0)}-\frac{\sqrt2}{\sqrt{-D''(0)}}\Big) [D'(\rho_*^2)-D'(0)]+ \frac{\mu\rho_*^2}{2}-\frac{\mu D'(\rho_*^2) \rho_*^2}{D'(0) }, & \mu\le \sqrt{-2D''(0)},\\
   \frac{D'(\rho_*^2)-D'(0)}{\mu }+\frac{\mu\rho_*^2}{2}-\frac{\mu D'(\rho_*^2) \rho_*^2}{D'(0) }, & \mu >\sqrt{-2D''(0)},
  \end{cases}
\end{align*}
and
\begin{align*}
&\psi_*(\rho^0,u^0,y^0)\\
&= \begin{cases}
   -2-\frac12\log2+\frac{\sqrt2\mu}{\sqrt{-D''(0)}} +\frac{\mu^2}{4D''(0)} +\log\frac{\sqrt{D'(0)}}{|\mu|}, &\mu\le \sqrt{-2D''(0)}, R_2>\frac{\sqrt{D'(0)}}{|\mu|},\\
   -\frac12\log2-\log\sqrt{-2D''(0)}-\frac12+\frac12\log D'(0), &\mu> \sqrt{-2D''(0)}, R_2>\frac{\sqrt{D'(0)}}{|\mu|},\\
    -\frac32-\frac12\log2+\frac{\sqrt2\mu}{\sqrt{-D''(0)}} +\frac{\mu^2}{4D''(0)}+\log R_2- \frac{\mu^2R_2^2}{2D'(0)}, &\mu\le \sqrt{-2D''(0)}, R_2\le \frac{\sqrt{D'(0)}}{|\mu|},\\
   -\frac12\log2-\log\sqrt{-2D''(0)}+\log|\mu| +\log R_2- \frac{\mu^2R_2^2}{2D'(0)}, &\mu> \sqrt{-2D''(0)}, R_2\le \frac{\sqrt{D'(0)}}{|\mu|}.
  \end{cases}
\end{align*}

\emph{Case 2}: $\mu=0$. We have $\rho^0=R_2<\8$,
\begin{align*}
  u^0&=-\frac{\sqrt2[D'(R_2^2)-D'(0)]}{\sqrt{-D''(0)}},\\
  \psi_*(\rho^0,u^0,y^0)&=-\frac32-\frac12\log2+\log R_2.
\end{align*}
\pref{th:criticalfix1} suggests that the local minima around the value $u^0$ and variable $\rho^0$ given above dominate all other places. Moreover, we observe that $\psi_*(\rho^0,u^0,y^0)=\psi_*(\rho_*,u_*,y_*)$ when $\mu> \sqrt{-2D''(0)}$, where $u_*$ and $y_*$ were given in \cite{AZ20}*{Example 2}. Actually $y_*=y^0$ and $u_*=u^0$ for $\mu> \sqrt{-2D''(0)}$.
Finally, we remark that as in \cite{ABC13}*{Theorem 2.12}, by investigating when the complexity function equals zero, we may obtain a lower bound of the global minimum of the Hamiltonian in the large $N$ limit.
\end{example}

\section{Saddles with fixed index $k\ge1$}\label{se:kge1}

Recall $I^k(E,(R_1,R_2))$ and $II^k(E,(R_1,R_2))$ as in \pref{eq:ikdef}. One would expect similar behavior of $I^0$ and $I^k$. A moment of reflection, however, reveals that the method to prove upper bound for $I^0$ does not work for $I^k$, simply because $(\frac{N-1}{N})^{1/2}\la_i-y<0$ for $i\le k$ on $\{(\frac{N-1}{N})^{1/2}\la_k<y<(\frac{N-1}{N})^{1/2}\la_{k+1}\}$. Fortunately, it turns out that we do not need that precise upper bound, which would always be dominated by $II^k$ in the large $N$ limit. On the other hand, the upper bound for $II^k$ and the lower bound for index $k\ge1$ will be established following ideas similar to that for local minima.

\subsection{Upper bound involving $J_k$ }
For the upper bound of $I^k$, using \pref{eq:schur}, we have
\begin{align*}
  &\ez[|\det G|  \indi\{i(G_{**})=k,\zeta>0\}]\le \ez(|\det G_{**}| | z_1'-\xi^\sfT G_{**}^{-1} \xi |  \indi\{i(G_{**})=k \})\\
&= [-4D''(0)]^{\frac{N-1}{2}} \ez[| z_1'- Q(z_3') | |\det ((\frac{N-1}{N})^{1/2}\GOE_{N-1}-z_3' I_{N-1}) | \\
&\quad \indi\{(\frac{N-1}{N})^{1/2}\la_k<z_3'<(\frac{N-1}{N})^{1/2}\la_{k+1} \}] \\
&\le [-4D''(0)]^{\frac{N-1}{2}}\Big(  \ez\Big[|z_1'| \prod_{i=1}^{N-1} |(\frac{N-1}{N})^{1/2}\la_i-z_3'|\indi\{(\frac{N-1}{N})^{1/2}\la_k<z_3'<(\frac{N-1}{N})^{1/2}\la_{k+1} \}\Big] \\
& \ \ +\frac{\sqrt{-D''(0)}}{N}\sum_{i=1}^{N-1}\ez\Big[Z_i^2\prod_{j\neq i} |(\frac{N-1}{N})^{1/2}\la_j-z_3'|\indi\{(\frac{N-1}{N})^{1/2}\la_k<z_3'<(\frac{N-1}{N})^{1/2}\la_{k+1} \}\Big ] \Big).
\end{align*}
To handle the two terms, let
\begin{align}
  &I_1^k(E,(R_1,R_2)) = [-4D''(0)]^{\frac{N-1}{2}} \int_{R_1}^{R_2} \int_{E} \ez\Big[|z_1'| \prod_{i=1}^{N-1} |(\frac{N-1}{N})^{1/2}\la_i-z_3'| \notag\\
  &\ \ \indi\{(\frac{N-1}{N})^{1/2}\la_k<z_3'<(\frac{N-1}{N})^{1/2}\la_{k+1} \} \Big]\frac{ e^{-\frac{(u-m_Y)^2}{2\si_Y^2}}}{\sqrt{2\pi}\si_Y} \frac{e^{-\frac{N \mu^2 \rho^2}{2D'(0)}}}{(2\pi)^{N/2} D'(0)^{N/2}}  \rho^{N-1}   \dd u  \dd\rho,\notag \\
  &I_2^k(E,(R_1,R_2)) = \frac{[-4D''(0)]^{\frac{N}{2}} }{2N} \sum_{i=1}^{N-1}\int_{R_1}^{R_2} \int_{E} \ez\Big[Z_i^2\prod_{j\neq i} |(\frac{N-1}{N})^{1/2}\la_j-z_3'| \notag \\
  &\ \ \indi\{(\frac{N-1}{N})^{1/2}\la_k<z_3'<(\frac{N-1}{N})^{1/2}\la_{k+1} \}\Big ]\frac{ e^{-\frac{(u-m_Y)^2}{2\si_Y^2}}}{\sqrt{2\pi}\si_Y} \frac{e^{-\frac{N \mu^2 \rho^2}{2D'(0)}}}{(2\pi)^{N/2} D'(0)^{N/2}}  \rho^{N-1}   \dd u  \dd\rho.\label{eq:ik12}
\end{align}
It follows that $I^k\le I_1^k+I_2^k$. We first consider the upper bound for $I_1^k$. With \pref{eq:z13con0} and \pref{eq:absgau}, we find for $y\le -\sqrt2$,
\begin{align*}
  \ez[|z_1'||z_3'=y] & \le \sqrt{\frac2\pi}\frac{\sfb}{\sqrt N} + |\bar\sfa|.
\end{align*}
Since by \pref{eq:conineq}, for any $y\ge -\sqrt2+\eps$, $\pz(\la_{k+1}>y)\le e^{-c N^2}$, using Lemmas \ref{le:exptti11} and \ref{le:exptti12}, for large $K>0$ and small $\de,\eps>0$, we have
\begin{align*}
\limsup_{N\to\8}\frac1N\log I_1^k( E,(R_1,R_2)) \le \max\Big\{ \limsup_{N\to\8}\frac1N\log I_1^k( E,(R_1,R_2), [-K,-\sqrt2);\de),\\
\limsup_{N\to\8}\frac1N\log I_1^k( E,(R_1,R_2), [-\sqrt2,-\sqrt2+\eps];\de)\Big\},
\end{align*}
where
\begin{align*}
  &I_1^k( E,(R_1,R_2),G;\de)= [-4D''(0)]^{\frac{N}{2}} \int_{R_1}^{R_2} \int_{ E}\int_{G} \ez\Big[\Big(\sqrt{\frac2\pi}\frac{\sfb}{\sqrt N} + |\bar\sfa|\Big) \\
  &\ \ \prod_{i=1}^{N-1} |(\frac{N-1}{N})^{1/2}\la_i-y | \indi\{(\frac{N-1}{N})^{1/2}\la_k< y, L((\frac{N-1}{N})^{1/2}\la_{i=1}^{N-1})\in B_K(\si_{\rm sc}, \de) \}\Big] \\
  &\ \ \frac{ e^{-\frac{(u-m_Y)^2}{2\si_Y^2}}}{\sqrt{2\pi}\si_Y} \frac{e^{-\frac{N \mu^2 \rho^2}{2D'(0)}}}{(2\pi)^{N/2} D'(0)^{N/2}} \frac{ \exp\{-\frac{N(\sqrt{ -4D''(0)}y+m_2)^2}{2(-2D''(0)-\bt^2)}\}} {\sqrt{2\pi(-2D''(0)-\bt^2)} } \rho^{N-1}  \dd y \dd u  \dd\rho.
\end{align*}
For any $(\rho',u',y')\in (R_1,R_2]\times \bar E\times (-K,-\sqrt2]$ and $\de>0$, using the LDP of $\la_k$ \pref{eq:jk1x} and continuity of functions in question we have
\begin{align*}
  &\limsup_{N\to\8}\frac1N\log  I_1^k( (u'-\de,u'+\de),(\rho'-\de,\rho'+\de), (y'-\de,y'+\de);\de_1)\le \frac12\log[-4D''(0)]\\
  &\ \ -\frac12\log D'(0)-\frac12\log(2\pi)- kJ_1((y'+\de)\wedge-\sqrt2) +\sup_{\nu\in B_K(\si_{\rm sc}, \de_1),\rho'-\de<\rho<\rho'+\de, \atop u'-\de<u<u'+\de, y'-\de<y<y'+\de}\psi(\nu,\rho,u,y).
\end{align*}
Here as usual we understand all intervals are replaced with shorter intervals if they go out of $(R_1,R_2]\times \bar E\times [-K,-\sqrt2]$. As for the case of local minima in \pref{pr:ubk0}, we may assume $R_1>0$.
Since $\lim_{y\to-\8}\psi_*(\rho, u,y)=-\8$ by \pref{eq:psi*lim}, we may choose $K$ large enough so that
$$\sup_{R_1\le \rho\le R_2, u\in \bar E, y\le -\sqrt2} \psi_*( \rho, u,y)-kJ_1(y)= \max_{R_1\le \rho\le R_2, u\in \bar E, -K\le y\le -\sqrt2}  \psi_*(\rho, u,y)-kJ_1(y).$$
Here $K$ may depend on $R_1>0$. Let $(\rho_1^k,u_1^k,y_1^k)$ be a maximizer. Since  $\lim_{\rho\to 0+}\psi_*(\rho, u,y)=-\8$, we have $\rho_1^k>0$. Consider a cubic cover of $[R_1,R_2]\times \bar E\times [-K,-\sqrt2]$ with side length $2\de$  such that $(\rho_1^k,u_1^k,y_1^k)$ is one of the centers. It follows that
\begin{align*}
  &\limsup_{\de\to0+,\atop\de_1\to0+}\limsup_{N\to\8}\frac1N\log I_1^k( E,(R_1,R_2),[-K,-\sqrt2);\de_1) \\
  & \le \limsup_{\de\to0+,\atop\de_1\to0+}\max_{\text{cubes with} \atop \text{centers} (\rho',u',y')} \limsup_{N\to\8}\frac1N\log  I_1^k((u'-\de,u'+\de),(\rho'-\de,\rho'+\de), (y'-\de,y'+\de);\de_1)\\
  &\le  \frac12\log[-4D''(0)] -\frac12\log D'(0)-\frac12\log(2\pi)+\psi_*( \rho_1^k,u_1^k,y_1^k)-kJ_1(y_1^k).
\end{align*}
For $I_1^k( E,(R_1,R_2), [-\sqrt2,-\sqrt2+\eps];\de)$, 
when $y\in[-\sqrt2,-\sqrt2+\eps]$,
\begin{align*}
  &\prod_{i=1}^{N-1} |(\frac{N-1}{N})^{1/2}\la_i-y | \indi\{(\frac{N-1}{N})^{1/2}\la_k< y, L((\frac{N-1}{N})^{1/2}\la_{i=1}^{N-1})\in B_K(\si_{\rm sc}, \de_1) \}\\
  &\le e^{(N-1)\sup_{\nu\in B_K(\si_{\rm sc}, \de_1) }\Psi(\nu,y)}.
\end{align*}
 Note that
 \begin{align*}
 &I_1^k( E,(R_1,R_2), [-\sqrt2,-\sqrt2+\eps];\de)\le  [-4D''(0)]^{\frac{N}{2}} \int_{R_1}^{R_2} \int_{ E}\int_{-\sqrt2}^{-\sqrt2+\eps} \\
 &\ \   \prod_{i=1}^{N-1} |(\frac{N-1}{N})^{1/2}\la_i-y | \ez\Big[\Big(\sqrt{\frac2\pi}\frac{\sfb}{\sqrt N} + |\bar\sfa|\Big) \indi\{ L((\frac{N-1}{N})^{1/2}\la_{i=1}^{N-1})\in B_K(\si_{\rm sc}, \de) \}\Big] \\
 &\ \ \frac{ e^{-\frac{(u-m_Y)^2}{2\si_Y^2}}}{\sqrt{2\pi}\si_Y} \frac{e^{-\frac{N \mu^2 \rho^2}{2D'(0)}}}{(2\pi)^{N/2} D'(0)^{N/2}} \frac{ \exp\{-\frac{N(\sqrt{ -4D''(0)}y+m_2)^2}{2(-2D''(0)-\bt^2)}\}} {\sqrt{2\pi(-2D''(0)-\bt^2)} } \rho^{N-1}  \dd y \dd u  \dd\rho.
  \end{align*}
 Using the same covering argument as above, sending $\de\to0+,\de_1\to0+$ and $\eps\to0+$ sequentially, we have
\begin{align}\label{eq:i1k2}
  &\limsup_{\eps\to0+}\limsup_{\de\to0+\atop\de_1\to0+}\limsup_{N\to\8}\frac1N\log I_1^k( E,(R_1,R_2),[-\sqrt2,-\sqrt2+\eps];\de_1) \notag \\
  &\le \frac12\log[-4D''(0)] -\frac12\log D'(0)-\frac12\log(2\pi) +\sup_{R_1<\rho<R_2, u\in \bar E} \psi_*(\rho,u,-\sqrt2) \notag \\
  &\le \frac12\log[-4D''(0)] -\frac12\log D'(0)-\frac12\log(2\pi)+\psi_*( \rho_1^k,u_1^k,y_1^k)-kJ_1(y_1^k).
\end{align}
In fact, in this special case the covering argument is not necessary; we can directly bound the integrand from above since $J_1(-\sqrt2)=0$.

Let us consider $I_2^k$ as in \pref{eq:ik12}. Similar to the above, we can assume $R_1>0$ and choose $K$ large and $\de,\eps$ small such that
\begin{align*}
\limsup_{N\to\8}\frac1N\log I_2^k( E,(R_1,R_2)) &\le \max\Big\{ \limsup_{N\to\8}\frac1N\log I_2^k(E,(R_1,R_2), (-K,-\sqrt2);\de),\\
&\ \ \limsup_{N\to\8}\frac1N\log I_2^k(E,(R_1,R_2), [-\sqrt2,-\sqrt2+\eps];\de)
\Big\},
\end{align*}
where
\begin{align*}
  &I_2^k( E,(R_1,R_2),G;\de)= \frac{[-4D''(0)]^{\frac{N+1}{2}} }{2N} \sum_{i=1}^{N-1}\int_{R_1}^{R_2} \int_{ E}\int_{G}\\
  & \ \ \ez\Big[\prod_{j\neq i}^{N-1} |(\frac{N-1}{N})^{1/2}\la_j-y | \indi\{(\frac{N-1}{N})^{1/2}\la_k< y, L((\frac{N-1}{N})^{1/2}\la_{i=1}^{N-1})\in B_K(\si_{\rm sc}, \de) \}\Big] \\
  &\ \ \frac{ e^{-\frac{(u-m_Y)^2}{2\si_Y^2}}}{\sqrt{2\pi}\si_Y} \frac{e^{-\frac{N \mu^2 \rho^2}{2D'(0)}}}{(2\pi)^{N/2} D'(0)^{N/2}} \frac{ \exp\{-\frac{N(\sqrt{ -4D''(0)}y+m_2)^2}{2(-2D''(0)-\bt^2)}\}} {\sqrt{2\pi(-2D''(0)-\bt^2)} } \rho^{N-1}  \dd y \dd u  \dd\rho.
\end{align*}
Note that $ L((\frac{N-1}{N})^{1/2}\la_{i=1}^{N-1})\in B_K(\si_{\rm sc}, \de)$ implies that $ L((\frac{N-1}{N})^{1/2}\la_{i=1,i\neq j}^{N-1})\in B_K(\si_{\rm sc}, 2\de)$ for any $j=1,....,N-1$.
For any $(\rho',u',y')\in (R_1,R_2]\times \bar E\times (-K,-\sqrt2]$ and $\de>0$, the LDP of $\la_k$ and continuity of functions in question we have
\begin{align*}
  &\limsup_{N\to\8}\frac1N\log  I_2^k((u'-\de,u'+\de),(\rho'-\de,\rho'+\de), (y'-\de,y'+\de);\de_1)\le \frac12\log[-4D''(0)]\\
  &\ \ -\frac12\log D'(0)-\frac12\log(2\pi)- kJ_1((y'+\de)\wedge-\sqrt2) +\sup_{\nu\in B_K(\si_{\rm sc}, 2\de_1),\rho'-\de<\rho<\rho'+\de, \atop u'-\de<u<u'+\de, y'-\de<y<y'-\de}\psi(\nu,\rho,u,y).
\end{align*}
Here as usual we understand all intervals are replaced with shorter intervals if they go out of $(R_1,R_2]\times \bar E\times [-K,-\sqrt2]$. We may choose $K$ large enough so that
$$\sup_{R_1\le \rho\le R_2, u\in \bar E, y\le -\sqrt2} \psi_*( \rho, u,y)-kJ_1(y)= \max_{R_1\le \rho\le R_2, u\in \bar E, -K\le y\le -\sqrt2}  \psi_*(\rho, u,y)-kJ_1(y),$$
and $(\rho_1^k,u_1^k,y_1^k)$ is a maximizer. Using the same covering argument as for $I_1^k$, we find
\begin{align*}
  &\limsup_{\de\to0+,\atop\de_1\to0+}\limsup_{N\to\8}\frac1N\log I_2^k( E,(R_1,R_2),(-K,-\sqrt2);\de_1) \\
  & \le \limsup_{\de\to0+,\atop\de_1\to0+}\max_{\text{cubes with} \atop \text{centers} (\rho',u',y')} \limsup_{N\to\8}\frac1N\log  I_2^k((\rho'-\de,\rho'+\de), (u'-\de,u'+\de),(y'-\de,y'+\de);\de_1)\\
  &\le  \frac12\log[-4D''(0)] -\frac12\log D'(0)-\frac12\log(2\pi)+\psi_*( \rho_1^k,u_1^k,y_1^k)-kJ_1(y_1^k).
\end{align*}
Also, we have
\begin{align*}
  &\limsup_{\eps\to0+}\limsup_{\de\to0+\atop\de_1\to0+}\limsup_{N\to\8}\frac1N\log I_2^k( E,(R_1,R_2),[-\sqrt2,-\sqrt2+\eps];\de_1) \\
  &\le \frac12\log[-4D''(0)] -\frac12\log D'(0)-\frac12\log(2\pi)+\psi_*( \rho_1^k,u_1^k,y_1^k)-kJ_1(y_1^k).
\end{align*}
From here we conclude
\begin{proposition}\label{pr:ubk1}
 Assume $\bar E$ is compact and $R_2<\8$. Then
 \begin{align*}
   \limsup_{N\to\8}\frac1N\log I^k(E,(R_1,R_2))\le \frac12\log[-4D''(0)] -\frac12\log(2\pi)-\frac12\log D'(0)\\
   +\sup_{(\rho,u,y)\in F} [\psi_*(\rho,u,y)-k J_1(y)],
 \end{align*}
 where $F=\{(\rho,u,y): \rho\in(R_1,  R_2], u\in\bar E, y\le -\sqrt2\}$.
\end{proposition}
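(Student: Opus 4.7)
My strategy is to bound $|\det G|\,\indi\{i(G_{**})=k,\zeta>0\}$ from above by $|\det G_{**}|\,(|z_1'|+|\xi^\sfT G_{**}^{-1}\xi|)\,\indi\{i(G_{**})=k\}$, using the Schur complement identity \pref{eq:schur}, the triangle inequality, and the observation that dropping the constraint $\zeta>0$ only enlarges the integrand. Expanding $\xi^\sfT G_{**}^{-1}\xi$ via the orthogonal decomposition used in \pref{eq:goedc} and writing $|\det G_{**}|$ as $\prod_j|(\tfrac{N-1}{N})^{1/2}\la_j-z_3'|$ yields $I^k\le I_1^k+I_2^k$ with the two pieces already defined in \pref{eq:ik12}. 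Since $\limsup_N \frac1N\log(a_N+b_N)=\max(\limsup_N\frac1N\log a_N,\limsup_N\frac1N\log b_N)$, it suffices to prove the claimed bound separately for $I_1^k$ and $I_2^k$.

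For each of $I_1^k$ and $I_2^k$ I would run the same three-step truncation/covering scheme. First, using \pref{le:exptt}, \pref{le:exptti11}, \pref{le:exptti12} and \pref{eq:ladein}, reduce to $R_1>0$ and to $y\in[-K,-\sqrt2+\eps]$, with $K$ so large that $\sup_{F}[\psi_*-kJ_1]$ is attained inside $[R_1,R_2]\times\bar E\times[-K,-\sqrt2]$ (possible by \pref{eq:psi*lim}). Second, insert the event $\{L((\tfrac{N-1}{N})^{1/2}\la_1^{N-1})\in B_K(\si_{\rm sc},\de_1)\}$; by \pref{eq:conineq} the complement contributes only an $e^{-cN^2}$ factor. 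Third, split $y$ into $[-K,-\sqrt2)$ and $[-\sqrt2,-\sqrt2+\eps]$. On the former subinterval, cover $[R_1,R_2]\times\bar E\times[-K,-\sqrt2]$ by cubes of side $2\de$ arranged so that a maximizer of $\psi_*-kJ_1$ lies at a centre; on each cube bound the constraint $(\tfrac{N-1}{N})^{1/2}\la_k<y$ by the LDP \pref{eq:jk1x} (yielding $-kJ_1$) and control the remaining integrand via upper semicontinuity of $\psi(\nu,\rho,u,y)$ at $\nu=\si_{\rm sc}$. For $I_2^k$, use that $L((\tfrac{N-1}{N})^{1/2}\la_1^{N-1})\in B_K(\si_{\rm sc},\de_1)$ forces $L((\tfrac{N-1}{N})^{1/2}\la_{j\neq i})\in B_K(\si_{\rm sc},2\de_1)$ when a single eigenvalue is dropped, and bound the conditional expectation of $|z_1'|$ by $\sqrt{2/\pi}\sfb/\sqrt N+|\bar\sfa|$ using \pref{eq:absgau}. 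Letting $\de,\de_1\to 0$ and then optimising over the grid recovers $\sup_F[\psi_*-kJ_1]$. On the subinterval $[-\sqrt2,-\sqrt2+\eps]$, since $J_1(-\sqrt2)=0$ and all functions involved are continuous there, one can directly control the integrand by its supremum in $y$ and then let $\eps\to 0+$.

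The main obstacle is that, unlike the $k=0$ case treated in \pref{pr:ubk0}, I cannot meaningfully control $|z_1'-Q(z_3')|$ on the event $\{(\tfrac{N-1}{N})^{1/2}\la_k<z_3'\}$: the denominators $(\tfrac{N-1}{N})^{1/2}\la_i-z_3'$ change sign for $i\le k$, so $Q(z_3')$ is not comparable to a semicircle Stieltjes transform evaluated below the spectrum, and the delicate large-deviation estimates of Section~\ref{se:5} cannot be invoked directly. The way out is to forgo a sharp estimate of $|z_1'-Q(z_3')|$ altogether and pay only the cost $-kJ_1(y)$ coming from the eigenvalue constraint; the resulting bound is generally not tight, but as the author indicates this is exactly the form required to serve as the first argument of the maximum in \pref{th:fixk}, with the sharper contribution coming separately from the companion estimate $II^k$.
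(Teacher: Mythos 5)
Your proposal follows essentially the same route as the paper: drop the constraint $\zeta>0$, use the Schur identity and the triangle inequality to split $I^k\le I_1^k+I_2^k$ exactly as in \eqref{eq:ik12} (with the $i$-th determinant factor cancelling the denominator in the quadratic form), bound $\ez[|z_1'|\,|\,z_3'=y]$ via \eqref{eq:absgau}, and then run the same truncation ($R_1>0$, $y\in[-K,-\sqrt2+\eps]$, concentration event), cubic covering with the LDP of $\la_k$, and upper semicontinuity argument, treating $[-\sqrt2,-\sqrt2+\eps]$ separately since $J_1(-\sqrt2)=0$. The only slip is cosmetic: the $\sqrt{2/\pi}\,\sfb/\sqrt N+|\bar\sfa|$ bound on $|z_1'|$ pertains to $I_1^k$ rather than $I_2^k$, which instead only needs the $2\de_1$-ball observation after removing one eigenvalue.
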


Let us turn to $II^k$.
Note that
\begin{align*}
  &\ez[|\det G| \indi\{i(G_{**})=k-1,\zeta<0\}]=\ez(|\det G_{**}| | z_1'-\xi^\sfT G_{**}^{-1} \xi |  \indi\{i(G_{**})=k-1 , \zeta<0\})\\
&= [-4D''(0)]^{\frac{N-1}{2}} \ez[ |\det ((\frac{N-1}{N})^{1/2}\GOE_{N-1}-z_3' I_{N-1}) | | z_1'- Q(z_3') | \\
&\ \ \indi\{z_1'- Q(z_3') <0\}\indi\{(\frac{N-1}{N})^{1/2}\la_{k-1}<z_3'<(\frac{N-1}{N})^{1/2}\la_{k} \} ]\\
&=[-4D''(0)]^{\frac{N-1}{2}} \ez\Big[ |\det ((\frac{N-1}{N})^{1/2}\GOE_{N-1}-z_3' I_{N-1}) | | z_1'- Q(z_3') |\indi\{z_1'- Q(z_3') <0\}\\
&\ \ \Big(\indi\{ (\frac{N-1}{N})^{1/2}\la_{k-1}<z_3'<(\frac{N-1}{N})^{1/2}\la_k<z_3'+\de'\}\\
&\ \ + \indi\{(\frac{N-1}{N})^{1/2}\la_{k-1}<z_3',z_3'+\de'\le (\frac{N-1}{N})^{1/2}\la_{k} \} \Big)\Big]
\end{align*}
for some small $\de'>0$ to be chosen later. Define
\begin{align*}
  &II_1^k(E,(R_1,R_2);\de')  = [-4D''(0)]^{\frac{N-1}{2}} \int_{R_1}^{R_2} \int_{E} \ez[ |\det ((\frac{N-1}{N})^{1/2}\GOE_{N-1}-z_3' I_{N-1}) | \\
  &\quad  | z_1'- Q(z_3') | \indi\{(\frac{N-1}{N})^{1/2} \la_k<z_3'+\de'\}] \frac{e^{-\frac{(u-m_Y)^2}{2\si_Y^2}}}{\sqrt{2\pi}\si_Y} \frac{e^{-\frac{N \mu^2 \rho^2}{2D'(0)}} }{(2\pi)^{N/2} D'(0)^{N/2}}   \rho^{N-1} \dd u  \dd\rho,\\
  &II_2^k(E,(R_1,R_2);\de')  =[-4D''(0)]^{\frac{N-1}{2}} \int_{R_1}^{R_2} \int_{E} \ez[ |\det ((\frac{N-1}{N})^{1/2}\GOE_{N-1}-z_3' I_{N-1}) |\\
  &\quad | z_1'- Q(z_3') | \indi\{z_1'- Q(z_3') <0\} \indi\{(\frac{N-1}{N})^{1/2}\la_{k-1}<z_3',z_3'+\de'\le (\frac{N-1}{N})^{1/2}\la_{k} \}] \\
  &\quad \frac{e^{-\frac{(u-m_Y)^2}{2\si_Y^2}}}{\sqrt{2\pi}\si_Y} \frac{e^{-\frac{N \mu^2 \rho^2}{2D'(0)}} }{(2\pi)^{N/2} D'(0)^{N/2}}   \rho^{N-1} \dd u  \dd\rho.
\end{align*}
It follows that $II^k( E,(R_1,R_2))\le II_1^k(E,(R_1,R_2);\de') +II_2^k(E,(R_1,R_2);\de')$. For $II_1^k$ we can follow verbatim the argument for $I^k$. The only difference is that we have an extra $\de'$ here, which can be sent to zero in the last step. Thus, we find
\begin{align}\label{eq:ii1k}
  \limsup_{\de'\to0+}\limsup_{N\to\8}\frac1N \log II_1^k(E,(R_1,R_2);\de')\le \frac12\log[-4D''(0)] -\frac12\log(2\pi)-\frac12\log D'(0)\notag\\
   +\sup_{(\rho,u,y)\in F} [\psi_*(\rho,u,y)-k J_1(x)].
\end{align}
It remains to understand $II^k_2$.

Using \pref{eq:z13con0}, \pref{eq:gau2} and by conditioning, we have
\begin{align*}
 &\ez\Big[ |\det ((\frac{N-1}{N})^{1/2}\GOE_{N-1}-z_3' I_{N-1}) | | z_1'- Q(z_3') | \\
 &\quad \indi\{z_1'- Q(z_3') <0,(\frac{N-1}{N})^{1/2}\la_{k-1}<z_3',z_3'+\de'\le(\frac{N-1}{N})^{1/2} \la_{k} \}\Big]  \\
&= \ez[ |\det ((\frac{N-1}{N})^{1/2}\GOE_{N-1}-z_3' I_{N-1}) | \indi\{(\frac{N-1}{N})^{1/2}\la_{k-1}<z_3', z_3'+\de'\le  (\frac{N-1}{N})^{1/2}\la_{k} \} \\
&\quad \ez(  | z_1'- Q(z_3') |\indi\{z_1'- Q(z_3') <0\}  | \la_1^{N-1}, z_3',\xi')]\\
&=\int_{\rz} \ez\Big[ |\det ((\frac{N-1}{N})^{1/2}\GOE_{N-1}- y I_{N-1}) | \indi\{(\frac{N-1}{N})^{1/2}\la_{k-1}< y, y+\de'\le (\frac{N-1}{N})^{1/2}\la_{k} \} \\
& \quad (-\mathsf{a}_N\Phi(-\frac{\sqrt N\sfa_N}\sfb) +\frac{\sfb}{\sqrt{2\pi N}} e^{-\frac{N \sfa_N^2}{2\sfb^2}})\Big] \frac{\sqrt {-4N D''(0)} \exp\{-\frac{N(\sqrt{ -4D''(0)}y+m_2)^2}{2(-2D''(0)-\bt^2)}\}} {\sqrt{2\pi(-2D''(0)-\bt^2)} } \dd y,
\end{align*}
where $\sfa_N, \sfb^2$ are given in \pref{eq:abdef}. By l'Hospital's rule, for $x\in\rz$ and $b>0$ we have
\begin{align}\label{eq:phib2}
  \lim_{N\to\8} \frac1N\log\Big[-x\Phi(-\frac{\sqrt N x}b) +\frac{b}{\sqrt{2\pi N}} e^{-\frac{\sqrt N x^2}{2b^2}} \Big]=-\frac{(x_+)^2}{2b^2},
\end{align}
where $x_+=x\vee 0$. Let $\eps>0$. Using Lemmas \ref{le:goecpt}, \ref{le:goecpt2}, \ref{le:exptti11}, and \ref{le:exptti12}, by choosing $K>0$ large and $\de_1,\de'>0 $ small such that
\begin{align*}
\limsup_{N\to\8}II_2^k( E,(R_1,R_2);\de')& =\max\{ \limsup_{N\to\8} II_2^k( E,(R_1,R_2),(-K,-\sqrt2);\de_1,\de'), \\
&\quad \limsup_{N\to\8} II_2^k(  E,(R_1,R_2),[-\sqrt2,-\sqrt2+\eps);\de_1,\de')\}
\end{align*}
where
\begin{align}\label{eq:ii2k}
  &II_2^k( E,(R_1,R_2),G;\de_1,\de') \notag\\
 & = [-4D''(0)]^{\frac{N}{2}} \int_{R_1}^{R_2} \int_{E}\int_{G} \ez\Big[ (-\mathsf{a}_N\Phi(-\frac{\sqrt N\sfa_N}\sfb) +\frac{\sfb}{\sqrt{2\pi N}} e^{-\frac{N \sfa_N^2}{2\sfb^2}})  e^{(N-1)\Psi(L((\frac{N-1}{N})^{1/2}\la_1^{N-1}),y)} \notag\\
  & \ \  \indi\{L((\frac{N-1}{N})^{1/2}\la_1^{N-1})\in B_{K}(\si_{\rm sc},\de_1),(\frac{N-1}{N})^{1/2}\la_{k-1}< y,y+\de'< (\frac{N-1}{N})^{1/2}\la_{k} \}\Big] \notag\\
&  \ \   \frac{ e^{-\frac{N(\sqrt{ -4D''(0)}y+m_2)^2}{2(-2D''(0)-\bt^2)}}} {\sqrt{2\pi(-2D''(0)-\bt^2)} } \frac{e^{-\frac{(u-m_Y)^2}{2\si_Y^2}}}{\sqrt{2\pi}\si_Y} \frac{e^{-\frac{N \mu^2 \rho^2}{2D'(0)}} }{(2\pi)^{N/2} D'(0)^{N/2}}   \rho^{N-1} \dd y \dd u  \dd\rho.
\end{align}
Note that
\begin{align*}
  &II_2^k(  E,(R_1,R_2),[-\sqrt2,-\sqrt2+\eps);\de_1,\de')\le  [-4D''(0)]^{\frac{N}{2}}\\
  & \ \ \int_{R_1}^{R_2} \int_{E}\int_{-\sqrt2}^{-\sqrt2+\eps}  \ez\Big[ e^{(N-1)\Psi(L((\frac{N-1}{N})^{1/2}\la_1^{N-1}),y)} \indi\{L((\frac{N-1}{N})^{1/2}\la_1^{N-1})\in B_{K}(\si_{\rm sc},\de_1)\} \Big]  \\
  &\ \  \frac{ e^{-\frac{N(\sqrt{ -4D''(0)}y+m_2)^2}{2(-2D''(0)-\bt^2)}}} {\sqrt{2\pi(-2D''(0)-\bt^2)} } \frac{e^{-\frac{(u-m_Y)^2}{2\si_Y^2}}}{\sqrt{2\pi}\si_Y} \frac{e^{-\frac{N \mu^2 \rho^2}{2D'(0)}} }{(2\pi)^{N/2} D'(0)^{N/2}}   \rho^{N-1} \dd y \dd u  \dd\rho.
\end{align*}
Using an argument similar to that of \pref{eq:i1k2}, we find
\begin{align}\label{eq:ii2kub}
  &\limsup_{\eps\to0+} \limsup_{\de_1\to0+\atop \de'\to0+}\limsup_{N\to\8} II_2^k(E,(R_1,R_2),[-\sqrt2,-\sqrt2+\eps);\de_1,\de') \notag\\
 &\le \frac12\log[-4D''(0)] -\frac12\log D'(0)-\frac12\log(2\pi) +\sup_{R_1<\rho<R_2, u\in \bar E} \psi_*(\rho,u,-\sqrt2) \notag \\
  &\le \frac12\log[-4D''(0)] -\frac12\log D'(0)-\frac12\log(2\pi)+\psi_*( \rho_1^k,u_1^k,y_1^k)-kJ_1(y_1^k).
\end{align}
It remains to control $II_2^k( E,(R_1,R_2),(-K,-\sqrt2);\de_1,\de')$ for $R_1>0$, which turns out to be a major obstacle for the case of fixed index.

\subsection{Upper bound involving $J_{k-1}$}\label{se:ex3}

Recall $ \Lambda^*(x;y)=\Lambda^*(x;f_{y,-\sqrt2-y})$ as in \pref{eq:lasy} and $\Lambda'(\frac{\sqrt2+y}{2}+;y)=-\sqrt2+\sfm(y)$. For $y<-\sqrt2$, let us define
\begin{align}
\ix_+(\rho,u,y;x)&= \Lambda^*(x;y)+ \frac1{2\sfb^2} [(\sfa+\sqrt{-D''(0)}x)_+]^2,\notag\\
\ix^+(\rho,u,y) & = \begin{cases}
  \ix_+(\rho,u,y;0)=0, & \mbox{if } \sfa\le0, \\
  \ix_+(\rho,u,y;\tilde x), & \mbox{otherwise},
  \end{cases}\label{eq:ix+}
\end{align}
where
\begin{align*}
\tilde x =\begin{cases}
\frac{\sfb^2(\sqrt2+y)}{2D''(0)} - \frac{\sfa}{\sqrt{-D''(0)}}, &\mbox{if }\frac{\sfb^2(\sqrt2+y)}{2D''(0)}  - \frac{\sfa}{\sqrt{-D''(0)}}\le  \Lambda'(\frac{\sqrt2+y}{2}+;y),\\
\hat x, & \mbox{otherwise},
\end{cases}
\end{align*}
and $\hat x$ is given as in \pref{eq:ixmin}.   When $\sfa\le0$, it is clear that $\ix^+(\rho,u,y)=\inf_{x\le0}  \ix_+(\rho,u,y;x)=0$ and the unique minimizer is $x=0$. We may extend $\ix^+(\rho,u,y)$ to $y=-\sqrt2$ by continuity.


\begin{lemma}\label{le:ix+min}
  For $\sfa>0$, we have
  \[
  \ix^+(\rho,u,y)=\inf_{ 
   x\le0}  \ix_+(\rho,u,y;x)>0
  \]
  and the minimizer is unique, which is in $[-\frac{\sfa}{\sqrt{-D''(0)}} ,0]$.
\end{lemma}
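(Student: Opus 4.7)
My plan is to exploit the piecewise structure of $x \mapsto \ix_+(\rho,u,y;x)$ on $(-\infty, 0]$ induced by the kink of the positive-part in the definition, together with the piecewise structure of $\Lambda^*(\cdot;y)$ from \pref{eq:lasy}. Setting $x_0 := -\sfa/\sqrt{-D''(0)} < 0$, on $(-\infty, x_0]$ the penalty term vanishes and $\ix_+(x) = \Lambda^*(x;y)$, which is convex, nonnegative, with its unique zero at $x=0$, hence non-increasing on this subinterval; on $(x_0, 0]$ we have $\ix_+(x) = \Lambda^*(x;y) + \tfrac{1}{2\sfb^2}(\sfa+\sqrt{-D''(0)}x)^2$, which is strictly convex as the sum of a convex function and a strictly convex quadratic.

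The crucial derivative estimates are the following. At $x = 0^-$, using $(\Lambda^*)'(s;y) = \tau_s$ via Fenchel--Legendre duality and $\tau_0 = 0$ (which follows from $\Lambda'(0;y)=0$ and the strict monotonicity of $\Lambda'(\cdot;y)$), the derivative equals $\sqrt{-D''(0)}\sfa/\sfb^2 > 0$; while at $x_0^+$ the derivative equals $\tau_{x_0}$, which is strictly negative since $x_0 < 0$ and $\tau$ is strictly increasing through zero. Strict convexity on $(x_0, 0]$ then yields a unique zero $x^* \in (x_0, 0)$ of $\partial_x \ix_+$, which is the unique minimizer on this subinterval. Combining this with the non-increasing behavior on $(-\infty, x_0]$ and continuity at $x_0$, $x^*$ is the unique minimizer on all of $(-\infty, 0]$, and in particular it lies in $[x_0, 0]$.

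To identify $x^*$ I split based on whether it falls in the linear piece $\{x \le -\sqrt{2}+\sfm(y)\}$ or in the strictly convex piece $\{-\sqrt{2}+\sfm(y) < x < \sfm(y)\}$ of $\Lambda^*(\cdot;y)$. In the linear piece $(\Lambda^*)'(x;y) = (\sqrt{2}+y)/2$ is constant, so $\partial_x\ix_+ = 0$ solves explicitly to
\[
x^* = \frac{\sfb^2(\sqrt{2}+y)}{2D''(0)} - \frac{\sfa}{\sqrt{-D''(0)}},
\]
valid precisely when this value satisfies $x^* \le \Lambda'(\tfrac{\sqrt{2}+y}{2}+;y) = -\sqrt{2}+\sfm(y)$, which matches the first branch of $\tilde{x}$. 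Otherwise $x^*$ lies in the strictly convex piece, where the equation $\partial_x \ix_+ = 0$ reduces, via the identity $\tfrac12(y-\sqrt{y^2-2}) = -1/\sfm(y)$ and after clearing denominators, to the same quadratic $Bx^2 - (B\sfm(y)+C)x + C\sfm(y) - 1 = 0$ that appears in the proof of \pref{le:ixmin}; its relevant root in $(x_0, 0)$ is exactly $\hat{x}$ from \pref{eq:ixmin}.

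Finally, $\ix^+(\rho,u,y) = \ix_+(x^*) > 0$ is immediate since $x^*$ lies strictly in $(x_0, 0)$: both $\Lambda^*(x^*;y) > 0$ (as $x^* \ne 0$) and $(\sfa+\sqrt{-D''(0)}x^*)^2 > 0$ (as $x^* > x_0$), so their sum is strictly positive. The main obstacle is the piecewise bookkeeping across the threshold $x = -\sqrt{2}+\sfm(y)$ inside $\Lambda^*(\cdot;y)$, which forces the two-branch definition of $\tilde{x}$; but once the sign analysis of the derivative at the endpoints $x_0$ and $0$ is in place, strict convexity delivers both uniqueness and the localization of the minimizer in $[x_0, 0]$ without further work.
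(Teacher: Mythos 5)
Your proposal is correct and follows essentially the same route as the paper's proof: restrict the infimum to $[-\sfa/\sqrt{-D''(0)},0]$ using the monotonicity of $\Lambda^*(\cdot;y)$ on $x\le 0$, analyze $\partial_x\ix_+$ on the two branches of $\Lambda^*$, obtain the explicit solution $x'=\frac{\sfb^2(\sqrt2+y)}{2D''(0)}-\frac{\sfa}{\sqrt{-D''(0)}}$ on the linear branch and reduce to the quadratic of Lemma \ref{le:ixmin} otherwise, exactly matching the two-branch definition of $\tilde x$. The only detail you gloss over is why the relevant root of that quadratic is $\hat x$ rather than the other root; the paper settles this by noting that $\sfa>0$ gives $C\sfm(y)<1$, so $\hat x<0$ while the other root is positive, which your uniqueness argument then absorbs with no further difficulty.
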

\begin{proof}
Since $x\mapsto \Lambda^*(x;y)$ is strictly decreasing for $x\le 0$,
  \[
  \inf_{ 
   x\le0}  \ix_+(\rho,u,y;x)=\inf_{ -\frac{\sfa}{\sqrt{-D''(0)}}\le
   x\le0}  \ix_+(\rho,u,y;x),
  \]
  and for any fixed $\rho,u$ and $y$, $\ix_+(\rho,u,y;\cdot)$ has a unique minimizer. From \pref{eq:lasy}, we see that $x\mapsto  \Lambda^*(x;y)$ is differentiable.  If $-\frac{\sfa}{\sqrt{-D''(0)}}< x< \Lambda'(\frac{\sqrt2+y}{2}+;y)$,
  \begin{align*}
    \partial_x \ix_+(\rho,u,y;x) = \frac{\sqrt2+y}{2}+\frac{\sqrt{-D''(0)}(\sfa +\sqrt{-D''(0)}x)}{\sfb^2}.
  \end{align*}
  Solving $\partial_x \ix_+(\rho,u,y;x)=0$ for $x$ gives $ x':=\frac{\sfb^2(\sqrt2+y)}{2D''(0)} - \frac{\sfa}{\sqrt{-D''(0)}}>- \frac{\sfa}{\sqrt{-D''(0)}}$. If $ x' \le  \Lambda'(\frac{\sqrt2+y}{2}+;y)$,
  \[
  \inf_{ -\frac{\sfa}{\sqrt{-D''(0)}}\le
   x\le0}  \ix_+(\rho,u,y;x) = \ix_+(\rho,u,y; x').
  \]
Otherwise, $ \partial_x \ix_+(\rho,u,y;x)<0$ for $-\frac{\sfa}{\sqrt{-D''(0)}}\le x\le \Lambda'(\frac{\sqrt2+y}{2}+;y)$.  Now suppose $ x'>\Lambda'(\frac{\sqrt2+y}{2}+;y)$. In this case $\ix_+(\rho,u,y;\cdot)$ can only attain the minimum for $x>\Lambda'(\frac{\sqrt2+y}{2}+;y)$ for which $\partial_x \ix_+(\rho,u,y;x)$ is given as in \pref{eq:ixder1}. We claim the minimizer is given by $x=\hat x$ as in \pref{eq:ixmin}. Indeed, since $C\sfm(y)<1$, we have
  \[
  C+B\sfm(y)<\sqrt{(C-B\sfm(y))^2+4B}, \ \ C+B\sfm(y)+\sqrt{(C-B\sfm(y))^2+4B}>0.
  \]
This means that $\hat x<0$ while the other possible solution to $\partial_x \ix_+(\rho,u,y;x)=0$ is positive. From the expression of $\partial_x \ix_+(\rho,u,y;x)$, we deduce that $- \frac{\sfa}{\sqrt{-D''(0)}}<\hat x<x'$.	  Finally, we clearly have $ \ix_+(\rho,u,y;x)>0$ for $x\le0$ and thus $\ix^+(\rho,u,y)>0$.
\end{proof}

From the proof, we see that $[-\sqrt2+\sfm(y)] \vee -\frac{\sfa}{\sqrt{-D''(0)}}<\tilde x <x'$ if $x'>-\sqrt2+\sfm(y)$, while $-\frac{\sfa}{\sqrt{-D''(0)}}\le \tilde x \le -\sqrt2+\sfm(y) $ if $x'\le -\sqrt2+\sfm(y)$ for the two cases of the minimizer.
Recall $II_2^k$ from \pref{eq:ii2k}.

\begin{lemma}\label{le:covlet2}
  For any $(\rho',u',y')\in [R_1,R_2]\times \bar E\times [-K,-\sqrt2]$, and $0<\de<1$, we have
  \begin{align*}
    &\limsup_{\de_1\to0+}\limsup_{N\to\8} \frac1N\log II_2^k((u'-\de,u'+\de),(\rho'-\de,\rho'+\de), (y'-\de,y'+\de) ;\de_1,\de')\\
    &\le \frac12\log[-4D''(0)] -\frac12\log D'(0) -\frac12\log(2\pi) \\
    &- (k-1)J_1(\tilde y)+\sup_{\rho'-\de < \rho< \rho'+\de,\atop u'-\de< u< u'+\de, y'-\de< y< y'+\de}\psi_*(\rho,u,y)\\
    &-\inf_{x\ge0} \Big\{ \Lambda^*\Big(\Big[-x- 2\de(1+\frac{\tilde y}{\sqrt{\tilde y^2-2}}) \Big]_-;\tilde y\Big) + \frac1{2\sfb_m^2} (\sfa_o-\sqrt{-D''(0)}x)_+^2 \Big\}.
  \end{align*}
  Here $\tilde y = (y'+\de) \wedge (-\sqrt2-\de'/2)$, $\sfb_m=\sup_{\rho\in (\rho'-\de, \rho'+\de)\cap [R_1,R_2]} \sfb(\rho)$ and
  \[
  \sfa_o= \inf_{(\rho,u,y)\in (\rho'-\de, \rho'+\de)\times (u'-\de, u'+\de)\times( y'-\de, y'+\de)\cap [R_1,R_2]\times \bar E\times [-K,-\sqrt2]} \sfa(\rho,u,y).
  \]
  Moreover,
  \begin{align*}
    &\liminf_{\de\to0+, \atop \de'\to 0+}  \inf_{x\ge0} \Big\{ \Lambda^*\Big(\Big[-x- 2\de(1+\frac{\tilde y}{\sqrt{\tilde y^2-2}}) \Big]_-;\tilde y\Big) + \frac1{2\sfb_m^2} (\sfa_o-\sqrt{-D''(0)}x)_+^2\Big\} =\ix^+(\rho',u',y').
  \end{align*}
\end{lemma}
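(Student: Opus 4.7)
The plan is to follow the architecture of Lemma \ref{le:covlet}, now using the right-tail input Proposition \ref{pr:ge2} in place of the left-tail Proposition \ref{pr:ge}, and incorporating the eigenvalue restriction $\{\la_{k-1}<y,\la_k>y+\de'\}$ via the LDP \pref{eq:jk1x}. The first step is to condition on $\la_1^{N-1}$ and $z_3'=y$: by \pref{eq:z13con0} the law of $z_1'$ is Gaussian with mean $\bar\sfa$ and variance $\sfb^2/N$, independent of the $Z_i$'s defining $Q(y)$. Integrating out $z_1'$ via \pref{eq:gau3} produces the factor $-\sfa_N\Phi(-\sqrt{N}\sfa_N/\sfb)+(\sfb/\sqrt{2\pi N})e^{-N\sfa_N^2/(2\sfb^2)}$ in the integrand of $II_2^k$, whose exponential rate is $-(\sfa_N)_+^2/(2\sfb^2)$ by \pref{eq:phib2}. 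Writing $\sfa_N=\sfa+\sqrt{-D''(0)}\sfm(y)-Q(y)$, this rate is severe unless $Q(y)$ overshoots its typical value $\sqrt{-D''(0)}\sfm(y)$; moreover, on $\{\la_{k-1}<y\}$ the indices $i\le k-1$ only lower $Q(y)$, so the relevant event reduces to the right tail of $\frac1N\sum_{i=k}^{N-1}Z_i^2/(\la_i-y)$ exceeding $\sfm(y)$, precisely the setting of Proposition \ref{pr:ge2}.

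The second step is a discretization in the overshoot. Partition $\{Q(y)/\sqrt{-D''(0)}-\sfm(y)\in[j\eps_1,(j+1)\eps_1]\}$ for $j\ge 0$. By Proposition \ref{pr:ge2} applied with the Lipschitz surrogate $f_{\tilde y,-\sqrt 2-\tilde y}$, each bin has probability bounded uniformly in $y\in(y'-\de,y'+\de)$ by $\exp\{-N\Lambda^*([-j\eps_1-2\de(1+\tilde y/\sqrt{\tilde y^2-2})]_-;\tilde y)+o(N)\}$; the shift $-2\de(1+\tilde y/\sqrt{\tilde y^2-2})$ arises from $|\sfm(y)-\sfm(\tilde y)|\le 2\de|\sfm'(\tilde y)|$ via the mean value theorem on $(-\infty,-\sqrt 2)$. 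On the same bin, $(\sfa_N)_+\le (\sfa_o-\sqrt{-D''(0)}j\eps_1)_++o(1)$ uniformly across the cube, so the Gaussian envelope is at most $\exp\{-N(\sfa_o-\sqrt{-D''(0)}j\eps_1)_+^2/(2\sfb_m^2)+o(N)\}$. Combining these with the LDP factor $\exp\{-(k-1)NJ_1(\tilde y)+o(N)\}$ coming from \pref{eq:jk1x} applied to $\la_{k-1}$, with the logarithmic-potential factor yielding $\psi_*$ through the restriction to $B_K(\si_{\rm sc},\de_1)$, and with the explicit Gaussian densities, summing over the $O(1/\eps_1)$ bins and sending $\eps_1,\de_1\to 0$ yields the first assertion.

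Third, for the limit $\de,\de'\to 0$: $\tilde y\to y'$ while remaining strictly below $-\sqrt 2$ thanks to the $-\de'/2$ buffer; $\sfa_o\to\sfa(\rho',u',y')$ and $\sfb_m\to\sfb(\rho')$ by continuity of $\sfa$ and $\sfb$; the shift $2\de(1+\tilde y/\sqrt{\tilde y^2-2})$ vanishes. After the substitution $x'=-x\le 0$, the limiting infimum is
\[
\inf_{x'\le 0}\Big[\Lambda^*(x';y')+\frac{(\sfa+\sqrt{-D''(0)}x')_+^2}{2\sfb^2}\Big]=\ix^+(\rho',u',y'),
\]
by Lemma \ref{le:ix+min} together with the definition \pref{eq:ix+}. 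The boundary case $y'=-\sqrt 2$ is handled by continuity using Remark \ref{re:ylim}.

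The main obstacle is obtaining a large-deviation upper bound for the singular quadratic form $Q(y)$ uniformly in $y$ across the cube; this is precisely what the Lipschitz truncation $f_{\tilde y,\de}$ together with Proposition \ref{pr:ge2} are designed for. A secondary subtlety is the behavior near the spectral edge: as $y\to-\sqrt 2-$, $\Lambda'((\sqrt 2+y)/2+;y)$ degenerates and the minimizer $\tilde x$ of $\ix^+$ can switch between the two branches of Lemma \ref{le:ix+min}. The $-\de'/2$ offset in the definition of $\tilde y$ is inserted precisely to maintain $\tilde y<-\sqrt 2$ throughout the cover, ensuring that the LDP regime applies and the limit matches $\ix^+$ continuously.
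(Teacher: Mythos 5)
Your overall route is the paper's: discard the part of the cube where $y>-\sqrt2-\de'/2$ using the indicator $\{y+\de'<(\tfrac{N-1}{N})^{1/2}\la_k\}$, use $\la_{k-1}<y$ to drop the indices $i\le k-1$ from $Q(y)$ and the monotonicity of $x\mapsto -x\Phi(-\sqrt N x/\sfb)+\tfrac{\sfb}{\sqrt{2\pi N}}e^{-Nx^2/(2\sfb^2)}$ to replace $\sfa_N$ by the surrogate built from $f_{y,\de'/2}$, discretize the overshoot of $\tfrac1{N-1}\sum_{i\ge k}f_{y,\de'/2}(\la_i)Z_i^2$ above $\sfm(y)$ with the mean-value shift $2\de(1+\tilde y/\sqrt{\tilde y^2-2})$, bound each cell by \pref{pr:ge2}, and identify the limiting variational problem with $\ix^+$ through \pref{le:ix+min}. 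One caution: the factor $(k-1)J_1(\tilde y)$ must come from the \emph{joint} estimate in \pref{pr:ge2} (whose event already contains $\la_{k-1}\le\tilde y$), not from an independent application of \pref{eq:jk1x} multiplied onto the cell probability, since $\la_{k-1}$ and the quadratic form are dependent through the spectrum; your wording suggests the latter, which as written would double count.

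The genuine gap is the sentence ``summing over the $O(1/\eps_1)$ bins.'' The overshoot is not bounded, so the partition $\{Q(y)/\sqrt{-D''(0)}-\sfm(y)\in[j\eps_1,(j+1)\eps_1]\}$, $j\ge0$, has infinitely many cells, and on the far cells your envelope bound gives nothing: there $(\sfa_o-\sqrt{-D''(0)}j\eps_1)_+=0$ while the prefactor $-\sfa'_N$ in $-\sfa'_N\Phi(-\sqrt N\sfa'_N/\sfb)+\tfrac{\sfb}{\sqrt{2\pi N}}e^{-N(\sfa'_N)^2/(2\sfb^2)}$ grows linearly with the overshoot, so the expectation over that region is not controlled by cell probabilities alone. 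Two additional observations are needed: beyond the level $x=(\sfa_m)_+/\sqrt{-D''(0)}$ (with $\sfa_m$ the supremum of $\sfa$ over the cube) no further subdivision is required, since the Gaussian rate is already zero and the tail probability is monotone, so that range can be lumped into a single term with a \emph{bounded} prefactor; and the event where $\tfrac1{N-1}\sum_{i\ge k}f_{y,\de'/2}(\la_i)Z_i^2$ exceeds a large threshold of order $T/\de'$, where the integrand is unbounded, must be shown negligible. The paper does the latter with a Bernstein-type concentration inequality for the chi-square variables conditioned on the spectrum, combined with \pref{eq:mms} to control $\sum_i f_{y,\de'/2}(\la_i)$ and $\sum_i f_{y,\de'/2}(\la_i)^2$ uniformly in $y$, choosing $T$ so large that this event's exponential rate dominates everything retained; alternatively a Cauchy--Schwarz bound against the uniform tail estimate of \pref{pr:ge2} would serve, using that $\Lambda^*(s;\tilde y)$ grows linearly as $s\to-\8$ with slope at least $\de'/4$. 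Without some such truncation the maximum over finitely many cells does not bound the expectation and the stated upper bound does not follow; with it, your argument and the subsequent continuity passage to $\ix^+$ agree with the paper's proof.
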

\begin{proof}
We follow the proof of \pref{le:covlet} with the same notation. Let $y<y'+\de$.  If $y+\frac{\de'}2>-\sqrt2$, by the LDP of $L(\la_{i=1}^{N-1})$ \pref{eq:conineq}, there exists $c=c(\de')$ such that for $N$ large,
$$
\pz((\frac{N-1}{N})^{1/2}\la_k>y+\de')\le \pz(\la_k>-\sqrt2+\de'/2)\le e^{-c N^2}
$$
uniformly for $y>-\sqrt2-\frac{\de'}{2}$.
Using Lemmas \ref{le:goecpt2} and  \ref{le:exptti12}, the contribution from this is exponentially negligible. We only need to consider $y\le-\sqrt2-\frac{\de'}{2}$. For simplicity, let us write $\de_2=\frac{\de'}{2}$. Observe that for $(\frac{N-1}{N})^{1/2}\la_{k-1}<y< (\frac{N-1}{N})^{1/2}\la_{k}-\de'$,
  \begin{align*}
    \sfa_N &\ge \bar \sfa- \frac{\sqrt{-D''(0)}}{N}\sum_{i=k}^{N-1} \frac{Z_i^2}{(\frac{N-1}{N})^{1/2}\la_i-y}\ge \bar\sfa- \frac{\sqrt{-D''(0)}}{N-1}\sum_{i=k}^{N-1} f_{y,\de_2}(\la_i){Z_i^2} =:\sfa_N',
  \end{align*}
which implies that
  \begin{align}\label{eq:aan'}
    -\mathsf{a}_N\Phi(-\frac{\sqrt N\sfa_N}\sfb) +\frac{\sfb}{\sqrt{2\pi N}} e^{-\frac{N {\sfa_N}^2}{2\sfb^2}}\le -\mathsf{a}'_N\Phi(-\frac{\sqrt N\sfa'_N}\sfb) +\frac{\sfb}{\sqrt{2\pi N}} e^{-\frac{N {\sfa'_N}^2}{2\sfb^2}},
  \end{align}
since the function $x\mapsto -x\Phi(-\frac{\sqrt N x}{\sfb })+\frac{\sfb}{\sqrt{2\pi N}}e^{-\frac{N x^2}{2\sfb^2}}$ is strictly decreasing. 

  Using the concentration inequality for chi-square distribution \cite{ML00} and conditioning, for any $t>0$,
  \[
  \pz\Big(\sum_{i=k}^{N-1} f_{y,\de_2}(\la_i) Z_i^2 \ge \sum_{i=k}^{N-1} f_{y,\de_2}(\la_i)+ 2\sqrt{t}\Big(\sum_{i=k}^{N-1} f_{y,\de_2}(\la_i)^2\Big)^{1/2} +2\max_{k\le i\le N-1}f_{y,\de_2}(\la_i)t \Big)\le e^{-t}.
  \]
  Since $\|f_{y,\de_2}\|_L=\de_2^{-2}$ and $\|f_{y,\de_2}^2\|_L=2\de_2^{-3}$, with the concentration inequality for empirical measures of GOE eigenvalues \pref{eq:mms}, we know there exists $c=c(\de_2)>0$ independent of $y$ such that
  \begin{align*}
    &\pz\Big(\Big|\frac1N\sum_{i=k}^{N-1} f_{y,\de_2}(\la_i)-\si_{\rm sc}(f_{y,\de_2})\Big|>N^{-1/4} \text{ or } \Big|\frac1N\sum_{i=k}^{N-1} f_{y,\de_2}(\la_i)^2-\si_{\rm sc}(f_{y,\de_2}^2)\Big|>N^{-1/4} \Big)\\
    &  \le e^{-cN^{3/2}}
  \end{align*}
  uniformly in $y$, where $\si_{\rm sc}(f)=\int f(x)\si_{\rm sc}(\dd x)$. Note that $\si_{\rm sc}(f_{y,\de_2})\le \sfm(-\sqrt2)=\sqrt2$ and $\si_{\rm sc}(f_{y,\de_2}^2)\le \frac1{\de_2^2}$.
  For $N$ large, we have
  \begin{align*}
    \pz\Big(\frac1N\sum_{i=k}^{N-1} f_{y,\de_2}(\la_i) Z_i^2 \ge 2+\frac{2\sqrt{2T}}{\de_2}+\frac{2T}{\de_2} \Big) & \le (1-e^{-cN^{3/2}}) e^{-NT}+e^{-cN^{3/2}}.
  \end{align*}
  It follows that
  \[
  \limsup_{T\to+\8}\limsup_{N\to+\8} \frac1N\log\pz\Big(\frac1{N-1}\sum_{i=k}^{N-1} f_{y,\de_2}(\la_i) Z_i^2 \ge \frac{3T}{\de_2} \Big)=-\8.
  \]
  We choose $T$ large enough so that $\pz(\frac1{ N-1}\sum_{i=k}^{N-1} f_{y,\de_2}(\la_i) Z_i^2 \ge \frac{3T}{\de_2} )$ is exponentially negligible uniformly in $y\in [y'-\de,(y'+\de)\wedge (-\sqrt2-\de_2)]$.
  Let $\eps>0$ be a small number that will be sent to zero later. Since $y'-\de\le y\le \tilde y$, we deduce from \pref{pr:ge2} that for $N$ large enough and $j=0,1,...,[\frac{\sfa_{m+}}{\sqrt{-D''(0)}\eps }]$,
\begin{align*}
&\frac1N\log\sup_{y\le \tilde y} \pz\Big(\la_{k-1}<y, \frac1{ N-1 }\sum_{i=k}^{N-1}f_{y,\de_2}(\la_i){Z_i^2}\ge \sfm(y)+ j\eps \Big) \\
&\le \frac1N\log\sup_{y\le \tilde y} \pz\Big(\la_{k-1}<\tilde y, \frac1{N-1}\sum_{i=k}^{N-1}f_{y,\de_2}(\la_i){Z_i^2}\ge \sfm(\tilde y)+ j\eps +2\de(1+\frac{\tilde y}{\sqrt{\tilde y^2-2}})\Big)\\
&\le -(k-1)J_1(\tilde y) -\Lambda^*\Big(\Big[-j\eps- 2\de(1+\frac{\tilde y}{\sqrt{\tilde y^2-2}}) \Big]_-;\tilde y\Big)  +\eps.
\end{align*}
Here $\sfa_m= \sup_{(\rho,u,y)\in (\rho'-\de, \rho'+\de)\times (u'-\de, u'+\de)\times( y'-\de, y'+\de)\cap [R_1,R_2]\times \bar E\times [-K,-\sqrt2]} \sfa(\rho,u,y).$
From \pref{eq:supa} we know $\sfa_o>-\8$.
Using the above facts, for a large $T$ and $y'-\de\le y\le \tilde y$ we deduce that
  \begin{align*}
    &\ez\Big[  \indi\Big\{(\frac{N-1}{N})^{1/2}\la_{k-1}< y, y+\de'< (\frac{N-1}{N})^{1/2}\la_{k},\frac1{N-1}\sum_{i=k}^{N-1} f_{y,\de_2}(\la_i) Z_i^2 < \frac{3T}{\de_2} \Big\}  \\
    & \quad (-\mathsf{a}'_N\Phi(-\frac{\sqrt N\sfa'_N}\sfb) +\frac{\sfb}{\sqrt{2\pi N}} e^{-\frac{N {\sfa'_N}^2}{2\sfb^2}})\Big]\\
  \le &~\indi\{\sfa_m>0\}\sum_{j=0}^{[\frac{\sfa_m}{\sqrt{-D''(0)}\eps}]} \ez\Big[  \indi\{(\frac{N-1}{N})^{1/2}\la_{k-1}< y, y+\de'< (\frac{N-1}{N})^{1/2}\la_{k} \}   \\
  &\indi\Big\{\frac1{N-1}\sum_{i=k}^{N-1} f_{y,\de_2}(\la_i)Z_i^2\in [\sfm(y)+j\eps, \sfm(y)+(j+1)\eps] \Big\}(-\mathsf{a}_N'\Phi(-\frac{\sqrt N\sfa_N'}\sfb) +\frac{\sfb}{\sqrt{2\pi N}} e^{-\frac{N {\sfa_N'}^2}{2\sfb^2}})\Big] \\
  &+\ez \Big[(-\mathsf{a}'_N\Phi(-\frac{\sqrt N\sfa'_N}\sfb) +\frac{\sfb}{\sqrt{2\pi N}} e^{-\frac{N{\sfa'_N}^2}{2\sfb^2}}) \\
  &\Big(\indi\Big\{(\frac{N-1}{N})^{1/2}\la_{k-1}< y, \sfm(y) +\frac{\sfa_{m+}}{\sqrt{-D''(0)}} < \frac{1}{N-1}\sum_{i=k}^{N-1} f_{y,\de_2}(\la_i){Z_i^2}\le \frac{3T}{\de_2} \Big\}\\
  &+\indi\Big\{(\frac{N-1}{N})^{1/2}\la_{k-1}< y,\frac{1}{N-1}\sum_{i=k}^{N-1} f_{y,\de_2}(\la_i){Z_i^2}< \sfm(y)\Big\}\Big)\Big]\\
\le &~ \Big\lceil \frac{\sfa_{m+}}{\sqrt{-D''(0)}\eps}
    \Big\rceil
    \max_{0\le j\eps \le \frac{\sfa_{m+}}{\sqrt{-D''(0)}} }\Big\{ \Big[-(\sfa-\sqrt{-D''(0)}(j+1)\eps)\Phi(-\frac{\sqrt N (\sfa-\sqrt{-D''(0)}(j+1)\eps)}{\sfb}) \\
    &+\frac{\sfb}{\sqrt{2\pi N}}e^{-\frac{N(\sfa-\sqrt{-D''(0)}(j+1)\eps)^2}{2\sfb^2}} \Big] \pz\Big(\la_{k-1}< y, \frac1{N-1}\sum_{i=k}^{N-1}f_{y,\de_2}(\la_i){Z_i^2}\ge \sfm(y)+ j\eps\Big)\Big\}\\
    &+ \Big(\frac{3T\sqrt{-D''(0)}}{\de_2}+|\bar \sfa| +\frac{\sfb}{\sqrt{2\pi N}} \Big) \pz\Big(\la_{k-1}< y, \frac{1}{N-1}\sum_{i=k}^{N-1} f_{y,\de_2}(\la_i){Z_i^2} >\sfm(y)+ \frac{\sfa_{m+}}{\sqrt{-D''(0)}} \Big)\\
    &+\Big[-\sfa \Phi\Big(-\frac{\sqrt N \sfa }{\sfb}\Big)+\frac{\sfb}{\sqrt{2\pi N}} e^{-\frac{N\sfa^2 }{2\sfb^2}} \Big]\pz(\la_{k-1}< y)\\
\le&~ \Big(\frac{3T\sqrt{-D''(0)}}{\de_2}+|\bar\sfa|+\sfb_m+\Big\lceil \frac{\sfa_m}{\sqrt{-D''(0)}\eps}
    \Big\rceil\Big)\max_{-\eps\le x \le  \frac{\sfa_{m+}}{\sqrt{-D''(0)}} } \Big\{\Big[\frac{\sfb_m}{\sqrt{2\pi N}}e^{-\frac{N(\sfa_o-\sqrt{-D''(0)}(x+\eps))^2}{2\sfb_m^2}}\\
    &-(\sfa_o-\sqrt{-D''(0)}(x+\eps))\Phi(-\frac{\sqrt N (\sfa_o-\sqrt{-D''(0)}(x+\eps))}{\sfb_m}) \Big]\\
    & \exp\Big[-N\Big((k-1)J_1(\tilde y) +\Lambda^*\Big(\Big[-x- 2\de(1+\frac{\tilde y}{\sqrt{\tilde y^2-2}}) \Big]_-;\tilde y\Big) -\eps\Big)\Big]\Big\}.
  \end{align*}
  Since
  \begin{align*}
    &\Phi(-\frac{\sqrt N (\sfa_o-\sqrt{-D''(0)}(x+\eps))}{\sfb_m})\le \indi\{\sfa_o-\sqrt{-D''(0)}(x+\eps)\le 0\} \\
    &\quad + \frac{\sfb_m}{\sqrt{2\pi N}[\sfa_o-\sqrt{-D''(0)}(x+\eps)]} e^{-\frac{N(\sfa_o-\sqrt{-D''(0)}(x+\eps))^2}{2\sfb_m^2}} \indi\{\sfa_o-\sqrt{-D''(0)}(x+\eps)> 0\},
  \end{align*}
using \pref{eq:aan'}, sending $N\to\8$ and then $\eps\to0+,\de_1\to0+$ we find
\begin{align*}
  &\limsup_{\de_1\to0+,\atop \eps\to0+}\limsup_{N\to\8} \frac1N\log II_2^k((u'-\de,u'+\de),(\rho'-\de,\rho'+\de), (y'-\de,y'+\de) ;\de_1,\de')\\
    &\le \frac12\log[-4D''(0)] -\frac12\log D'(0) -\frac12\log(2\pi) \\
    &- (k-1)J_1(\tilde y)+\sup_{\rho'-\de < \rho< \rho'+\de,\atop u'-\de< u< u'+\de, y'-\de< y< y'+\de}\psi_*(\rho,u,y)\\
    &- \min_{0\le x\le \frac{\sfa_{m+}}{\sqrt{-D''(0)}}} \Big\{ \Lambda^*\Big(\Big[-x- 2\de(1+\frac{\tilde y}{\sqrt{\tilde y^2-2}}) \Big]_-;\tilde y\Big) + \frac1{2\sfb_m^2} (\sfa_o-\sqrt{-D''(0)}x)_+^2\Big\}.
\end{align*}
The second assertion follows from continuity.
\end{proof}

\begin{proposition}\label{pr:ubk2}
 Assume $\bar E$ is compact, $k\ge1$ and $R_2<\8$. Then
 \begin{align*}
   &\limsup_{\de'\to0+}\limsup_{\de_1\to0+} \limsup_{N\to\8}\frac1N\log II_2^k( E,(R_1,R_2),(-K,-\sqrt2);\de_1,\de')\le -\frac12\log(2\pi)\\
   & \ +\frac12\log[-4D''(0)] -\frac12\log D'(0)
   +\sup_{(\rho,u,y)\in F} [\psi_*(\rho,u,y)- \ix^+(\rho,u,y)- (k-1) J_1(y)],
 \end{align*}
 where $F=\{(\rho,u,y): \rho\in(R_1,  R_2], u\in\bar E, y\le -\sqrt2\}$.
\end{proposition}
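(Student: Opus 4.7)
The plan is to mimic the covering argument used in \pref{pr:ubk0} for local minima, replacing \pref{le:covlet} by \pref{le:covlet2} as the key local estimate. First, I would reduce to a compact optimization domain. Using \pref{eq:psi*lim} together with the non-negativity of $\ix^+$ and of $(k-1)J_1(\cdot)$, observe that for fixed $\rho>0$ and $u\in\bar E$, $\psi_*(\rho,u,y)-\ix^+(\rho,u,y)-(k-1)J_1(y)\to-\8$ as $y\to-\8$, and similarly $\psi_*(\rho,u,y)\to-\8$ as $\rho\to 0+$. Together with \pref{le:exptt}, this lets me assume $R_1>0$, and lets me choose $K$ large enough that
\[
\sup_{(\rho,u,y)\in F}[\psi_*-\ix^+-(k-1)J_1] = \max_{[R_1,R_2]\times \bar E\times[-K,-\sqrt2]}[\psi_*-\ix^+-(k-1)J_1],
\]
attained at some $(\rho_2^k,u_2^k,y_2^k)$ with $\rho_2^k>0$.

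Next, I would cover the compact set $[R_1,R_2]\times \bar E\times[-K,-\sqrt2]$ by finitely many cubes of side-length $2\de$ whose centers $(\rho',u',y')$ include $(\rho_2^k,u_2^k,y_2^k)$, and bound
\[
II_2^k(E,(R_1,R_2),(-K,-\sqrt2);\de_1,\de') \le \sum_{\text{cubes}} II_2^k\bigl((u'-\de,u'+\de),(\rho'-\de,\rho'+\de),(y'-\de,y'+\de);\de_1,\de'\bigr).
\]
Applying \pref{le:covlet2} to each cube and then taking the maximum over the finitely many cubes yields, after sending $\de_1\to 0+$ and $N\to\8$,
\begin{align*}
&\limsup_{\de_1\to 0+}\limsup_{N\to\8}\frac1N\log II_2^k(E,(R_1,R_2),(-K,-\sqrt2);\de_1,\de')\\
&\le \tfrac12\log[-4D''(0)]-\tfrac12\log D'(0)-\tfrac12\log(2\pi)\\
&\quad +\max_{\text{centers }(\rho',u',y')}\Bigl\{\sup_{|\rho-\rho'|<\de,|u-u'|<\de,|y-y'|<\de}\psi_*(\rho,u,y)-(k-1)J_1(\tilde y)\\
&\qquad -\inf_{x\ge 0}\Bigl[\Lambda^*\!\Bigl(\bigl[-x-2\de(1+\tfrac{\tilde y}{\sqrt{\tilde y^2-2}})\bigr]_-;\tilde y\Bigr)+\tfrac{1}{2\sfb_m^2}(\sfa_o-\sqrt{-D''(0)}x)_+^2\Bigr]\Bigr\}.
\end{align*}

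Finally, I would send $\de'\to 0+$ and $\de\to 0+$. By the second assertion of \pref{le:covlet2}, the inner infimum converges to $\ix^+(\rho',u',y')$; by continuity of $\psi_*$, $\ix^+$ (extended continuously to $y=-\sqrt2$ as noted after \pref{eq:ix+}), and $J_1$ on the compact set, the finite maximum over cube centers converges to the value at $(\rho_2^k,u_2^k,y_2^k)$, yielding the claimed upper bound. The only real obstacle is the boundary behaviour near $y=-\sqrt2$: the region $(-K,-\sqrt 2)$ is open at its right endpoint, but since $J_1$ is continuous at $-\sqrt 2$ with $J_1(-\sqrt 2)=0$ and $\ix^+$ extends continuously there, the maximizer may be taken on the closure without affecting the limit. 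Combined with the estimate \pref{eq:ii2kub} handling the region $[-\sqrt 2,-\sqrt 2+\eps]$ separately, this closes the argument.
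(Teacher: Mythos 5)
Your proposal is correct and follows essentially the same route as the paper: reduce to a compact parameter set via \pref{eq:psi*lim} (taking $R_1>0$ and $K$ large so the supremum is attained), cover $[R_1,R_2]\times\bar E\times[-K,-\sqrt2]$ by cubes centered so that a maximizer $(\rho_2^k,u_2^k,y_2^k)$ is a center, apply Lemma \ref{le:covlet2} on each cube, and pass to the limits $\de_1\to0+$, $N\to\8$, $\de'\to0+$, $\de\to0+$ using the second assertion of Lemma \ref{le:covlet2} and continuity. This is exactly the paper's argument, so no further comparison is needed.
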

\begin{proof}
  We follow the argument of \pref{pr:ubk0} and assume $R_1>0$. 
  By \pref{eq:psi*lim}, 
for any fixed $\rho>R_1$ and $u\in \bar E$,
$$\limsup_{y\to-\8} \psi_*(\rho,u,y)- \ix^+(\rho,u,y)- (k-1) J_1(y)\le \limsup_{y\to-\8} \psi_*(\rho,u,y)- (k-1) J_1(y)=-\8.
  $$
We may choose $K$ large enough so that $$\sup_{\rho\in(R_1,R_2],u\in\bar E, y\le-\sqrt2} [\psi_*(\rho,u,y)- \ix^+(\rho,u,y)- (k-1) J_1(y)]$$ is attained at a point $(\rho_2^k,u_2^k,y_2^k)\in (R_1,R_2]\times \bar E\times [-K,-\sqrt2]$.

Consider a cover of the compact set $[R_1,R_2]\times \bar E\times [-K,-\sqrt2]$ with cubes of side length $2\de$ and center $(\rho',u',y')$ so that $(\rho_2^k,u_2^k,y_2^k)$ is one of the centers. 
We deduce from \pref{le:covlet2} that
\begin{align*}
  &\limsup_{\de\to0+,\atop \de'\to0+}\limsup_{\de_1\to0+}\limsup_{N\to\8}\frac1N  \log II_2^k(\bar E,(R_1,R_2),(-K,-\sqrt2);\de_1,\de') \\
  & \le \frac12\log[-4D''(0)] -\frac12\log D'(0) -\frac12\log(2\pi)\\
   &+\limsup_{\de\to0+,\atop \de'\to0+} \max_{(\rho',u',y') \atop \text{ centers of cubes}} \Big\{ - (k-1)J_1(\tilde y)+\sup_{\rho'-\de < \rho< \rho'+\de,\atop u'-\de< u< u'+\de, y'-\de< y< y'+\de}\psi_*(\rho,u,y)\\
    &-\inf_{x\ge0} \Big[  \Big(\Big[-x- 2\de(1+\frac{\tilde y}{\sqrt{\tilde y^2-2}}) \Big]_-;\tilde y\Big) + \frac1{2\sfb_m^2} (\sfa_o-\sqrt{-D''(0)}x)_+^2 \Big]\Big\} \\
  &=\frac12\log[-4D''(0)] -\frac12\log D'(0) -\frac12\log(2\pi)+ \psi_*(\rho_2^k,u_2^k,y_2^k)- \ix^+(\rho_2^k,u_2^k,y_2^k)-(k-1)J_1(y_2^k).
\end{align*}
Here we understand that the supremum and infimum were taken within $[R_1,R_2]\times \bar E\times [-K,-\sqrt2)$. We have completed the proof.
\end{proof}

Let
\begin{align}\label{eq:psik}
 \jx_k(\bar E,(R_1,R_2))
  =\max\{ \jx_k^1(\bar E,(R_1,R_2)),  \jx_k^2(\bar E,(R_1,R_2))
   \},
\end{align}
where
\begin{align*}
  \jx_k^1(\bar E,(R_1,R_2))
  &=\sup_{(\rho,u,y)\in F} [\psi_*(\rho,u,y)-k J_1(y)]=\psi_*(\rho_1^k,u_1^k,y_1^k)-k J_1(y_1^k), \\
  \jx_k^2(\bar E,(R_1,R_2))
  &=\sup_{(\rho,u,y)\in F} [\psi_*(\rho,u,y)- \ix^+(\rho,u,y)- (k-1) J_1(y)]\\
&=\psi_*(\rho_2^k,u_2^k,y_2^k)- \ix^+(\rho_2^k,u_2^k,y_2^k)- (k-1) J_1(y_2^k).
\end{align*}
From \pref{eq:snlim}, \pref{eq:ii1k}, \pref{eq:ii2kub}, Propositions \ref{pr:ubk1} and \ref{pr:ubk2}, we conclude
\begin{align}\label{eq:crkub}
  \limsup_{N\to\8}\frac1N\log \ez\Crt_{N,k}(E,(R_1,R_2) )\le  \frac12 \log[-4D''(0)] -\frac12\log D'(0) +\frac12+\jx_k(\bar E,(R_1,R_2)) .
\end{align}

\begin{remark}
  It is crucial to note that $\jx_k^1\le\jx_k^2$ always holds if $\sfa(\rho_1^k,u_1^k,y_1^k)\le0$. This explains why we do not need $\ix^-$ like the case of local minima in the expression of $\jx_k^1$ (which we cannot get from our analysis anyway). However, it could be $J_1(y_1^k)=0$ so that $\jx_k^1=\jx_k^2$. Let us justify the necessity of $\jx_k^2$. In other words, we have to show that $\jx_k^1(\rho_1^k,u_1^k,y_1^k) <\jx_k^2(\rho_1^k,u_1^k,y_1^k)$ is possible.

To this end, it suffices to cook up an example where $J_1(y_1^k)>0$ while $\sfa(\rho_1^k,u_1^k,y_1^k)<0$.  Let us fix the structure function $D$ and leave $\mu,E,R_1,R_2$ free to specify later. Recall the notation from \pref{ex:3} and \pref{eq:m12a}. Using \pref{le:albtd} and continuity, for $\rho>0$ small enough,
\[
2(J^2-\al\bt\rho^2-\bt^2)-(J^2-\bt^2)<0.
\]
Recall from \pref{re:63} that $\partial_y \sfa<0$. By continuity we may find $\de>0$ and $\rho_0>0$ such that
\[
-(J^2-\al\bt\rho^2-\bt^2)\sqrt2 Jy-(J^2-\bt^2)\frac1{\sqrt2}J\sfm(y)<0
\]
for $y\in[-\sqrt2-\de, -\sqrt2]$ and  $\rho\in[\frac12\rho_0, \frac32\rho_0]$. It remains to designate $\frac{J^2 v}{\bt}+\mu<0$ to ensure $\sfa<0$. On the other hand,  from \pref{eq:parypsi} we have for $y<-\sqrt2$,
\[
\partial_y[\psi_*(\rho,u,y)-kJ_1(y)] =  \frac{-(\bt^2+J^2)y-\sqrt2J (\mu+\bt v)}{J^2-\bt^2} +(k+1)\sqrt{y^2-2}.
\]
Let $C(\rho,y)=\frac{1}{\sqrt2 J}[\frac{-(J^2+\bt^2)y}{J^2-\bt^2}+(k+1)\sqrt{y^2-2}]$.  Given any $y\in[-\sqrt2-\de, -\sqrt2)$ and  $\rho\in[\frac12\rho_0, \frac32\rho_0]$, we have $C(\rho,y)>0$ and since $J^2>\bt^2$, the following (in)equalities
\[
\frac{\mu+\bt v}{J^2-\bt^2}=C(\rho,y), \ \ \frac{J^2v}{\bt}+\mu<0
\]
have solutions when $\mu>C(\rho,y)J^2$.

The discussion above allows us to complete our job. Indeed, take any
$$\mu>\max_{\rho\in[\frac12\rho_0,  \frac32\rho_0], y\in[-\sqrt2-\de,-\sqrt2-\frac\de2]} C(\rho,y)J^2 $$
and let
\begin{align*}
T_-(\rho)&=\min_{ y\in[-\sqrt2-\de,-\sqrt2-\frac\de2]}\frac{C(\rho,y)(J^2-\bt^2)-\mu}{\bt(\rho)}\sqrt{D(\rho^2)-\frac{D'(\rho^2)^2 \rho^2}{D'(0)} }+\frac{\mu\rho^2}{2}-\frac{\mu D'(\rho^2) \rho^2}{D'(0) }, \\
T_+(\rho)&= \max_{ y\in[-\sqrt2-\de,-\sqrt2-\frac\de2]}\frac{C(\rho,y)(J^2-\bt^2)-\mu}{\bt(\rho)}\sqrt{D(\rho^2)-\frac{D'(\rho^2)^2 \rho^2}{D'(0)} }+\frac{\mu\rho^2}{2}-\frac{\mu D'(\rho^2) \rho^2}{D'(0) }.
\end{align*}
Since $\partial_y C(\rho,y)<0$, we have $T_-(\rho)<T_+(\rho)$ for any $\rho>0$. By continuity, we may choose $\eps>0$ small enough such that $\rho_0-\eps>\frac12\rho_0, \rho_0+\eps<\frac32\rho_0$ and
$$\max_{\rho_0-\eps\le \rho\le \rho_0+\eps}T_-(\rho)< \min_{\rho_0-\eps\le \rho\le \rho_0+\eps}T_+(\rho).$$
Let $R_1=\rho_0-\eps, R_2=\rho_0+\eps$ and
$E=(\max_{\rho_0-\eps\le \rho\le \rho_0+\eps}T_-(\rho), \min_{\rho_0-\eps\le \rho\le \rho_0+\eps}T_+(\rho))$. Now suppose $(\rho_1^k,u_1^k,y_1^k)$ is a maximizer of $\psi_*(\rho,u,y)-kJ_1(y)$ in $F$ as in \pref{pr:ubk2}. Since
\[
 u_1^k\in \bar E = \bigcap_{\rho_0-\eps\le \rho\le \rho_0+\eps} [T_-(\rho), T_+(\rho)],\quad \rho_0-\eps\le \rho_1^k\le \rho_0+\eps,
\]
we conclude that $y_1^k\in [-\sqrt2-\de,-\sqrt2-\frac\de2]$ and thus $J_1(y_1^k)>0, \sfa(\rho_1^k,u_1^k,y_1^k)<0$.


Using similar ideas, one can argue that $\jx_k^1$ is also needed.  In fact, later on we will  see that when there is no restriction on the critical value, $\jx_k^1$ is larger than $\jx_k^2$.
\end{remark}

\subsection{Lower bound}\label{se:snowe}

Recall $I^k$ and $II^k$ as in \pref{eq:ikdef}.
\begin{proposition}\label{pr:lbk1}
 Suppose $ E$ is open and $R_2<\8$. If $\jx_k^1> \jx_k^2$, then
 \begin{align*}
   \liminf_{N\to\8}\frac1N\log I^k(E,(R_1,R_2))\ge \frac12\log[-4D''(0)] -\frac12\log(2\pi)-\frac12\log D'(0)\\
   +\sup_{(\rho,u,y)\in F} [\psi_*(\rho,u,y)- kJ_1(y)],
 \end{align*}
 where $F=\{(\rho,u,y): \rho\in [R_1,  R_2], u\in\bar E, y\le -\sqrt2\}$.
\end{proposition}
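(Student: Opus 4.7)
The plan is to adapt the lower bound argument of \pref{pr:lbk0}, using the sign information forced by the hypothesis to bypass the rate functions $\ix^-$ and $\ix^+$ that appeared in the previous analyses. First I would observe that the remark immediately following \pref{eq:crkub} shows $\jx_k^1\le \jx_k^2$ whenever $\sfa(\rho_1^k,u_1^k,y_1^k)\le 0$; hence the assumption $\jx_k^1>\jx_k^2$ forces $\sfa(\rho_1^k,u_1^k,y_1^k)>0$ and $y_1^k<-\sqrt 2$ at a maximizer $(\rho_1^k,u_1^k,y_1^k)$ of $\psi_*(\rho,u,y)-kJ_1(y)$ on $F$, with the boundary case $y_1^k=-\sqrt 2$ handled by continuity. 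The essential consequence is that when $\sfa>0$ the event $\{\zeta>0\}$ occurs with probability bounded below by a positive constant, so no large deviation cost is incurred for it.

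Next I would fix small $\de,\eps,\eta,\de_1>0$ and restrict $I^k(E,(R_1,R_2))$ to the cube
\[
N_\de=(\rho_1^k-\de,\rho_1^k+\de)\times(u_1^k-\de,u_1^k+\de)\times(y_1^k-\de,y_1^k-\tfrac\de2),
\]
which for $\de$ small enough lies inside $(R_1,R_2)\times E\times(-K,-\sqrt 2)$ and on whose closure $\sfa$ stays uniformly positive. Using the Schur complement \pref{eq:schur} and conditioning on $(\la_1^{N-1},z_3',\xi')$ as in the derivation of \pref{eq:detgk}, the integrand equals $|\det G_{**}|$ times $\sfa_N\Phi(\sqrt N\sfa_N/\sfb)+\sfb(2\pi N)^{-1/2}e^{-N\sfa_N^2/(2\sfb^2)}$ times $\indi\{i(G_{**})=k\}$. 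I would then restrict further to the event
\[
\Omega_N=\Big\{L((\tfrac{N-1}{N})^{1/2}\la_1^{N-1})\in B(\si_{\rm sc},\de_1),\ \la_k\le y_1^k-\de-\eps,\ \la_{k+1}\ge y_1^k-\tfrac\de4,\ Q(y)\le\sqrt{-D''(0)}\sfm(y)+\eta\Big\}.
\]
On $\Omega_N$ the interlacement $\la_k<(\tfrac{N}{N-1})^{1/2}y<\la_{k+1}$ holds for all $y\in(y_1^k-\de,y_1^k-\de/2)$ and all sufficiently large $N$, so $i(G_{**})=k$; the conditional mean $\sfa_N=\bar\sfa-Q(y)$ converges uniformly to $\sfa(\rho_1^k,u_1^k,y_1^k)>0$, so the Gaussian factor is bounded below by a positive constant independent of $N$; and $|\det G_{**}|$ yields the $\Psi_*(y)$ piece of $\psi_*$ via continuity of $\Psi$ on $B(\si_{\rm sc},\de_1)$.

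It remains to show $\liminf_{N\to\8}N^{-1}\log\pz(\Omega_N)\ge -kJ_1(y_1^k-\de-\eps)$. The empirical measure constraint is free by \pref{eq:conineq}. The joint event $\{\la_k\le y_1^k-\de-\eps,\ Q(y)\le\sqrt{-D''(0)}\sfm(y)\}$ has rate exactly $-kJ_1(y_1^k-\de-\eps)$ by \pref{pr:geup} applied with $s=0$, since $\Lambda^*(0;f_{y,\de})=0$. The remaining condition $\la_{k+1}\ge y_1^k-\tfrac\de4$ does not inflate the rate: writing
\[
\pz(\la_k\le a,\la_{k+1}\ge b)\ge\pz(\la_k\le a)-\pz(\la_{k+1}<b)
\]
with $a=y_1^k-\de-\eps$ and $b=y_1^k-\tfrac\de4$, the LDP \pref{eq:jk1x} gives rates $kJ_1(a)$ and $(k+1)J_1(b)$, and $J_1(a)/J_1(b)\to 1<(k+1)/k$ as $\de,\eps\to 0$ yields $kJ_1(a)<(k+1)J_1(b)$ for small parameters, so the dominant term is $\pz(\la_k\le a)$. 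Collecting all estimates and sending $N\to\8$, then $\eta,\eps,\de_1\to 0$, and finally $\de\to 0+$, continuity of $\psi_*$ and $J_1$ yields the claim. The main technical hurdle is this last exponential comparison coordinated with the joint probability bound of \pref{pr:geup}; this is precisely where the hypothesis $\sfa>0$ enters, since it eliminates any large deviation cost for the $\zeta>0$ event.
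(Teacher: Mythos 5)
Your proposal follows essentially the same route as the paper's proof: the hypothesis forces $\sfa(\rho_1^k,u_1^k,y_1^k)>0$, you localize to a small cube at the maximizer with $y$ bounded away from $-\sqrt2$, restrict to the event with eigenvalue margins $\la_k\le y_1^k-\de-\eps$, $\la_{k+1}\ge y_1^k-\de/4$ together with a cap on the quadratic form so that the conditional Gaussian factor costs nothing, lower bound the probability of this event via \pref{pr:geup} and handle the $\la_{k+1}$ constraint by subtraction using $kJ_1<(k+1)J_1$ for small parameters, exactly as in the paper. The only place the paper is more careful is the passage from the singular sum $Q(y)$ to the Lipschitz-regularized sum $f_{y_1^k-\de/2,\de/4}$ — since \pref{pr:geup} is stated for the regularized sums, one incurs a small cost $\Lambda^*\bigl(\sfm(y_1^k-\tfrac\de2)-\sfm(y_1^k-2\de)\bigr)$ that vanishes only as $\de\to0+$ (so the slack should be tied to $\de$ rather than sent to zero first), and the lower bound on $|\det G_{**}|$ needs the infimum of $\Psi(\nu,y)$ over measures supported away from a window around $y$ rather than plain continuity on $B(\si_{\rm sc},\de_1)$; your eigenvalue margins are precisely what makes both of these repairs work, so these are presentation-level inaccuracies rather than gaps.
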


\begin{proof}
   Choose $\de>0$ small enough so that
$$(\rho_1^k-\de,\rho_1^k+\de)\times (u^k_1-\de,u^k_1+\de)\times (y_1^k-\de,y_1^k)\subset (R_1,R_2)\times E\times (-K,-\sqrt2).$$
Here as usual we understand if $\rho_1^k$ (resp.~$u_1^k$) is on the boundary, we can simply replace the interval $(\rho^k_1-\de,\rho^k_1+\de)$ [resp.~$(u^k_1-\de,u^k_1+\de)$] with $(\rho_1^k,\rho_1^k+\de)$ or $(\rho_1^k-\de,\rho_1^k)$ [resp.~$(u_1^k,u_1^k+\de)$ or $(u_1^k-\de,u_1^k)$] in an obvious way. Using \pref{eq:detgk}, by restriction we have
\begin{align*}
  &I^k(E,(R_1,R_2))  \\
\ge & \frac{\sqrt{N}[-4D''(0)]^{N/2}}{(2\pi)^{(N+2)/2} D'(0)^{N/2} } \int_{\rho_1^k-\de}^{\rho_1^k+\de} \int_{u_1^k-\de}^{u_1^k+\de}\int_{y_1^k-\de}^{y_1^k-\de/2}  \frac{e^{-\frac{(u-m_Y)^2}{2\si_Y^2}-\frac{N \mu^2 \rho^2}{2D'(0)} -\frac{N(\sqrt{ -4D''(0)}y+m_2)^2}{2(-2D''(0)-\bt^2)} }} {\si_Y\sqrt{-2D''(0)-\bt^2}}
 \\
  & \ez[ e^{(N-1)\Psi(L((\frac{N-1}{N})^{1/2}\la_1^{N-1}),y)} \indi\{L((\frac{N-1}{N})^{1/2}\la_1^{N-1})\in B(\si_{\rm sc}, \de_1), \la_k<(\frac{N}{N-1})^{1/2}y<\la_{k+1}\} \\
  &(\mathsf{a}_N\Phi(\frac{\sqrt N\sfa_N}\sfb) +\frac{\sfb}{\sqrt{2\pi N}} e^{-\frac{N\sfa_N^2}{2\sfb^2}})]
    \rho^{N-1}  \dd y \dd u  \dd\rho\\
    =: &\frac{\sqrt{N}[-4D''(0)]^{N/2}}{(2\pi)^{(N+2)/2} D'(0)^{N/2} } I_N^k(\de,\de_1).
\end{align*}
Note that for any $y\in [y_1^k-\de, y_1^k-\frac\de2]$ we have $f_{y,\frac\de4}(x)  \ge \frac1{x-y}\indi\{x-y\ge \frac\de4\}$. Let $K>0$ be a large constant such that $K^2>10[kJ_1(y_1^k-\de) +\Lambda^*(\sfm(y_1^k-\frac\de2) -\sfm(y_1^k-2\de);f_{y_1^k-\frac\de2,\frac{\de}{4}})]$. 
Since $x\mapsto x\Phi(\frac{\sqrt N x}\sfb) +\frac{\sfb}{\sqrt{2\pi N}} e^{-\frac{N x^2}{2\sfb^2}}$ is positive and strictly increasing, for $y_1^k-\de< y<y_1^k-\frac\de2\le -\sqrt2-\frac{\de}{2}$,
\begin{align*}
    &\ez\Big[e^{(N-1)\Psi(L((\frac{N-1}{N})^{1/2}\la_1^{N-1}),y)} \indi\{L((\frac{N-1}{N})^{1/2}\la_1^{N-1})\in B(\si_{\rm sc}, \de_1),\la_k< (\frac{N}{N-1})^{1/2} y<\la_{k+1} \}  \\
    &\quad \Big(\mathsf{a}_N\Phi(\frac{\sqrt N\sfa_N}\sfb) +\frac{\sfb}{\sqrt{2\pi N}} e^{-\frac{N\sfa_N^2}{2\sfb^2}}\Big)\Big] \\
\ge~& \ez \Big[e^{(N-1)\Psi(L((\frac{N-1}{N})^{1/2}\la_1^{N-1}),y)}
    \indi\Big\{L((\frac{N-1}{N})^{1/2}\la_1^{N-1})\in B_K(\si_{\rm sc}, \de_1),\\
    &(\frac{N-1}{N})^{1/2}\la_k+2\de < y_1^k<(\frac{N-1}{N})^{1/2}\la_{k+1}+\frac{\de}{4},\frac{1}{N}\sum_{i=1}^{N-1} \frac{Z_i^2}{(\frac{N-1}{N})^{1/2}\la_i-y}\le \sfm(y) \Big\}\\
    &\Big(\mathsf{a}_N\Phi(\frac{\sqrt N\sfa_N}\sfb) +\frac{\sfb}{\sqrt{2\pi N}} e^{-\frac{N{\sfa_N}^2}{2\sfb^2}}\Big) \Big]\\
\ge~& \Big(\sfa
    \Phi(\frac{\sqrt N\sfa}\sfb) +\frac{\sfb}{\sqrt{2\pi N}} e^{-\frac{N\sfa^2}{2\sfb^2}}\Big) \exp\Big\{(N-1)\inf_{\nu\in B(\si_{\rm sc}, \de_1)\cap \px([-K,K]\setminus (y_1^k-2\de, y_1^k-\frac\de4))} \Psi(\nu,y)\Big\} \\
    &\ez\Big[ \indi\Big\{L((\frac{N-1}{N})^{1/2}\la_1^{N-1})\in B_K(\si_{\rm sc}, \de_1),(\frac{N-1}{N})^{1/2}\la_k+2\de < y_1^k<\la_{k+1}+\frac{\de}{4},\\
    &\frac{1}{N} \sum_{i=k+1}^{N-1}  \frac{Z_i^2}{\la_i-(y_1^k-\frac\de2)}\le(\frac{N-1}{N})^{1/2} \sfm(y_1^k-\de)\Big\} \Big]\\
\ge~& \Big(\sfa
    \Phi(\frac{\sqrt N\sfa}\sfb) +\frac{\sfb}{\sqrt{2\pi N}} e^{-\frac{N\sfa^2}{2\sfb^2}}\Big) \exp\Big\{(N-1)\inf_{\nu\in B(\si_{\rm sc}, \de_1)\cap \px([-K,K]\setminus (y_1^k-2\de, y_1^k-\frac\de4))} \Psi(\nu,y)\Big\}\\
    &\Big[ \pz \Big((\frac{N-1}{N})^{1/2}\la_k<y_1^k-2\de, \frac{1}{N} \sum_{i=k+1}^{N-1}  f_{y_1^k-\frac\de2,\frac{\de}{4}}(\la_i){Z_i^2} \le \sfm(y_1^k-2\de) \Big)\\
    &-\pz \Big( \la_{k+1}\le y_1^k-\frac\de4,\frac{1}{N} \sum_{i=k+1}^{N-1}  f_{y_1^k-\frac\de2,\frac{\de}{4}}(\la_i){Z_i^2}\le \sfm(y_1^k-2\de) \Big)\\
    &- \pz(L((\frac{N-1}{N})^{1/2}\la_1^{N-1})\notin B(\si_{\rm sc}, \de_1))-\pz(\la_{N-1}^*>K) \Big].
  \end{align*}
  Note that $\Psi(\nu,y)$ is continuous for $y_1^k-\de< y<y_1^k-\frac\de2$ and $\nu\in B(\si_{\rm sc}, \de_1)\cap \px([-K,K]\setminus (y_1^k-2\de, y_1^k-\frac\de4))$. Using \pref{pr:geup}, we have
  \begin{align*}
    \liminf_{N\to\8} &~\frac1N \log \pz \Big((\frac{N-1}{N})^{1/2}\la_k<y_1^k-2\de, \frac{1}{N} \sum_{i=k+1}^{N-1}  f_{y_1^k-\frac\de2,\frac{\de}{4}}(\la_i){Z_i^2}
    \le \sfm(y_1^k-2\de) \Big) \\
    &\ge -kJ_1(y_1^k-2\de) -\Lambda^*\Big(\sfm(y_1^k-\frac\de2) -\sfm(y_1^k-2\de); f_{y_1^k-\frac\de2,\frac{\de}{4}}\Big),\\
    \limsup_{N\to\8} &~ \frac1N\log \pz \Big(\la_{k+1}\le y_1^k-\frac\de4,\frac{1}{N} \sum_{i=k+1}^{N-1}  f_{y_1^k-\frac\de2,\frac{\de}{4}}(\la_i){Z_i^2}\le \sfm(y_1^k-2\de) \Big)\\
    &\le -(k+1)J_1(y_1^k-\frac\de4) -\Lambda^*\Big(\sfm(y_1^k-\frac\de2) -\sfm(y_1^k-2\de); f_{y_1^k-\frac\de2,\frac{\de}{4}}\Big).
  \end{align*}
  We can always choose $\de$ small enough so that
  \[
  kJ_1(y_1^k-2\de)<(k+1)J_1(y_1^k-\frac\de4).
  \]
Since $\jx_k^1> \jx_k^2$, $\sfa(\rho_1^k,u_1^k,y_1^k)>0$.
Using \pref{eq:phiblim}, \pref{eq:ladein} and the LDP of the empirical measures of GOE eigenvalues, since all functions in question are continuous, we deduce   that
\begin{align*}
  &\liminf_{\de_1\to0+}\liminf_{N\to\8}\frac1N I_N^k(\de,\de_1) \\
  &\ge \inf_{\rho_1^k-\de<\rho<\rho_1^k+\de, u_1^k-\de<u<u_1^k+\de, \atop y_1^k-\de<y<y_1^k-\frac\de2}\psi_*(\rho,u,y) -kJ_1(y_1^k-2\de)-\Lambda^*\Big(\sfm(y_1^k-\frac\de2) -\sfm(y_1^k-2\de); f_{y_1^k-\frac\de2,\frac{\de}{4}}\Big).
\end{align*}
Sending $\de\to0+$, we find by continuity and \pref{re:ylim} if necessary
\begin{align*}
  &\liminf_{N\to\8} \frac1N \log I_1^k(E,(R_1,R_2))\\
  & \ge \frac12\log [-4D''(0)]-\frac12\log(2\pi) -\frac12\log D'(0)+\psi_*(\rho_1^k,u_1^k,y_1^k)- kJ_1(y_1^k).
\end{align*}
The proof is complete.
\end{proof}

\begin{proposition}\label{pr:lbk2}
 Suppose $ E$ is open and $R_2<\8$. Then
 \begin{align*}
   \liminf_{N\to\8}\frac1N\log II^k(E,(R_1,R_2))\ge \frac12\log[-4D''(0)] -\frac12\log(2\pi)-\frac12\log D'(0)\\
   +\sup_{(\rho,u,y)\in F} [\psi_*(\rho,u,y)- \ix^+(\rho,u,y)- (k-1) J_1(y)],
 \end{align*}
 where $F=\{(\rho,u,y): \rho\in (R_1,  R_2], u\in\bar E, y\le -\sqrt2\}$.
\end{proposition}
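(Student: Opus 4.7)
The plan parallels the lower-bound arguments used for $I^0$ in \pref{pr:lbk0} and for $I^k$ in \pref{pr:lbk1}, but now with the decisive role played by the event $\{\zeta<0\}$ together with the right tail of $Q(y)$. Let $(\rho_2^k,u_2^k,y_2^k)\in (R_1,R_2]\times \bar E\times [-K,-\sqrt2]$ be a maximizer of $\psi_*-\ix^+-(k-1)J_1$ over $F$ (using \pref{eq:psi*lim} to rule out the boundary $\rho\to 0+$ and $y\to -\8$), and let $\tilde x = \tilde x(\rho_2^k,u_2^k,y_2^k)\le 0$ be the minimizer from \pref{le:ix+min}. Choose $\de>0$ small enough that the open box $(\rho_2^k-\de,\rho_2^k+\de)\times (u_2^k-\de,u_2^k+\de)\times (y_2^k-\de,y_2^k-\tfrac\de2)$ lies inside $(R_1,R_2)\times E\times (-K,-\sqrt2)$; boundary cases for $\rho_2^k$ or $u_2^k$ are handled in the usual way by one-sided intervals. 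Restrict the integral \eqref{eq:ikdef} defining $II^k$ to this box and to the values of $y$ with $y_2^k-\de<y<y_2^k-\tfrac\de2$.

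Inside the expectation, plug in the Schur decomposition \eqref{eq:schur} and condition on $(\la_i^{N-1},\xi',z_3')$ as in \eqref{eq:detgk}, but now retain the event $\{z_1'-Q(z_3')<0\}$, which contributes the Gaussian factor $-\sfa_N\Phi(-\sqrt N\sfa_N/\sfb)+\tfrac{\sfb}{\sqrt{2\pi N}}e^{-N\sfa_N^2/(2\sfb^2)}$ (the analogue of \eqref{eq:detgk} with \eqref{eq:gau3} in place of \eqref{eq:gau2}). Further restrict to the event
\begin{align*}
\Omega_N=\Bigl\{L((\tfrac{N-1}{N})^{1/2}\la_1^{N-1})\in B_K(\si_{\rm sc},\de_1),\ (\tfrac{N-1}{N})^{1/2}\la_{k-1}<y_2^k-2\de,\ (\tfrac{N-1}{N})^{1/2}\la_k>y_2^k-\tfrac{\de}{4},\\
\tfrac{1}{N}\sum_{i=k}^{N-1}\tfrac{Z_i^2}{\la_i-y}-\sfm(y)\ge -\tilde x+\eta_\de\Bigr\},
\end{align*}
where $\eta_\de\downarrow 0$ as $\de\downarrow 0$ is chosen so that $\sfm(y_2^k-\tfrac\de2)-\sfm(y_2^k-2\de)$ is absorbed into $\eta_\de$ and the localization of $y$ in the box is accommodated. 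On $\Omega_N$ we have $Q(z_3')\ge \sqrt{-D''(0)}\bigl(\sfm(y)-\tilde x+O(\eta_\de)\bigr)$, so $\sfa_N\le \sfa+\sqrt{-D''(0)}\tilde x+O(\eta_\de)$. Since $-x\Phi(-\sqrt N x/\sfb)+\tfrac{\sfb}{\sqrt{2\pi N}}e^{-Nx^2/(2\sfb^2)}$ is strictly decreasing in $x$, the Gaussian factor is bounded below by its value at $\sfa+\sqrt{-D''(0)}\tilde x+O(\eta_\de)$. By \eqref{eq:phib2} this contributes $-\tfrac{1}{2\sfb^2}[(\sfa+\sqrt{-D''(0)}\tilde x)_+]^2+o(1)$ at exponential scale.

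To produce $\Omega_N$ with the right exponential cost, apply the lower bound in \pref{pr:ge2} with $\ell=k-1$, $y_1=y_2^k-2\de$, $y_2=y_2^k-\tfrac{\de}{4}$, and $s=-\tilde x+\eta_\de$; the monotonicity of $J_1$ ensures $(k-1)J_1(y_1)<kJ_1(y_2)$ once $\de$ is small, and the proposition gives
\begin{align*}
\liminf_{N\to\8}\tfrac1N\log \pz(\Omega_N)\ge -(k-1)J_1(y_2^k-2\de)-\Lambda^*\bigl((-\tilde x+\eta_\de)_-;y_2^k-\tfrac{\de}{2}\bigr),
\end{align*}
after subtracting the $e^{-cN^2}$ probabilities coming from $L\notin B(\si_{\rm sc},\de_1)$ and from $\la_{N-1}^*>K$ via \eqref{eq:conineq} and \eqref{eq:ladein}; on $\Omega_N$ the function $\Psi(\nu,y)$ is uniformly continuous, so $e^{(N-1)\Psi(L(\cdot),y)}$ contributes $\inf \Psi(\nu,y)$ up to $o(1)$. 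Multiplying the three exponential contributions (the $\psi$-part from the density, the Gaussian $\zeta<0$ part, and the probability of $\Omega_N$) and using continuity and \pref{re:ylim} to send $\de_1,\eta_\de\to 0+$ and then $\de\to 0+$, the bound converges to
\[
\tfrac12\log[-4D''(0)]-\tfrac12\log(2\pi)-\tfrac12\log D'(0)+\psi_*(\rho_2^k,u_2^k,y_2^k)-\ix^+(\rho_2^k,u_2^k,y_2^k)-(k-1)J_1(y_2^k),
\]
exactly because $\ix^+(\rho_2^k,u_2^k,y_2^k)=\Lambda^*(\tilde x;y_2^k)+\tfrac1{2\sfb^2}[(\sfa+\sqrt{-D''(0)}\tilde x)_+]^2$ by \pref{le:ix+min}. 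The case $\sfa\le 0$ (where $\tilde x=0$ and $\ix^+=0$) is treated identically but with the Gaussian factor requiring no shift and no large-deviation cost; the maximizer $y_2^k=-\sqrt 2$ is reached by continuity using \pref{re:ylim}.

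The main obstacle is the joint event in $\Omega_N$: we must simultaneously fix the signs of $\la_{k-1},\la_k$ (giving $i(G_{**})=k-1$ modulo interlacement $(\frac{N-1}{N})^{1/2}\la_{k-1}<y<(\frac{N-1}{N})^{1/2}\la_k$) and force $\frac1N\sum_{i=k}^{N-1}\frac{Z_i^2}{\la_i-y}$ onto its right tail, since the two mechanisms interact through the shared spectrum. This is precisely the regime where \pref{pr:ge2} was built: the strict inequality $(k-1)J_1(y_1)<kJ_1(y_2)$ guarantees that moving $\la_{k-1}$ down does not force $\la_k$ down, so the contribution of the $k$-th eigenvalue (which is the source of the non-exposed behaviour at $s\le s_0$) may be separated from the rest, yielding the sharp rate $\Lambda^*(s;y)$. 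With this input the verification that the three exponential scales combine to $\ix^+$ is a matter of continuity.
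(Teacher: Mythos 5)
Your proposal follows essentially the same route as the paper's proof: localize at a maximizer $(\rho_2^k,u_2^k,y_2^k)$, condition to produce the Gaussian factor of the form \eqref{eq:gau3} (the analogue of \eqref{eq:detgk}, i.e.\ \eqref{eq:detgk1}), force jointly $\{(\tfrac{N-1}{N})^{1/2}\la_{k-1}<y_2^k-2\de,\ \la_k>y_2^k-\tfrac\de4\}$ and the right tail of the quadratic form at level $|\tilde x|$ via the lower bound of \pref{pr:ge2} with $\ell=k-1$, bound the Gaussian factor from below by monotonicity together with \eqref{eq:phib2}, and pass to the limit by continuity and \pref{re:ylim}; this is exactly the paper's argument, with your $\eta_\de$ playing the role of the paper's explicit corrections.

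Two points should be fixed. First, there is a sign slip in the application of \pref{pr:ge2}: since your event reads $\frac1N\sum_{i=k}^{N-1}\frac{Z_i^2}{\la_i-y}-\sfm(y)\ge -\tilde x+\eta_\de$, the parameter must be $s=\tilde x-\eta_\de$ (so that $\{\cdot\ge -s\}$ is your event and the cost is $\Lambda^*\bigl((\tilde x-\eta_\de)_-;\cdot\bigr)=\Lambda^*(\tilde x-\eta_\de;\cdot)$); as written you take $s=-\tilde x+\eta_\de\ge 0$, and the displayed bound $-\,(k-1)J_1-\Lambda^*\bigl((-\tilde x+\eta_\de)_-;\cdot\bigr)=-(k-1)J_1$ both misquotes the event in \pref{pr:ge2} and is false when $\tilde x<0$ (it omits the cost of the right tail), contradicting the cost $\Lambda^*(\tilde x;y_2^k)$ that you correctly use in the final identification with $\ix^+$ via \pref{le:ix+min}. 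Second, the inference ``on $\Omega_N$, $Q(z_3')\ge\sqrt{-D''(0)}(\sfm(y)-\tilde x+O(\eta_\de))$'' ignores the negative terms $i\le k-1$ of $Q$; on your event their denominators are bounded away from zero by $\de$, so after intersecting with an event such as $\{Z_i^2\le 1,\ i\le k-1\}$ (of probability bounded below by a constant, hence harmless at exponential scale) their total contribution is of order $1/N$ and can be absorbed into $\eta_\de$ — this is what the paper's $\frac{k-1}{N\de}$ correction does, and it should be made explicit. With these repairs the proposal coincides with the paper's proof.
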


\begin{proof}
  Choose $\de>0$ small enough so that
$$(\rho_2^k-\de,\rho_2^k+\de)\times (u^k_2-\de,u^k_2+\de)\times (y_2^k-\de,y_2^k)\subset (R_1,R_2)\times E\times (-K,-\sqrt2).$$
Here as usual we understand if $u_2^k$ or $\rho_2^k$ is on the boundary, we can simply replace the corresponding interval by half of itself in an obvious way. Note that by conditioning
\begin{align}
  &\ez[|\det G| \indi\{i(G_{**})=k-1,\zeta<0\}]=\ez(|\det G_{**}| | z_1'-\xi^\sfT G_{**}^{-1} \xi |  \indi\{i(G_{**})=k-1 , \zeta<0\})\notag\\
&= [-4D''(0)]^{\frac{N-1}{2}} \int_{\rz} \ez\Big[ |\det ((\frac{N-1}{N})^{1/2}\GOE_{N-1}- y I_{N-1}) | \indi\{\la_{k-1}<(\frac{N}{N-1})^{1/2} y< \la_{k} \} \notag\\
& \quad (-\mathsf{a}_N\Phi(-\frac{\sqrt N\sfa_N}\sfb) +\frac{\sfb}{\sqrt{2\pi N}} e^{-\frac{N \sfa_N^2}{2\sfb^2}})\Big] \frac{\sqrt {-4N D''(0)} \exp\{-\frac{N(\sqrt{ -4D''(0)}y+m_2)^2}{2(-2D''(0)-\bt^2)}\}} {\sqrt{2\pi(-2D''(0)-\bt^2)} } \dd y. \label{eq:detgk1}
\end{align}
It follows from restriction that
\begin{align*}
  &II^k(E,(R_1,R_2))  \\
  \ge & \frac{\sqrt{N}[-4D''(0)]^{N/2}}{(2\pi)^{(N+2)/2} D'(0)^{N/2} } \int_{\rho_2^k-\de}^{\rho_2^k+\de} \int_{u_2^k-\de}^{u_2^k+\de}\int_{y_2^k-\de}^{y_2^k-\de/2}  \frac{e^{-\frac{(u-m_Y)^2}{2\si_Y^2}-\frac{N \mu^2 \rho^2}{2D'(0)} -\frac{N(\sqrt{ -4D''(0)}y+m_2)^2}{2(-2D''(0)-\bt^2)} }} {\si_Y\sqrt{-2D''(0)-\bt^2}}
 \\
  & \ez[ e^{(N-1)\Psi(L((\frac{N-1}{N})^{1/2}\la_1^{N-1}),y)} \indi\{L((\frac{N-1}{N})^{1/2}\la_1^{N-1})\in B(\si_{\rm sc}, \de_1), \la_{k-1} <(\frac{N}{N-1})^{1/2} y<\la_{k}\} \\
  &(-\mathsf{a}_N\Phi(-\frac{\sqrt N\sfa_N}\sfb) +\frac{\sfb}{\sqrt{2\pi N}} e^{-\frac{N\sfa_N^2}{2\sfb^2}})]
    \rho^{N-1}  \dd y \dd u  \dd\rho\\
    =: &\frac{\sqrt{N}[-4D''(0)]^{N/2}}{(2\pi)^{(N+2)/2} D'(0)^{N/2} } II_N^k(\de,\de_1).
\end{align*}
Recalling $\tilde x$ as in \pref{eq:ix+}, let
\[
s_k=s_k(\rho_2^k,u_2^k,y_2^k)=\begin{cases}
      0, & \mbox{if } \sfa(\rho_2^k,u_2^k,y_2^k)\le0, \\
      \tilde x,  & \mbox{otherwise}.
    \end{cases}
\]
Let $K>0$ be a large constant such that
\[
K^2>10 \Big[(k-1)J_1(y_2^k-2\de) + \Lambda^*\Big([s_k+\sfm(y_2^k-\frac32\de) -\sfm(y_2^k-\frac\de2) ]_-;y_2^k-\frac32\de\Big)\Big].
\]
Since $x\mapsto -x\Phi(-\frac{\sqrt N x}\sfb) +\frac{\sfb}{\sqrt{2\pi N}} e^{-\frac{N x^2}{2\sfb^2}}$ is positive and strictly decreasing, for $y_2^k-\de< y<y_2^k-\frac\de2$,
\begin{align*}
    \ez&\Big[e^{(N-1)\Psi(L((\frac{N-1}{N})^{1/2}\la_1^{N-1}),y)} \indi\{L((\frac{N-1}{N})^{1/2}\la_1^{N-1})\in B(\si_{\rm sc}, \de_1),\la_{k-1}<(\frac{N}{N-1})^{1/2}y<\la_{k} \} \\
    & \Big(-\mathsf{a}_N\Phi(-\frac{\sqrt N\sfa_N}\sfb) +\frac{\sfb}{\sqrt{2\pi N}} e^{-\frac{N\sfa_N^2}{2\sfb^2}}\Big)\Big] \\
\ge~& \ez \Big[e^{(N-1)\Psi(L((\frac{N-1}{N})^{1/2}\la_1^{N-1}),y)}
    \indi\Big\{L((\frac{N-1}{N})^{1/2}\la_1^{N-1})\in B_K(\si_{\rm sc}, \de_1),\\
    &(\frac{N-1}{N})^{1/2}\la_{k-1}+2\de< y_2^k<(\frac{N-1}{N})^{1/2}\la_{k}+\frac{\de}{4},\frac{1}{N}\sum_{i=1}^{N-1} \frac{Z_i^2}{(\frac{N-1}{N})^{1/2}\la_i-y}\ge \sfm(y)- s_k\Big\}\\
    & \Big(-(\sfa+\sqrt{-D''(0)} s_k)\Phi(-\frac{\sqrt N(\sfa+\sqrt{-D''(0)} s_k)}\sfb)+\frac{\sfb}{\sqrt{2\pi N}} e^{-\frac{N(\sfa+\sqrt{-D''(0)} s_k)^2}{2\sfb^2}}\Big) \Big]\\
\ge~& \Big(-(\sfa +\sqrt{-D''(0)}s_k)\Phi(-\frac{\sqrt N(\sfa+\sqrt{-D''(0)}s_k)}\sfb)
    +\frac{\sfb}{\sqrt{2\pi N}} e^{-\frac{N(\sfa+\sqrt{-D''(0)}s_k)^2}{2\sfb^2}}\Big) \\
    &e^{(N-1)\inf_{\nu \in B(\si_{\rm sc}, \de_1)\cap \px([-K,K]\setminus (y_2^k-2\de, y_2^k-\frac\de4))}\Psi(\nu,y)} \\
    & \ez\Big[ \indi\Big\{L((\frac{N-1}{N})^{1/2}\la_1^{N-1})\in B_K(\si_{\rm sc}, \de_1),(\frac{N-1}{N})^{1/2}\la_{k-1}<y_2^k-2\de, \la_{k}>y_2^k-\frac\de4,\\
    &\frac{1}{N} \sum_{i=k}^{N-1}  \frac{Z_i^2}{\la_i-(y_2^k-\frac32\de)}\ge \sfm(y_2^k-\frac\de2)-s_k+\frac{k-1}{N\de}\Big\} \Big]\\
\ge~& \Big(-(\sfa+\sqrt{-D''(0)} s_k)\Phi(-\frac{\sqrt N(\sfa+\sqrt{-D''(0)}s_k)}\sfb)
    +\frac{\sfb}{\sqrt{2\pi N}} e^{-\frac{N(\sfa+ \sqrt{-D''(0)}s_k)^2}{2\sfb^2}}\Big) \\ &e^{(N-1)\inf_{\nu \in B(\si_{\rm sc}, \de_1)\cap \px([-K,K]\setminus (y_2^k-2\de, y_2^k-\frac\de4))}\Psi(\nu,y)} \Big[ \pz \Big((\frac{N-1}{N})^{1/2}\la_{k-1}<y_2^k-2\de, \\
    &\la_{k}>y_2^k-\frac\de4,
    \frac{1}{N} \sum_{i=k}^{N-1}  \frac{Z_i^2}{\la_i-(y_2^k-\frac32\de)}\ge \sfm(y_2^k-\frac\de2)-s_k+\frac{k-1}{N\de} \Big) \\
    &- \pz(L((\frac{N-1}{N})^{1/2}\la_1^{N-1})\notin B(\si_{\rm sc}, \de_1))-\pz(\la_{N-1}^*>K)\Big].
  \end{align*}
We can choose in the beginning $\de$ small enough so that
$$
(k-1)J_1(y_2^k-2\de) < k J_1(y_2^k-\frac\de4).
$$
It follows from \pref{pr:ge2} that
\begin{align*}
  &\liminf_{N\to\8} \frac1N \log\pz \Big(\la_{k-1}<y_2^k-2\de, \la_{k}>y_2^k-\frac\de4,\\
&\quad    \frac{1}{N} \sum_{i=k}^{N-1}  \frac{Z_i^2}{\la_i-(y_2^k-\frac32\de)}\ge \sfm(y_2^k-\frac\de2)-s_k+\frac{k-1}{N\de} \Big)\\
    &\ge -(k-1)J_1(y_2^k-2\de)- \Lambda^*\Big([s_k+\sfm(y_2^k-\frac32\de) -\sfm(y_2^k-\frac\de2) ]_-;y_2^k-\frac32\de\Big).
\end{align*}
Sending $\de\to0+$, using \pref{eq:phib2}, \pref{eq:ix+}, the LDP of $L(\la_1^{N-1})$, continuity of functions in question and \pref{re:ylim} if necessary,
\begin{align*}
  \liminf_{N\to\8} \frac1N \log II^k (E,(R_1,R_2))\ge \frac12\log[-4D''(0)] -\frac12\log(2\pi)-\frac12\log D'(0)\\ +\psi_*(\rho_2^k,u_2^k,y_2^k)- \ix^+(\rho_2^k,u_2^k,y_2^k)- (k-1) J_1(y_2^k).
\end{align*}
The proof is complete.
\end{proof}

\begin{proof}[Proof of Theorem \ref{th:fixk}]
With \pref{eq:crkub}, Propositions \ref{pr:lbk1} and \ref{pr:lbk2}, using the same argument as for \pref{th:criticalfix1}, we have established Theorem \ref{th:fixk}.
\end{proof}

\begin{example}\label{ex:31}
  \rm
  Here we use \pref{th:fixk} to recover \pref{th:fixktotal} for $k\ge1$ when $B_N$ is a shell. Let us recall the setting in \pref{ex:3}. we first consider $ \psi_*(\rho,u,y)-k J_1(y)$ for $y\le -\sqrt2$. In \cite{AZ20}*{Example 2}, we computed
  \begin{align*}
    \partial_{yy}\psi_*&=-\frac{J^2+\bt^2}{J^2-\bt^2} -\frac{|y|}{\sqrt{y^2-2}}\indi\{|y|>\sqrt2\}, \\
    \partial_{yv}\psi_* &=-\frac{\sqrt2 J\bt}{J^2-\bt^2},\  \
    \partial_{vv}\psi_* = -\frac{J^2}{J^2-\bt^2}.
    \end{align*}
Thus for any fixed $\rho$, this is a strictly concave function of $(y,u)$ and there is at most one global maximum. Suppose the global maximum is attained at $(y_1^k,u_1^k)$. Solving $\partial_y \psi_*(\rho,u,y)-k J_1'(y)=0$ with \pref{eq:musbt} yields
  \[
  -y-\frac{\sqrt2\mu}{J}+(k+1)\sqrt{y^2-2}=0, \ \ y_1^k=\frac{\sqrt2\mu-\sqrt2(k+1)\sqrt{\mu^2+k(k+2)J^2}} {Jk(k+2)}.
  \]
  We see that $y_1^k<-\sqrt2$ if and only if $\mu>J$. If $ \mu\le J$, since the global maximum of $\psi_*$ is attained at $y\ge -\sqrt2$ as shown in \cite{AZ20}*{Example 2}, the maximum of $(u,y)\mapsto \psi_*(\rho,u,y)-k J_1(y)$ must be attained at $y=-\sqrt2$, for otherwise this maximum must be a local maximum which would lead to a contradiction.  
All these calculations plugging in \pref{eq:psids0} recover \pref{th:fixktotal} provided $\sup_{(\rho,u,y)\in F} [\psi_*(\rho,u,y)-k J_1(y)]$ always dominates the other term in \pref{th:fixk} for the current non-restriction situation. This is indeed true due to the following result.

\begin{proposition}\label{pr:compj12}
For any fixed $\rho>0$, we have
\[
\sup_{u\in \rz} \psi_*(\rho,u,y)-k J_1(y) > \sup_{u\in \rz} \psi_*(\rho,u,y) -\ix^+(\rho,u,y) -(k-1) J_1(y)
\]
for any $ y<-\sqrt2$.
\end{proposition}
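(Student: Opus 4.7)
Proof proposal. Writing $M_1 = \sup_u \psi_*(\rho,u,y)$ and $M_2 = \sup_u[\psi_*(\rho,u,y) - \ix^+(\rho,u,y)]$, the inequality to prove is $M_1 - M_2 > J_1(y)$, or equivalently
\[
\inf_{u\in\mathbb R}\, g(u) > J_1(y),\qquad g(u) := \psi_*(\rho, u^\star, y) - \psi_*(\rho, u, y) + \ix^+(\rho, u, y),
\]
where $u^\star = u^\star(\rho,y)$ is the unique (strictly concave) maximizer of $u \mapsto \psi_*(\rho,u,y)$, characterized by the FOC \eqref{eq:musbt}. By \pref{re:63} and \eqref{eq:abdef1}, $\sfa(\rho,\cdot,y)$ is affine and strictly decreasing in $u$, so it has a unique zero $u^\circ$; from \eqref{eq:m12a} together with \eqref{eq:musbt} one computes $\sfa(\rho, u^\star, y) = \sqrt{-D''(0)}(-y+\sqrt{y^2-2}) > 0$, whence $u^\star < u^\circ$.

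I would then split the infimum into three regions. For $u \ge u^\circ$, $\ix^+(\rho,u,y) = 0$ by \pref{le:ix+min}, and $g$ is strictly increasing there, with value $\psi_*(\rho,u^\star,y) - \psi_*(\rho,u^\circ,y) > 0$ at $u = u^\circ$. For $u \le u^\star$, applying the envelope theorem to the minimization in \eqref{eq:ix+} gives $\partial_u \ix^+(\rho,u,y) = \sfb^{-2}\sqrt{-D''(0)}\,(\sfa + \sqrt{-D''(0)}\tilde x)\cdot \partial_u \sfa \le 0$, and $-\partial_u \psi_* \le 0$; hence $g$ is nonincreasing on $(-\infty, u^\star]$ with infimum $g(u^\star) = \ix^+(\rho,u^\star,y)$. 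On $[u^\star, u^\circ]$, $g'(u^\star)<0$ and $g'(u^\circ)>0$, so $g$ attains a unique interior minimum $u^{**}$, whose value is determined by coupling the FOC for $u^{**}$ with the FOC $\tau_{\tilde x^{**}} + \sfb^{-2}\sqrt{-D''(0)}(\sfa^{**} + \sqrt{-D''(0)}\tilde x^{**}) = 0$ for the minimizer $\tilde x^{**}$ in the definition of $\ix^+$.

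The crux is the direct lower bound on $g$ at the three candidate minimizers $u^\star$, $u^{**}$, $u^\circ$. Using \eqref{eq:phi*}, I would derive the closed form
\[
\Lambda^*\!\left(y - \sqrt{y^2-2};\, y\right) = \sqrt 2\, y + \tfrac{3}{4}\,y^2 + \tfrac12 + \tfrac12 J_1(y),
\]
obtained by writing $\Lambda(\tfrac{\sqrt 2 + y}{2}; y) = \sfm(y)\cdot \tfrac{\sqrt 2+y}{2} - \tfrac12[\Psi_*(-\sqrt 2) - \Psi_*(y)]$ and using the identity $\Psi_*(-\sqrt 2) - \Psi_*(y) = 1 - \tfrac12 y^2 + J_1(y)$. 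Combined with the explicit quadratic form of $\psi_*$ in $u$ (from \eqref{eq:psifunction}) and with Assumption IV (via \pref{le:albtd}) controlling $\sfb$, each of the three candidate values of $g$ reduces to a one-variable inequality in $y$ that I would verify by checking equality at $y = -\sqrt 2$ (where $J_1(-\sqrt 2) = 0$ and $\Lambda^*(y-\sqrt{y^2-2}; y) = 0$) and then comparing derivatives in $y$ for $y < -\sqrt 2$ to propagate strictness.

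The main obstacle is this last verification for the interior minimizer $u^{**}$: depending on the position of $\tilde x^{**}$ relative to $s_0 = \sfm(y) - \sqrt 2 = \Lambda'(\tfrac{\sqrt 2+y}{2}+;y)$, one is either in the linear regime or in the smooth regime of $\Lambda^*(\cdot;y)$, and both cases must be handled separately. The balance between the quadratic penalty $\tfrac{1}{2\sfb^2}(\sfa + \sqrt{-D''(0)} x)_+^2$ and the convex function $\Lambda^*$ becomes tight in the limit $y \to -\sqrt 2$, so the strict inequality $\inf g > J_1(y)$ is not purely qualitative and requires the explicit derivative comparison described above.
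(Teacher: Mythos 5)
Your reduction to $\inf_u g(u)>J_1(y)$ is a correct reformulation (minimizing $g$ is the same as maximizing $u\mapsto \psi_*-\ix^+$), and several of your ingredients are sound: $\sfa(\rho,u^\star,y)=\sqrt{-D''(0)}\,(-y+\sqrt{y^2-2})>0$ at the first-order condition \eqref{eq:musbt}, and the closed form $\Lambda^*(y-\sqrt{y^2-2};y)=\sqrt2\,y+\tfrac34 y^2+\tfrac12+\tfrac12 J_1(y)$ does follow from the linear branch of \eqref{eq:lasy} together with $\Psi_*(-\sqrt2)-\Psi_*(y)=1-\tfrac12y^2+J_1(y)$. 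But the proposal stops exactly where the proof has to be done. Since $g$ decreases up to the interior critical point and increases after it, the whole statement reduces to the single inequality $g(u^{**})>J_1(y)$ at the interior minimizer, i.e.\ at the maximizer of $\psi_*-\ix^+$ in $u$; the candidates $u^\star$ and $u^\circ$ are then automatically harmless, and in particular your observation that $g(u^\circ)>0$ is beside the point (one needs $>J_1(y)$, not $>0$). For that decisive candidate you only say that the coupled first-order conditions "determine" the value and that the branch dichotomy and the tightness as $y\to-\sqrt2$ are "the main obstacle" — but resolving that obstacle \emph{is} the content of the proposition; nothing in the proposal shows which branch of $\Lambda^*(\cdot;y)$ the optimal $\tilde x$ falls in, nor produces the explicit value of $g(u^{**})$ to compare with $J_1(y)$.

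Moreover, the one explicit evaluation you do offer is aimed at the wrong point. Solving the coupled conditions (as in the paper's argument: differentiate in $v$ using \eqref{eq:m12a} to get \eqref{eq:pars1}, posit that the optimal $\tilde x$ lies in the linear regime $\tilde x\le -\sqrt2+\sfm(y)$, and solve) yields $v$ as in \eqref{eq:sj} and $\tilde x=-2\sqrt2+\sfm(y)=-2\sqrt2-y-\sqrt{y^2-2}$, which self-consistently confirms the linear-branch ansatz; this differs from your point $y-\sqrt{y^2-2}$ for every $y<-\sqrt2$ (they agree only at $y=-\sqrt2$). At $u^\star$ your identity gives only the value of $\Lambda^*$ at the endpoint $x=-\sfa/\sqrt{-D''(0)}$, hence an \emph{upper} bound on $\ix^+(\rho,u^\star,y)$, whereas a lower bound would be needed there. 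Once the joint optimization is actually solved, both suprema admit closed forms and the difference collapses to $\tfrac14y^2-\tfrac12 J_1(y)-\tfrac12$, which vanishes at $y=-\sqrt2$ and is strictly positive for $y<-\sqrt2$ by the monotonicity-in-$y$ comparison you anticipated; without carrying out that computation (including the branch identification), the strict inequality is not established.
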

\begin{proof}
Let us write in this proof
\begin{align*}
 L_\rho(y)&= \sup_{u\in \rz} \psi_*(\rho,u,y)-k J_1(y),\\
R_\rho(y)&=\sup_{u\in \rz} \psi_*(\rho,u,y) -\ix^+(\rho,u,y) -(k-1) J_1(y).
\end{align*}
 From \pref{eq:psids0}, we find
\begin{align*}
L_\rho(y)= -\frac12y^2-(k+1)J_1(y)- \frac{\sqrt2 \mu y}{J}  -\frac{\mu^2}{2J^2}-\frac{\mu^2\rho^2}{2D'(0)}+\log \rho -\frac12-\frac12\log2.
\end{align*}
To figure out $R_\rho(y)$, we start with investigating $\ix^+(\rho,u,y) $. Given $\rho$ and $y$, note that the maximizer $u(\rho,y)$ for $R_\rho(y)$ must satisfy $\sfa(\rho,u(\rho,y),y)>0$, for otherwise $\ix^+(\rho,u(\rho,y),y)=0$ and the first order condition \pref{eq:musbt} implies $\sfa(\rho,u(\rho,y),y)>0$ as shown in \pref{ex:3}. By definition, $\ix^+(\rho,u,y) = \ix_+(\rho,u,y;\tilde x) $. Since $\partial_x \ix_+(\rho,u,y; x)|_{x=\tilde x}=0$, using the chain rule with \pref{eq:m12a} to differentiate in $v$, we have
\begin{align}\label{eq:pars1}
&\partial_v[\psi_*(\rho,u,y) -\ix^+(\rho,u,y) -(k-1) J_1(y)]= \frac{-J^2v-\bt(\sqrt2Jy+\mu)}{J^2-\bt^2}-\frac{(\sfa+\frac{J \tilde x}{\sqrt2})J^2\al \rho^2}{\sfb^2(J^2-\bt^2)}.
\end{align}
As in \pref{le:ix+min}, to find out which minimizer we should use, we work with the branch
\[
  \Lambda^*(x;y)=\frac{(\sqrt2+y)x}{2}  - \Lambda(\frac{\sqrt2+y}{2}+;y)
\]
for $x\le \Lambda'(\frac{\sqrt2+y}{2}+;y)= -\sqrt2+\sfm(y)$ and consider $x':=-\frac{\sfb^2(\sqrt2+y)}{J^2} - \frac{\sqrt2\sfa}{J}\le -\sqrt2+\sfm(y)$ as an ansatz. Plugging in \pref{eq:abdef} and  \pref{eq:abdef1} we obtain 
\[
x'=\frac1{J^2-\bt^2}[-\sqrt2(2J^2-2\bt^2-\al^2\rho^4) +(\al^2\rho^4-2\al\bt\rho^2) y-\frac{\sqrt2 \al\rho^2}{J}(J^2v+\mu\bt) ]+\sfm(y).
\]
Setting \pref{eq:pars1} equal to zero and using the ansatz $\sfa+\frac{J\tilde x}{\sqrt2}=-\frac{(\sqrt2+y)\sfb^2}{\sqrt2 J}$, we find
\begin{align}\label{eq:sj}
v=\frac1{J^2}\Big(\frac{(\sqrt2+y)J\al\rho^2}{\sqrt2}-\sqrt2 J\bt y -\mu\bt \Big).
\end{align}
Plugging in the last display for $x'$, after simplification we obtain $x'=-2\sqrt2+\sfm(y)$, which indeed verifies the ansatz $x'\le -\sqrt2+\sfm(y)$. Since the minimizer is unique, we must have $\tilde x= x'$.

Using \pref{eq:sj} to eliminate $u$ in $\psi_*(\rho,u,y) -\ix^+(\rho,u,y)$, we find
\begin{align*}
\psi_*(\rho,u,y)&= -\frac12 y^2-\frac{\al^2\rho^4(\sqrt2+y)^2}{4(J^2-\bt^2)}-J_1(y)-\frac{\sqrt2\mu y}{J}-\frac{\mu^2}{2J^2}-\frac{\mu^2\rho^2}{2D'(0)}+\log \rho -\frac12-\frac12\log2,\\
\ix^+(\rho,u,y)&=\frac14y^2-\frac{\al^2\rho^4(\sqrt2+y)^2}{4(J^2-\bt^2)}-\frac14y\sqrt{y^2-2}+\frac12\log\sfm(y)-\frac12-\frac14\log2.
\end{align*}
From \pref{eq:rf1ev}, we see that $J_1(y)=-\frac12\log2-\frac12y\sqrt{y^2-2}+\log\sfm(y)$ and then
\begin{align*}
R_\rho(y)=-\frac34 y^2-(k+\frac12)J_1(y)-\frac{\sqrt2 \mu y}{J} -\frac12\log2-\frac{\mu^2}{2J^2}-\frac{\mu^2\rho^2}{2D'(0)}+\log \rho.
\end{align*}
It follows that
\begin{align*}
L_\rho(y)-R_\rho(y)=\frac14y^2-\frac12 J_1(y)-\frac12.
\end{align*}
Finally, observing that $y\mapsto L_\rho(y)-R_\rho(y)$ is strictly decreasing for $y<-\sqrt2$ with $L_\rho(-\sqrt2)-R_\rho(-\sqrt2)=0$, we have completed the proof.
\end{proof}

Like what we did in \pref{ex:31}, we can also solve $\partial_y R_\rho(y)=0$ to find the optimizer $y_2^k$, which would further yield $u_2^k$ and $\rho_2^k$. From here we see that the sup in \pref{pr:compj12} can be taken over a compact set instead of $\rz$.  Moreover, the above discussion suggests that the critical points with index $k$ around the value $u_1^k$ and variable $\rho_1^k$ given above dominate all other places.
\end{example}

\begin{remark}
  Observe that in the setting of \pref{re:63}, since $y^0=-\sqrt2$,
  \begin{align*}
    \psi_*(\rho^0,u^0,y^0)-\ix^-(\rho^0,u^0,y^0)< \psi_*(\rho^0,u^0,y^0)\le \psi_*(\rho_1^k,u_1^k,y_1^k) -k J_1(y_1^k).
  \end{align*}
  This shows that the complexity function of index $k$ may be larger than that of the local minima even though we have \pref{re:25} for the total number of critical points with index $k$. This seems to be a new phenomenon, compared to the spherical $p$-spin glass model where the latter always dominates \cite{ABC13}.
\end{remark}

\section{Saddles with diverging index}\label{se:div}
Let $\ga\in(0,1)$ and $k=k_N\in(1,N)$ be a sequence with $\lim_{N\to\8} \frac{k_N}{N}=\gamma$.
Define
\begin{align}
  I^{k_N}(E,(R_1,R_2)) &= \int_{R_1}^{R_2} \int_{E} \ez[|\det G|  \indi\{i(G_{**})=k_N \}]\frac{ e^{-\frac{(u-m_Y)^2}{2\si_Y^2}}}{\sqrt{2\pi}\si_Y} \frac{e^{-\frac{N \mu^2 \rho^2}{2D'(0)}}}{(2\pi)^{N/2} D'(0)^{N/2}}  \rho^{N-1}   \dd u  \dd\rho.
\end{align}

\begin{proposition}\label{pr:divup}
  Let $R_2<\8$ and $E$ be an open set. We have
  \begin{align*}
      &\lim_{N\to\8}\frac1N\log I^{k_N}(E,(R_1,R_2))\\
      &= \frac12\log[-4D''(0)] -\frac12\log D'(0)-\frac12\log(2\pi) +\sup_{R_1<\rho<R_2, u\in  E  }\psi_*(\rho,u,s_\ga).
  \end{align*}
\end{proposition}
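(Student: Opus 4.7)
Starting from the Kac--Rice representation and applying the Schur complement \pref{eq:schur} together with the conditional Gaussian law \pref{eq:z13con0}, I integrate out $z_1'$ using \pref{eq:absgau} to reduce $\ez[|\det G|\indi\{i(G_{**})=k_N\}]$ to
\[
[-4D''(0)]^{\frac{N-1}{2}}\int_\rz \ez\bigl[|\det((\tfrac{N-1}{N})^{1/2}\GOE_{N-1}-yI_{N-1})|\,g_N(y)\,\indi\{\la_{k_N}<(\tfrac{N}{N-1})^{1/2}y<\la_{k_N+1}\}\bigr]\,\nu_N(\dd y),
\]
where $\nu_N$ is the Gaussian density of $z_3'$ at $y$ and the conditional absolute expectation $g_N(y) := \ez[|z_1'-Q(y)|\,|\,\la^{N-1}, z_3'=y, \xi']$ satisfies $\sqrt{2/\pi}\,\sfb/\sqrt N \le g_N(y) \le \sqrt{2/\pi}\,\sfb/\sqrt N + |\bar\sfa-Q(y)|$. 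The outer integrand in the definition of $I^{k_N}$ is then a product of this quantity with deterministic $(\rho,u)$-dependent densities.

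The crucial reduction is \emph{localization at} $y=s_\gamma$: because $k_N/N\to\gamma$, the concentration estimate \pref{eq:lakn2} (a consequence of the LDP \pref{eq:conineq}) gives $\pz(|\la_{k_N}-s_\gamma|>\eps)\le e^{-cN^2}$, and together with the operator-norm tail \pref{eq:ladein} the indicator forces $y$ into $[s_\gamma-\eps,s_\gamma+\eps]$ with error that is exponentially negligible at rate $N$. On this window $s_\gamma\in(-\sqrt 2,\sqrt 2)$ lies in the bulk, so $\Psi_*$ is continuous at $s_\gamma$ by \pref{eq:phi*}, and \pref{eq:mms} applied to a Lipschitz truncation of $x\mapsto\log|x-y|$ (as in \pref{le:sclaw}) gives $\frac{1}{N-1}\log|\det((\tfrac{N-1}{N})^{1/2}\GOE_{N-1}-yI_{N-1})|\to\Psi_*(y)$ with superpolynomial concentration. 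Combining the determinant asymptotic with $\nu_N$ and the deterministic factors of $I^{k_N}$ rebuilds precisely $\psi_*(\rho,u,s_\gamma)+\frac12\log[-4D''(0)]-\frac12\log D'(0)-\frac12\log(2\pi)$ on the log scale, provided that the $g_N$-factor contributes nothing at this rate.

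The \emph{main obstacle} is that for $y$ in the bulk, the singular Stieltjes-type sum
\[
Q(y)=\frac{\sqrt{-D''(0)}}{N}\sum_{i=1}^{N-1}\frac{Z_i^2}{(\tfrac{N-1}{N})^{1/2}\la_i-y}
\]
has eigenvalues arbitrarily close to $y$, and even $\ez|Q(y)|$ may be infinite, so the upper estimate $g_N(y)\le\sqrt{2/\pi}\sfb/\sqrt N+|\bar\sfa-Q(y)|$ cannot be averaged naively. To circumvent this, I will use the identity (announced in the introduction and attributed to J.~Huang as \pref{eq:elak}) that transfers the singular moment under $\indi\{\la_{k_N}<(\tfrac{N}{N-1})^{1/2}y<\la_{k_N+1}\}$, upon integrating out $Z_i^2\mapsto1$ and conditioning on the two enclosing eigenvalues $\la_{k_N},\la_{k_N+1}$, into an expression controlled by the expectation of a single-eigenvalue functional. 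The conditional joint density of the remaining eigenvalues (given the forced gap across $y$) is log-concave, so a Jensen/convexity bound controls $\sum_{i\neq k_N,k_N+1}(\la_i-y)^{-1}$; the two explicit singular terms $i=k_N,k_N+1$ are tamed by GOE gap/level-repulsion estimates. This yields $\ez\bigl[|\bar\sfa-Q(y)|\indi\{\cdot\}\bigr]\le CN^\alpha$ uniformly in $y$ on the localized window, which is subexponential and therefore negligible in the rate.

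For the matching lower bound, I restrict $(\rho,u,y)$ to a small cube around a near-maximizer $(\rho^*,u^*,s_\gamma)$ with $(\rho^*,u^*)\in(R_1,R_2)\times E$, use $g_N(y)\ge\sqrt{2/\pi}\,\sfb/\sqrt N$ from \pref{eq:absgau} (polynomial and hence contributing zero to the rate), use the LDP lower bound \pref{eq:conineq} on $\{L(\la^{N-1})\in B(\si_{\rm sc},\delta_1)\}$ to obtain $|\det|\ge e^{(N-1)\Psi_*(y)-o(N)}$, and note that the event $\{\la_{k_N}<(\tfrac{N}{N-1})^{1/2}y<\la_{k_N+1}\}$ has probability $1-o(1)$ for $y$ in an $\eps$-neighborhood of $s_\gamma$. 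Letting $\eps,\delta_1\to 0^+$ and invoking continuity of $\psi_*$ closes the argument.
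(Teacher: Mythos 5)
Your localization of $y=z_3'$ near $s_\ga$ and the final reassembly of $\psi_*(\rho,u,s_\ga)$ match the paper's strategy, but your treatment of the singular factor $|z_1'-Q(z_3')|$ in the upper bound contains a genuine gap. After separating the determinant from $g_N(y)$, you claim $\ez[|\bar\sfa-Q(y)|\indi\{\la_{k_N}<(\tfrac{N}{N-1})^{1/2}y<\la_{k_N+1}\}]\le CN^\alpha$. This expectation is in fact infinite: $y$ is independent of the spectrum, so there is no repulsion between $y$ and the two enclosing eigenvalues, and already the conditional expectation over the $Z_i$'s produces the terms $\tfrac1{N(\la_{k_N+1}-y)}+\tfrac1{N(y-\la_{k_N})}$, whose expectation over the eigenvalues (or over $y$) diverges logarithmically --- exactly the phenomenon the paper flags after \eqref{eq:abdef1} (``$\ez[Q(y)]$ is not even finite''). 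The identity \eqref{eq:elak} cannot rescue this: it concerns the \emph{signed} sum $\sum_{i\ne k}(\la_k-\la_i)^{-1}$ evaluated \emph{at an eigenvalue}, where cancellations make the expectation finite, and in the paper it is used only in the proof of Theorem \ref{th:kdiv} (the Jensen/convexity step for $\ez\Crt_{N,k_N}$), not to control $|Q(y)|$; likewise level repulsion bounds $\la_{k_N+1}-\la_{k_N}$, not $|\la_i-y|$. Even a finite moment bound on $|Q|$ would not legitimately multiply against ``typical-value'' determinant asymptotics, since the two are strongly correlated. The paper keeps the product together: expanding $|\det G|\le |\det G_{**}|(|z_1'|+|Q(z_3')|)$, each singular factor $Z_i^2/|(\tfrac{N-1}{N})^{1/2}\la_i-z_3'|$ is cancelled by the matching factor inside the determinant, producing the regular quantities $I_1^{k_N},I_2^{k_N}$ of \eqref{eq:ik12}, which are then handled by the tightness lemmas and the LDP localization at $s_\ga$. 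This cancellation is the missing idea in your upper bound.

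Your lower bound also has a gap: on $\{L(\la_1^{N-1})\in B(\si_{\rm sc},\de_1)\}$ you cannot conclude $|\det((\tfrac{N-1}{N})^{1/2}\GOE_{N-1}-yI_{N-1})|\ge e^{(N-1)\Psi_*(y)-o(N)}$ when $y$ is near $s_\ga$ in the bulk, because $x\mapsto\log|x-y|$ is unbounded below near $y$ and a weak-topology ball gives no control on eigenvalues extremely close to $y$; moreover the event $\{\la_{k_N}<(\tfrac{N}{N-1})^{1/2}y<\la_{k_N+1}\}$ has probability $o(1)$ (of the order of a typical gap), not $1-o(1)$ --- the latter is harmless at exponential scale, but the determinant claim is not. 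The paper sidesteps both issues by bounding $\ez[\,|z_1'-\xi^\sfT G_{**}^{-1}\xi|\mid z_3',\la_1^{N-1}]\ge\sqrt{2/\pi}\,\sfb/\sqrt N$ and then invoking the exact identity of Lemma 3.3 in \cite{ABC13} to rewrite $\ez(|\det G_{**}|\indi\{i(G_{**})=k_N\})$ as an explicit $\GOE(N)$ expectation of $\exp\{N(\la_{k_N+1}^N)^2/2-\cdots\}$, which is then localized at $s_\ga$ via \eqref{eq:lakn2}; no bulk determinant concentration is needed.
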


\begin{proof}
As before, we may assume $R_1>0$. Note that denoting the probability space as $\Omega$,
\begin{align*}
  I^{k_N}(E,(R_1,R_2))&\le I_1^{k_N}(E,(R_1,R_2))+I_2^{k_N}(E,(R_1,R_2))\\
  &\le I_1(E,(R_1,R_2), \Omega)+I_2(E,(R_1,R_2),\Omega)
\end{align*}
where $I_1^{k_N}$ and $I_2^{k_N}$ are defined as in \pref{eq:ik12}, and $I_1$ and $I_2$ are defined as in \pref{se:exptt}. As in \pref{eq:lakn2} and \pref{eq:sgasc}, for any $\eps>0$, we may find $\de=\de(\eps,\ga)$ and $c=c(\eps,\ga)>0$ such that
\[
\{|\la_{k} -s_\ga|>\eps\}\subset \{ L(\la_{i=1}^{N-1})\notin B(\si_{\rm sc},\de)\}.
\]
It follows from Lemmas \ref{le:exptti11} and \ref{le:exptti12} that for large $K$ and small $\de>0$,
\begin{align*}
  \limsup_{N\to\8} \frac1N\log I_1^{k_N}(E,(R_1,R_2)) & \le \limsup_{N\to\8} \frac1N\log I_1^{k_N}(\bar E,(R_1,R_2),\eps;K,\de),
\end{align*}
where
\begin{align*}
 & I_1^{k}(\bar E,(R_1,R_2),\eps;K,\de)  = [-4D''(0)]^{\frac{N}{2}} \int_{R_1}^{R_2} \int_{\bar E}\int_{s_\ga-\eps}^{s_\ga+\eps} \ez\Big[\Big(\sqrt{\frac2\pi}\frac{\sfb}{\sqrt N} + |\bar\sfa|\Big) \\
  &\prod_{i=1}^{N-1} |(\frac{N-1}{N})^{1/2}\la_i-y | \indi\{\la_k<(\frac{N}{N-1})^{1/2} y<\la_{k+1}, L((\frac{N-1}{N})^{1/2}\la_{i=1}^{N-1})\in B_K(\si_{\rm sc}, \de) \}\Big] \\
  &\frac{ e^{-\frac{(u-m_Y)^2}{2\si_Y^2}}}{\sqrt{2\pi}\si_Y} \frac{e^{-\frac{N \mu^2 \rho^2}{2D'(0)}}}{(2\pi)^{N/2} D'(0)^{N/2}} \frac{ \exp\{-\frac{N(\sqrt{ -4D''(0)}y+m_2)^2}{2(-2D''(0)-\bt^2)}\}} {\sqrt{2\pi(-2D''(0)-\bt^2)} } \rho^{N-1}  \dd y \dd u  \dd\rho.
\end{align*}
Sending $N\to\8$, $\de\to0+$, $\eps\to0+$ sequentially, and using the upper semi-continuity and continuity as proving upper bounds for fixed indices, we find
\begin{align*}
  &\limsup_{\eps\to0+}\limsup_{\de\to0+}\limsup_{N\to\8} \frac1N\log I_1^{k_N}(\bar E,(R_1,R_2),\eps;K,\de)\\
  &\le \frac12\log[-4D''(0)] -\frac12\log D'(0)-\frac12\log(2\pi) +\sup_{R_1<\rho<R_2, u\in\bar E  }\psi_*(\rho,u,s_\ga).
\end{align*}
For $I_2^{k_N}$, we deduce from Lemmas \ref{le:goecpt} and \ref{le:goecpt2} that for large $K$ and small $\de>0$,
\begin{align*}
  \limsup_{N\to\8} \frac1N\log I_2^{k_N}(E,(R_1,R_2)) & \le \limsup_{N\to\8} \frac1N\log I_2^{k_N}(\bar E,(R_1,R_2),\eps;K,\de),
\end{align*}
where
\begin{align*}
  &I_2^k(\bar E,(R_1,R_2); \eps,K,\de) = \frac{[-4D''(0)]^{\frac{N+1}{2}} }{2N} \sum_{i=1}^{N-1}\int_{R_1}^{R_2} \int_{\bar E}\int_{s_\ga-\eps}^{ s_\ga+\eps}  \\
  &\ez\Big[\prod_{j\neq i}^{N-1} |(\frac{N-1}{N})^{1/2}\la_j-y | \indi\{\la_k<(\frac{N}{N-1})^{1/2} y <\la_{k+1}, L((\frac{N-1}{N})^{1/2}\la_{i=1}^{N-1})\in B_K(\si_{\rm sc}, \de) \}\Big] \\
  &\frac{ e^{-\frac{(u-m_Y)^2}{2\si_Y^2}}}{\sqrt{2\pi}\si_Y} \frac{e^{-\frac{N \mu^2 \rho^2}{2D'(0)}}}{(2\pi)^{N/2} D'(0)^{N/2}} \frac{ \exp\{-\frac{N(\sqrt{ -4D''(0)}y+m_2)^2}{2(-2D''(0)-\bt^2)}\}} {\sqrt{2\pi(-2D''(0)-\bt^2)} } \rho^{N-1}  \dd y \dd u  \dd\rho.
\end{align*}
Note that $L((\frac{N-1}{N})^{1/2}\la_{i=1}^{N-1})\in B_K(\si_{\rm sc}, \de) $ implies $L((\frac{N-1}{N})^{1/2}\la_{i=1,i\neq j}^{N-1})\in  B_K(\si_{\rm sc}, 2\de)$ for any $j=1,...,N-1$. Sending $N\to\8$, $\de\to0+$, $\eps\to0+$ sequentially, and using the upper semi-continuity and continuity again, we find
\begin{align*}
  &\limsup_{\eps\to0+}\limsup_{\de\to0+}\limsup_{N\to\8} \frac1N\log I_2^{k_N}(\bar E,(R_1,R_2),\eps;K,\de)\\
  &\le \frac12\log[-4D''(0)] -\frac12\log D'(0)-\frac12\log(2\pi) +\sup_{R_1<\rho<R_2, u\in\bar E  }\psi_*(\rho,u,s_\ga).
\end{align*}
Let $(\rho_\ga,u_\ga)$ be a maximizer of $\psi_*(\rho,u,s_\ga)$ in $[R_1,R_2]\times \bar E$ which exists clearly.
For lower bound, using \pref{eq:absgau} with conditional distribution \pref{eq:z13con0},
\begin{align*}
  \ez[|z_1'-\xi^\sfT G_{**}^{-1} \xi||z_3'=y] & \ge \sqrt{\frac2\pi}\frac{\sfb}{\sqrt N}.
\end{align*}
It follows that
\begin{align*}
  \ez[|\det G| \indi\{i(G_{**})=k \}] &=\ez[\ez(|\det G_{**}| | z_1'-\xi^\sfT G_{**}^{-1} \xi |  \indi\{i(G_{**})=k \})|z_3', \la_1^{N-1}]\\
  &\ge \sqrt{\frac2\pi}\frac{\sfb}{\sqrt N} \ez(|\det G_{**}|\indi\{i(G_{**})=k \})
\end{align*}
Let $\de>0$ be a small constant. Since $G_{**}=\sqrt{\frac{-4D''(0)(N-1)}{N}}(\GOE_{N-1}-(\frac{N}{N-1})^{1/2}z_3' I_{N-1})$, using \cite{ABC13}*{Lemma 3.3} with $m=-\frac{m_2 N^{1/2}}{\sqrt{-4D''(0)(N-1)}}$ and $t^2=\frac{-2D''(0)-\bt^2}{-4(N-1) D''(0)}$,
\begin{align*}
  &I^{k_N}(E,(R_1,R_2)) \\
   &\ge \int_{\rho_\ga-\de}^{\rho_\ga+\de} \int_{u_\ga-\de}^{u_\ga+\de} \sqrt{\frac2\pi}\frac{\sfb}{\sqrt N} \ez(|\det G_{**}|\indi\{i(G_{**})=k \}) \frac{ e^{-\frac{(u-m_Y)^2}{2\si_Y^2}}}{\sqrt{2\pi}\si_Y} \frac{e^{-\frac{N \mu^2 \rho^2}{2D'(0)}}}{(2\pi)^{N/2} D'(0)^{N/2}}  \rho^{N-1}   \dd u  \dd\rho\\
  &= \frac{\sqrt2 \Gamma(\frac{N}2) N^{-\frac{N}2} [-4D''(0)]^{\frac{N}{2}}}{\pi   } \int_{\rho_\ga-\de}^{\rho_\ga+\de} \int_{u_\ga-\de}^{u_\ga+\de}  \frac{\sfb}{ \sqrt{-2D''(0)-\bt^2}} \frac{ e^{-\frac{(u-m_Y)^2}{2\si_Y^2}}}{\sqrt{2\pi}\si_Y} \\
  &\quad \ez_{\GOE(N)}\Big [\exp\Big\{\frac{N(\la_{k+1}^N)^2}{2} -\frac{N (\la_{k+1}^N+ \frac{m_2}{\sqrt{-4D''(0)}})^2 }{\frac{-2D''(0)-\bt^2}{-2 D''(0)}}\Big\}\Big]  \frac{e^{-\frac{N \mu^2 \rho^2}{2D'(0)}}}{(2\pi)^{N/2} D'(0)^{N/2}}  \rho^{N-1}   \dd u  \dd\rho\\
  &=:I^{k_N}((\rho_\ga-\de,\rho_\ga+\de),(u_\ga-\de, u_\ga+\de),\la_{k+1}^N),
\end{align*}
where $\la_{k+1}^N$ is the $(k+1)$th smallest eigenvalue of $\GOE_N$. Since $\frac{k_N+1}{N}\to\ga$, we still have $\pz(|\la_{k+1}^N-s_\ga|>\eps)<e^{-cN^2}$. We can thus restrict $\la_{k+1}^N$ to $(s_\ga-\eps,s_\ga+\eps)$. Sending $N\to\8$, $\de\to0+$, $\eps\to0+$ sequentially, and using the Stirling formula and continuity, we deduce from \pref{eq:phi*} that
\begin{align*}
  &\liminf_{\de\to0+,\atop \eps\to0+}\liminf_{N\to\8} \frac1N\log I^{k_N}((\rho_\ga-\de,\rho_\ga+\de),(u_\ga-\de, u_\ga+\de),\la_{k+1}^N) \\
  & \ge \frac12\log[-4D''(0)] -\frac12\log D'(0)-\frac12\log(2\pi) + \psi_*(\rho_\ga,u_\ga,s_\ga).
\end{align*}
The proof is complete.
\end{proof}

\begin{proof}[Proof of \pref{th:kdiv}]
  As before, it suffices to consider $0<R_1<R_2<\8$. Since $\lim_{N\to\8}\frac{k_N-1}{N}=\ga$, we have
\[
\lim_{N\to\8}\frac1N\log I^{k_N}(E,(R_1,R_2)) = \lim_{N\to\8}\frac1N\log I^{k_N-1}(E,(R_1,R_2)),
\]
where by \pref{pr:divup} the limits exist. Noting that
\begin{align*}
  \{i(G)=k\}&\subset \{i(G_{**})=k\} \cup \{i(G_{**})=k-1\},
\end{align*}
 the upper bound immediately follows from \pref{pr:divup}. For the lower bound, using \pref{eq:krk}, \pref{eq:ikdef}, \pref{eq:detgk} and \pref{eq:detgk1}, we have
 \begin{align*}
   &\ez\Crt_{N,k}(E,(R_1,R_2) ) =S_{N-1} N^{(N-1)/2} [-4D''(0)]^{\frac{N-1}{2}} \int_{R_1}^{R_2}\int_E \\
   &\quad \ez\Big[ \Big( \indi\{\la_k< (\frac{N}{N-1})^{1/2}z_3'<\la_{k+1} \}(\mathsf{a}_N\Phi(\frac{\sqrt N \sfa_N}\sfb) +\frac{\sfb}{\sqrt{2\pi N}} e^{-\frac{N\sfa_N^2}{2\sfb^2}})\\
   &\quad +\indi\{\la_{k-1}<(\frac{N}{N-1})^{1/2} z_3'< \la_{k} \}
 (-\mathsf{a}_N\Phi(-\frac{\sqrt N\sfa_N}\sfb) +\frac{\sfb}{\sqrt{2\pi N}} e^{-\frac{N \sfa_N^2}{2\sfb^2}})\Big)\\
 &\quad |\det ((\frac{N-1}{N})^{1/2}\GOE_{N-1}- z_3' I_{N-1}) | \Big]\frac1{\sqrt{2\pi}\si_Y} e^{-\frac{(u-m_Y)^2}{2\si_Y^2}}  \frac1{(2\pi)^{N/2} D'(0)^{N/2}} e^{-\frac{N \mu^2 \rho^2}{2D'(0)}} \rho^{N-1}   \dd u  \dd\rho.
 \end{align*}
 Here $\sfa_N=\sfa_N(\rho,u,z_3')$ as in \pref{eq:abdef}. Let us define $f_N(x)=x\Phi(\frac{\sqrt N x}{\sfb})+\frac{\sfb}{\sqrt{2\pi N}} e^{-\frac{Nx^2}{2\sfb^2}}$. Using the same argument as that of \cite{ABC13}*{Lemma 3.3} with $m=-\frac{m_2 N^{1/2}}{\sqrt{-4D''(0)(N-1)}}$ and $t^2=\frac{-2D''(0)-\bt^2}{-4(N-1) D''(0)}$, we can rewrite
 \begin{align*}
    &\ez\Big[ \Big( \indi\{\la_k< (\frac{N}{N-1})^{1/2}z_3'<\la_{k+1} \}(\mathsf{a}_N\Phi(\frac{\sqrt N \sfa_N}\sfb) +\frac{\sfb}{\sqrt{2\pi N}} e^{-\frac{N\sfa_N^2}{2\sfb^2}})\\
    &\quad +\indi\{\la_{k-1}<(\frac{N}{N-1})^{1/2} z_3'< \la_{k} \}
  (-\mathsf{a}_N\Phi(-\frac{\sqrt N\sfa_N}\sfb) +\frac{\sfb}{\sqrt{2\pi N}} e^{-\frac{N \sfa_N^2}{2\sfb^2}})\Big)\\
  &\quad |\det ((\frac{N-1}{N})^{1/2}\GOE_{N-1}- z_3' I_{N-1}) | \Big]\\
    &= \frac{\Gamma(\frac{ N}2)N^{-\frac{N-1}{2}}\sqrt{-4D''(0)}}{\sqrt{\pi(-2D''(0)-\bt^2)}} \ez\Big[ \Big(f_N(\hat \sfa_N(\rho,u,\la_{k+1}^N) ) +f_N(-\hat \sfa_N(\rho,u, \la_{k}^N))\Big) \\
    &\quad \exp\Big\{\frac{N(\la_{k+1}^N)^2}{2} -\frac{N (\la_{k+1}^N+ \frac{m_2}{\sqrt{-4D''(0)}})^2 }{\frac{-2D''(0)-\bt^2}{-2 D''(0)}}\Big\} \Big]
 \end{align*}
 where $\la^N_{k+1}$ is the $(k+1)$th smallest eigenvalue of $\GOE_N$, and
 \begin{align*}
   &\hat\sfa_N (\rho,u,\la_{k+1}^N) :=\frac{-2D''(0)\al\rho^2(u-\frac{\mu\rho^2}{2}+\frac{\mu D'(\rho^2) \rho^2}{D'(0) } )}{(-2D''(0)-\bt^2) \sqrt{D(\rho^2)-\frac{D'(\rho^2)^2 \rho^2}{D'(0)} }}
   +\frac{\al\bt\rho^2 \mu }{-2D''(0)-\bt^2}\notag \\
   &\quad  -\frac{(-2D''(0)-\bt^2-\al\bt\rho^2) \sqrt{-4D''(0)} \la_{k+1}^N}{-2D''(0)-\bt^2} -\frac{\sqrt{-D''(0)}}{N}\sum_{i=1,i\neq k+1}^{N}\frac{Z_i^2}{\la^N_i-\la^N_{k+1}}.
 \end{align*}
 Let $(\rho_\ga,u_\ga)$ be a maximizer of $\psi_*(\rho,u,s_\ga)$ in $[R_1,R_2]\times \bar E$. Since $f_N(x)$ is a convex function in $x$, for any $\de>0$, by Jensen's inequality and restriction,
 \begin{align}\label{eq:crtkn}
   &\ez\Crt_{N,k_N}(E,(R_1,R_2) ) \ge  \frac{S_{N-1} \Gamma(\frac{ N}2)  [-4D''(0)]^{{N}/{2}} }{\sqrt{\pi(-2D''(0)-\bt^2)} } \int_{\rho_\ga-\de}^{\rho_\ga+\de} \int_{u_\ga-\de}^{u_\ga+\de} \notag\\
   &\  \Big[f_N(\ez[\hat \sfa_N(\rho,u,\la_{k+1}^N)])+ f_N(-\ez[\hat \sfa_N(\rho,u,\la_{k}^N)]) \notag\\
   &\ -\ez \Big([f_N(\hat \sfa_N(\rho,u,\la_{k+1}^N) )\indi\{|\la_{k+1}^N-s_\ga|>\eps\} +f_N(-\hat \sfa_N(\rho,u, \la_{k}^N))\indi\{|\la_{k}^N-s_\ga|>\eps \}]\Big) \Big]\notag \\
   &\ \exp\Big\{N \inf_{y\in [s_\ga-\eps,s_\ga+\eps]}\Big(\frac{y^2}{2} -\frac{(y+\frac{m_2}{\sqrt{-4D''(0)}})^2} {\frac{-2D''(0)-\bt^2}{-2D''(0)}} \Big) \Big\}\notag\\
   &\  \frac1{\sqrt{2\pi}\si_Y} e^{-\frac{(u-m_Y)^2}{2\si_Y^2}}  \frac1{(2\pi)^{N/2} D'(0)^{N/2}} e^{-\frac{N \mu^2 \rho^2}{2D'(0)}} \rho^{N-1}   \dd u  \dd\rho.
 \end{align}
Since $\pz(\la_k<s_\ga-\eps \text{ or } \la_{k+1}>s_\ga+\eps)< e^{-c N^2}$, using \pref{eq:detgk}, \pref{eq:absgau}, the Cauchy--Schwarz inequality, and the argument of \cite{ABC13}*{Lemma 3.3} again,
\begin{align*}
  &\frac{\Gamma(\frac{ N}2)N^{-\frac{N-1}{2}}\sqrt{-4D''(0)}}{\sqrt{\pi(-2D''(0)-\bt^2)}}\ez [f_N(\hat \sfa_N(\rho,u,\la_{k+1}^N) ) \indi\{|\la_{k+1}^N-s_\ga|>\eps \}]\\
  &=\ez\Big[ \Big( \indi\{\la_k< (\frac{N}{N-1})^{1/2}z_3'<\la_{k+1}, |z_3'-s_\ga|>\eps \}(\mathsf{a}_N\Phi(\frac{\sqrt N \sfa_N}\sfb) +\frac{\sfb}{\sqrt{2\pi N}} e^{-\frac{N\sfa_N^2}{2\sfb^2}})\Big)\\
&\quad |\det ((\frac{N-1}{N})^{1/2}\GOE_{N-1}- z_3' I_{N-1}) | \exp\Big\{-\frac{N(z_3')^2}{2} +\frac{N (z_3' + \frac{m_2}{\sqrt{-4D''(0)}})^2 }{\frac{-2D''(0)-\bt^2}{-2 D''(0)}}\Big\}\Big] \\
&=\ez\Big(|\det ((\frac{N-1}{N})^{1/2}\GOE_{N-1}- z_3' I_{N-1}) | | \zeta | \exp\Big\{-\frac{N(z_3')^2}{2} +\frac{N (z_3' + \frac{m_2}{\sqrt{-4D''(0)}})^2 }{\frac{-2D''(0)-\bt^2}{-2 D''(0)}}\Big\} \\
&\ \ \indi\{\la_k< (\frac{N}{N-1})^{1/2}z_3'<\la_{k+1} , \zeta>0,|z_3'-s_\ga|>\eps\}\Big) \\
&\le \frac{\sqrt{-4N D''(0)}}{\sqrt{\pi(-2D''(0)-\bt^2) }}  \int_{|y-s_\ga|>\eps} \ez\Big[\Big(\ez(|z_1'| | z_3'=y) \prod_{i=1}^{N-1} |(\frac{N-1}{N})^{1/2}\la_i-y|\\
&\ \ +\frac{-2D''(0)}{N} \sum_{i=1}^{N-1} Z_i^2 \prod_{j\neq i} |(\frac{N-1}{N})^{1/2}\la_j-y|\Big)\indi\{\la_k< (\frac{N}{N-1})^{1/2}y <\la_{k+1} \}\Big] e^{-\frac{Ny^2}{2}} \dd y\\
&\le C_D^N (|\bar \sfa|+\sfb ) \Big[\int_{s_\ga+\eps}^\8\ez[(1+(\la_{N-1}^*)^{2N}+|y|^{2N}) ]^{1/2} \pz(\la_{k+1}>s_\ga+\frac\eps2)^{1/2}  e^{-\frac{Ny^2}{2}}  \dd y\\
&\ \ +\int_{-\8}^{s_\ga-\eps}\ez[(1+(\la_{N-1}^*)^{2N}+|y|^{2N}) ]^{1/2} \pz(\la_{k}<s_\ga-\frac\eps2)^{1/2}  e^{-\frac{Ny^2}{2}}  \dd y\Big]\\
&\le  C_D^N (|\bar \sfa|+\sfb ) e^{-cN^2}.
\end{align*}
 From here we deduce that the term involving
 $$\ez \Big([f_N(\hat \sfa_N(\rho,u,\la_{k+1}^N) )\indi\{|\la_{k+1}^N-s_\ga|>\eps\} +f_N(-\hat \sfa_N(\rho,u, \la_{k}^N))\indi\{|\la_{k}^N-s_\ga|>\eps \}]\Big) $$
 in \pref{eq:crtkn} is bounded above by $e^{-cN^2}$ and thus is exponentially negligible as other terms are bounded below by $e^{\Omega(N)}$ as we will show below.
 To compute the expectation $\ez[\hat \sfa_N(\rho,u,\la_{k}^N)]$, we observe that
 \begin{align*}
   0&=\int_{\la_1\le\la_2\le \cdots \le \la_N} \partial_{\la_k} p_{\GOE}(\la_1,...,\la_N) \prod_{i=1}^{N} \dd \la_i \\
    & = \int_{\la_1\le\la_2\le \cdots \le \la_N} \frac1{Z_N} \partial_{\la_k} \prod_{1\le i<j\le N} |\la_i-\la_j| e^{-\frac{N}{2}\sum_{\ell=1}^{N}\la_\ell^2} \prod_{i=1}^{N} \dd \la_i \\
   &= \int_{\la_1\le\la_2\le \cdots \le \la_N} \Big(\sum_{i=1,i\neq k}^{N} \frac1{\la_k-\la_i}- N \la_k \Big) p_{\GOE}(\la_1,...,\la_N)\prod_{i=1}^{N} \dd \la_i .
 \end{align*}
 It follows that
 \begin{align}\label{eq:elak}
   \ez(\la_k^N) & =\frac1N \ez \sum_{i=1,i\neq k}^{N} \frac1{\la_k^N-\la_i^N}.
 \end{align}
 From here we compute
 \begin{align*}
   \ez[\hat \sfa_N(\rho,u,\la_{k+1}^N)]-\ez[\hat \sfa_N(\rho,u,\la_{k}^N)] & = \frac{(-2D''(0)-\bt^2-\al\bt\rho^2) \sqrt{-4D''(0)} \ez(\la_k^N-\la_{k+1}^N)}{-2D''(0)-\bt^2} \\
   & \ +\sqrt{-D''(0)} \ez(\la_{k+1}^N-\la_{k}^N).
 \end{align*}
 It is by far well known that $\ez(\la_{k+1}^N-\la_{k}^N)=o(1)$. In fact, using rigidity of eigenvalues for Wigner matrices \cite{EYY12}, for any given $\eps_0>0$, we have $\ez(\la_{k+1}^N-\la_{k}^N)=O(N^{-1+\eps_0})$ for all $N$ large enough. 
 By convexity and continuity, we have
 $$f_N(\ez[\hat \sfa_N(\rho,u,\la_{k+1}^N)])+f_N(-\ez[\hat \sfa_N(\rho,u,\la_{k}^N)])\ge 2f_N( o(1))$$
 where $o(1)\to 0$ uniformly in $\rho\in[\rho_\ga-\de, \rho_\ga+\de]$ and $u\in[u_\ga-\de,u_\ga+\de]$ as $N\to\8$.
 Using \pref{eq:phi*}, \pref{eq:snlim}, the Stirling formula and continuity again, we find
 \begin{align*}
  & \liminf_{\de\to0+\atop \eps\to 0+}\liminf_{N\to\8} \frac1N\log \ez\Crt_{N,k_N}(E,(R_1,R_2) ) \\
  &\ge \frac12\log[-4D''(0)]-\frac12\log D'(0)+\frac12 + \psi_*(\rho_\ga,u_\ga,s_\ga).
 \end{align*}
 The proof is complete.
\end{proof}

\begin{example} \label{ex:4}\rm
  In the setting of \pref{ex:3}, it is easy to check that the complexity function in \pref{th:kdiv} recovers \pref{th:divtotal}. Indeed, from \pref{eq:psids0} we have for $\mu\neq 0$,
  \begin{align*}
    \psi_*(\rho_\ga,u_\ga,s_\ga) & =
    \begin{cases}
      -\frac12s_\ga^2-1-\frac12\log2-\frac{ \mu s_\ga}{\sqrt{-D''(0)}}-\frac{\mu^2}{-4D''(0)}  +\log\frac{\sqrt{D'(0)}}{|\mu|}, & \mbox{if } R_2>\frac{\sqrt{D'(0)}}{|\mu|}, \\
      -\frac12s_\ga^2-\frac12-\frac12\log2-\frac{ \mu s_\ga}{\sqrt{-D''(0)}}-\frac{\mu^2}{-4D''(0)}-\frac{\mu^2 R_2^2}{2D'(0)}+ \log R_2, & \mbox{otherwise},
    \end{cases}\\
    u_\ga&=-\frac{[D'(\rho^2)-D'(0)](\sqrt{-4D''(0)}s_\ga+\mu)}{-2D''(0)}+ \frac{\mu\rho^2_\ga}{2}-\frac{\mu D'(\rho_\ga^2)\rho_\ga^2}{D'(0)},\\
    \rho_\ga&=\begin{cases}
                \frac{\sqrt{D'(0)}}{|\mu|}, & \mbox{if } R_2>\frac{\sqrt{D'(0)}}{|\mu|}, \\
                R_2, & \mbox{otherwise}.
              \end{cases}
  \end{align*}
  For $\mu=0$,  we have
  \begin{align*}
    \psi_*(\rho_\ga,u_\ga,s_\ga) &=-\frac12s_\ga^2 -\frac12-\frac12\log2 +\log R_2, \\
    u_\ga & = -\frac{[D'(\rho^2)-D'(0)](\sqrt{-4D''(0)}s_\ga)}{-2D''(0)},\\
    \rho_\ga&=R_2.
  \end{align*}
  These expressions match those in \pref{th:divtotal} when $B_N$ is a shell between radii $\sqrt N R_1$ and $\sqrt N R_2$.
\end{example}

\begin{appendix}
\section{Covariances and Hessian distribution}
For the reader's convenience, in this part we list some results proved in \cite{AZ20} that are needed in the text. 
\begin{lemma}[\cite{AZ20}*{Lemma A.1}]\label{le:cov}
  Assume Assumptions I and II. Then for $x\in \rz^N$,
  \begin{align*}
    \Cov[H_N(x), \partial_i H_N(x)]&= D'\left(\frac{\|x\|^2}N\right)x_i,\\
  \Cov[\partial_i H_N(x),\partial_j H_N(x)] &= D'(0)\de_{ij},\\
  \Cov[H_N(x),\partial_{ij} H_N(x)]&= 2D''\left(\frac{\|x\|^2}N\right)\frac{x_ix_j}N +\left[D'\left(\frac{\|x\|^2}N\right)-D'(0)\right]\de_{ij}\\
  \Cov[\partial_k H_N(x), \partial_{ij} H_N(x)]&= 0,\\
  \Cov[\partial_{lk} H_N(x), \partial_{ij} H_N(x)]&= -2D''(0)[\de_{jl}\de_{ik}+\de_{il}\de_{kj} +\de_{kl}\de_{ij}]/N,
  \end{align*}
  where $\de_{ij}$ are the Kronecker delta function.
\end{lemma}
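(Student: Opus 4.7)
My plan is to compute the covariance kernel of $X_N$ explicitly from the increment variance, then obtain all covariances of partial derivatives by differentiating the kernel. Since Assumption II gives $X_N(0)=0$, we have $\ez[X_N(x)^2]=\ez[(X_N(x)-X_N(0))^2]=N D(\|x\|^2/N)$, and polarization yields
\begin{align*}
C(x,y):=\ez[X_N(x)X_N(y)] = \frac{N}{2}\Big[D\Big(\tfrac{\|x\|^2}{N}\Big)+D\Big(\tfrac{\|y\|^2}{N}\Big)-D\Big(\tfrac{\|x-y\|^2}{N}\Big)\Big].
\end{align*}
Because $H_N(x)=X_N(x)+\frac{\mu}{2}\|x\|^2$ differs from $X_N(x)$ only by a deterministic shift, the lemma's covariances coincide with those of $X_N$. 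Assumption I gives $D\in C^4$ near $0$, which is enough to ensure that $X_N$ is mean-square differentiable up to second order, so $\Cov[\partial^\alpha_x X_N(x),\partial^\beta_x X_N(x)] = \partial^\alpha_x\partial^\beta_y C(x,y)|_{y=x}$, reducing the lemma to a calculus exercise.

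Next I would carry out the chain-rule differentiation of $C$. With $w:=\|x-y\|^2/N$, each $\partial_{x_i}$ either produces a factor $D^{(m+1)}(w)\cdot 2(x_i-y_i)/N$ or differentiates an existing polynomial factor into a Kronecker delta times $1/N$; at $y=x$, every term still carrying a factor $(x_i-y_i)$ vanishes, so only the fully-contracted terms survive, evaluated at $w=0$, producing $D'(0)$ or $D''(0)$. For example, $\partial_{x_i}\partial_{y_j}\bigl[-\tfrac{N}{2}D(w)\bigr]\big|_{y=x}=D'(0)\de_{ij}$, giving $\Cov[\partial_iH_N,\partial_jH_N]=D'(0)\de_{ij}$. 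The pure $y$-derivatives receive an additional contribution from the $D(\|y\|^2/N)$ piece, yielding the $2D''(\|x\|^2/N)x_ix_j/N$ and $[D'(\|x\|^2/N)-D'(0)]\de_{ij}$ pieces of $\Cov[H_N,\partial_{ij}H_N]$. For $\partial_{x_k}\partial_{y_i}\partial_{y_j}C|_{y=x}$, only the $-D(w)$ term contributes, and every surviving term carries an odd total degree in $(x-y)$, hence vanishes on the diagonal, giving $\Cov[\partial_kH_N,\partial_{ij}H_N]=0$.

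The hardest case is $\Cov[\partial_{lk}H_N,\partial_{ij}H_N]=\partial_{x_l}\partial_{x_k}\partial_{y_i}\partial_{y_j}C(x,y)|_{y=x}$, which involves a fourth derivative of $-\tfrac{N}{2}D(w)$. The main obstacle is bookkeeping: organizing the Leibniz expansion so that one cleanly identifies which Wick-style pairings of the four indices $(l,k,i,j)$ produce a fully-contracted product of Kronecker deltas at $y=x$. There are exactly three such pairings---$\{l\leftrightarrow i, k\leftrightarrow j\}$, $\{l\leftrightarrow j, k\leftrightarrow i\}$, and $\{l\leftrightarrow k, i\leftrightarrow j\}$---and each contributes a coefficient $-2D''(0)/N$ once the $(2/N)^2$ factors from the chain rule combine with the $N/2$ normalization and the overall sign, producing the stated three-term formula. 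All other terms carry at least one residual $(x-y)$ factor and vanish on the diagonal. No deeper technique is needed beyond this careful bookkeeping of the chain rule.
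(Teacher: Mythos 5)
Your proposal is correct and follows the same route as the cited source: the paper only quotes this lemma from \cite{AZ20}*{Lemma A.1}, whose proof is precisely the polarization identity $\ez[X_N(x)X_N(y)]=\tfrac N2[D(\|x\|^2/N)+D(\|y\|^2/N)-D(\|x-y\|^2/N)]$ obtained from the pinning $X_N(0)=0$, followed by differentiating this kernel and evaluating on the diagonal. Your chain-rule bookkeeping, including the three Kronecker pairings each contributing $-2D''(0)/N$ in the fourth-derivative case, reproduces all five stated formulas.
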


Let $Y= \frac{H_N(x)}N- \frac{D'(\frac{\|x\|^2}N)\sum_{i=1}^{N} x_i \partial_i H_N(x)}{N D'(0)}$ and define
\begin{align}\label{eq:albt0}
  \alpha=\al(\|x\|^2/N) &= \frac{2D''(\|x\|^2/N)}{ \sqrt{ D(\frac{\|x\|^2}N)-\frac{D'({\|x\|^2}/N)^2}{D'(0)}\frac{\|x\|^2}N}}, \notag\\
  \beta=\bt(\|x\|^2/N) & =\frac{D'(\|x\|^2/N)-D'(0)}{\sqrt{ D(\frac{\|x\|^2}N)-\frac{D'({\|x\|^2}/N)^2}{D'(0)}\frac{\|x\|^2}N}}.
\end{align}
Writing $\rho=\frac{\|x\|}{\sqrt N}$, we introduce the following notations
\begin{align}
  m_1 & =m_1(\rho,u)= \mu + \frac{(u-\frac{\mu\rho^2}{2} +\frac{\mu D'(\rho^2)\rho^2}{D'(0)}) ( 2D''(\rho^2)\rho^2+D'(\rho^2) -D'(0) )}{D(\rho^2) -\frac{D'(\rho^2)^2 \rho^2}{D'(0) }}, \notag \\
  m_2&=m_2(\rho,u)= \mu + \frac{(u-\frac{\mu \rho^2}{2} +\frac{\mu D'(\rho^2) \rho^2}{D'(0)}) (D'(\rho^2) -D'(0) )}{D( \rho^2) -\frac{D'(\rho^2)^2 \rho^2}{D'(0) }}, \notag\\
\si_1 & =\si_1(\rho)= \sqrt{\frac{-4D''(0)-(\al \rho^2 +\bt)\al\rho^2}{N}}, \ \ \
  \si_2  =\si_2(\rho)= \sqrt{\frac{-2D''(0)-(\al\rho^2 +\bt)\bt}{N}}, \notag \\
  m_Y&=m_Y(\rho)= \frac{\mu\rho^2}{2}-\frac{\mu D'(\rho^2) \rho^2}{D'(0) }, \ \ \ \si_Y =\si_Y(\rho) =\sqrt{\frac1N\Big(D(\rho^2)-\frac{D'(\rho^2)^2\rho^2}{D'(0)} \Big)},\notag \\
  \alpha &=\alpha(\rho^2)= \frac{2D''(\rho^2)}{ \sqrt{ D(\rho^2)-\frac{D'(\rho^2)^2 \rho^2}{D'(0)}}},  \ \ \
   \beta =\beta(\rho^2)=\frac{D'(\rho^2 )-D'(0)}{\sqrt{ D(\rho^2 )-\frac{D'(\rho^2)^2 \rho^2}{D'(0)}}}, \label{eq:msialbt}
\end{align}
Sometimes, we also use the following change of variable
\begin{align}
v&=\frac{u-\frac{\mu\rho^2}{2}+\frac{\mu D'(\rho^2)\rho^2}{D'(0)}}{\sqrt{D(\rho^2)-\frac{D'(\rho^2)^2\rho^2}{D'(0)}}} = \frac{u-m_Y}{\sqrt{N}\si_Y},\label{eq:uvcov}\\
m_1&=\mu +v(\al \rho^2+\bt), \ \
m_2=\mu+ v\bt.\label{eq:m12cov}
\end{align}
In many situations, we need to take care of the singularity as $\rho \to 0+$. Thus the following result is helpful.
\begin{lemma}[\cite{AZ20}*{Lemma 3.1}]\label{le:albtd}
  We have $\lim_{\rho \to0+} \frac{D(\rho^2)}{\rho^4}-\frac{D'(\rho^2)^2}{D'(0)\rho^2} =-\frac32D''(0)$ and
  \begin{align*}
    \lim_{\rho\to0+} \bt(\rho^2)^2&=-\frac23 D''(0),\quad
    \lim_{\rho\to 0+} \al(\rho^2) \bt(\rho^2)\rho^2  = -\frac43 D''(0), \quad
    \lim_{\rho\to 0+} [\al(\rho^2) \rho^2]^2  = -\frac83 D''(0).
  \end{align*}
\end{lemma}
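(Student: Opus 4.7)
The plan is to derive all four limits by feeding a single Taylor expansion of $D$ at the origin into the definitions of $\alpha$ and $\beta$ in \eqref{eq:albt0}. Under Assumption I, $D$ is four times differentiable at $0$, and from the representation \eqref{eq:drep} one reads off $D(0)=0$. Hence for $r\downarrow 0$,
\begin{align*}
D(r) &= D'(0)r+\tfrac12 D''(0) r^2+\tfrac16 D'''(0) r^3+o(r^3),\\
D'(r) &= D'(0)+D''(0) r+\tfrac12 D'''(0) r^2+o(r^2).
\end{align*}
I would first substitute $r=\rho^2$ into these expansions and compute $D'(\rho^2)^2$ up to order $\rho^4$, then divide by $D'(0)$ and multiply by $\rho^2$ to obtain an expansion for $\tfrac{D'(\rho^2)^2\rho^2}{D'(0)}$ through order $\rho^6$.

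The key cancellation is at order $\rho^2$: both $D(\rho^2)$ and $\tfrac{D'(\rho^2)^2\rho^2}{D'(0)}$ begin with $D'(0)\rho^2$, and their $\rho^4$-coefficients differ by exactly $\tfrac12 D''(0)-2D''(0)=-\tfrac32 D''(0)$. Dividing by $\rho^4$ and letting $\rho\to 0+$ yields the first claimed limit
\[
\lim_{\rho\to 0+}\Big(\tfrac{D(\rho^2)}{\rho^4}-\tfrac{D'(\rho^2)^2}{D'(0)\rho^2}\Big)=-\tfrac32 D''(0).
\]
In particular $D(\rho^2)-\tfrac{D'(\rho^2)^2\rho^2}{D'(0)}=-\tfrac32 D''(0)\rho^4(1+o(1))$, and since $D''(0)<0$ (by Assumption I and the representation \eqref{eq:drep}, or simply as a nontrivial correlator), the square root is a well-defined positive quantity $\rho^2\sqrt{-\tfrac32 D''(0)}(1+o(1))$.

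With this denominator asymptotic in hand, the three remaining limits reduce to matching numerators. The numerator of $\beta(\rho^2)$ is $D'(\rho^2)-D'(0)=D''(0)\rho^2+O(\rho^4)$, so
\[
\beta(\rho^2)^2=\tfrac{D''(0)^2\rho^4(1+o(1))}{-\tfrac32 D''(0)\rho^4(1+o(1))}\to -\tfrac23 D''(0).
\]
Similarly $\alpha(\rho^2)\rho^2$ has numerator $2D''(\rho^2)\rho^2=2D''(0)\rho^2+o(\rho^2)$, giving $[\alpha(\rho^2)\rho^2]^2\to \tfrac{4D''(0)^2}{-\tfrac32 D''(0)}=-\tfrac83 D''(0)$, and the product $\alpha(\rho^2)\beta(\rho^2)\rho^2$ follows from the two preceding limits (or directly from $\tfrac{2D''(0)\cdot D''(0)}{-\tfrac32 D''(0)}=-\tfrac43 D''(0)$). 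There is no serious obstacle here; the only thing to be careful about is bookkeeping of the $o(\rho^4)$ error terms in the denominator so that dividing by $\rho^4$ is legitimate, which is guaranteed by the four-times differentiability in Assumption I.
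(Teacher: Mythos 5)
Your Taylor-expansion argument is correct and is essentially the same computation as in the cited source (this paper only quotes the lemma from \cite{AZ20}, where it is proved by expanding $D$ and $D'$ at $0$ exactly as you do): the cancellation of the $D'(0)\rho^2$ terms leaves $\bigl(\tfrac12-2\bigr)D''(0)\rho^4+o(\rho^4)$ in the denominator quantity, and the three remaining limits follow by pairing this with $D'(\rho^2)-D'(0)=D''(0)\rho^2+o(\rho^2)$ and $2D''(\rho^2)\rho^2=2D''(0)\rho^2+o(\rho^2)$. The bookkeeping is fine (in fact second-order Peano remainders already suffice, so Assumption I gives more than enough regularity), and your sign check $D''(0)<0$ from \eqref{eq:drep} is the right justification that the square roots in \eqref{eq:albt0} are well defined near $\rho=0$.
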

We have the following conditional distribution which is our basis for complexity analysis.
\begin{proposition}[\cite{AZ20}*{Proposition 3.3}]
Under Assumptions I, II and IV,  we can find deterministic orthogonal matrices $U$ such that
  \begin{align}\label{eq:ayg}
    (U\nabla^2 H_N U^\sfT | Y=u)  & \stackrel{d}{=}
    \begin{pmatrix}
      z_1'& \xi^\mathsf T \\
       \xi & \sqrt{-4D''(0)} (\sqrt{\frac{N-1}{N}}\GOE_{N-1}-z_3'I_{N-1})
    \end{pmatrix}=:G ,
  \end{align}
  where with $z_1,z_2,z_3$ being independent standard Gaussian random variables,
  \begin{align*}
    z_1'&=\si_1 z_1 - \si_2  z_2 + m_1, \quad
     z_3'=\frac1{\sqrt{-4D''(0)}}\Big(\si_2 z_2+ \frac{  \sqrt{\al\bt}\rho }{\sqrt N} z_3 - m_2\Big),
  \end{align*}
  and $\xi$ is a centered column Gaussian vector with covariance matrix $\frac{-2D''(0)}{N}I_{N-1}$ which is independent from $z_1,z_2,z_3$ and the GOE matrix $\GOE_{N-1}$.
\end{proposition}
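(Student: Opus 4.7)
\medskip

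\noindent\textbf{Proof plan for Theorem \ref{th:kdiv}.} The plan is to analyze the Kac--Rice representation \eqref{eq:krk} by combining Lazutkin's decomposition \eqref{eq:kdcmp} with the fact that, when $k_N/N\to \ga\in(0,1)$, the eigenvalue $\la_{k_N}$ concentrates (at speed $N^2$) around the $\ga$-quantile $s_\ga\in(-\sqrt2,\sqrt2)$ of $\si_{\rm sc}$. This is qualitatively different from the fixed-index situation: the singular point $s_\ga$ now lies \emph{inside} the bulk of the semicircle, so the Stieltjes-type sums appearing in $\sfa_N$ are no longer controlled by the large deviations of Section~\ref{se:5}. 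The main consequence is that, at the exponential scale, the random correction $Q(z_3')$ in $\sfa_N$ cannot contribute, so the complexity is driven purely by $\psi_*(\rho,u,s_\ga)$.

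\medskip

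\noindent\textbf{Upper bound.} I would start from the bound $\Crt_{N,k_N}(E,(R_1,R_2))\le \Crt_N(E,(R_1,R_2))$ and use the inclusion $\{i(G)=k\}\subset\{i(G_{**})=k\}\cup\{i(G_{**})=k-1\}$ to reduce the analysis to the two quantities $I^{k_N}(E,(R_1,R_2))$ and $I^{k_N-1}(E,(R_1,R_2))$ (both with $k_N/N\to\ga$, so they give the same asymptotics). For each of these I would split the integrand as in Section~\ref{se:kge1} into $I_1^{k_N}+I_2^{k_N}$ (using Lemma~3.3 of \cite{ABC13} and $|\det G|\le |z_1'|\cdot|\det G_{**}|+\sum_i |Z_i|^2\cdot |\det G_{**}^{(i)}|$). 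Using the LDP for the empirical measure \eqref{eq:conineq} combined with \eqref{eq:sgasc}, for any $\eps>0$ the event $\{|\la_{k_N}-s_\ga|>\eps\}$ has probability at most $e^{-cN^2}$, so the integration in $y$ reduces to a neighborhood of $s_\ga$. The estimates $\ez[|z_1'|\,|\,z_3'=y]\le \sqrt{2/\pi}\sfb/\sqrt N+|\bar\sfa|$ together with the continuity of $\psi$ and the standard shell truncation (Lemmas~\ref{le:exptt}, \ref{le:exptti11}, \ref{le:exptti12}) then yield the upper bound without any large deviation of a singular quadratic form.

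\medskip

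\noindent\textbf{Lower bound.} For the lower bound I would rewrite $\ez\Crt_{N,k_N}$ exactly via Lazutkin's formula \eqref{eq:kdcmp} keeping \emph{both} sign-of-$\zeta$ branches, so the conditional expectation of the positive part and the negative part of $z_1'-Q(z_3')$ combine into $f_N(\hat\sfa_N(\rho,u,\la_{k+1}^N))+f_N(-\hat\sfa_N(\rho,u,\la_k^N))$, where $f_N(x)=x\Phi(\sqrt N x/\sfb)+\sfb e^{-Nx^2/(2\sfb^2)}/\sqrt{2\pi N}$ is \emph{convex}. Following the computation of \cite{ABC13}*{Lemma~3.3}, this packages the Hessian determinant together with the Kac--Rice Gaussian weights into a single $\GOE_N$ expectation of the form
\[
\ez\!\left[(f_N(\hat\sfa_N(\rho,u,\la_{k+1}^N))+f_N(-\hat\sfa_N(\rho,u,\la_k^N)))\exp\!\Big\{\tfrac{N(\la_{k+1}^N)^2}{2}-\tfrac{N(\la_{k+1}^N+m_2/\sqrt{-4D''(0)})^2}{(-2D''(0)-\bt^2)/(-2D''(0))}\Big\}\right].
\]
Restricting to $|\la_{k+1}^N-s_\ga|\le\eps$ (justified by \eqref{eq:lakn2}), applying Jensen's inequality in the convex variable $\hat\sfa_N$ (conditionally on $\la_{k+1}^N$, then removing the conditioning) reduces the problem to computing $\ez[\hat\sfa_N(\rho,u,\la_{k+1}^N)]$. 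This is where the key identity \eqref{eq:elak}, obtained by differentiating the joint GOE density, enters: it says $\ez(\la_k^N)=\frac1N\ez\sum_{i\neq k}(\la_k^N-\la_i^N)^{-1}$, so the linear-in-$\la_{k+1}^N$ term in $\hat\sfa_N$ is exactly cancelled, up to a $O(\ez(\la_{k+1}^N-\la_k^N))=O(N^{-1+\eps_0})$ correction from the eigenvalue rigidity of Erd\H os--Yau--Yin. Hence $\ez[\hat\sfa_N]=o(1)$ uniformly on compact $(\rho,u)$, and the Stirling asymptotics together with the explicit formula \eqref{eq:phi*} for $\Psi_*(s_\ga)$ (here $|s_\ga|<\sqrt2$) produce the claimed limit $\psi_*(\rho_\ga,u_\ga,s_\ga)$ after optimizing over $\rho,u$. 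The extension to unbounded $E$ or $R_2=\8$ is handled by the standard truncation Lemmas~\ref{le:exptt}, \ref{le:exptt2}, noting that $\psi_*(\rho,u,s_\ga)\to -\8$ as $\rho\to 0+$ or $|u|\to\8$.

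\medskip

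\noindent\textbf{Main obstacle.} The technical heart of the argument is controlling $\ez[\hat\sfa_N(\rho,u,\la_{k+1}^N)]$ in the lower bound, because the sum $\frac{1}{N}\sum_{i\neq k+1}Z_i^2/(\la_i^N-\la_{k+1}^N)$ is \emph{singular}: the coefficients explode at $\la_i\approx \la_{k+1}$, which lies in the bulk, and the usual moment-generating-function/LDP technology (as used for fixed index in Section~\ref{se:5}) breaks down because $\bar\Lambda$ is not finite in any neighborhood of zero. The identity \eqref{eq:elak} is what makes the analysis tractable: it trades this singular quadratic form for the much tamer quantity $\ez(\la_{k+1}^N-\la_k^N)$, which vanishes by eigenvalue rigidity. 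Convexity of $f_N$ (so that Jensen goes in the right direction) and the near-equality $\ez(\la_{k+1}^N)\approx\ez(\la_k^N)$ are therefore the two crucial inputs that allow passing from a singular random expression to the deterministic $\psi_*(\rho,u,s_\ga)$.
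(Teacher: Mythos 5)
Your write-up does not prove the statement it is supposed to prove. The statement is the distributional identity \eqref{eq:ayg} for the Hessian conditioned on $Y=u$: one must exhibit a deterministic orthogonal matrix $U$ (coming from isotropy, aligning $x$ with a fixed coordinate direction so that the distinguished radial entry sits in the $(1,1)$ corner), compute the joint covariances of $H_N$, $\nabla H_N$, $\nabla^2 H_N$ as in Lemma \ref{le:cov}, observe that $Y$ is chosen precisely so that it is independent of $\nabla H_N$, and then carry out the Gaussian conditioning of $\nabla^2 H_N$ on $Y=u$. That conditioning shifts the means of the $(1,1)$ entry and of the diagonal (producing $m_1$ and $m_2$ from \eqref{eq:msialbt}), and the remaining fluctuations decompose into the independent pieces $\si_1 z_1$, $\si_2 z_2$, $\sqrt{\al\bt}\,\rho z_3/\sqrt N$, the off-diagonal Gaussian vector $\xi$ with covariance $\frac{-2D''(0)}{N}I_{N-1}$, and an independent $\GOE_{N-1}$ block scaled by $\sqrt{-4D''(0)}$. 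Assumption IV enters only to guarantee that the variances $\si_1^2$, $\si_2^2$ and the product $\al\bt$ appearing under square roots are nonnegative, so that the representation makes sense. None of these steps appears in your proposal.

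What you have written instead is a proof plan for Theorem \ref{th:kdiv} (complexity of saddles with diverging index): the $\la_{k_N}\to s_\ga$ concentration, the splitting into $I_1^{k_N}+I_2^{k_N}$, the convexity of $f_N$ with Jensen's inequality, the identity \eqref{eq:elak} and eigenvalue rigidity. That sketch is broadly consistent with what the paper does in Section \ref{se:div}, but it presupposes the proposition \eqref{eq:ayg} as an input to the Kac--Rice formula rather than establishing it; in particular it says nothing about the choice of $U$, the covariance structure, or why $\xi$, $z_1,z_2,z_3$ and $\GOE_{N-1}$ come out independent with the stated parameters. As it stands, the attempt addresses a different theorem, so the statement in question is left entirely unproved.
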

We write frequently
$$G_{**}=\sqrt{-4D''(0)} \Big(\sqrt{\frac{N-1}{N}}\GOE_{N-1}-z_3'I_{N-1}\Big).$$
As a property of GOE matrices, we may find a random orthogonal matrix $V$ which is independent of the unordered eigenvalues $\tilde \la_j, j=1,...,N-1$ and $z_3'$, such that
\begin{align}\label{eq:goedc}
  G_{**} = \sqrt{-4D''(0)} V^\mathsf{T} \begin{pmatrix}
    (\frac{N-1}{N})^{1/2}\tilde \la_1-z_3' &\cdots  &0  \\
  \vdots& \ddots& \vdots  \\
  0&  \cdots& (\frac{N-1}{N})^{1/2}\tilde\la_{N-1}-z_3'
  \end{pmatrix} V.
  \end{align}
To connect with the Kac--Rice formula \pref{eq:startingpoint}, we have
\begin{align}
&\ez(|\det \nabla^2 H_N|\indi \{i(\nabla^2 H_N)=k\} |Y=u)\notag \\ 
&=\ez(|U\det \nabla^2 H_N U^\sfT|\indi \{i(U\nabla^2 H_N U^\sfT)=k\} |Y=u) = \ez(|\det G|\indi\{i(G)=k\})\label{eq:martin}
\end{align}
where $G$ depends on $u$ implicitly.

We need the following technical lemma for proving Theorems \ref{th:fixktotal} and \ref{th:divtotal}.
\begin{lemma}[\cite{AZ20}*{Lemma B.1}]\label{le:repl}
  Let $\nu_N$ be probability measures on $\rz$ and $\mu\neq0$. Suppose
  $$\lim_{N\to\8} \frac1N\log \int_\rz e^{-\frac12(N+1)x^2 -\frac{(N+1)\mu x}{\sqrt{-D''(0)}} }\nu_{N+1}(\dd x) >-\8.$$
  Then we have
  \begin{align*}
  \lim_{N\to \8} \frac1N \Big(\log \int_\rz e^{-\frac12(N+1)x^2 -\frac{\sqrt{N(N+1)}\mu x}{\sqrt{-D''(0)}} } \nu_{N+1}(\dd x)
  - \log \int_\rz e^{-\frac12(N+1)x^2 -\frac{(N+1)\mu x}{\sqrt{-D''(0)}} }\nu_{N+1}(\dd x)\Big)=0.
  \end{align*}
  \end{lemma}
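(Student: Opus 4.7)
The plan is to show the two exponential integrals differ only by a multiplicative factor of order $e^{O(1)}$, hence by $e^{o(N)}$. First I would compute the key estimate on the discrepancy: writing $\delta_N = \sqrt{N(N+1)} - (N+1) = -\,(N+1)/[\sqrt{N(N+1)}+(N+1)]$, we have $\delta_N \to -1/2$, so in particular $|\delta_N| \le 1$ for all $N$ large. Setting $c = \mu/\sqrt{-D''(0)}$ (nonzero by hypothesis) and $f_N(x) = -\tfrac12(N+1)x^2 - (N+1)cx$, the two integrands are $e^{f_N(x)}$ and $e^{f_N(x) - \delta_N c x}$ respectively, so denoting
\begin{align*}
I_N = \int_{\mathbb R} e^{f_N(x)}\,\nu_{N+1}(\dd x),\qquad J_N = \int_{\mathbb R} e^{f_N(x) - \delta_N c x}\,\nu_{N+1}(\dd x),
\end{align*}
the task reduces to proving $\frac1N(\log J_N - \log I_N) \to 0$.

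The second step is a truncation argument. By hypothesis there exists $A>0$ with $\log I_N \ge -AN$ for all $N$ large. Choose $R = R_A$ such that $R > 4|c|$ and $R^2 > 16 A$. For $|x|>R$ one has $(N+1)|cx| \le \tfrac14(N+1)x^2$, hence $f_N(x) \le -\tfrac14(N+1)x^2 \le -\tfrac14(N+1)R^2$. Since $|\sqrt{N(N+1)} c x| \le (N+1)|cx|$, the same upper bound holds for $f_N(x)-\delta_N c x$. Therefore both tails are bounded by $e^{-(N+1)R^2/4}$, while our choice of $R$ makes this dominated by $\tfrac12 e^{-AN} \le \tfrac12 I_N$. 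This gives
\begin{align*}
I_N^{\le R} := \int_{|x|\le R} e^{f_N}\,\dd\nu_{N+1} \ge \tfrac12 I_N.
\end{align*}

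The third step uses the uniform bound $|\delta_N c x| \le |c|R$ on $\{|x|\le R\}$ for $N$ large, which yields $e^{-|c|R} I_N^{\le R} \le J_N^{\le R} \le e^{|c|R} I_N^{\le R}$. Combining with the tail estimate,
\begin{align*}
\tfrac12 e^{-|c|R} \le \frac{J_N^{\le R}}{I_N} \le \frac{J_N}{I_N} \le e^{|c|R} + \frac{e^{-(N+1)R^2/4}}{I_N} \le e^{|c|R} + e^{AN - (N+1)R^2/4},
\end{align*}
and the last term tends to $0$ by the choice $R^2 > 16 A$. Thus $\log J_N - \log I_N$ is bounded uniformly in $N$, which gives the claim upon dividing by $N$.

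I do not expect any serious obstacle: the argument is essentially elementary because the Gaussian weight $e^{-\frac12(N+1)x^2}$ provides super-exponential localization in $x$ uniformly in $N$, and the perturbation $\delta_N c x$ is $O(x)$ with $\delta_N$ bounded. The only delicate point is to make sure the truncation radius $R$ can be chosen depending only on the lower bound constant $A$ supplied by the hypothesis, so that the tail is actually negligible against $I_N$; this is what forces the assumption $\liminf \frac1N\log I_N > -\infty$ to be used in an essential way.
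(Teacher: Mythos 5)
Your argument is correct and complete: writing the exponent discrepancy as $-\delta_N c x$ with $\delta_N=\sqrt{N(N+1)}-(N+1)\to -\tfrac12$ (so $|\delta_N|\le 1$), truncating at a fixed radius $R$ determined only by the constant $A$ from the hypothesis and by $|c|$, and comparing the two integrands uniformly on $\{|x|\le R\}$ shows that $\log J_N-\log I_N=O(1)$, which is even stronger than the required $o(N)$. The paper itself gives no proof of this statement — it is quoted verbatim from the companion paper \cite{AZ20}*{Lemma B.1} — so there is nothing in-paper to compare against, but your elementary truncation argument stands on its own, and you correctly identify that the non-degeneracy hypothesis is needed precisely to make the Gaussian tail $e^{-(N+1)R^2/4}$ negligible against $I_N$ for a fixed $R$.
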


\section{Exponential tightness}\label{se:exptt}
The following results allow us to reduce our analysis to the compact setting. They were proven via hard analysis in \cite{AZ20} for the sake of all critical points. In this paper, they are applicable because $\Crt_{N,k}(E,(R_1,R_2))\le \Crt_{N}(E,(R_1,R_2))$.
\begin{lemma}[\cite{AZ20}*{Lemma 4.2}]\label{le:exptt}
  Suppose $\mu\neq 0$. 
  Then
  \begin{align*}
  \limsup_{ T\to \8} \limsup_{N\to \8} \frac1N \log  \ez \Crt_{N} ([-T, T]^c, (0,\8)) &= -\8,\\
  \limsup_{ R \to \8} \limsup_{N\to \8} \frac1N \log  \ez \Crt_{N} (\rz, (R,\8)) &= -\8,\\
  \limsup_{ \eps \to 0+} \limsup_{N\to \8} \frac1N \log  \ez \Crt_{N} (\rz, (0,\eps)) &= -\8.
  \end{align*}
  \end{lemma}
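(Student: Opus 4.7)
The approach is to apply the Kac--Rice representation \pref{eq:krerr} together with the conditional Hessian decomposition \pref{eq:ayg}, which for any $E \subset \rz$ and $0 \le R_1 < R_2 \le \infty$ gives
\begin{align*}
\ez \Crt_N(E, (R_1,R_2)) = S_{N-1} N^{(N-1)/2} \int_{R_1}^{R_2} \int_E \ez[|\det G|] \frac{e^{-\frac{(u-m_Y)^2}{2\si_Y^2}}}{\sqrt{2\pi}\si_Y} \frac{e^{-\frac{N\mu^2\rho^2}{2D'(0)}}}{(2\pi)^{N/2} D'(0)^{N/2}} \rho^{N-1}\, du\, d\rho.
\end{align*}
The first step is an a priori bound $\ez[|\det G|] \le C_D^N (1 + |m_1|^{2N} + |m_2|^{2N} + \ez[(\la_{N-1}^*)^{2N}])^{1/2}$ obtained from the eigenvalue decomposition \pref{eq:goedc} and Cauchy--Schwarz, combined with Gaussian moment bounds on $z_1', z_3'$ and the standard exponential tail \pref{eq:ladein}. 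By \pref{eq:m12cov} and \pref{le:albtd} (the latter ensuring $\alpha\rho^2, \beta$ remain bounded as $\rho \to 0$), $|m_1| + |m_2| \le C_D(1 + \rho^2 + |u|)$ uniformly, so $\ez[|\det G|] \le C_D^N (1 + \rho^2 + |u|)^N$ for a constant $C_D$.

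For assertion (1), with $E = [-T,T]^c$, I would split $(0,\8) = (0,R] \cup (R,\8)$ for fixed $R$. On $(0,R]$ we have $|m_Y(\rho)| \le C_D(1+R^2)$ and $\si_Y^2 = O(1/N)$; hence the Gaussian density in $u$ integrated against the polynomial $(1+\rho^2+|u|)^N$ over $|u| > T$ yields decay of order $\exp(-cN T^2)$ once $T > 2 C_D(1+R^2)$, dominating all polynomial factors. The contribution from $\rho > R$ is absorbed in assertion (2), which one applies first with a large but fixed $R$.

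For assertion (2), I would integrate out $u$ against the Gaussian kernel to obtain $\int_\rz (1+\rho^2+|u|)^N \frac{e^{-(u-m_Y)^2/(2\si_Y^2)}}{\sqrt{2\pi}\si_Y} du \le C_D^N (1+\rho^2)^N$, reducing the claim to showing
\begin{align*}
\limsup_{R\to\8}\limsup_{N\to\8} \frac1N \log \int_R^\8 e^{-\frac{N\mu^2\rho^2}{2D'(0)}} (1+\rho^2)^N \rho^{N-1}\, d\rho = -\8,
\end{align*}
which follows by Laplace asymptotics since $\mu \ne 0$ and $\sup_{\rho \ge R}[\log\rho(1+\rho^2) - \mu^2\rho^2/(2D'(0))] \to -\8$. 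For assertion (3), the same reduction gives an integrand bounded polynomially in $\rho$ on $(0,\eps)$, so $\frac1N\log\int_0^\eps \rho^{N-1}(1+\rho^2)^N\, d\rho \le \log\eps + O(1) \to -\8$ as $\eps \to 0+$.

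The main obstacle is the uniform polynomial control of $\ez[|\det G|]$ in both $\rho$ and $u$; the dependence of $m_1, m_2, \si_1, \si_2$ on $\rho$ through \pref{eq:msialbt} potentially becomes singular at $\rho = 0$, and \pref{le:albtd} is precisely what is needed to guarantee that all quantities in the integrand admit continuous extensions to $\rho = 0$. Once this uniformity is established, the three assertions reduce to elementary one-variable asymptotic integrals.
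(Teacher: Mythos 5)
Your overall strategy (Kac--Rice for $\Crt_N$, a crude a priori bound on $\ez|\det G|$, then one-variable Gaussian/Laplace asymptotics in $u$ and $\rho$) is the right one, and it is consistent with the route the paper takes: note that this paper does not prove the lemma at all but cites \cite{AZ20}*{Lemma 4.2}, whose "hard analysis" is of exactly this flavor (cf.\ the $I_1,I_2$ machinery reproduced in Appendix B). However, there is a concrete false step in your key uniformity claim. You assert $|m_1|+|m_2|\le C_D(1+\rho^2+|u|)$ uniformly, invoking \pref{le:albtd}. That lemma controls $\al\rho^2$, $\bt$ and $\al\bt\rho^2$ near $\rho=0$, but by \pref{eq:m12cov} one has $m_1=\mu+v(\al\rho^2+\bt)$, $m_2=\mu+v\bt$ with $v=(u-m_Y)/(\sqrt N\si_Y)=(u-m_Y)\big/\sqrt{D(\rho^2)-D'(\rho^2)^2\rho^2/D'(0)}$, and \pref{le:albtd} says precisely that the denominator behaves like $c\rho^2$ as $\rho\to0+$. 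Hence for fixed $u\neq0$, $|m_1|,|m_2|\asymp |u|/\rho^2\to\8$ as $\rho\to0+$: the claimed bound fails on the region $\rho$ near $0$, which is exactly the region you need for assertions (1) and (3). So $m_1,m_2$ do \emph{not} extend continuously to $\rho=0$ jointly in $(\rho,u)$; only the normalized quantities do.

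The flaw is repairable, and repairing it is essentially the whole content of the estimate: bound $|m_1|+|m_2|\le C_D(1+|v|)$ (uniformly in $\rho$ on compacts, by \pref{le:albtd}), so that $\ez|\det G|\le C_D^N(1+|v|^N+\ez[(\la_{N-1}^*)^{2N}]^{1/2}+\cdots)$, and then integrate in the variable $v$ rather than $u$: the weight $e^{-(u-m_Y)^2/(2\si_Y^2)}=e^{-Nv^2/2}$ absorbs the polynomial $(1+|v|)^N$ uniformly in $\rho$, giving a bound $C^N$ after the $u$-integration for assertions (2) and (3), while for assertion (1) the restriction $|u|>T$ becomes $|v|\ge (T-|m_Y|)/\si_{Y,1}$ with $\si_{Y,1}^2=D(\rho^2)-D'(\rho^2)^2\rho^2/D'(0)$ bounded above on $(0,R]$ (and small near $\rho=0$, which only helps), yielding the $e^{-cNT^2}$ decay you want. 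With that substitution your outline for all three assertions goes through, including the use of $\mu\neq0$ to kill $\rho^{N-1}(1+\rho^2)^N$ at infinity. Two minor caveats: the representation \pref{eq:ayg} you start from requires Assumption IV (harmless here, since the lemma is only invoked in this paper after Assumption IV is in force, but the lemma as stated does not list it), and your a priori determinant bound should be stated via an operator-norm or Schur-complement estimate (as in the $I_1,I_2$ decomposition of \cite{AZ20}) rather than left implicit.
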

\begin{lemma}[\cite{AZ20}*{Lemma 4.3}]\label{le:exptt2}
    Let $\mu=0$ and $R<\8$. Then
    \begin{align*}
    \limsup_{ T\to \8} \limsup_{N\to \8} \frac1N \log  \ez \Crt_{N} ([-T, T]^c, [0,R)) &= -\8.
    \end{align*}
\end{lemma}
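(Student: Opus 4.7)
The plan is to apply the Kac--Rice formula \eqref{eq:startingpoint} and exploit the decoupling provided by Lemma \ref{le:cov}: $\nabla^2 H_N(x) \perp \nabla H_N(x)$, and the auxiliary field $Y = H_N(x)/N - D'(\rho^2)\langle x, \nabla H_N(x)\rangle/(N D'(0))$ is independent of $\nabla H_N(x)$ (one checks $\Cov(Y, \partial_k H_N) = 0$ directly from Lemma \ref{le:cov}). Since $Y = H_N(x)/N$ on $\{\nabla H_N(x) = 0\}$ and $(Y, \nabla^2 H_N(x))$ is independent of $\nabla H_N(x)$, the conditioning in Kac--Rice can be dropped in that expectation, yielding
\[
\ez\Crt_N([-T,T]^c, [0, R)) = \int_{\{\|x\| < \sqrt{N} R\}} \ez\bigl[|\det \nabla^2 H_N(x)|\, \indi\{|Y| > T\}\bigr]\, p_{\nabla H_N(x)}(0)\, dx.
\]
Under $\mu = 0$ we have $p_{\nabla H_N(x)}(0) = (2\pi D'(0))^{-N/2}$ and $Y$ is centered Gaussian with variance $\si_Y^2 = (D(\rho^2) - D'(\rho^2)^2\rho^2/D'(0))/N$.

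Apply Cauchy--Schwarz to separate the two factors:
\[
\ez\bigl[|\det \nabla^2 H_N(x)|\, \indi\{|Y| > T\}\bigr] \leq \bigl(\ez|\det \nabla^2 H_N(x)|^2\bigr)^{1/2} \bigl(\pz(|Y| > T)\bigr)^{1/2}.
\]
For the first factor, the (unconditional) distributional identity $\nabla^2 H_N(x) \stackrel{d}{=} \sqrt{-4D''(0)}\,\GOE_N - \sqrt{-2D''(0)/N}\, Z\, I_N$ recorded after \eqref{eq:startingpoint} is independent of $x$, and a routine moment bound (for example, Hadamard combined with the operator-norm estimate \eqref{eq:ladein} and the tails of $Z$) gives $\ez|\det \nabla^2 H_N(x)|^2 \leq C_D^{2N}$ for some $C_D < \infty$. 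For the second factor, Lemma \ref{le:albtd} shows that $\rho \mapsto D(\rho^2) - D'(\rho^2)^2\rho^2/D'(0)$ is continuous on $[0, R]$ (vanishing like $-\tfrac{3}{2}D''(0)\rho^4$ at the origin), so $C_R := \sup_{\rho \in [0, R]}(D(\rho^2) - D'(\rho^2)^2\rho^2/D'(0)) < \infty$ and therefore $\pz(|Y| > T) \leq 2\exp(-NT^2/(2C_R))$ uniformly in $\rho \in [0, R]$.

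Combining these estimates with the volume bound $\mathrm{Vol}(\{\|x\| < \sqrt{N} R\}) \leq (2\pi e R^2)^{N/2}$ (up to a sub-exponential Stirling factor), we obtain
\[
\frac{1}{N}\log \ez\Crt_N([-T, T]^c, [0, R)) \leq A(R, D) - \frac{T^2}{4 C_R} + o(1)
\]
for some constant $A(R, D)$ independent of $T$. Sending $N \to \infty$ and then $T \to \infty$ gives the claimed limit $-\infty$. The only non-routine ingredient is the uniform second moment bound $\ez|\det \nabla^2 H_N(x)|^2 \leq C_D^{2N}$; since the conditional Hessian here is a scalar shift of a GOE matrix (not the $x$-dependent deformation needed for the fine index-sensitive asymptotics later in the paper), this is standard and the precise value of $C_D$ is immaterial as it is dominated by the $T$-dependent decay. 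In particular, no analogue of Assumption III or of the delicate volume-growth condition from Lemma \ref{le:exptt} is required, because the Gaussian tail of $Y$ (rather than large-deviation behavior of the Hessian) drives the suppression when $|u|$ is large.
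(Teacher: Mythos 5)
Your argument is correct. Note that this paper does not actually prove the lemma — it is quoted from the companion paper \cite{AZ20}*{Lemma 4.3}, where (as remarked in the appendix) it is established by more involved estimates — so the comparison is with that route rather than with anything in the present text. The standard route runs through the conditional representation behind \eqref{eq:krerr}: one writes $\ez\Crt_N([-T,T]^c,[0,R))$ as an integral over $u$ with $|u|>T$ of $\ez[|\det G|\,|\,Y=u]$ against the Gaussian density of $Y$, and must then track the growth in $|u|$ of the conditional determinant (the shift parameters $m_1,m_2$ are linear in $u$, so $\ez[|\det G|\,|\,Y=u]$ grows like $C^N(1+|u|)^N$), beating it with the Gaussian tail. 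Your Cauchy--Schwarz step sidesteps exactly this: by decoupling $|\det\nabla^2 H_N(x)|$ from the rare event $\{|Y|>T\}$ you only need the \emph{unconditional} second moment of the determinant, whose law is $x$-independent (GOE plus an independent scalar shift, as recorded after \eqref{eq:startingpoint}) and exponentially bounded, while $\pz(|Y|>T)\le 2\exp(-NT^2/(2C_R))$ uniformly on $[0,R]$ since $\rho\mapsto D(\rho^2)-D'(\rho^2)^2\rho^2/D'(0)$ is continuous there. This buys a genuinely shorter proof for the case at hand; the price is that it leans on $\mu=0$ and $R<\infty$ in an essential way (the volume of the ball and the flat density $p_{\nabla H_N(x)}(0)=(2\pi D'(0))^{-N/2}$ are absorbed into a $T$-independent constant, and the variance bound $C_R$ is uniform), so it would not extend as written to the unbounded-domain statements of Lemma \ref{le:exptt}, where the decay must come from the gradient density $e^{-N\mu^2\rho^2/(2D'(0))}$ and the variance of $Y$ is not uniformly bounded. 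Two small points worth making explicit: the removal of the conditioning uses that the \emph{pair} $(Y,\nabla^2 H_N(x))$ is jointly Gaussian and uncorrelated with $\nabla H_N(x)$, hence independent of it (pairwise zero covariance plus joint Gaussianity), not merely the two separate independences; and in the moment bound $\ez|\det\nabla^2 H_N(x)|^2\le C_D^{2N}$ one should split the operator-norm integral at the threshold $K_0$ of \eqref{eq:ladein} before integrating the tail — both are routine and your sketch already points in the right direction.
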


For an event $\Delta$ that may depend on the eigenvalues of GOE and other Gaussian random variables in question,  let us define
\begin{align*}
  I_1(E,(R_1,R_2),\Delta) &= [-4D''(0)]^{\frac{N-1}{2}} \int_{R_1}^{R_2} \int_{E} \ez\Big[|z_1'| \prod_{i=1}^{N-1} |(\frac{N-1}{N})^{1/2}\la_i-z_3'| \indi_\Delta\Big]\notag\\
  &\frac{ e^{-\frac{(u-m_Y)^2}{2\si_Y^2}}}{\sqrt{2\pi}\si_Y} \frac{e^{-\frac{N \mu^2 \rho^2}{2D'(0)}}}{(2\pi)^{N/2} D'(0)^{N/2}}  \rho^{N-1}   \dd u  \dd\rho,\\
I_2(E,(R_1,R_2),\Delta) &= \frac{[-4D''(0)]^{\frac{N}{2}} }{2N} \sum_{i=1}^{N-1}\int_{R_1}^{R_2} \int_{E} \ez\Big[Z_i^2 \prod_{j\neq i} |(\frac{N-1}{N})^{1/2}\la_j-z_3'| \indi_\Delta \Big]  \\
  &\frac{ e^{-\frac{(u-m_Y)^2}{2\si_Y^2}}}{\sqrt{2\pi}\si_Y} \frac{e^{-\frac{N \mu^2 \rho^2}{2D'(0)}}}{(2\pi)^{N/2} D'(0)^{N/2}}  \rho^{N-1}   \dd u  \dd\rho.
\end{align*}
\begin{lemma}[\cite{AZ20}*{Lemma 4.5}]\label{le:goecpt}
Suppose $|\mu| +\frac1{R_2}>0$.  Then
\begin{align*}
&\limsup_{K\to\8} \limsup_{N \to\8} \frac1N\log I_2(E, (R_1,R_2), \{\la_{N-1}^*>K\} )=-\8,\\
&\limsup_{K\to\8} \limsup_{N \to\8} \frac1N\log I_2(E, (R_1,R_2), \{|z_3'-\ez(z_3')|>K\} )=-\8.
\end{align*}
\end{lemma}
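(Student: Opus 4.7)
The plan is to apply the Cauchy--Schwarz inequality inside the expectation defining $I_2$ so as to factor out the indicator $\indi_\Delta$ from the product of eigenvalue factors. Once decoupled, the eigenvalue moment can be controlled by an $e^{O(N)}$ bound that is uniform in $(\rho,u)$ on the relevant region, while the probability of $\Delta$ decays like $e^{-cNK^2}$; combining the two gives $\tfrac1N\log I_2\le C-K^2/C'$, which tends to $-\infty$ as $K\to\infty$.

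Concretely, for each $i$ I would use the trivial pointwise bound $\prod_{j\ne i}|(\tfrac{N-1}{N})^{1/2}\la_j-z_3'|\le(\la_{N-1}^*+|z_3'|)^{N-1}$ followed by Cauchy--Schwarz:
\begin{align*}
\ez\Bigl[Z_i^2\prod_{j\ne i}\bigl|(\tfrac{N-1}{N})^{1/2}\la_j-z_3'\bigr|\indi_\Delta\Bigr]\le\Bigl(3\,\ez\bigl[(\la_{N-1}^*+|z_3'|)^{2(N-1)}\bigr]\Bigr)^{1/2}\pz(\Delta)^{1/2},
\end{align*}
using $\ez[Z_i^4]=3$ and the independence of $Z_i$ from $\la_1^{N-1}$ and $z_3'$. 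The operator-norm moment $\ez[(\la_{N-1}^*)^{2(N-1)}]$ is at most $e^{CN}$ by integrating the tail bound \pref{eq:ladein}, and since $z_3'$ is Gaussian with $\ez z_3'=-m_2/\sqrt{-4D''(0)}$ and variance of order $1/N$ (by \pref{eq:msialbt} and \pref{le:albtd}), a standard Gaussian moment estimate gives $\ez[|z_3'|^{2(N-1)}]^{1/2}\le C^N(1+|m_2(\rho,u)|)^{N-1}$.

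For the indicator, the first event is handled directly by \pref{eq:ladein}: $\pz(\la_{N-1}^*>K)\le e^{-NK^2/9}$, uniformly in $(\rho,u)$ since $\GOE_{N-1}$ is independent of $(z_1,z_2,z_3,\xi)$. The second event concerns the centered Gaussian $z_3'-\ez z_3'$, which has variance at most $C(R_1,R_2)/N$, yielding $\pz(|z_3'-\ez z_3'|>K)\le 2e^{-cNK^2}$ with $c=c(R_1,R_2)>0$. Plugging these into the displayed Cauchy--Schwarz bound and then integrating against the prefactors $\rho^{N-1}e^{-N\mu^2\rho^2/(2D'(0))}/[(2\pi)^{N/2}D'(0)^{N/2}]$ and the $u$-density $e^{-(u-m_Y)^2/(2\sigma_Y^2)}/(\sqrt{2\pi}\sigma_Y)$ produces $I_2(E,(R_1,R_2),\Delta)\le e^{CN}\pz(\Delta)^{1/2}$, from which both claims follow on letting first $N\to\infty$ and then $K\to\infty$.

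The main technical concern is ensuring that the moment bound on $|z_3'|$ and the constant $c$ in the Gaussian tail are uniform over $(\rho,u)$ when $R_2=\infty$ or $E$ is unbounded. This is handled by the decay of $e^{-N\mu^2\rho^2/(2D'(0))}$ (when $\mu\ne 0$) and of the $u$-density (of variance $O(1/N)$) around $m_Y(\rho)$, which concentrates $(\rho,u)$ onto a compact set up to a superexponentially small remainder. On any such compact set, the smoothness of $D$ makes $m_2$, $\sigma_Y$, $\alpha\bt\rho^2$ and the associated constants uniformly bounded, so the moment and tail estimates apply verbatim and the rest is bookkeeping via Stirling's formula.
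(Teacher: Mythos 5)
The paper does not actually prove this lemma: it is imported verbatim from the companion paper \cite{AZ20}*{Lemma 4.5}, so your argument has to stand on its own, and in substance it does. The Cauchy--Schwarz decoupling $\ez[Z_i^2\prod_{j\ne i}|\cdot|\,\indi_\Delta]\le(3\,\ez[(\la_{N-1}^*+|z_3'|)^{2(N-1)}])^{1/2}\pz(\Delta)^{1/2}$, the $e^{O(N)}$ bound on the GOE operator-norm moment via \pref{eq:ladein}, and the uniform-in-$(\rho,u)$ tails $e^{-cNK^2}$ for the two events are exactly the right ingredients, and this is the same spirit as the exponential-tightness estimates of \cite{AZ20}*{Section 4}. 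Two points in your last paragraph should be tightened. First, the uniformity you defer to a compactness reduction is available for free: since $D$ has the form \pref{eq:drep}, one has $\al\rho^2\le 0$ and $\bt\le 0$, so $\al\rho^2+\bt\le\bt\le 0$ and Assumption IV (\pref{eq:asmp1}--\pref{eq:asmp2}) yields $\bt^2<-2D''(0)$, $(\al\rho^2)^2<-4D''(0)$ and $\al\bt\rho^2<-2D''(0)$ uniformly in $\rho$; hence $\si_2^2\le -2D''(0)/N$ and $\Var(z_3')=\frac{\si_2^2+\al\bt\rho^2/N}{-4D''(0)}\le\frac1N$, so $\pz(|z_3'-\ez z_3'|>K)\le 2e^{-NK^2/2}$ with an absolute constant and no localization is needed. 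Second, for the moment factor write $m_2=\mu+\bt v$ with $v=(u-m_Y)/(\sqrt N\si_Y)$ as in \pref{eq:uvcov}--\pref{eq:m12cov}; substituting $v$ in the $u$-integral turns $(1+|m_2|)^{N-1}$ times the conditional Gaussian density into $\int(1+|\mu|+C|v|)^{N-1}e^{-Nv^2/2}\sqrt N\,\dd v\le e^{CN}$, again uniformly in $\rho$, so no compactness reduction is required there either. This matters because the reduction as you phrase it is slightly off: if on the non-compact remainder you drop $\indi_\Delta$, the remainder is bounded by $e^{CN-cNR^2}$ \emph{independently of $K$}, and the iterated $\limsup$ in $K$ would not give $-\8$ unless $R$ is also sent to infinity; keeping $\pz(\Delta)^{1/2}$ globally, as your displayed inequality already does, avoids this entirely. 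With these two observations your proof is complete.
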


\begin{lemma}[\cite{AZ20}*{Lemma 4.6}]\label{le:goecpt2}
  Suppose $|\mu| +\frac1{R_2}>0$.  Then for any $\de>0$,
  \[
  \limsup_{N \to\8} \frac1N\log I_2(E, (R_1,R_2), \{L(\la_{1}^{N-1})\notin B(\si_{{\rm sc}},\delta) \} )=-\8.
  \]
  \end{lemma}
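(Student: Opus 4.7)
The strategy is to exploit the super-exponential decay in the large deviation bound \eqref{eq:conineq} for the empirical measure of GOE, namely $\pz(L(\lambda_1^{N-1})\notin B(\sigma_{\rm sc},\delta)) \le e^{-cN^2}$, and to show that the remaining ingredients in $I_2$ contribute only a factor of order $C^N$, so that the restricted integral is bounded by $C^N e^{-cN^2/2}$ and therefore $\tfrac1N \log I_2 \to -\infty$.

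First I would note that $\Delta := \{L(\lambda_1^{N-1})\notin B(\sigma_{\rm sc},\delta)\}$ depends only on the eigenvalues of $\GOE_{N-1}$ and is therefore independent of $Z_i$ and of $z_3'$ (which depends only on the external Gaussians $z_2, z_3$). Using $\ez[Z_i^2]=1$ together with this independence, and then applying Cauchy--Schwarz conditionally on $z_3'$, I would bound
\[
\ez\Bigl[Z_i^2 \prod_{j\neq i}\bigl|(\tfrac{N-1}{N})^{1/2}\lambda_j - z_3'\bigr|\,\indi_\Delta\Bigr]
\le \ez\Bigl[\prod_{j\neq i}\bigl|(\tfrac{N-1}{N})^{1/2}\lambda_j - z_3'\bigr|^2\Bigr]^{1/2} \pz(\Delta)^{1/2}.
\]
The second factor is at most $e^{-cN^2/2}$ by \eqref{eq:conineq} for all sufficiently large $N$, uniformly in $\rho, u$.

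Next I would show that the remaining pieces of $I_2$, after dropping the indicator, contribute only exponentially in $N$. The $L^2$-moment $\ez[\prod_{j\neq i}((\tfrac{N-1}{N})^{1/2}\lambda_j - y)^2]$ is a polynomial of degree $2(N-2)$ in $y$, and using the crude pointwise bound $\prod_{j\neq i}(\lambda_j-y)^2 \le (1+(\lambda^*_{N-1})^{2(N-2)})(1+|y|^{2(N-2)})$ together with the operator-norm moment bound $\ez[(\lambda^*_{N-1})^{2N}]\le C^N$ (a consequence of \eqref{eq:ladein}), this moment is at most $C^N(1+|y|^{N})$. Integration against the law of $z_3'$ produces a Gaussian weight of the form $\sqrt{N}\exp\{-cN(y+m_2/\sqrt{-4D''(0)})^2\}$ coming from \eqref{eq:ayg}, which has Gaussian moments of order $C^N$. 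Combined with the $\frac{1}{\sqrt{2\pi}\sigma_Y}e^{-(u-m_Y)^2/(2\sigma_Y^2)}$ factor (whose $u$-integral on $\rz$ is $1$), the $\rho^{N-1}$ factor on the bounded interval $(R_1, R_2)$, and the prefactors $[-4D''(0)]^{N/2}/((2\pi)^{N/2} D'(0)^{N/2})$, the whole non-indicator part is at most $C_{D,\mu,R_2}^N$ uniformly.

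Putting these together yields $I_2(E,(R_1,R_2),\Delta)\le C^N e^{-cN^2/2}$, and the claim follows after taking logarithms. The main obstacle is the uniform polynomial-in-$y$ growth of the second moment combined with the unbounded support of $z_3'$: one has to confirm that the Gaussian decay of the $z_3'$ density, which is on the scale $1/\sqrt{N}$ around its mean, beats the $(1+|y|^{N})^{1/2}$ factor arising from the crude bound. This is handled by the standard Gaussian moment estimate $\ez|z_3'|^{N} \le C^N$, since $z_3'$ is Gaussian with variance $O(1)$ (on the scale before the $\sqrt{N}$ rescaling implicit in \eqref{eq:ayg}), so that the additional $C^N$ is absorbed into the overall $C^N$ prefactor. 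The assumption $|\mu|+1/R_2 > 0$ ensures that $\rho$ is bounded so that the exponential factor $e^{-N\mu^2\rho^2/(2D'(0))}$ (or the finiteness of $R_2$) keeps all integrals finite on the relevant domain.
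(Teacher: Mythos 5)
Your overall mechanism is the right one, and it is the same scheme the authors themselves deploy for the analogous estimate at the end of the paper (see the Cauchy--Schwarz step with $\ez[(1+(\la_{N-1}^*)^{2N}+|y|^{2N})]^{1/2}\pz(\cdot)^{1/2}$ following \eqref{eq:crtkn} in the proof of Theorem \ref{th:kdiv}): split off the indicator by Cauchy--Schwarz, use the speed-$N^{2}$ bound \eqref{eq:conineq} for $\pz(L(\la_1^{N-1})\notin B(\si_{\rm sc},\de))$, and check that all remaining factors are at most $C^{N}$, using $\prod_{j\neq i}|\cdot|^{2}\le C^{N}\big((\la_{N-1}^*)^{2(N-2)}+|y|^{2(N-2)}\big)$ and $\ez[(\la_{N-1}^*)^{2N}]\le C^{N}$ from \eqref{eq:ladein}. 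Note that this paper does not reproduce a proof of the lemma (it is quoted from \cite{AZ20}*{Lemma 4.6}), so the comparison is with that style of argument; your proposal is consistent with it.

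The one step that does not hold as you wrote it is the uniform moment bound $\ez|z_3'|^{N}\le C^{N}$. Under the hypotheses of the lemma, $E$ may be all of $\rz$ and, when $\mu\neq0$, $R_2$ may be $\8$; the mean of $z_3'$ is $-m_2/\sqrt{-4D''(0)}$ with $m_2=\mu+\bt v$ and $v=(u-m_Y)/(\sqrt N\si_Y)$, so it is unbounded in $u$ (and, for fixed $u$, it grows with $\rho$ when $R_2=\8$), and pointwise $\ez|z_3'|^{N}\gtrsim |m_2|^{N}$ is not $O(C^{N})$. Consequently you cannot treat the $u$-integral as contributing $1$ and the $y$-moment as contributing $C^{N}$ separately; the estimate must be done jointly. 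The repair is mechanical: keep the factor $(1+|m_2|^{N})$, bound $|m_2|^{N}\le 2^{N}(|\mu|^{N}+|\bt|^{N}|v|^{N})$ with $|\bt|$ and $|\al\rho^2|$ uniformly bounded in $\rho$ by Assumption IV, integrate $|v|^{N}$ against the Gaussian density of $u$ (standard deviation $\si_Y=O(N^{-1/2})$ times a function of $\rho$), which gives $\int |v|^{N}e^{-Nv^{2}/2}\sqrt{N/2\pi}\,\dd v\le C^{N}$, and integrate the resulting $\rho$-dependence against $\rho^{N-1}e^{-N\mu^{2}\rho^{2}/(2D'(0))}$ (or $\rho^{N-1}\indi\{\rho<R_2\}$ when $\mu=0$, where $R_2<\8$ by hypothesis), which again gives $C^{N}$. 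With these uniform estimates in place your bound $I_2\le C^{N}e^{-cN^{2}/2}$, and hence the conclusion, follows; the exponent bookkeeping ($2(N-2)$ versus $N$) is immaterial.
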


\begin{lemma}[\cite{AZ20}*{Lemma 4.7}]\label{le:exptti11}
    Suppose $|\mu|+\frac1{R_2}>0$. Then we have
  \begin{align*}
  &\limsup_{K\to\8} \limsup_{N \to\8} \frac1N\log I_1(E, (R_1,R_2), \{\la_{N-1}^*>K\} )=-\8,\\
  &\limsup_{K\to\8} \limsup_{N \to\8} \frac1N\log I_1(E, (R_1,R_2), \{|z_3'-\ez(z_3')|>K\} )=-\8.
  \end{align*}
\end{lemma}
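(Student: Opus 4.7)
The strategy mirrors that of \pref{le:goecpt}, whose analogue for $I_2$ is already proved. The essential difference between $I_1$ and $I_2$ is only the weight $|z_1'|$ versus the chi-square weight $Z_i^2$ and the presence of one extra factor in the spectral product; since $z_1'$ is Gaussian with mean $m_1$ and variance $\si_1^2+\si_2^2=O(1/N)$, any fixed moment of $z_1'$ is uniformly bounded on compact ranges of $(\rho,u)$, so the same moment-based argument will go through mutatis mutandis. First, I would reduce to the case where $\bar E$ is compact and $R_2<\8$: since $\Crt_{N,k}\le \Crt_{N}$, the exponential tightness Lemmas \ref{le:exptt} and \ref{le:exptt2} let us truncate $u$ to an interval $[-T,T]$ and $\rho$ to a subinterval of $(\eps,R)$ with exponentially small error; on this domain all constants $m_1,m_2,\al,\bt,\si_Y,\sfa,\sfb$ are uniformly bounded.

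Next I would control the product factor. On $\{\la_{N-1}^*\le L\}$, the trivial bound $|(\tfrac{N-1}{N})^{1/2}\la_i-z_3'|\le L+|z_3'|$ gives $\prod_{i=1}^{N-1}|\cdots|\le (L+|z_3'|)^{N-1}$. Splitting $\{\la_{N-1}^*>K\}$ into dyadic shells $\{\la_{N-1}^*\in[L,L+1]\}$ for integer $L\ge K$, and using the independence of the GOE spectrum from $(z_1,z_2,z_3)$ together with the Cauchy--Schwarz inequality, I would write
\[
\ez\bigl[|z_1'|\textstyle\prod_{i=1}^{N-1}|\la_i-z_3'|\,\indi_{\{\la_{N-1}^*\in[L,L+1]\}}\bigr]
\le \ez[z_1'^2]^{1/2}\,\ez[(L+|z_3'|)^{2(N-1)}]^{1/2}\,\pz(\la_{N-1}^*\in[L,L+1])^{1/2}.
\]
The first factor is $O(1)$; the second is at most $C^N(L+1)^{N-1}$ since $z_3'$ is Gaussian with $O(1)$ mean and $O(1/N)$ variance; the third is dominated by the tail bound $e^{-NL^2/18}$ coming from \pref{eq:ladein}. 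Summing in $L$, the dominant contribution is $C^N(K+1)^{N}e^{-NK^2/18}$, and after multiplying by the prefactors $[-4D''(0)]^{(N-1)/2}(2\pi)^{-N/2}D'(0)^{-N/2}\rho^{N-1}$ and integrating in $u,\rho$ over a compact region, one obtains $\frac{1}{N}\log I_1(E,(R_1,R_2),\{\la_{N-1}^*>K\})\le -K^2/18 + O(1)$, which tends to $-\8$ as $K\to\8$.

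For the second event $\{|z_3'-\ez z_3'|>K\}$, I would use that $z_3'$ is Gaussian with variance of order $1/N$, so $\pz(|z_3'-\ez z_3'|>K)\le 2e^{-cNK^2}$ for some $c>0$ depending only on $D$ and the compact range. Applying Cauchy--Schwarz exactly as above to separate this indicator from the spectral product, and using the $L^2$ moment bound $\ez[z_1'^2\prod_{i=1}^{N-1}|\la_i-z_3'|^2]\le e^{CN}$ (which follows from the first part with $K=0$, since GOE eigenvalue moments grow only exponentially in $N$), one gets an overall estimate of the form $e^{-cNK^2/2+O(N)}$, giving the claim.

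The main obstacle is controlling the high moment $\ez\prod|\la_i-z_3'|^2$, whose naive estimate $(\la_{N-1}^*)^{2(N-1)}$ is too crude to be integrated against the Gaussian tail bound directly. The dyadic shell decomposition is precisely the device that converts the super-polynomial $(L+1)^{N}$ growth into something that is absorbed by the quadratic-in-$L$ Gaussian decay in \pref{eq:ladein}; this is the same mechanism that makes \pref{le:goecpt} work, and once it is installed, the replacement of $Z_i^2$ by $|z_1'|$ is immaterial.
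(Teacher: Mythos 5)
The paper itself does not prove this lemma: it is imported from \cite{AZ20}*{Lemma 4.7}, where it is established by direct estimates on $I_1$ carried out against the explicit Gaussian weights in $u$ and $\rho$ (the ``hard analysis'' alluded to at the start of the appendix). Your core mechanism for the compact situation --- dyadic shells $\{\la_{N-1}^*\in[L,L+1]\}$ played against the tail bound \eqref{eq:ladein}, and a Gaussian tail for $z_3'-\ez z_3'$ (variance $O(1/N)$ uniformly, by Assumption IV) played against a crude $e^{CN}$ bound on the remaining moment --- is the right one and is in the spirit of \pref{le:goecpt}. The genuine gap is your opening reduction to compact $\bar E$, $R_2<\8$ (and implicitly $R_1>0$). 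Lemmas \ref{le:exptt} and \ref{le:exptt2} control $\ez\Crt_N$, i.e.\ integrals of $\ez[|\det G|]=\ez[|\det G_{**}|\,|\zeta|]$; but $I_1$ comes from the splitting $|\zeta|\le |z_1'|+|Q(z_3')|$, so $I_1+I_2$ dominates the Kac--Rice integrand and not conversely: $|\det G_{**}|\,|z_1'|$ can vastly exceed $|\det G|$ when $\zeta$ is small. Hence exponential smallness of $\ez\Crt_N$ on $\{|u|>T\}$, $\{\rho>R\}$ or $\{\rho<\eps\}$ gives no control of $I_1$ on those regions, and you cannot ``truncate with exponentially small error''. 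Since the lemma is stated for arbitrary $E$ and possibly $R_2=\8$, you must handle the unbounded-$u$ region through the weight $e^{-(u-m_Y)^2/2\si_Y^2}$ (equivalently the variable $v=(u-m_Y)/(\sqrt N\si_Y)$, which carries an $e^{-Nv^2/2}$ factor absorbing the growth of $m_1,m_2$ in $u$), the large-$\rho$ region through $e^{-N\mu^2\rho^2/2D'(0)}\rho^{N-1}$ (this is exactly where $|\mu|+\frac1{R_2}>0$ enters), and the region $\rho\to0+$ where $m_2$, and hence $\ez z_3'$, blows up for fixed $u$; without this, your repeated appeals to ``uniformly bounded constants on compact ranges of $(\rho,u)$'' have nothing to stand on.

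Two smaller points, both repairable. The displayed ``Cauchy--Schwarz'' with three factors each raised to the power $1/2$ is not a valid inequality (H\"older would require the exponents' reciprocals to sum to $1$); but since the GOE spectrum is independent of $(z_1',z_3')$ you can factor out $\pz(\la_{N-1}^*\in[L,L+1])$ exactly and apply Cauchy--Schwarz only to $\ez[|z_1'|(L+1+|z_3'|)^{N-1}]$, which even improves the probability to the first power. Likewise, when you invoke ``the first part with $K=0$'' to get $\ez[z_1'^2\prod_i|\la_i-z_3'|^2]\le e^{CN}$, remember that \eqref{eq:ladein} is only valid for $K>K_0$, so you should split off $\{\la_{N-1}^*\le K_0\}$ (trivial bound $(K_0+|z_3'|)^{2(N-1)}$) and sum the shells above $K_0$; with the $u$-integral rewritten in the variable $v$ this does yield $e^{CN}$, but only after the non-compact bookkeeping described above is in place.
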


\begin{lemma}[\cite{AZ20}*{Lemma 4.8}]\label{le:exptti12}
    Let $\de>0$.  Suppose $|\mu|+\frac1{R_2}>0$. Then we have
   \begin{align*}
   \limsup_{N \to\8} \frac1N\log I_1(E, (R_1,R_2), \{L(\la_1^{N-1})\notin B(\si_{\rm sc},\de)\} )=-\8.
   \end{align*}
\end{lemma}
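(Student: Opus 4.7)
The plan is to combine the super-exponential smallness of $\pz(L(\la_1^{N-1})\notin B(\si_{\rm sc},\de))$ coming from the LDP for GOE empirical measures with singly exponential upper bounds on all other factors appearing in $I_1$. Since \pref{eq:conineq} gives $\pz(\Delta)\le e^{-c(\de)N^2}$ for $\Delta:=\{L(\la_1^{N-1})\notin B(\si_{\rm sc},\de)\}$, it is enough to show that the remaining contribution grows at most like $e^{CN}$; a single application of Cauchy--Schwarz then converts the $N^2$-rate decay of $\pz(\Delta)$ into super-exponential decay of $I_1$.

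First, inside the expectation defining $I_1$ I would apply Cauchy--Schwarz:
\[
\ez\Bigl[|z_1'|\prod_{i=1}^{N-1}\bigl|(\tfrac{N-1}{N})^{1/2}\la_i-z_3'\bigr|\indi_\Delta\Bigr] \le \pz(\Delta)^{1/2}\,\Bigl(\ez\Bigl[(z_1')^2\prod_{i=1}^{N-1}\bigl|(\tfrac{N-1}{N})^{1/2}\la_i-z_3'\bigr|^2\Bigr]\Bigr)^{1/2}.
\]
Using $|(\tfrac{N-1}{N})^{1/2}\la_i-z_3'|^2\le 2(1+(\la_{N-1}^*)^2)(1+(z_3')^2)$ and the independence of $\GOE_{N-1}$ from $(z_1',z_3')$, the second-moment factor splits into a GOE part and a Gaussian part. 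The GOE part $\ez [2(1+(\la_{N-1}^*)^2)]^{N-1}$ is at most $e^{CN}$ by a dyadic decomposition on $\{\la_{N-1}^*\in[K,K+1]\}$ together with the tail bound \pref{eq:ladein}, which provides super-exponential decay beyond any fixed threshold. The Gaussian factor $\ez[(z_1')^2 (2(1+(z_3')^2))^{N-1}]$ is likewise $\le e^{CN}$ since $z_1',z_3'$ are jointly Gaussian with parameters bounded uniformly in $N$ from the formulas in \pref{eq:msialbt}.

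Finally, I would integrate over $u$ and $\rho$. The prefactor $[-4D''(0)]^{(N-1)/2}$ and the normaliser $(2\pi)^{N/2}D'(0)^{N/2}$ contribute at most $e^{CN}$; the density $\frac{1}{\sqrt{2\pi}\si_Y}e^{-(u-m_Y)^2/(2\si_Y^2)}$ integrates to $1$ in $u$; and $\rho^{N-1}e^{-N\mu^2\rho^2/(2D'(0))}$ integrated over $(R_1,R_2)$ contributes at most $e^{CN}$ under $|\mu|+\tfrac1{R_2}>0$, as already exploited in \pref{le:exptt} and \pref{le:exptt2}. Combining everything yields $I_1(E,(R_1,R_2),\Delta)\le e^{CN-c(\de)N^2/2}$, which immediately gives $\frac1N\log I_1\to-\8$. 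The main technical point is the control of the potentially divergent $(\la_{N-1}^*)^{2(N-1)}$ factor after Cauchy--Schwarz: the dyadic splitting together with the Gaussian-type tail \pref{eq:ladein} is essential to absorb this into the $e^{CN}$ bound, since merely using second moments of $\la_{N-1}^*$ would be insufficient.
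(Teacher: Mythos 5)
You should first be aware that this paper does not contain a proof of Lemma \ref{le:exptti12} at all: it is imported verbatim from the companion paper \cite{AZ20}*{Lemma 4.8}, and Appendix B says explicitly that these exponential-tightness statements ``were proven via hard analysis in \cite{AZ20}''. So your proposal can only be compared with that external proof in spirit. Your skeleton is the natural one and is sound as far as it goes: Cauchy--Schwarz against $\indi_\Delta$, the speed-$N^2$ bound $\pz(\Delta)\le e^{-c(\de)N^2}$ from \pref{eq:conineq}, and an $e^{CN}$ bound on the GOE factor $\ez[(2(1+(\la_{N-1}^*)^2))^{N-1}]$ obtained from the tail estimate \pref{eq:ladein} (you are right that second moments of $\la_{N-1}^*$ alone would not suffice).

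The genuine gap is the claim that $\ez[(z_1')^2(2(1+(z_3')^2))^{N-1}]\le e^{CN}$ ``since $z_1',z_3'$ are jointly Gaussian with parameters bounded uniformly in $N$'', after which you integrate the Gaussian density in $u$ separately. The lemma does not assume $E$ bounded, nor $R_2<\8$ when $\mu\neq0$, and the conditional means $m_1$ and $-m_2/\sqrt{-4D''(0)}$ of $z_1'$ and $z_3'$ in \pref{eq:msialbt} are affine in $u$ with $\rho$-dependent coefficients; by Jensen's inequality $\ez[(1+(z_3')^2)^{N-1}]\ge (1+m_2^2/(-4D''(0)))^{N-1}$, which is not $e^{CN}$ uniformly over an unbounded $E$, so you cannot decouple the expectation bound from the $u$-integration. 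The correct version of your step keeps the $u$-dependence, writes $v=(u-m_Y)/(\sqrt N\si_Y)$, and beats the growth $(1+Cv^2)^{N-1}$ by the weight $e^{-Nv^2/2}$ inside the integral; in addition one needs the coefficients $\al\rho^2$, $\bt$, $\al\bt\rho^2$ and $\sfb$ to be bounded uniformly over the whole range of $\rho$, including $\rho\to0+$ (Lemma \ref{le:albtd}) and $\rho\to\8$ when $R_2=\8$, which is exactly where Assumption IV and the uniform estimates of \cite{AZ20} enter. As written, your argument is only valid after first reducing to compact $\bar E$ and finite $R_2$, which is weaker than what the lemma asserts; supplying those uniform-in-$(\rho,u)$ bounds is the actual substance of the companion paper's proof.
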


\end{appendix}

\bibliographystyle{plain}
\bibliography{gficrev}

\end{document}